\documentclass[11pt,a4paper,reqno]{amsart}
\usepackage[english]{babel}
\usepackage[T1]{fontenc}
\usepackage{verbatim}
\usepackage{palatino}
\usepackage{amsmath}
\usepackage{mathabx}
\usepackage{amssymb}
\usepackage{amsthm}
\usepackage{amsfonts}
\usepackage{graphicx}
\usepackage{esint}
\usepackage{color}
\usepackage{mathtools}
\usepackage{overpic}
\usepackage{pdfpages}

\usepackage[colorlinks = true, citecolor = black, linkcolor = black, urlcolor = black]{hyperref}
\author[Algom, Rodriguez Hertz, and Wang]{Amir Algom, Federico Rodriguez Hertz, and Zhiren Wang}
	
	\address{Department of Mathematics, University of Haifa at Oranim, Tivon 36006, Israel}

\email{\href{mailto:amir.algom@math.haifa.ac.il}{amir.algom@math.haifa.ac.il}}

\address{Department of Mathematics\\
	The Pennsylvania State University\\
	107 McAllister Building\\
	University Park, PA 16802\\
	USA}

\email{\href{mailto:fjr11@psu.edu}{fjr11@psu.edu}}

\address{Department of Mathematics\\
	Johns Hopkins University\\
	3400 N. Charles Street\\
	Baltimore, MD 21218\\
	USA}
	
	\email{\href{mailto:zhirenw@jhu.edu}{zhirenw@jhu.edu}}

\title[Absolutely continuous convolutions and projections]{Absolutely Continuous Convolutions and Projections of Fractal Measures}
\date{\today}
\subjclass[2020]{28A80 (primary) 42B10 (secondary)}
\keywords{Projections, stationary measures, self-conformal measures, self-affine measures, convolutions, renewal theory, Littlewood--Paley theory.}
\thanks{A.A. is supported by  the Israel Science Foundation (Grant No. 392/25),   NSF-BSF Grant No. 2024692, and  Grant No. 2022034 from the United States-Israel Binational Science Foundation (BSF), Jerusalem, Israel. \newline
F. RH. is partially supported by NSF Grant No.~2453688 and by the Anatole Katok Chair in Mathematics. \newline
Z.W is partially supported by NSF grant No. 2453689.}

\newcommand{\R}{\mathbb{R}}

\newcommand{\N}{\mathbb{N}}

\newcommand{\C}{\mathbb{C}}
\newcommand{\Z}{\mathbb{Z}}

\newcommand{\supp}{\operatorname{supp}}

\newcommand{\diam}{\operatorname{diam}}

\newcommand{\dist}{\operatorname{dist}}

\newcommand{\sgn}{\operatorname{sgn}}

\def\Barint_#1{\mathchoice
          {\mathop{\vrule width 6pt height 3 pt depth -2.5pt
                  \kern -8pt \intop}\nolimits_{#1}}%
          {\mathop{\vrule width 5pt height 3 pt depth -2.6pt
                  \kern -6pt \intop}\nolimits_{#1}}%
          {\mathop{\vrule width 5pt height 3 pt depth -2.6pt
                  \kern -6pt \intop}\nolimits_{#1}}%
          {\mathop{\vrule width 5pt height 3 pt depth -2.6pt
                  \kern -6pt \intop}\nolimits_{#1}}}

\numberwithin{equation}{section}

\theoremstyle{plain}
\newtheorem{thm}{Theorem}
\numberwithin{thm}{section}
\newtheorem*{"thm"}{"Theorem"}

\newtheorem{lemma}[thm]{Lemma}

\newtheorem{cor}[thm]{Corollary}
\newtheorem{proposition}[thm]{Proposition}
\newtheorem{"proposition"}[thm]{"Proposition"}
\newtheorem{"lemma"}[thm]{"Lemma"}

\theoremstyle{definition}

\theoremstyle{remark}
\newtheorem{remark}[thm]{Remark}

\newcommand{\nref}[1]{(\hyperref[#1]{#1})}

\DeclareMathSymbol{\intop}  {\mathop}{mathx}{"B3}

\newcommand{\Leb}{\mathcal{L}}
\newcommand{\Pone}{\mathbb{P}^{1}}
\newcommand{\Sone}{\mathbb{S}^{1}}
\newcommand{\dimFr}{\dim_{\mathrm{Fr}}}
\newcommand{\push}[1]{(#1)_{\#}}

\begin{document}

\begin{abstract}
We develop a general criterion for establishing absolute continuity of convolutions of fractal measures on the line, and more generally of prescribed line projections of fractal measures in the plane. The crucial new ingredient is  exact scaling covariance of the \(L^1\)-norm of Littlewood--Paley pieces of the projected measure, under affine renormalization. We apply this criterion in three key settings. First, we show that the convolution of two measures on the line is absolutely continuous whenever their dimensions sum to more than one, provided one is a self-conformal measure whose defining IFS is not \(C^2\)-conjugate to linear, and the other is either self-conformal or Ahlfors--David regular. Second, we show that every line projection of a planar complex-analytic self-conformal measure of dimension greater than one is absolutely continuous, under natural nonlinearity and nondegeneracy assumptions. Finally, we show that every line projection of a planar self-affine measure is absolutely continuous, under natural irreducibility and proximality assumptions, whenever the correlation dimension of the measure and the Frostman dimension of its Furstenberg measure sum to more than two.

\end{abstract}

\maketitle

\section{Introduction}\label{sec:introduction}
\subsection{Background}
Marstrand's projection theorem is one of the fundamental results  of geometric measure theory.  It states that if $\mu \in \mathcal{P}(\mathbb{R}^2)$, i.e.  a  Borel probability measure on $\R^2$, then, writing $\pi_e(x)=\langle x,e\rangle$ and $\dim \mu :=\inf \lbrace \dim_H A:\, \mu(A)>0\rbrace$,
\begin{equation}\label{eq:MM-dimension}
 \dim \push{\pi_e}\mu=\min\{1,\dim \mu\}, \text{ for Lebesgue-almost every direction } e\in \Sone.
\end{equation}
Moreover, 
\begin{equation}\label{eq:MM-ac}
 \push{\pi_e}\mu\ll \Leb^1 \text{ for Lebesgue-almost every direction } e\in \Sone 
 \text{ if }  \dim \mu >1.
\end{equation}
See \cite[Chapter 9]{mattila1999geometry}, which also discusses Mattila's extension to $\mathbb{R}^d,d>2$. 

For dynamically defined measures one expects sharper results. A guiding principle,   influenced by  Furstenberg's conjectures  around the $\times 2, \times 3$ Conjecture \cite{furstenberg1967disjointness, furstenberg1970intersections}, is that if a projection violates \eqref{eq:MM-dimension} or \eqref{eq:MM-ac}, then it should represent some resonance between the dynamics and that specific projection. Thus, in the absence of  algebraic or dynamical obstructions, one  expects \eqref{eq:MM-dimension} and \eqref{eq:MM-ac} to hold in a given direction. Indeed, this heuristic has been demonstrated for the dimension part \eqref{eq:MM-dimension}, in a large class of examples, as we recall below. Much less is known, however, regarding the absolute continuity counterpart \eqref{eq:MM-ac}.

In this paper we  make significant progress on this absolute-continuity problem.  We establish a general criterion, Theorem \ref{thm:stopped-intro} below, that ensures a \emph{given} projection of a fractal measure on $\mathbb{R}^2$ is absolutely continuous. We then apply it to three important classes of examples: convolutions of fractal measures on the line, line projections of self-conformal measures on the plane, and line projections of self-affine measures on the plane. We first state these three applications, together with their background, and then formulate the main technical theorem \ref{thm:stopped-intro} from which they all follow.

\subsection{Absolute continuity of convolutions}\label{subsec:convolutions-intro}
We begin with convolutions of fractal measures $\mu,\nu\in\mathcal P(\mathbb R)$ in the supercritical regime; that is, when
$
\dim \mu+\dim \nu>1.
$
The terminology comes from viewing the convolution as a projection of the product measure $\mu\times\nu\in\mathcal P(\mathbb R^2)$. Indeed, up to a harmless linear rescaling, $\mu*\nu$ is the projection of $\mu\times\nu$ in the direction
$
e=\frac{1}{\sqrt2}(1,1)\in\Sone.
$
Moreover, standard  estimates (see e.g. \cite[Chapter 7]{mattila1999geometry}) give
$
\dim (\mu\times\nu)\geq\dim \mu+\dim \nu>1.
$
Thus, by \eqref{eq:MM-ac}, almost every line projection of $\mu\times\nu$ is absolutely continuous. For dynamically defined measures, one is therefore led to expect that  $\mu*\nu$ is also absolutely continuous, unless there is a resonance between the structures of $\mu$ and $\nu$.

We will show that this is indeed the case for a broad class of dynamically defined measures. Our mechanism for ruling out such resonances is to require one of the two measures to arise from a sufficiently nonlinear dynamical system. We consider self-conformal measures, which serve as an axiomatic model for more general stationary measures. To define them, let $I\subset\R$ be a compact interval. A \emph{$C^2$ iterated function system}, abbreviated as a $C^2$ IFS, is a finite family
$
\Phi=\{f_i:i\in\mathcal A\}
$
of non-singular $C^2$ contractions from a neighbourhood of $I$ into $I$ such that
\begin{equation}\label{eq:1d-uniform-contraction}
0<\rho_-\leq |f_i'(x)|\leq\rho_+<1,
\,i\in\mathcal A,\ x\in I.
\end{equation}
It is well known that there exists a unique non-empty compact set $K=K_\Phi$ satisfying
\begin{equation}\label{eq:attractor}
K=\bigcup_{i\in\mathcal A}f_i(K).
\end{equation}
We call $K$ the \emph{attractor} of $\Phi$, or the corresponding \emph{self-conformal set}. If $\Phi$ contains only affine maps, we call it \emph{self-similar} and $K$ a \emph{self-similar set}. Given a strictly positive probability vector $p=(p_i)_{i\in\mathcal A}$, the associated \emph{self-conformal measure} is the unique probability measure $\nu \in \mathcal{P}(K)$ satisfying the stationarity relation
\begin{equation} \label{eq:self-conformal}
\nu=\sum_{i\in\mathcal A}p_i(f_i)_\#\nu.
\end{equation}
If $\Phi$ is self-similar, we call $\nu$ a \emph{self-similar measure}. We say that $\Phi$ is \emph{linear} if
$
f_i''(x)=0
\text{ for every }i\in\mathcal A\text{ and }x\in K_\Phi.
$
Finally, we say that a $C^2$ IFS $\Phi$ is $C^2$-conjugate to  linear,  if there exists a $C^2$ diffeomorphism $g$ such that
$
\{g\circ f_i\circ g^{-1}:i\in\mathcal A\}
$
is linear.

This notion of linearity originates from our work on the Fourier decay problem \cite{algom2021decay,algom2023polynomial}. In general, linearity and self-similarity are distinct notions. In the real-analytic category, however, they coincide up to analytic conjugacy: a real-analytic IFS on $\mathbb R$ which is $C^2$-conjugate to linear is in fact real-analytically conjugate to a self-similar IFS. In finite smoothness the distinction is genuine. In \cite{algom2024linear}, together with Ben Ovadia and Shannon, we constructed linear IFSs which are not smoothly conjugate to self-similar ones, and in \cite{AlgomBenOvadiaRHWShannon2026} we constructed a $C^\infty$ example of this type admitting a stationary measure which is not even Rajchman. We also recall that \cite{algom2023polynomial} gives explicit sufficient conditions for a $C^2$ IFS not to be $C^2$-conjugate to linear; in particular, in that case every non-atomic self-conformal measure associated to it has positive Fourier dimension.

Finally, recall that a measure $\nu\in\mathcal P(\mathbb R)$ is called \emph{Ahlfors--David regular}, abbreviated as AD regular, if there exists $C\geq1$ and $\alpha>0$ such that
$$
C^{-1}r^\alpha\leq\nu(B(x,r))\leq Cr^\alpha
\qquad
(x\in\supp\nu,\ 0<r\leq\diam(\supp\nu)).
$$

Our first theorem is the following.

\begin{thm}\label{thm:convolution-intro}
Let $\mu,\nu\in\mathcal P(\mathbb R)$. Assume that $\mu$ is a self-conformal measure for a $C^2$ IFS which is not $C^2$-conjugate to linear. Assume in addition that one of the following holds:
\begin{enumerate}
\item[(A)] $\nu$ is a self-conformal measure for a $C^2$ IFS; or
\item[(B)] $\nu$ is AD regular.
\end{enumerate}
If
$
\dim \mu+\dim \nu>1
$
then
$
\mu*\nu\ll\Leb^1.
$
\end{thm}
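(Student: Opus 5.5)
Theorem \ref{thm:convolution-intro} should follow from the general criterion, Theorem \ref{thm:stopped-intro}, applied to the product measure $\mu\times\nu\in\mathcal P(\R^2)$ projected in the direction $e=\frac1{\sqrt2}(1,1)$. Up to a linear change of coordinates, this projection is $\mu*\nu$. The plan is to verify the criterion's hypotheses in three steps.

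\emph{Step 1: the dimension hypothesis.} Since $\dim(\mu\times\nu)\ge\dim\mu+\dim\nu>1$, the supercritical condition holds. I expect the criterion to need a quantitative version of this, namely an $L^2$ or correlation-type energy bound at small scales. For self-conformal measures, exact dimensionality gives local dimension bounds on a set of large measure. In case (B), AD regularity gives uniform bounds $\nu(B(x,r))\asymp r^\alpha$. So in both cases we obtain Frostman-type estimates for $\mu\times\nu$ on large-measure subsets, with exponent above $1$.

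\emph{Step 2: renormalization.} Iterating the IFS of $\mu$ to a stopping time, so that the cylinders $f_\omega(I)$ have length about $r$, writes $\mu$ as a convex combination of pushforwards $(f_\omega)_\#\mu$. Each of these is, up to $O(r)$ distortion in $C^2$, an affine image of $\mu$. The factor $\nu$ is handled similarly: by cylinders in case (A), and in case (B) by restricting to balls of radius comparable to $r$ and rescaling, where AD regularity makes the rescaled measures uniformly regular. After the affine renormalization $(x,y)\mapsto$ a common scale $r$, the projection of a piece in direction $e$ becomes a projection of $\mu\times\nu$ (or a tangent measure of it) in a new direction. That direction is determined by the ratio of the derivatives $f_\omega'(x)$ and the $\nu$-scale. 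The exact scaling covariance of the $L^1$ norms of the Littlewood--Paley pieces then converts the decay question at scale $2^{-n}$ for $\mu*\nu$ into an average over these renormalized directions at bounded scale.

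\emph{Step 3: the main obstacle.} The criterion presumably needs this distribution of directions, i.e.\ of the log-ratio $\log|f_\omega'(x)|-\log(\text{scale of }\nu)$, to be sufficiently spread, with no concentration on a lattice or a point. This is where non-$C^2$-conjugacy to linearity enters. By the renewal-theoretic results underlying \cite{algom2023polynomial}, that condition makes the derivative cocycle non-arithmetic in a quantitative sense, with spectral gap for the transfer operator with complex parameter. This yields equidistribution of $\log|f_\omega'|$ at stopping times, with a rate. The equidistribution must hold uniformly, regardless of how $\nu$'s scales are arranged: in case (A) $\nu$'s own IFS could be linear, and in case (B) it has no dynamics at all. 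I would handle this by conditioning on $\nu$'s piece first and using only the randomness from $\mu$'s cocycle, so that $\nu$ contributes only a fixed shift. The resulting averaged projection over a spread family of directions is then controlled by the Marstrand-type bound \eqref{eq:MM-ac} in its quantitative $L^2$ form. Feeding this into Theorem \ref{thm:stopped-intro} gives summable Littlewood--Paley pieces, and hence $\mu*\nu\ll\Leb^1$.
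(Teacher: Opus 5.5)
Your architecture matches the paper: product measure, $L^1$ covariance, renewal from non-linearity, and conditioning on $\nu$'s piece while using only the randomness of $\mu$'s cocycle. But two load-bearing steps fail as written.

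\emph{Step 1 uses too weak a dimension input.} Frostman bounds on a set $G$ of $\mu$-measure $1-\epsilon$, obtained from exact dimensionality, do not survive renormalization. The measure $\mu|_G$ is not self-conformal, and its pieces are $(f_w)_\#(\mu|_{f_w^{-1}G})$. For $y\in f_w^{-1}G$, membership in $G$ only gives $\mu(B(y,\rho))\lesssim(|f_w'|^{s}/p_w)\rho^{s}$, and this constant is unbounded along typical words since $s<\dim\mu$. If instead you discard bad sets inside the pieces at the scale dictated by $R$, you add an error of order $\epsilon$ to $\|\Delta_R(\mu*\nu)\|_{L^1}$ for every $R$. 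That destroys the power decay needed to sum the Littlewood--Paley pieces. An $L^2$/energy form of Marstrand does not rescue this either: it would require $\dim_2\mu+\dim_2\nu>1$, which is strictly stronger than the hypothesis. What is needed is a global, all-scales bound $S_{m,q}(\mu)\leq B\,2^{-m(q-1)S_\mu}$ for some $q>1$, with $S_\mu$ close to $\dim\mu$. The same is needed for $\nu$ (or the exponent $\alpha$ in case (B)), with $S_\mu+S_\nu>1$. This bound comes from $\lim_{q\downarrow1}D_q=\dim$ for self-conformal measures (Theorem~\ref{thm:conformal-uniform-qmass}, via submultiplicativity of $S_{n,q}$, Fekete's lemma and entropy dimension). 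It passes with uniform constants to every normalized piece, because these are bi-Lipschitz images of the \emph{whole} measure (Lemma~\ref{cor:bilip-qmass}). The angular gain is then an $L^1$--$L^2$ Riesz--Thorin interpolation at exponent $q$ near $1$ (Proposition~\ref{prop:angular-gain}).

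\emph{Step 2's claim that $(f_w)_\#\mu$ is, up to small error, an affine image of $\mu$ is false for a nonlinear IFS.} The normalized maps $N_w$ are only uniformly bi-Lipschitz. Their second derivative $N_w''$ does not become small as $|w|\to\infty$, since it is governed by the last letters of $w$. The $C^0$ linearization error $O(r)$ is the size of the cylinder itself. After rescaling, the frequency is $U\asymp Rr$, so Lemma~\ref{lem:uniform-displacement} costs $O(Rr)=O(U)$, which is useless. Consequently the renormalized pieces vary with $w$ jointly with the direction, whereas the angular gain applies only to a \emph{fixed} piece averaged over a spread law of directions. The missing idea is the two-scale factorization of Proposition~\ref{prop:convolution-finite-remainder}. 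Decompose $\mu$ along a genuine prefix stopping time $\beta_k$ at scale $e^{-2k}$, and write $f_{\omega|\beta_k}=f_{\omega|\tau_k}\circ c_{\xi,k}\circ f_\rho$. Here $\tau_k$ is the cocycle rule at scale $e^{-k}$, $c_{\xi,k}$ depends only on the tail $\sigma^{\tau_k}\omega$, and $\rho$ ranges over a finite set. Linearize only the outer map $f_{\omega|\tau_k}$, at the tail point, on $c_{\xi,k}(I)$, which has diameter $O(e^{-k})$. The error is then $O(e^{-3k})$ against frequency $R=Ue^{2k}$, i.e.\ $O(Ue^{-k})$ with $U\asymp e^{\delta k}$, and $c_{\xi,k}\circ f_\rho$ stays inside the normalized piece. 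After conditioning on the tail, the piece is fixed and the direction depends only on the overshoot $Y_k$ and a sign. That is exactly the variable controlled by the effective renewal theorem under the tail-conditional measures $P_{k,\xi}$ (Proposition~\ref{prop:convolution-effective-renewal}). Its limit has bounded density, and pushing it through the bi-Lipschitz map $t\mapsto[v(t)]$ gives $\kappa=1$, hence the threshold $S_\mu+S_\nu>1$.
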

The hypotheses of Theorem~\ref{thm:convolution-intro} are easy to verify in many concrete examples; this is particularly so in the real-analytic category. We give  explicit examples in Subsection~\ref{subsec:explicit-convolution}. The conclusion of the theorem also holds under every nonzero rescaling of either factor. Indeed, if $S_t(x)=tx$, then for every $t \neq 0$ the measure $(S_t)_{\#}\nu$ satisfies the same  hypotheses as $\nu$, and has the same dimension. Hence 
$
\mu*(S_t)_\#\nu \ll\Leb^1
\, \text{ for every }t\neq0.$
In particular,
$$
\Leb^1\bigl(\supp(\mu)+t\supp(\nu)\bigr)>0
\,\text{ for every }t\neq0.
$$

The corresponding  problem of establishing the dimension of the convolution or arithmetic sum of the supports is by now quite well understood. In the self-similar setting, Peres and Shmerkin \cite{Peres2009Shmerkin} proved the expected dimension formula under arithmetic non-resonance: if $\Phi,\Psi$ are self-similar IFSs and
$
\frac{\log |f_i'|}{\log |g_j'|}\notin\mathbb Q
$
for some $f_i\in\Phi$ and $g_j\in\Psi$, then, writing $K,K'$ for their attractors,
\begin{equation}\label{eq:dim}
\dim_H(K+tK')
=
\min\{1,\dim_H K+\dim_H K'\}
\, \text{ for every }t\neq0.
\end{equation}
Hochman and Shmerkin subsequently developed a broad measure-theoretic framework unifying much of this dimension theory \cite{hochman2009local}; in particular, their work resolved Furstenberg's  sumset conjecture, a version of \eqref{eq:dim} for $\times m$- and $\times n$-invariant sets. In the strongly separated self-conformal setting relevant to Theorem~\ref{thm:convolution-intro}(A), their results also give the measure-theoretic counterpart
\begin{equation}\label{eq:dim-measures}
\dim \bigl(\mu*(S_t)_{\#}\nu\bigr)
=
\min\{1,\dim \mu+\dim \nu\},
\, t\neq0.
\end{equation}
Finer information on the $L^q$ dimensions of such convolutions was later obtained by Shmerkin \cite{shmerkin2016furstenberg} as part of his resolution of Furstenberg's intersection conjecture; see also Wu \cite{wu2016proof}. More recently, B\'ar\'any, K\"aenm\"aki, Py\"or\"al\"a and Wu \cite{barany2023scaling} proved versions of \eqref{eq:dim-measures} under natural arithmetic assumptions without separation conditions, while Py\"or\"al\"a \cite{Pyorala2025Dissonance} obtained a particularly close dimension-theoretic counterpart of Theorem~\ref{thm:convolution-intro}: one factor may arise from a real-analytic self-conformal IFS which is not conjugate to linear, and the other may be either self-conformal or AD regular, again without separation assumptions.

An important predecessor to this  theory is the work of Moreira and Moreira--Yoccoz on arithmetic sums of dynamically defined Cantor sets, originating in the Palis program on homoclinic bifurcations. Moreira announced in \cite{Moreira1998sums}, and later proved in the more general setting of \cite{Moreira2023images}, that if $K,K'\subset\R$ are attractors of $C^2$ IFSs satisfying the strong separation condition, and one of the defining IFSs is not $C^2$-conjugate to linear, then \eqref{eq:dim} holds. The underlying methods grew out of the theory of stable intersections developed by Moreira and Yoccoz \cite{carlos2001stable}. In particular, they proved Palis' conjecture that, generically, when $\dim_H K+\dim_H K'>1$, suitable translations of such Cantor sets have stable intersection, or equivalently their arithmetic difference contains an interval. The conclusion of Theorem~\ref{thm:convolution-intro} is related but of a different nature: for every fixed pair satisfying its hypotheses, it gives positive Lebesgue measure of the corresponding arithmetic sumset for every nonzero scaling.

A particularly relevant work is due to Nazarov, Peres and Shmerkin \cite{nazarov2012peres}. Let $\mu_a$ and $\mu_b$ be the equal-weight Cantor--Lebesgue measures associated to the IFSs
$
\Phi_a=\{ax,1-a+ax\},
\,
\Phi_b=\{bx,1-b+bx\}.
$
Recall that the lower correlation, or $L^2$, dimension of a measure $\eta\in\mathcal P(\mathbb R)$ is defined by
\begin{equation}\label{eq:def-correlation-dim}
\dim_2\eta
:=
\sup\bigl\{s:I_s(\eta)<\infty\bigr\},\,
\text{ where }
I_s(\eta):=
\iint |x-y|^{-s}\,d\eta(x)\,d\eta(y).
\end{equation}
When $\log a/\log b\notin\mathbb Q$, Nazarov, Peres and Shmerkin proved that, for every $t\neq0$,
$
\dim_2\bigl(\mu_a*(S_t)_\#\mu_b\bigr)
=
\min\{1,\dim_2\mu_a+\dim_2\mu_b\}.
$

Crucially, \cite{nazarov2012peres} also shows the analogous statement for absolute continuity is \emph{false}. For $a=1/4$ and $b=1/3$, the convolution is supercritical and $\log a/\log b\notin\mathbb Q$, yet
$
\mu_{1/4}*(S_t)_\#\mu_{1/3}
$
is singular for a dense $G_\delta$ set of $t$. Hence every non-coordinate projection of $\mu_{1/4}\times\mu_{1/3}$ has full dimension, while a dense set of them are singular. This shows that arithmetic non-resonance alone cannot force absolute continuity, and highlights the essential role of the nonlinearity hypothesis in Theorem~\ref{thm:convolution-intro}.

More broadly, there is a substantial theory showing that  parametrized families of convolutions of dynamically defined measures are absolutely continuous in the super-critical regime for almost every parameter, typically after excluding a possibly nontrivial exceptional set. This includes the work of Peres--Solomyak \cite{Peres1998Solomyak} and the parameter-exclusion method of Peres--Schlag \cite{Peres200Schlag}; see also \cite{Shmerkin2016Solomyak,shmerkin2016furstenberg,barany2022typical}. Of particular relevance is the work of Shmerkin and Solomyak \cite{Shmerkin2016Solomyak}, who established absolute continuity, with quantitative regularity, for almost every parameter in broad  families of self-similar measures. A  striking application of this circle of ideas appears in spectral theory: Damanik, Gorodetski and Solomyak \cite{DamanikGorodetskiSolomyak2015} proved absolute continuity of the density of states measure for the square Fibonacci Hamiltonian for almost every pair of sufficiently small coupling constants, and explicitly asked whether the exceptional set can be removed. Relatedly, Gorodetski and Northrup proved positive Lebesgue measure for almost every member of certain families of  Cantor sums "near" the affine setting, and conjectured that this almost every restriction is only technical \cite{Gorodetski2018Northrup}. Theorem~\ref{thm:convolution-intro} addresses this same general issue from a different direction: it gives absolute continuity for a prescribed pair rather than for almost every parameter.

\subsection{Projections of self-conformal measures}
\label{subsec:planar-conformal-intro}
We next turn to projections of self-conformal measures on the plane. We identify
$\mathbb R^2$ with $\mathbb C$, and let $D=B(0,1)$ be the unit disc. Consider a finite IFS
$\Phi=\{f_i:i\in\mathcal A\}$
of injective complex-analytic contractions defined on a neighbourhood of $D$,
satisfying the uniform contraction assumptions analogous to
\eqref{eq:1d-uniform-contraction}. For a strictly positive probability vector
$p$ on $\mathcal A$, let $\mu$ be the corresponding  self-conformal
measure, as in \eqref{eq:self-conformal}, and let $K_\Phi$ be its attractor, as in
\eqref{eq:attractor}. We again call $K_\Phi$ a self-conformal set. If $\Phi$
consists only of similarities (affine conformal maps), we call $\Phi$ self-similar, $K_\Phi$ a
self-similar set, and $\mu$ a self-similar measure.

A fundamental problem is to understand the geometry of the line projections
$(\pi_e)_\#\mu$ in prescribed directions. At the level of  dimension,
this problem is by now quite well understood for broad classes of dynamically
defined measures. Building on the local-entropy methods of Hochman and Shmerkin
\cite{hochman2009local}, several results establishing \eqref{eq:MM-dimension}
in prescribed directions have been obtained in this setting, including work of
Falconer and Jin \cite{Falconer2014Jin,Falconer2015Jin} and Bruce and Jin
\cite{Bruce2019Jin}. In particular, under suitable non-linearity assumptions,
Bruce and Jin proved that every orthogonal projection satisfies
\eqref{eq:MM-dimension}. More recently, 
B\'ar\'any--K\"aenm\"aki--Py\"or\"al\"a--Wu \cite{barany2023scaling} and
Py\"or\"al\"a \cite{Pyorala2025Dissonance} have given
further  dimension preservation results for projections of self-conformal measures. Py\"or\"al\"a's result, in particular, shows that under the assumptions of Theorem \ref{thm:planar-conformal-intro} below, $\dim (\pi_e)_\#\mu=1$ for all $e\in \Sone$.  See also the
recent work of Algom and Shmerkin \cite{algom2024dimension} on prescribed
projections of self-similar measures, and Wu \cite{wu2025projection}, who
showed, in particular, that for a broad class of dynamically defined measures
there are at most countably many exceptional directions for
\eqref{eq:MM-dimension}.

There is some suggestive evidence for absolute continuity of $(\pi_e)_\#\mu$ when $\dim \mu >1$. A principle appearing in Shmerkin's work \cite{Shmkerin2014Abs} about measures on the line, is that convolution of a measure with power
Fourier decay with a measure of dimension one is absolutely continuous. In our
setting, assuming mild non-degeneracy and non-linearity,  the dimension theory above supplies $\dim (\pi_e)_\#\mu=1$,
while our previous work \cite{algom2024plane} gives power Fourier decay for the  self-conformal measure itself (and so uniformly for all of its projections). Taken together,
these facts  suggest an absolute-continuity counterpart could be possible. The following theorem
confirms this expectation, although its proof does not proceed through  a
convolution decomposition and does not use Shmerkin's criterion at all.

\begin{thm}
\label{thm:planar-conformal-intro}
Let $\mu$ be a self-conformal measure with respect to a $C^\omega (\mathbb{C})$ IFS $\Phi$. Suppose that:
\begin{enumerate}
\item[(i)] $\Phi$ is not $C^\omega$ conjugate to a self-similar IFS; and,
\item[(ii)] $K_\Phi$ is not contained in a real-analytic planar curve; and,
\item[(iii)] $\dim \mu>1$.
\end{enumerate}
Then
$(\pi_e)_\#\mu\ll\Leb^1$ for every $e\in\Sone.$
\end{thm}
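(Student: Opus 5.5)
The plan is to deduce Theorem~\ref{thm:planar-conformal-intro} from the general criterion, Theorem~\ref{thm:stopped-intro}. That criterion reduces absolute continuity of a prescribed projection to two inputs. The first is a renormalization scheme under which the $L^1$-norms of Littlewood--Paley pieces of the projected measure transform with exact scaling covariance. The second is a uniform, summable decay of these $L^1$-norms along the renormalized (``stopped'') family of measures. I will take the Littlewood--Paley pieces to be $\|\Delta_k (\pi_e)_\#\mu\|_{L^1}$ at frequency scale $2^k$. The goal is
\[
\sum_k \|\Delta_k(\pi_e)_\#\mu\|_{L^1}<\infty,
\]
which gives an $L^1$ density.

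\textbf{Step 1: renormalization.} Fix $e\in\Sone$ and a scale $r$. Decompose $\mu$ via a stopping time: $\mu=\sum_{|f_I'|\approx r} p_I (f_I)_\#\mu$. By complex-analyticity and bounded distortion (Koebe), each $f_I$ on $D$ is, up to an error of size $O(r^2)$, the similarity
\[
z\mapsto f_I(x_I)+f_I'(x_I)(z-x_I).
\]
Hence $\pi_e\circ f_I$ equals, up to affine rescaling by $|f_I'|$, the projection $\pi_{e_I}$ of the linearized map applied to $\mu$, with rotated direction $e_I=\overline{f_I'(x_I)}e/|f_I'(x_I)|$. Exact covariance of the $L^1$-norms of Littlewood--Paley pieces under affine maps then bounds frequency-$2^k$ pieces of $(\pi_e)_\#\mu$ by averages, over the random direction $e_I$, of frequency-$2^k r$ pieces of $(\pi_{e_I})_\#\mu$. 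The nonlinear correction contributes only at the negligible scale $r^2$, which is handled by choosing $r\approx 2^{-k/2}$ or similar.

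\textbf{Step 2: equidistribution of directions.} Hypothesis (i) says $\Phi$ is not conjugate to self-similar. By the non-concentration/renewal theory from the authors' earlier Fourier-decay work \cite{algom2024plane}, this implies that the distribution of the pair $(\log|f_I'|,\arg f_I')$ under the stopping time has a non-lattice, spectral-gap behaviour. Consequently the random directions $e_I$ equidistribute on $\Sone$ at a polynomial rate, uniformly in $e$. Averaged over directions, Marstrand-type $L^2$ energy estimates apply. Since $\dim\mu>1$, I will pick $s>1$ with $I_s(\mu)<\infty$, which is available on a large-measure piece since $\dim_2$ may be smaller than $\dim$. Then
\[
\int \|\Delta_j(\pi_{\theta})_\#\mu\|_{L^2}^2\,d\theta \lesssim 2^{-j(s-1)},
\]
and the $L^1$-norm is controlled by Cauchy--Schwarz. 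Hypothesis (ii) is used to guarantee that $\mu$ is not supported on a real-analytic curve, where all projections could degenerate. Together with (i), it yields the nondegeneracy that Theorem~\ref{thm:stopped-intro} requires, for instance that no single direction carries positive mass under the limiting direction distribution.

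\textbf{Step 3: summation.} Combining Steps 1 and 2 gives $\|\Delta_k(\pi_e)_\#\mu\|_{L^1}\lesssim 2^{-\epsilon k}$ uniformly in $e$, which is summable. This proves $(\pi_e)_\#\mu\ll\Leb^1$ for every $e$.

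The main obstacle is Step 2: obtaining quantitative equidistribution of the rotation part $\arg f_I'$ jointly with the stopping time, uniformly in $e$, from hypothesis (i) alone. I would first try to import the Dolgopyat-type spectral bounds for the complex transfer operator from \cite{algom2024plane}. A secondary difficulty is that $\dim\mu>1$ rather than $\dim_2\mu>1$; I would handle this by restricting to a subset where local dimension exceeds $s$, and showing the discarded mass contributes a singular-free error that tends to zero.
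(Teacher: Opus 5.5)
Your overall architecture matches the paper's: cylinder decomposition, linearization, exact $L^1$ covariance, directions rotated by $\arg f_I'$, the joint norm--angle renewal theorem of \cite{algom2024plane}, and Falconer-type angular averaging. However, Step~1 fails as written. For a cylinder with $|f_I'|\approx r$, Koebe distortion and Taylor's formula give only
\[
\bigl|f_I(z)-f_I(x_I)-f_I'(x_I)(z-x_I)\bigr|\lesssim |f_I'(x_I)|\,|z-x_I|^2=O(r)\qquad(z\in D).
\]
This is the same order as $\diam f_I(D)$. For a genuinely nonlinear system $f_I''/f_I'$ stays of order one, so the bound is sharp. Linearizing the whole cylinder therefore gains nothing, and you cannot renormalize back to $\mu$ itself.

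Your scale choice is also off, even granting $O(r^2)$. With $r\approx 2^{-k/2}$, the cost $2^kr^2$ in Lemma~\ref{lem:uniform-displacement} is of order one. Moreover, the renormalized frequency $U=2^kr\approx r^{-1}$ is far too large. The map $\theta\mapsto Q_U(\eta,\theta)$ is only known to be $U$-Lipschitz, and the renewal error is $r^{\epsilon}\|G\|_{C^8}$. After smoothing (Lemma~\ref{lem:H-smoothing}) this forces $U\le r^{-\delta}$ with $15\delta<\epsilon$.

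The paper resolves this with two scales:
\begin{itemize}
\item It stops at total scale $e^{-2k}$ with the genuine prefix stopping time $\beta_k$.
\item It factors $f_{\omega|\beta_k}=F_\omega\circ g_{\xi,k}\circ f_\rho$ with $|F_\omega'|\asymp|g_{\xi,k}'|\asymp e^{-k}$ (Lemma~\ref{lem:planar-finite-remainder}).
\item It linearizes only the outer map $F_\omega$, on the set $g_{\xi,k}(D)$ of diameter $e^{-k}$. The cost is $Re^{-3k}=Ue^{-k}$ with $U=Re^{-2k}\approx e^{\delta k}$.
\end{itemize}
The inner map is kept. So the renormalized measures are the nonlinear pieces $(N_{\xi,k}\circ f_\rho)_\#\mu$, not $\mu$, and every estimate must be uniform over this bi-Lipschitz family (Lemma~\ref{cor:bilip-qmass}). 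Note also that the renewal theorem concerns the first passage $\tau_k$ of $-\log|f'_{\omega|n}(x_{\sigma^n\omega})|$, which is not a stopping time. This is why the paper works with the tail-fibre conditional measures $P_{k,\xi}$ and compares $\tau_k$ with $\beta_k$.

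The second gap is in Step~2. The $L^2$/energy route needs $I_s(\mu)<\infty$ for some $s>1$, essentially $\dim_2\mu>1$, and hypothesis (iii) does not give this. Restricting to a large-measure set $A$ does not repair it. First, $\mu|_A$ is not stationary. Second, if you instead restrict the renormalized pieces, the discarded part contributes $O(\mu(A^c))$ to $\|\Delta_{2^k}(\pi_e)_\#\mu\|_{L^1}$ at every $k$, which is not summable. Since the decomposition depends on $k$, you also cannot split off a single small singular part.

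The missing idea is to work in $L^q$ with $q\downarrow1$:
\begin{itemize}
\item The paper proves $\lim_{q\downarrow1}D_q(\mu)=\dim\mu$ (Theorem~\ref{thm:conformal-uniform-qmass}), via submultiplicativity of $S_{n,q}$, Fekete's lemma and exact dimensionality.
\item This gives $S_{m,q}(\mu)\le B2^{-m(q-1)S}$ with $S>1$, a bound that survives bi-Lipschitz images.
\item It then interpolates (Riesz--Thorin) between the trivial $L^1$ bound and the Frostman-averaged $L^2$ bound (Proposition~\ref{prop:angular-gain}). This yields the angular gain $U^{-\frac{q-1}{q}(S-1)}$.
\end{itemize}
Passing from $L^2$ to $L^1$ by Cauchy--Schwarz, as you propose, cannot exploit $q$-moments with $q<2$, which is exactly what hypothesis (iii) provides.
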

Although assumption~(ii) follows from~(iii), we state it explicitly
to emphasize that our hypotheses match those of the renewal theorem
from our previous work \cite[Theorem~3.1]{algom2024plane}.
This result is recalled below as Theorem~\ref{thm:planar-angular-renewal}
and provides a key input to the proof of the present theorem.

The nonlinearity assumption is again essential to this Theorem. Rapaport
\cite{Rapaort2017exp} constructed a planar self-similar measure satisfying the
strong separation condition, with dense rotations and dimension greater than
one, whose projections are singular for a dense $G_\delta$ set of directions.
Thus, even though every projection has the expected dimension, absolute
continuity  can fail dramatically in the self-similar
setting.

The hypotheses of Theorem~\ref{thm:planar-conformal-intro} are not hard to verify in many situations. Under the strong separation
condition, the dimension of $\mu$ is given by the classical 
formula
$\dim\mu=\frac{H(p)}{\chi(\Phi,p)},$
where
$H(p):=-\sum_{i\in\mathcal A}p_i\log p_i$ is Shannon entropy, and
$\chi(\Phi,p):=
-\sum_{i\in\mathcal A}p_i\int\log |f_i'(z)|\,d\mu(z)$  is the Lyapunov exponent. 
Thus assumption~(iii) is equivalent in this case to
$H(p)>\chi(\Phi,p)$. This formula belongs to the classical thermodynamic
formalism for conformal repellers and cookie-cutter systems; see, for example,
\cite{Bedford1991cookie,Patzschke1997self}, and also the much more general
work of De-Jun Feng and Hu \cite{feng2009dimension}. We can estimate
$\chi(\Phi,p)$ in our setting by noting that if
$m_i\leq |f_i'(z)|\leq M_i$ on $D$, then
$-\sum_i p_i\log M_i
\leq \chi(\Phi,p)\leq
-\sum_i p_i\log m_i.$
In particular, the uniform bounds in \eqref{eq:1d-uniform-contraction} give
$-\log\rho_+\leq\chi(\Phi,p)\leq-\log\rho_-$.

More recently, Zhou Feng and Rapaport \cite{FengRapaport2026}, extending
Rapaport's one-dimensional result \cite{Rapaport2024analytic}, obtained the
same dimension assuming
only exponential separation, together with natural 
nondegeneracy assumptions. Recall that $\Phi$ is \emph{exponentially separated} if
\begin{equation} \label{eq:exp sep}
\exists c>0\, \text{ s.t. for infinitely many } n,\, 
\sup_{z\in D} |f_u-f_v|\geq c^n
\,
\text{for all distinct }u,v\in\mathcal A^n,
\end{equation}
where $f_u=f_{u_1}\circ\cdots\circ f_{u_n}$. This condition, originally introduced by Hochman \cite{hochman2014self} is substantially weaker than
the strong separation condition. Feng and Rapaport assume, in addition,
that the maps have no common fixed point, preserve no regular real-analytic
curve, and that $\Phi$ is not holomorphically conjugate to a homothetic IFS.
These  assumptions follow from (i)--(ii) above: a common fixed
point would make $K_\Phi$ a singleton, an invariant real-analytic curve
would contain $K_\Phi$, and a holomorphic conjugacy to a homothetic IFS
would in particular give a real-analytic conjugacy to a self-similar IFS.
Consequently, in their setting,
assumption~(iii) again reduces to the concrete inequality
$H(p)>\chi(\Phi,p)$. The geometric assumptions (i)--(ii) can likewise be
checked directly in many examples; see also
\cite{FengRapaport2026} for useful criteria excluding real-analytic curve
obstructions. We give explicit examples satisfying all the hypotheses in
Subsection~\ref{subsec:explicit-planar}.

There is also a substantial parameter-dependent theory of
absolute continuity of projections in the self-similar setting. Results of
Shmerkin--Solomyak \cite{Shmerkin2016Solomyak}, Shmerkin
\cite{shmerkin2016furstenberg}, and K\"aenm\"aki--Orponen
\cite{Antti2018orponen}, among others, give absolute continuity outside small
exceptional sets of directions or parameters. Rapaport
\cite{Rapaort2020proj} obtained all-direction absolute continuity outside an
exceptional set of system parameters. See also
\cite{barany2022typical,solomyak2023absolute} for broader parameter-dependent
absolute-continuity results for dynamically defined and planar self-similar
measures.

Let us now  compare Theorem~\ref{thm:planar-conformal-intro} with our recent
work \cite{ARHWSmoothProjections2026}. There, we studied the smoothness of
projections of self-similar measures in dimensions three and higher. The present
Theorem~\ref{thm:planar-conformal-intro} concerns nonlinear self-conformal
measures in the plane. The underlying mechanisms are also substantially
different. The main dynamical input in \cite{ARHWSmoothProjections2026} is an
$L^2$ spectral gap for the action of the rotational part of the IFS on the
relevant family of projection directions; in particular, the resulting
regularity may depend on the prescribed direction. Here, by contrast, the crucial input is
a $C^{1+\gamma}$ spectral gap for twisted transfer operators associated to the
derivative cocycle, yielding the effective equidistribution estimates used
below. Accordingly, Theorem~\ref{thm:planar-conformal-intro} is an
all-direction statement: its hypotheses imply absolute continuity in every
 direction. Thus, although both results ultimately yield absolute
continuity of projections, they differ substantially both in scope and in
their underlying mechanism. See more on this in Section \ref{subsec:main-technical-intro}.

Finally, we mention the very recent work of Jin and Sahlsten
\cite{JinSahlsten2026}. Although closer in scope to our previous work \cite{ARHWSmoothProjections2026}, it shares
a key proof idea with the present paper,  related to the
$L^1$  covariance  Lemma~\ref{lem:L1-scaling};
see Remark~\ref{rem:jin-sahlsten} for further discussion.

\subsection{Projections of self-affine measures}
\label{subsec:self-affine-intro}
Our third application concerns self-affine measures on the plane. Here the IFS
$\Phi$ consists of non-singular affine contractions
$f_i(x)=A_i x+b_i,
A_i\in\mathrm{GL}(2,\R), \|A_i\|<1,$
where $\|\cdot\|$ denotes the operator norm, and $b_i\in \mathbb{R}^2$. Given a strictly positive
probability vector $\mathbf{p}$, we let $\mu$ be the corresponding stationary
measure, as in \eqref{eq:self-conformal}. In this setting, $\Phi$ is called a
\emph{self-affine} IFS and $\mu$ a \emph{self-affine measure}. If all the
linear parts $A_i$ are similarities, then $\Phi$ is also self-similar; however,
the assumptions below will rule out this case entirely.

Write
$
\Gamma_+^T=\langle A_i^T:i\in\mathcal A\rangle_+
$
for the semigroup generated by the transposed matrices. We say that $\Phi$ is \emph{strongly irreducible} if no finite union of one-dimensional subspaces is invariant under $\Gamma_+^T$, and \emph{proximal} if $\Gamma_+^T$ contains an element with a simple dominant eigenvalue. We abbreviate these two assumptions by \emph{SIP}. Notice that proximality already rules out the self-similar case: if every $A_i$ were conformal, then every element of the generated semigroup would be conformal and hence would have eigenvalues of equal modulus.

Equip $\Pone$ with the angular metric
$d_{\Pone}([u],[v])
=
\min\{\angle(u,v),\pi-\angle(u,v)\}.$
Under SIP, the transpose-projective random walk has a unique stationary
measure $\nu_F$, called the \emph{Furstenberg measure}; see, for example,
\cite{BougerolLacroix1985}. Thus, analogously to \eqref{eq:self-conformal},
\begin{equation}\label{eq:furstenberg-stationary-intro}
\nu_F
=
\sum_{i\in\mathcal A}
p_i\,((A_i^T)^{\Pone})_\#\nu_F,
\end{equation}
where $(A_i^T)^{\Pone}([v])=[A_i^Tv]$. For a probability measure
$\sigma$ on $\Pone$, define its  Frostman (or $L^\infty$) dimension by
\begin{equation}\label{eq:frostman-dim-intro}
\dimFr\sigma
=
\sup\Big\{\kappa\geq0:
\exists C_\kappa<\infty\text{ such that }
\sigma(B(\theta,r))\leq C_\kappa r^\kappa
\text{ for all }\theta\in\Pone,\ 0<r\leq1
\Big\}.
\end{equation}
We use the notation $I_s(\mu)$ from \eqref{eq:def-correlation-dim}, now with
the Euclidean distance on $\R^2$.

There is again suggestive evidence for absolute continuity of the projections
in the setting of self-affine measures $\mu \in \mathcal{P}(\mathbb{R}^2)$ under the SIP condition. Li and Sahlsten \cite{Li2020Sahl} proved that, in dimension
two, irreducibility together with non-compactness of the projective action
implies power Fourier decay for self-affine measures; these hypotheses follow
from SIP in our setting. Consequently, $\mu$ has power Fourier decay, and so
does every projection $(\pi_e)_\#\mu$. The work of
B\'ar\'any--Hochman--Rapaport \cite{BaranyHochmanRapaport2019} about strongly separated IFS, and its
extension to exponentially separated IFS \eqref{eq:exp sep} by Hochman--Rapaport
\cite{HochmanRapaport2022}, provides us with dimension theory for
$\mu$. Very recently,
B\'ar\'any--K\"aenm\"aki--Kolossv\'ary
\cite{BaranyKaenmakiKolossvary2026} proved that, under exponential separation
and SIP,
$\dim (\pi_e)_\#\mu=\min\{1,\dim\mu\}$
for every $e\in\Sone.$
Thus, when $\dim \mu >1$, every projection simultaneously has full
dimension and power Fourier decay. In light of Shmerkin's convolution
principle \cite{Shmkerin2014Abs}, this gives  evidence for absolute
continuity. As in the self-conformal case, however, our proof uses a different mechanism to prove this. Our third main result is the following.

\begin{thm}
\label{thm:self-affine-intro}
Let $\mu\in \mathcal{P}(\mathbb{R}^2)$ be a self-affine measure with respect to an IFS $\Phi$ that is strongly irreducible and proximal. If, for some
$0<s<2$,
\begin{equation}\label{eq:self-affine-threshold-intro}
I_s(\mu)<\infty
\qquad\text{and}\qquad
s+\dimFr\nu_F>2,
\end{equation}
then
$(\pi_e)_\#\mu\ll\Leb^1$ for every $e\in\Sone.$
\end{thm}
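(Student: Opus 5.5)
We plan to prove absolute continuity of $(\pi_e)_\#\mu$ via a Littlewood--Paley decomposition: fix $e\in\Sone$, write $\eta=(\pi_e)_\#\mu$, and let $\Delta_k\eta$ be the dyadic Littlewood--Paley pieces at frequency $\sim 2^k$. It suffices to show that $\sum_k\|\Delta_k\eta\|_{L^1}<\infty$. Then $\eta$ is a norm-convergent sum of $L^1$ functions, hence absolutely continuous. The plan is to deduce this from the general criterion (Theorem~\ref{thm:stopped-intro}) by verifying its hypotheses in the self-affine setting.

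The first step is the renormalization. Using stationarity, iterate \eqref{eq:self-conformal} along a stopping time: for each word $u$, stop when $\|A_u^T e\|\approx 2^{-k}$. Then $\pi_e\circ f_u(x)=\langle A_u x,e\rangle+c_u=|A_u^Te|\,\pi_{e_u}(x)+c_u$, with $e_u=A_u^Te/|A_u^Te|$. This expresses $\eta$ as a convex combination of affine rescalings by factor about $2^{-k}$ of the projections $(\pi_{e_u})_\#\mu$. The exact $L^1$ scaling covariance of Littlewood--Paley pieces (Lemma~\ref{lem:L1-scaling}) transports the frequency-$2^k$ piece of a rescaled copy to a frequency-$O(1)$ piece of $(\pi_{e_u})_\#\mu$, with the same $L^1$ norm. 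We cannot just use the triangle inequality here, since that only gives $O(1)$ per scale. Instead we write $\Delta_k\eta$ as the sum of $\Delta_0$ of the rescaled translates and estimate its $L^1$ norm via the $L^2$ norm on the support, exploiting cancellation and overlaps between different pieces.

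The second step controls these overlaps. The relevant quantity is an $L^2$ energy of the form
\[
\sum_{u,v} p_u p_v \iint \phi\bigl(2^k(\pi_e f_u(x)-\pi_e f_v(y))\bigr)\,d\mu(x)\,d\mu(y),
\]
which is a frequency-localized correlation of $\mu\times\mu$ pushed to the line. By proximality and strong irreducibility, the stopped directions $e_u$ are distributed, up to exponentially small error, according to the Furstenberg measure $\nu_F$; this comes from the stationarity \eqref{eq:furstenberg-stationary-intro} together with the standard large-deviation and contraction estimates for the projective random walk. The Frostman bound $\nu_F(B(\theta,r))\le C r^{\kappa}$, with $\kappa<\dimFr\nu_F$ and $s+\kappa>2$, controls the probability that two projection directions nearly coincide. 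Meanwhile, $I_s(\mu)<\infty$ bounds the mass of pairs of points at distance at most $r$ by roughly $r^s$. We combine these through a Marstrand-type transversality count. For $x\neq y$, the set of directions $\theta$ with $|\pi_\theta(x-y)|\le 2^{-k}$ has measure about $\min\{1,2^{-k}/|x-y|\}^{\kappa}$ under $\nu_F$. Integrating against $I_s$ gives decay $2^{-k\epsilon}$ for the excess of the energy over the main term exactly when $s+\kappa>2$ (since $s>2-\kappa$ makes the relevant energy $\iint |x-y|^{-(2-\kappa)}$-type integral finite after the split). This yields $\|\Delta_k\eta\|_{L^1}\lesssim 2^{-\epsilon k}$, which is summable.

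The main obstacle is the second step: making the replacement of the stopped-direction distribution by $\nu_F$ effective, with a quantitative error uniform in $e$, and handling the fact that the rescaling factor $|A_u^Te|$ and the direction $e_u$ are correlated. We would first try to use the spectral gap of the projective transfer operator on Hölder functions under SIP (Guivarc'h--Le Page) to get exponential equidistribution of the stopped pairs (scale, direction). If the general criterion (Theorem~\ref{thm:stopped-intro}) already packages the Littlewood--Paley bookkeeping, the remaining work reduces to checking its equidistribution and energy hypotheses with these inputs.
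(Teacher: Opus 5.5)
The gap is in your first step, and it propagates through the rest of the plan.

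\textbf{Why your renormalization gives no gain.} By stopping at $|A_u^Te|\approx 2^{-k}$ when the frequency is $2^k$, you make the renormalized frequency $O(1)$. You correctly note that Minkowski then gives nothing, and you switch to an $L^2$ computation with cross terms. But the energy you write down does not see the decomposition at all. By the stopping identity $\sum_u p_u(f_u)_\#\mu=\mu$, the double sum $\sum_{u,v}p_up_v\iint\phi(2^k(\pi_ef_u(x)-\pi_ef_v(y)))\,d\mu(x)\,d\mu(y)$ is just $\iint\phi(2^k\pi_e(x-y))\,d\mu(x)\,d\mu(y)$. That is the $L^2$ energy of the projection in the single fixed direction $e$.

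No average over directions ever enters. At renormalized frequency $O(1)$ the directions $e_u$ are essentially invisible. The cross terms are governed by the projected positions of the cylinders $f_u(K)$, not by whether $e_u$ and $e_v$ nearly coincide. So the Frostman bound on $\nu_F$ has nothing to act on.

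What remains is the original problem in a stronger form ($L^2$ density in every fixed direction). Any $L^2$-based renormalization would also reintroduce the factor $|a|^{-1/2}$ of Remark~\ref{rem:L1-structural}, which the $L^1$ observable is designed to avoid. Your closing appeal to Theorem~\ref{thm:stopped-intro} does not rescue this: with your stopping scale $U=O(1)$, whereas the theorem needs $U\asymp e^{\delta k}\to\infty$.

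\textbf{The missing idea.} Keep the triangle inequality but stop \emph{earlier}. Take $k=\lfloor\log R/(1+\delta)\rfloor$ and the cut-set $\{w:-\log|A_w^Te|>k\geq-\log|A_{w^-}^Te|\}$. Minkowski and Lemma~\ref{lem:L1-scaling} then give, with no error, $Q_R(\mu,[e])\le\int H_{U,\mu}\,dP_{e,k}$. Here $U=Re^{-k}\asymp e^{\delta k}$ is large, and $P_{e,k}$ is the joint law of $([A_w^Te],\log|A_w^Te|+k)$. The average over directions is created by this step, not by expanding a square.

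The power gain then comes from averaging $Q_{Ue^z}(\mu,\theta)$ over these directions at the large frequency $U$, in three moves:
\begin{enumerate}
\item Smooth $H_{U,\mu}$, at a $C^r$ cost of $U^{2r-1}$. This is why $U$ may only be $e^{\delta k}$ with $(2r-1)\delta<\epsilon$.
\item Apply the Li--Sahlsten effective renewal theorem to replace $P_{e,k}$ by a law $\Lambda$ whose angular marginal is at most $(a_+/|\sigma_\lambda|)\nu_F$. This is a joint direction--overshoot statement with exponential error uniform in $e$; it does not follow from large deviations or the Guivarc'h--Le Page gap alone.
\item Apply Proposition~\ref{prop:angular-gain} with $q=2$ and $S=s$, which yields decay $U^{-(s+\kappa-2)/2}$.
\end{enumerate}

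\textbf{A second problem: your transversality count.} Your physical-space count cannot produce the exponent $2-\kappa$. Integrate the pointwise bound $\nu_F\{\theta:|\pi_\theta(x-y)|\le r\}\lesssim(r/|x-y|)^\kappa$ against $\mu\times\mu$ and divide by $r$. This gives $r^{\kappa-1}I_\kappa(\mu)$, which diverges for $\kappa<1$, even when $\mu$ is Lebesgue measure on a square.

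The threshold $s+\kappa>2$ arises on the Fourier side. The pushed-forward measure $\Gamma_U$ on the annulus $|\xi|\sim U$ satisfies $\Gamma_U(B(\xi,\rho))\lesssim U^{-\kappa}\rho^{1+\kappa}$. This is combined with $\|L_U*\mu\|_2^2\lesssim U^{2}S_{m,2}(\mu)\lesssim U^{2-s}$, and then interpolated against the trivial $L^1$ bound.
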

By Rapaport's example
\cite{Rapaort2017exp}, discussed after Theorem \ref{thm:planar-conformal-intro}, 
some genuine non-conformality hypothesis is essential to this Theorem.

There is an important difference between Theorem~\ref{thm:self-affine-intro}
and the preceding two applications. In the convolution and self-conformal
settings, the assumption $\dim\mu>1$ can be converted into  a
uniform $L^q$ concentration estimate required by the proof; this can be done by taking $q>1$ sufficiently close to
one, and using the fact that
\begin{equation}\label{eq:self-affine-lq-limit}
\lim_{q\downarrow1}D_q(\mu)=\dim\mu.
\end{equation}
This was originally noted by Shmerkin and Solomyak \cite{Shmerkin2016Solomyak}; 
see Theorem \ref{thm:conformal-uniform-qmass} for a full proof of a more refined statement. One of the ingredients underlying the argument is exact dimensionality of
$\mu$, a notion recalled below. Exact dimensionality of self-affine
measures was an open problem for a long time, but it is now known in the planar setting by
B\'ar\'any and K\"aenm\"aki \cite{BaranyKaenmaki2017}, and in  greater
generality by Feng \cite{Feng2023dim}. Exact dimensionality alone, however,
does not imply \eqref{eq:self-affine-lq-limit}, and we do not know whether
\eqref{eq:self-affine-lq-limit} holds in the present setting. We therefore
impose the stronger finite-energy hypothesis in
\eqref{eq:self-affine-threshold-intro}.
It is natural to ask whether \eqref{eq:self-affine-threshold-intro} can be replaced by the purely
dimensional condition
\begin{equation}\label{eq:self-affine-desired}
\dim\mu+\dimFr\nu_F>2.
\end{equation}
Indeed, if \eqref{eq:self-affine-lq-limit} were known, then our argument would
yield the conclusion of Theorem~\ref{thm:self-affine-intro} under
\eqref{eq:self-affine-desired}. We do not know whether this strengthening
holds in the required generality.

Theorem~\ref{thm:self-affine-intro} should also be viewed in the context of
the rapidly developing dimension theory of self-affine measures and their
projections. Besides
\cite{BaranyHochmanRapaport2019,BaranyKaenmakiKolossvary2026}, see also
\cite{FengXie2025,MorrisSert2025} for recent results on the dimensions and
geometry of such projections. A particularly relevant absolute-continuity
result is due to B\'ar\'any \cite{Barany2025Hausdorff}, who considers
dominated planar self-affine systems, that is, systems with a uniform
exponential separation between the two singular-value directions. Under an additional separation assumption, he shows that positivity of the Hausdorff measure of the attractor in its affinity dimension is equivalent to the projections of the K\"aenm\"aki measure having uniformly bounded densities in every direction in the support of the corresponding Furstenberg measure.

Finally, the hypotheses of Theorem~\ref{thm:self-affine-intro} can be
verified concretely. SIP can often be checked directly from the matrices,
while the Frostman exponent of $\nu_F$ may be estimated from the associated
projective IFS; see also the recent work of Rush
\cite{Rush2026Frostman} for a general formula for the uniform Frostman
dimension in the $\mathrm{SL}(2,\R)$ SIP setting. Finite energy can likewise
be obtained from direct Frostman estimates, for example under strong
separation together with quantitative lower bounds on the least singular
values of cylinder maps. In Subsection~\ref{subsec:explicit-self-affine} we
give a completely explicit example in which all these quantities are
estimated directly and the planar measure is singular, while every one of
its line projections is absolutely continuous.
\subsection{The main technical theorem}\label{subsec:main-technical-intro}

We now formulate our main technical Theorem, that implies the three applications as special cases. We begin
by fixing the Littlewood--Paley decomposition that will be used throughout the
paper. Choose an even function
$\varphi\in C_c^\infty(\R),
\,
0\leq\varphi\leq1,$
such that
$\varphi(\xi)=1$ for $|\xi|\leq1,$ and
$\varphi(\xi)=0$ for $|\xi|\geq2,$
and set
$\psi(\xi):=\varphi(\xi)-\varphi(2\xi).$
Then $\psi$ is even and
$\supp\psi\subset\{\xi:1/2\leq|\xi|\leq2\}.$
Moreover,
$\sum_{j\in\mathbb Z}\psi(2^{-j}\xi)=1$ for
$\xi \neq 0$. We will make use of the following inhomogeneous form of this identity, which we call a smooth dyadic partition of unity in frequency
space. Namely,
\begin{equation}\label{eq:LP-partition-inhom-intro}
\varphi(\xi)+\sum_{j=1}^{\infty}\psi(2^{-j}\xi)=1,\, \xi\in\R.
\end{equation}
We refer to \cite{Grafakos2014Classical,Grafakos2009Modern} for
standard background on Littlewood--Paley decompositions and the associated
function spaces.

For $\rho \in \mathcal{P}(\mathbb{R})$  and $R>0$, define its Littlewood--Paley
piece $\Delta_R\rho$ by
\begin{equation}\label{eq:LP-def-intro}
\widehat{\Delta_R\rho}(\xi)
=
\psi(\xi/R)\widehat\rho(\xi),
\qquad
\widehat\rho(\xi)
=
\int e^{-2\pi i\xi x}\,d\rho(x).
\end{equation}
We also write $P_{\leq1}\rho$ for the low-frequency piece defined by
$\widehat{P_{\leq1}\rho}(\xi)=\varphi(\xi)\widehat\rho(\xi).$
Thus \eqref{eq:LP-partition-inhom-intro} gives, in the sense of tempered
distributions,
\begin{equation}\label{eq:LP-decomposition-intro}
\rho
=
P_{\leq1}\rho+\sum_{j=1}^{\infty}\Delta_{2^j}\rho.
\end{equation}
For projections, this decomposition has a particularly simple geometric
interpretation. If $\eta\in\mathcal P(\R^2)$ and $e\in\Sone$, then
$\widehat{(\pi_e)_\#\eta}(\xi)
=
\widehat\eta(\xi e).$
Hence the Littlewood-Paley decomposition of $(\pi_e)_\#\eta$ is obtained by
restricting $\widehat\eta$ to 
$\{\xi e:\xi\in\R\}$ and decomposing this line into dyadic annuli. In
particular, $\Delta_{2^j}((\pi_e)_\#\eta)$ corresponds to frequencies
$2^{j-1}\leq|\xi|\leq2^{j+1}$
on this line.

An elementary observation underlying our approach is the well known fact that if $\sum_{j=1}^{\infty} \|\Delta_{2^j}\rho\|_{L^1(\R)} <\infty$  then $\rho$ is absolutely continuous. More quantitatively, if for some $\gamma>0$ we have
$\|\Delta_R\rho\|_{L^1(\R)}
\lesssim R^{-\gamma}$ for
$R\geq2$, then the density belongs to the inhomogeneous Besov space
$B^t_{1,1}(\R)$ for every $0<t<\gamma$; see, e.g., \cite{Mattila2015new}.

This motivates the main observable used in our argument. For a compactly
supported probability measure $\eta$ on $\R^2$ and a projective direction
$\theta=[e]\in\Pone$, put
\begin{equation}\label{eq:Q-def-intro}
Q_R(\eta,\theta)
:=
\|\Delta_R((\pi_e)_\#\eta)\|_{L^1(\R)}.
\end{equation}
Since $\psi$ is even, this is unchanged when $e$ is replaced by $-e$, and
hence $Q_R$ is well defined on $\Pone$.
The key feature of this observable
is its exact covariance under one-dimensional affine renormalization,
explained below. In particular, affine rescaling of the measure changes only the frequency
parameter $R$, without introducing any multiplicative loss in the $L^1$
norm, a feature specific to $L^1$.

Finally, for $m\in\mathbb N$ let
\begin{equation}\label{eq:dyadic-squares-intro}
\mathcal D_m
:=
\left\{
2^{-m}\bigl(k+[0,1)^2\bigr):k\in\mathbb Z^2
\right\}
\end{equation}
be the collection of half-open dyadic squares of side length $2^{-m}$ in
$\R^2$. For $1<q\leq2$, define the dyadic $q$-moment sum of a probability
measure $\eta$ by
\begin{equation}\label{eq:qmass-intro}
S_{m,q}(\eta)
:=
\sum_{Q\in\mathcal D_m}\eta(Q)^q.
\end{equation}

The following theorem is our main technical result.
\begin{thm}\label{thm:stopped-intro}
Let $\nu \in \mathcal{P}(\mathbb{R}^2)$ be  compactly supported, and let $E\subset\Pone$.  Assume there is a non-degenerate compact interval $J\subset\R$,  constants $1<q\leq2$, $0<\kappa\leq 1$, $S>0$ and
\[
 B,C,\epsilon,c,\delta>0,\qquad N_0,M,r\in\N,
\]
with
\begin{equation}\label{eq:smoothing-renewal-balance-intro}
 (2r-1)\delta<\epsilon,
\end{equation}
such that the following holds.

For every $e\in E$ and every sufficiently large $R$ (uniformly in $e$), one can choose
$k\asymp\log R,
 \, U\asymp e^{\delta k}$ with implicit constants uniform in $e,R$,
a probability space $(\Xi,m)$, an integer $1\leq N\leq N_0$, and measurable probability kernels
\[
 \xi\mapsto\eta_{\xi,j}\in\mathcal P(B(0,M)),
 \qquad
 \xi\mapsto P_{\xi,j},\Lambda_{\xi,j}\in\mathcal P(\Pone\times J),
 \qquad 1\leq j\leq N,
\]
such that:

\smallskip
\noindent\textup{(A) Uniform  $q$-mass.}  For every $n\geq1$, for $m$-almost every $\xi$ and every $j$:
\begin{equation}\label{eq:A-intro}
 S_{n,q}(\eta_{\xi,j})\leq B\,2^{-n(q-1)S}.
\end{equation}

\smallskip
\noindent\textup{(B) Renormalization.}
\begin{equation}\label{eq:B-intro}
 Q_R(\nu,e)
 \leq C\int_\Xi\sum_{j=1}^N
 \int_{\Pone\times J}
 Q_{Ue^z}(\eta_{\xi,j},\theta)\,dP_{\xi,j}(\theta,z)\,dm(\xi)
 +Ce^{-ck}.
\end{equation}

\smallskip
\noindent\textup{(C) Effective  angular equidistribution.}  There is a full $m$-measure set $\Xi_0\subset\Xi$ such that for every $\xi\in\Xi_0$, every $j$, and every nonnegative $G\in C^r(\Pone\times J)$,
\begin{equation}\label{eq:C-intro}
 \int G\,dP_{\xi,j}
 \leq C\int G\,d\Lambda_{\xi,j}
 +Ce^{-\epsilon k}\|G\|_{C^r}.
\end{equation}
Furthermore, uniformly in $\xi,j$,

\begin{equation}\label{eq:C-frostman-intro}
 (\operatorname{proj}_{\Pone})_\#\Lambda_{\xi,j}(B(\theta,\rho))\leq C\rho^\kappa
 \qquad\theta\in\Pone,\ 0<\rho\leq1
\end{equation}
\smallskip

If
$S+\kappa>2,$
then there are  $C_*,\gamma>0$ such that
$Q_R(\nu,e)\leq C_*R^{-\gamma}$ for every
$e\in E,\ R\geq2$. In particular,
 $(\pi_e)_\#\nu\ll\Leb^1$ for every $e\in E$.
\end{thm}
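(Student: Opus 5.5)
The plan is to bound the right-hand side of \eqref{eq:B-intro} directly, \emph{without} iterating the renormalization. I would first prove an averaged $L^2$ estimate for a single measure $\eta$ satisfying \eqref{eq:A-intro}, taken against the Frostman kernel $\Lambda_{\xi,j}$. I would then transfer this estimate to $P_{\xi,j}$ through \eqref{eq:C-intro}, after replacing the non-smooth observable $Q$ by a smooth $L^2$ majorant. Throughout, $U\asymp e^{\delta k}$ and $k\asymp\log R$, so every gain of the form $U^{-a}$ or $e^{-ak}$ (with $a>0$) is a power $R^{-\gamma}$. The gain $U^{-a}$ will come out as $U^{-(S+\kappa-2)/2}$.

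\emph{Step 1 (from $q$-mass to $L^2$ dimension).} For $Q\in\mathcal D_n$ we have $\eta(Q)^{q}\le S_{n,q}(\eta)$, so \eqref{eq:A-intro} gives $\max_Q\eta(Q)\lesssim_B 2^{-nS}$. Hence
\[
\sum_{Q\in\mathcal D_n}\eta(Q)^2\le \max_Q\eta(Q)^{2-q}\,S_{n,q}(\eta)\lesssim 2^{-nS}.
\]
Together with the support bound $\eta\in\mathcal P(B(0,M))$, this yields
\[
\int_{|x|\sim T}|\widehat\eta(x)|^2\,dx\lesssim T^{2-S},
\]
uniformly in $\eta=\eta_{\xi,j}$. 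The proof is the standard one: dominate the annulus by a Schwartz bump at scale $T$, pass to physical space, and sum $\eta\times\eta$ over neighbouring dyadic squares of side $T^{-1}$.

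\emph{Step 2 (averaged estimate against $\Lambda$).} Let $\lambda$ be the pushforward of $d\xi\times(\operatorname{proj}_{\Pone})_\#\Lambda$ restricted to $|\xi|\sim Ue^z$, with $z\in J$ bounded. By \eqref{eq:C-frostman-intro}, this polar measure satisfies $\lambda(B(x,1))\lesssim U^{-\kappa}$ for $|x|\sim U$: a unit ball at distance $U$ subtends an angle of order $U^{-1}$ and has radial length of order $1$. Taking $\widehat\eta$ locally constant at scale $1$ (which is valid because $\supp\eta\subset B(0,M)$), I get
\[
\int\!\!\int_{|\xi|\sim Ue^z}|\widehat\eta(\xi\theta)|^2\,d\xi\,d\Lambda(\theta,z)\lesssim U^{-\kappa}\,U^{2-S}.
\]
Since $(\pi_\theta)_\#\eta$ is supported in $[-M,M]$ and $\Delta$ has a Schwartz kernel, I have $Q_{Ue^z}(\eta,\theta)\le C_M\|w\,\Delta_{Ue^z}(\pi_\theta)_\#\eta\|_{L^2}+O(U^{-10})$, where $w$ is a fixed smooth weight. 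By Cauchy--Schwarz, $\int Q\,d\Lambda\lesssim U^{-(S+\kappa-2)/2}$.

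\emph{Step 3 (transfer to $P$; the main obstacle).} Estimate \eqref{eq:C-intro} requires a $C^r$ test function, but $Q$ is an $L^1$ norm and is not smooth in $(\theta,z)$. I would therefore apply \eqref{eq:C-intro} to
\[
G(\theta,z):=\|w\,\Delta_{Ue^z}(\pi_\theta)_\#\eta\|_{L^2}^2=\int |\widehat\eta(\xi\theta)|^2\psi(\xi/Ue^z)^2\,d\xi
\]
(with $w$ folded in as a convolution in frequency). This $G$ is smooth. The point is that $\|G\|_{C^r}\lesssim U^{2r-1}$, up to constants depending on $M$ and $J$. Each $\theta$-derivative costs a factor $U$, since $\eta$ has support of size $M$ and the frequencies are of order $U$. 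The $z$-derivatives are harmless. The bookkeeping is to check that the base size of $G$ and the derivative losses combine to at most $U^{2r-1}$; I expect this verification to be the main technical point, and if it fails I would mollify in $\theta$ at scale $U^{-1}$ first. Granting the bound, $\int G^{1/2}dP\le(\int G\,dP)^{1/2}$ and \eqref{eq:C-intro} give
\[
\int Q\,dP_{\xi,j}\lesssim \Big(U^{2-S-\kappa}+e^{-\epsilon k}U^{2r-1}\Big)^{1/2}+U^{-10}.
\]
By \eqref{eq:smoothing-renewal-balance-intro}, the second term inside the bracket is $e^{-(\epsilon-(2r-1)\delta)k}$, an exponential gain.

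\emph{Step 4 (conclusion).} Insert the bound from Step 3 into \eqref{eq:B-intro} and sum over $j\le N\le N_0$ and $\xi\in\Xi_0$. This gives
\[
Q_R(\nu,e)\lesssim e^{-\delta k(S+\kappa-2)/2}+e^{-(\epsilon-(2r-1)\delta)k/2}+e^{-ck}\lesssim R^{-\gamma},
\]
uniformly in $e\in E$, for $R$ large. For bounded $R\ge2$ the estimate is trivial, since $Q_R\le\|\check\psi\|_{L^1}$. Summing over $R=2^j$ gives $\sum_j\|\Delta_{2^j}(\pi_e)_\#\nu\|_{L^1}<\infty$. By the remark following \eqref{eq:LP-decomposition-intro}, $(\pi_e)_\#\nu\ll\Leb^1$.
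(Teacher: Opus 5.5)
Your Step~1 contains an arithmetic error, and everything after it depends on it. From $\eta(Q)^q\le S_{n,q}(\eta)\le B2^{-n(q-1)S}$ you only get $\max_Q\eta(Q)\le B^{1/q}2^{-n(q-1)S/q}$, not $2^{-nS}$. Your H\"older step then yields only $S_{n,2}(\eta)\lesssim 2^{-2n(q-1)S/q}$. This cannot be repaired. $D_q$ is non-increasing in $q$, strictly so for example for Bernoulli measures on dyadic squares with unequal weights. So when $q<2$, hypothesis~(A) does not imply $S_{n,2}(\eta)\lesssim 2^{-nS}$, nor $\int_{|x|\sim T}|\widehat\eta|^2\lesssim T^{2-S}$. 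With the $L^2$ information actually available, your Steps~2--4 prove the conclusion only under $\frac{2(q-1)}{q}S+\kappa>2$. This agrees with $S+\kappa>2$ only at $q=2$. So your argument covers $q=2$ (the self-affine application, where (A) comes from an energy bound). It does not cover the convolution and planar self-conformal applications, where $q$ is close to $1$ and $S$ only slightly above $1$; there $\frac{2(q-1)}{q}S$ is near $0$ and the $L^2$ route gives nothing.

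The missing idea is to keep the $q$-moment information in $L^q$ and get the angular gain by interpolation. The paper writes $\Delta_{Ue^z}((\pi_\theta)_\#\eta)=K_{Ue^z}*\pi_\theta F_U$ with $F_U=L_U*\eta$, where $\widehat{L_U}=\vartheta(\cdot/U)$ equals $1$ on the relevant annulus. For the linear map $F\mapsto K_{Ue^z}*\pi_\theta F$ into functions on $(\Lambda\times dx)$, it proves two bounds:
\begin{itemize}
\item the trivial $L^1$ bound;
\item an $L^2$ bound with gain $U^{-\kappa/2}$. This is essentially your Step~2, but as an operator estimate for compactly supported $F\in L^2(\R^2)$ rather than for $\eta$.
\end{itemize}
Riesz--Thorin then gives $\int\|K_{Ue^z}*\pi_\theta F\|_{L^q}^q\,d\Lambda\lesssim U^{-\kappa(q-1)}\|F\|_{L^q}^q$. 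Lemma~\ref{lem:qmass-annular} gives $\|F_U\|_{L^q}^q\lesssim U^{2(q-1)}S_{m,q}(\eta)$ with $2^m\le U<2^{m+1}$. Combining these and applying H\"older on a fixed interval and in $\Lambda$ yields Proposition~\ref{prop:angular-gain}, with gain $U^{-\beta}$ where $\beta=\frac{q-1}{q}(S+\kappa-2)$.

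Your Step~3 must change as well. For an $L^2$ majorant $G$, the main term $\int G\,d\Lambda$ is again an $L^2$ quantity. The paper instead mollifies $H_{U,\eta}$ itself, which is bounded and $O(U)$-Lipschitz in $(\theta,z)$. Mollifying at scale $U^{-2}$ gives sup-error $U^{-1}$ and $C^r$ norm $O(U^{2r-1})$ (Lemma~\ref{lem:H-smoothing}). This is exactly the loss that $(2r-1)\delta<\epsilon$ absorbs. Separately, your $G$ is only a priori bounded by $O(U)$, so the natural estimate is $\|G\|_{C^r}\lesssim U^{r+1}$. That is at most $U^{2r-1}$ only when $r\geq 2$.
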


Let us briefly explain the meaning of the three assumptions. The measures
$\eta_{\xi,j}$ should be thought of as normalized pieces, or blow-ups, arising
from a decomposition of the original measure $\nu$. Condition~\textup{(A)}
says that these pieces retain, uniformly, the $q$-mass estimates available
for the original measure. Condition~\textup{(B)} is  renormalization in the following sense:
after passing to the appropriate blow-up, the high-frequency quantity
$Q_R(\nu,\theta_0)$ can be reduced to the adjusted-frequency quantities
$Q_{Ue^z}(\eta_{\xi,j},\theta)$. Crucially, this renormalization costs only
the exponentially small error $\lesssim e^{-ck}$. Finally,
condition~\textup{(C)} asserts that the projective laws governing the
renormalized pieces equidistribute, uniformly and at an exponential rate,
towards limiting angular laws whose marginals satisfy a uniform Frostman
bound of exponent $\kappa$. In the important case $\kappa=1$, these limiting
angular laws have uniformly bounded densities. These hypotheses may at
first appear rather demanding, but all three of our main applications fit
naturally into this framework.

Let us indicate how condition~\textup{(A)} is obtained in our three
applications. For the convolution and planar self-conformal theorems, the
relation \eqref{eq:self-affine-lq-limit} will allow us to choose $q>1$
sufficiently close to one and convert the super-critical Hausdorff-dimension
hypothesis into the uniform $q$-mass estimate in \eqref{eq:A-intro}. In the
convolution setting the resulting exponent $S$ is the sum of the
contributions from the two factors. For the self-affine application we instead use the
finite-energy assumption in \eqref{eq:self-affine-threshold-intro}.
Standard dyadic energy estimates, recorded in
Lemma~\ref{lem:energy-qmass}, then give \eqref{eq:A-intro} with $q=2$ and
$S=s$. This is the precise reason for the finite-energy hypothesis in
Theorem~\ref{thm:self-affine-intro}.

Condition~\textup{(B)} is obtained by combining a decomposition of the underlying measure with respect to a carefully chosen random variable, together with linearization, and renormalization. In the convolution and planar self-conformal applications, this random variable is related to a random walk driven by the derivative cocycle. It is not, strictly speaking, a stopping time, but it plays the role of one, and can be uniformly compared with a genuine stopping time. Variants of this construction have played a central role in our previous work on Fourier decay
\cite{algom2020decay,algom2021decay,algom2023polynomial,algom2024plane}.
The precise choice of this random variable is important. On the one hand, its distribution gives the effective equidistribution results underlying condition~\textup{(C)}. On the other hand, its uniform comparison with a genuine stopping time allows us to decompose the measure at the corresponding geometric scale and to linearize the relevant cylinder maps uniformly. This interplay between the random-walk description and the genuine stopping-time structure is a subtle but essential feature of the argument. After linearization, the resulting normalized pieces are precisely the measures $\eta_{\xi,j}$ appearing in Theorem~\ref{thm:stopped-intro}, and the exact covariance of $Q_R$ under affine rescaling yields the renormalization estimate~\eqref{eq:B-intro}, up to an exponentially small error.
In the self-affine application the situation is considerably simpler. We use the matrix stopping time underlying the renewal theorem of Li--Sahlsten
\cite{Li2020Sahl}. Since the cylinder maps are already affine, no linearization is required; the affine renormalization is exact, and condition~\textup{(B)} follows directly from the covariance of $Q_R$.

As for Condition~\textup{(C)}, under the nonlinearity assumptions in the first two applications, and under the SIP assumption in the self-affine setting, the laws $P_{\xi,j}$ converge at an exponential rate towards limiting laws $\Lambda_{\xi,j}$ which can be identified explicitly. In the nonlinear applications, this effective convergence is one of the central mechanisms behind our Fourier-decay results
\cite{algom2020decay,algom2023polynomial,algom2024plane}; in the form needed here, it follows from a $C^{1+\gamma}$ spectral gap for twisted transfer operators associated to the derivative cocycle. In both of these applications, the angular marginals
$
(\operatorname{proj}_{\Pone})_\#\Lambda_{\xi,j}
$
have uniformly bounded densities, so that Condition~\textup{(C)} holds with $\kappa=1$. In the self-affine setting, the corresponding effective convergence follows from the quantitative renewal theory for products of random matrices developed by Li and Li--Sahlsten
\cite{Li2018decay,li2018fourier,Li2020Sahl}; the angular marginals of $\Lambda_{\xi,j}$ are governed by the Furstenberg measure, yielding Condition~\textup{(C)} for every $\kappa<\dim_{\mathrm{Fr}}\nu_F$.

This should be contrasted with our recent work on smooth
projections of self-similar measures
\cite{ARHWSmoothProjections2026}, where the  input is instead an
$L^2$ spectral gap for the rotational action on the space of projection
directions.

The crucial improvement over our previous Fourier-decay arguments is the
exact covariance of the observable $Q_R$ under affine renormalization. Since
the outer norm in \eqref{eq:Q-def-intro} is  $L^1$, rescaling changes
only the frequency parameter and introduces no additional multiplicative
loss. This leaves only the  linearization and equidistribution errors
to control, and ultimately gives enough decay in $R$ to make the
Littlewood--Paley decomposition \eqref{eq:LP-decomposition-intro} summable.
This feature is specific to $L^1$: replacing the outer norm by $L^p$ would introduce the factor
$|a|^{1/p-1}$ under a rescaling $x\mapsto ax+b$. This additional scaling
loss would have to be absorbed at every renormalization step and makes the
argument substantially more delicate.

We conclude with a brief indication of the proof of
Theorem~\ref{thm:stopped-intro}. Once conditions~\textup{(A)}--\textup{(C)}
have been established, the essential analytic step is an angular averaging
gain. This is a finite-frequency $L^q$ analogue of the classical
Fourier-analytic projection estimates underlying the exceptional-set theory
of Falconer \cite{Falconer1982Projections}; see also
\cite[Chapter~9]{mattila1999geometry}. Combining the uniform $q$-mass
estimate in~\eqref{eq:A-intro} with the $\kappa$-Frostman bound
in~\eqref{eq:C-frostman-intro} gives
\begin{equation}\label{eq:angular-gain-intro}
\int_{\Pone\times J}
Q_{Ue^z}(\eta_{\xi,j},\theta)\,
d\Lambda_{\xi,j}(\theta,z)
\lesssim U^{-\beta},
\qquad
\beta=\frac{q-1}{q}(S+\kappa-2).
\end{equation}
Thus the threshold $S+\kappa>2$ produces a genuine power gain after angular
averaging. Some smoothing and interpolation are needed in order to pass from
the effective equidistribution statement~\eqref{eq:C-intro} to
\eqref{eq:angular-gain-intro}; these are carried out in
Section~\ref{sec:proof-main-technical}. The exponential rate in
condition~\textup{(C)}, together with
\eqref{eq:smoothing-renewal-balance-intro}, absorbs the resulting smoothing
loss. Substituting \eqref{eq:angular-gain-intro} into the renormalization
estimate~\eqref{eq:B-intro} then yields power decay of $Q_R$, and the
Littlewood--Paley summability discussed above completes the proof.

\begin{remark}
\label{rem:jin-sahlsten}
Upon completion of this work, we learned of independent work by
Jin and Sahlsten \cite{JinSahlsten2026}. Although their results
are closer in scope to our previous work on smooth projections
of self-similar measures in higher dimensions
\cite{ARHWSmoothProjections2026}, there is an interesting
methodological connection with the present paper.
They prove that, in dimensions $d\geq 3$, equicontractive
self-similar measures with finite $t$-energy for some $t>k$
have absolutely continuous projections onto every
$k$-dimensional subspace, provided the defining rotations
generate a dense subgroup of $\mathrm{SO}(d)$.
They also obtain quantitative estimates for the projected
densities and establish dimension conservation under
additional assumptions.
Importantly, their result requires no spectral gap for
the underlying rotations.

The methodological connection concerns the use of
$L^1$ quantities whose norms are invariant under
appropriate affine rescaling.
Jin and Sahlsten work with certain smoothened increments
of martingale differences. A key observation in their
argument is that, due to self-similarity, the $L^1$
norms of these increments are preserved under affine
rescaling, after the corresponding adjustment of scale;
see \cite[Section~4.2, equation~(4.7)]{JinSahlsten2026}.
This is essentially the same scaling mechanism underlying
our Lemma~\ref{lem:L1-scaling}, although it is implemented
using different quantities and in a different setting.
The two papers exploit this mechanism in different ways
and to different ends (as we've already explained).  Jin and Sahlsten combine their argument with
Varj\'u's work on random walks on compact groups
\cite{Varju2013random}, whereas we combine
this  $L^1$ covariance with effective
renewal theorems and equidistribution results.

\end{remark}

\subsection{Organization}

In Section~\ref{sec:preliminaries} we establish some preliminary
results concerning our Littlewood--Paley operators, and dyadic
$q$-moment estimates. Section~\ref{sec:proof-main-technical}
is devoted to the proof of Theorem~\ref{thm:stopped-intro},
whose main ingredient is a Falconer-type angular averaging estimate, Proposition \ref{prop:angular-gain}.
In Section~\ref{sec:conformal-qmass} we establish uniform
$q$-mass estimates for cylinders of self-conformal measures. 
The proofs of our three main applications are given in
Sections~\ref{sec:convolution-proof},
\ref{sec:planar-conformal-proof}, and
\ref{sec:self-affine-proof}, respectively.
We conclude in Section~\ref{sec:explicit-examples}
with explicit examples illustrating these results.

The proofs of the three applications of Theorem \ref{thm:stopped-intro} follow a common strategy.
In each case, we first construct a suitable decomposition of
the underlying measure, together with the normalized pieces
and probability kernels appearing in
Theorem~\ref{thm:stopped-intro}.
We then verify its three hypotheses, typically beginning
with the renormalization estimate~\textup{(B)}, followed
by the uniform $q$-mass bound~\textup{(A)} and the effective
angular equidistribution estimate~\textup{(C)}.
Although the overall strategy is the same, the constructions
and the required renewal estimates differ between the
applications. For example,  Theorem \ref{thm:convolution-intro}
requires a more involved linearization argument,
whereas in the self-affine setting, Theorem \ref{thm:self-affine-intro}, the cylinder maps are
already affine and linearization (and renormalization) is simpler.

\section{Preliminaries}\label{sec:preliminaries}
We collect here some basic analytical facts used throughout the paper.  The main point is the exact covariance of the $L^1$ norm of the Littlewood--Paley pieces defined in \eqref{eq:LP-def-intro} under one-dimensional affine rescaling.   We also record the regularity in direction and logarithmic scale needed to apply the effective renewal estimates.

\subsection{Littlewood--Paley kernels and exact $L^1$ covariance}\label{subsec:kernels-scaling}
Recall from Subsection~\ref{subsec:main-technical-intro} the fixed even cutoff
$\psi\in C_c^\infty(\R\setminus{0})$ and the associated Littlewood--Paley operators
$\Delta_R$ defined in \eqref{eq:LP-def-intro}.  Writing
\begin{equation}\label{eq:kernel-def}
K_R(x)=R\check\psi(Rx),
\qquad
\Delta_R\rho=K_R*\rho ,
\end{equation}
gives the corresponding physical-space kernel representation. We  require the following basic estimates.
\begin{lemma}\label{lem:kernel-estimates}
For every $R>0$,
\begin{equation}\label{eq:kernel-basic}
\|K_R\|_{L^1}
=
\|\check\psi\|_{L^1},
\qquad
\|K_R'\|_{L^1}
=
R\|(\check\psi)'\|_{L^1}.
\end{equation}
Moreover, for every $N\geq1$ there is a constant $C_N<\infty$ such that
\begin{equation}\label{eq:kernel-decay}
|K_R(x)|
\leq
C_N R(1+R|x|)^{-N},
\qquad x\in\R,\ R>0.
\end{equation}
Finally, for every fixed $m\geq1$,
\begin{equation}\label{eq:log-kernel-derivative}
\sup_{V>0}
\left\|
(V\partial_V)^m K_V
\right\|_{L^1}
<\infty.\end{equation}
\end{lemma}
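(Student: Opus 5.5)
All four estimates follow from two facts: $K_R$ is an $L^1$-normalized dilate of the single fixed function $K_1=\check\psi$, and $\check\psi$ is a Schwartz function because $\psi\in C_c^\infty(\R\setminus\{0\})$. So the plan is to reduce every bound to a property of $\check\psi$ by the change of variables $y=Rx$.

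\emph{First two identities.} Substituting $y=Rx$ gives
\[
\|K_R\|_{L^1}=\int R|\check\psi(Rx)|\,dx=\|\check\psi\|_{L^1}.
\]
Since $K_R'(x)=R^2(\check\psi)'(Rx)$, the same substitution gives
\[
\|K_R'\|_{L^1}=R\|(\check\psi)'\|_{L^1}.
\]
Both norms are finite because $\check\psi$ is Schwartz.

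\emph{Decay bound \eqref{eq:kernel-decay}.} The Schwartz property gives $|\check\psi(y)|\le C_N(1+|y|)^{-N}$ for each $N$. Evaluating at $y=Rx$ and multiplying by $R$ yields the claim directly.

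\emph{Logarithmic derivative bound \eqref{eq:log-kernel-derivative}.} This is the only step needing a small computation. Write $K_V(x)=V\check\psi(Vx)$ and set $D=V\partial_V$. Because $y=Vx$ satisfies $V\partial_V$-homogeneity of degree one, a direct computation gives
\[
D\bigl(V g(Vx)\bigr)=V\,(Tg)(Vx),\qquad Tg(y):=g(y)+y\,g'(y).
\]
Iterating, $D^mK_V(x)=V\,(T^m\check\psi)(Vx)$. Here $T^m\check\psi$ is a finite linear combination of functions $y^ag^{(a)}(y)$ with $a\le m$ and $g=\check\psi$, so it is again Schwartz and lies in $L^1$. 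The change of variables then gives
\[
\|D^mK_V\|_{L^1}=\|T^m\check\psi\|_{L^1},
\]
which is independent of $V$, so the supremum over $V>0$ is finite.

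The argument has no genuine obstacle. The one point to check is that $D$ commutes correctly with the dilation structure, i.e.\ that the identity $D(Vg(Vx))=V(Tg)(Vx)$ holds. Differentiating $Vg(Vx)$ in $V$ gives $g(Vx)+Vx\,g'(Vx)$, and multiplying by $V$ gives $V\bigl(g+yg'\bigr)(Vx)$ with $y=Vx$, as claimed.
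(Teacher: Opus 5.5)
Your proof is correct and follows essentially the same route as the paper: the rescaling $u=Rx$ gives the two identities, Schwartz decay of $\check\psi$ gives the pointwise bound, and $(V\partial_V)^mK_V$ is written as an $L^1$-normalized dilate of a fixed Schwartz function. Your exact identity $(V\partial_V)^mK_V(x)=V\,(T^m\check\psi)(Vx)$ with $Tg=g+yg'$ is a compact packaging of the paper's expansion into the terms $V^{j+1}x^j(\check\psi)^{(j)}(Vx)$, $0\le j\le m$, and it gives the $L^1$ norm as an exact $V$-independent quantity rather than a bound via the triangle inequality.
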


\begin{proof}
The identities in \eqref{eq:kernel-basic} follow immediately from
$K_R(x)=R\check\psi(Rx),$ since
$K_R'(x)=R^2(\check\psi)'(Rx),$
after the change of variables $u=Rx$. Since $\check\psi$ is Schwartz, for every
$N\geq1$ there is $C_N<\infty$ such that
$|\check\psi(u)|\leq C_N(1+|u|)^{-N}.$
Substituting $u=Rx$ gives \eqref{eq:kernel-decay}.

It remains to prove \eqref{eq:log-kernel-derivative}. Since
$
K_V(x)=V\check\psi(Vx),
$
we have
$
V\partial_V K_V(x)
=
V\check\psi(Vx)+V^2x(\check\psi)'(Vx).$
More generally, writing $(\check\psi)^{(j)}$ for the $j$th derivative of
$\check\psi$, repeated application of $V\partial_V$ shows that, for every
fixed $m\geq1$, $(V\partial_V)^mK_V(x)$ is a finite linear combination,
with coefficients depending only on $m$, of terms of the form
$
V^{j+1}x^j(\check\psi)^{(j)}(Vx)$ for $0\leq j\leq m.$
For each such term, the change of variables $u=Vx$ gives
$$
\left\|
V^{j+1}x^j(\check\psi)^{(j)}(Vx)
\right\|_{L^1}
=
\left\|
u^j(\check\psi)^{(j)}(u)
\right\|_{L^1}.
$$
The quantity on the right is finite because $\check\psi$ is Schwartz, and
it is independent of $V$. Since only finitely many such terms occur for
each fixed $m$, \eqref{eq:log-kernel-derivative} follows.
\end{proof}

The next identity is elementary; nonetheless, it plays a key role in our analysis.

\begin{lemma}\label{lem:L1-scaling}
Let $T_{a,b}(x)=ax+b$ with $a\neq0$.  Then for every  $\rho \in \mathcal{P}(\mathbb{R})$,
\begin{equation}\label{eq:L1-scaling}
 \|\Delta_R((T_{a,b})_\#\rho)\|_{L^1(\R)}
 =
 \|\Delta_{|a|R}\rho\|_{L^1(\R)}.
\end{equation}
\end{lemma}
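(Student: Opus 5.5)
The plan is to compute directly with the kernel representation \eqref{eq:kernel-def}, $\Delta_R\rho=K_R*\rho$ with $K_R(x)=R\check\psi(Rx)$. The argument uses only that $\psi$ is even, so that $K_R$ is even and depends only on $|R|$. First, for any $x$,
\[
\Delta_R((T_{a,b})_\#\rho)(x)=\int K_R(x-ay-b)\,d\rho(y)=\int R\check\psi\bigl(Ra(\tfrac{x-b}{a}-y)\bigr)\,d\rho(y).
\]
Since $\check\psi$ is even, $\check\psi(Ra\,u)=\check\psi(R|a|\,u)$. Hence
\[
\Delta_R((T_{a,b})_\#\rho)(x)=\frac{1}{|a|}K_{|a|R}*\rho\Bigl(\frac{x-b}{a}\Bigr)=\frac{1}{|a|}\bigl(\Delta_{|a|R}\rho\bigr)\bigl(T_{a,b}^{-1}x\bigr).
\]

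Next, integrate the absolute value in $x$ and change variables $u=T_{a,b}^{-1}x$, so that $dx=|a|\,du$. The factor $|a|^{-1}$ cancels exactly against the Jacobian, which gives \eqref{eq:L1-scaling}. This cancellation is the mechanism behind the remark in the introduction: in $L^p$ one would get $|a|^{1/p-1}$ instead.

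For rigor, note that $\Delta_R\rho$ is a genuine smooth function because $\check\psi$ is Schwartz, by \eqref{eq:kernel-decay}. Moreover, Fubini applies since $\rho$ is a probability measure and $K_R\in L^1$. Alternatively, the same identity can be checked on the Fourier side:
\[
\widehat{(T_{a,b})_\#\rho}(\xi)=e^{-2\pi i b\xi}\widehat\rho(a\xi),
\]
so $\psi(\xi/R)\widehat\rho(a\xi)$ is, after the substitution $\eta=a\xi$ and using evenness of $\psi$, the transform of $\Delta_{|a|R}\rho$ pushed forward by $T_{a,b}$, as a function. I would present the physical-space computation.

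There is no real obstacle. The only points needing care are the sign of $a$, which is handled by evenness of $\psi$, and the Jacobian normalization of the pushforward of a density.
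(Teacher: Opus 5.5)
Your proof is correct and follows the paper's structure. Both arguments establish the pointwise identity $\Delta_R((T_{a,b})_\#\rho)=|a|^{-1}(\Delta_{|a|R}\rho)\circ T_{a,b}^{-1}$ using the evenness of $\psi$, and then cancel the factor $|a|^{-1}$ against the Jacobian. The only difference is that the paper derives this identity on the Fourier side, via inversion and the substitution $u=a\xi$, while you read it off directly from the kernel $K_R(x)=R\check\psi(Rx)$; that route is equally valid.
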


\begin{proof}
For $a\neq0$,
$\widehat{(T_{a,b})_\#\rho}(\xi)
=
e^{-2\pi i b\xi}\widehat\rho(a\xi).$
Let
$F=\Delta_{|a|R}\rho.$
Since $\psi$ is even,
\[
\widehat F(a\xi)
=
\psi\left(\frac{a\xi}{|a|R}\right)\widehat\rho(a\xi)
=
\psi\left(\frac{\xi}{R}\right)\widehat\rho(a\xi).
\]
Hence
$\widehat{\Delta_R((T_{a,b})_\#\rho)}(\xi)
=
e^{-2\pi i b\xi}\widehat F(a\xi).$
By Fourier inversion,

$$
\begin{aligned}
\Delta_R((T_{a,b})_\#\rho)(x)
&=
\int_{\R}
e^{2\pi i\xi x}
e^{-2\pi i b\xi}
\widehat F(a\xi)\,d\xi \\
&=
\int_{\R}
e^{2\pi i\xi(x-b)}
\widehat F(a\xi)\,d\xi \\
&=
|a|^{-1}
\int_{\R}
e^{2\pi i u(x-b)/a}
\widehat F(u)\,du \\
&=
|a|^{-1}
F\left(\frac{x-b}{a}\right),
\end{aligned}
$$
where in the third line we changed variables $u=a\xi$.
Therefore, by the change of variables $u=(x-b)/a$,
\[
\|\Delta_R((T_{a,b})_\#\rho)\|_{L^1}
=
\int_{\R}|a|^{-1}
\left|F\left(\frac{x-b}{a}\right)\right|\,dx
=
\|F\|_{L^1}.
\]
Since $F=\Delta_{|a|R}\rho$, this proves the claim.
\end{proof}
\begin{remark}\label{rem:L1-structural}
The identity \eqref{eq:L1-scaling} is specific to $L^1$. Indeed, the same computation in its proof shows that, for $1\leq p<\infty$,
$
\|\Delta_R((T_{a,b})_\#\rho)\|_{L^p}
=
|a|^{1/p-1}
\|\Delta_{|a|R}\rho\|_{L^p}.
$
Thus $L^1$ is the only exponent for which affine rescaling introduces no multiplicative factor.
\end{remark}

\begin{lemma}\label{lem:uniform-displacement}
Let $\eta \in \mathcal{P}(\mathbb{R}^2)$ , and let $F,L:\R^2\to\R$ satisfy
$ \sup_{x\in \R^2}|F(x)-L(x)|\leq d.$
Then
\begin{equation}\label{eq:uniform-displacement}
 \left|
 \|\Delta_R(F_\#\eta)\|_{L^1}
 -
 \|\Delta_R(L_\#\eta)\|_{L^1}
 \right|
 \leq C_\psi Rd.
\end{equation}
\end{lemma}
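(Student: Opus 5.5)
We write the Littlewood--Paley piece as a convolution with the kernel $K_R$ from \eqref{eq:kernel-def} and use the mean value theorem. By definition of the pushforward, for every $y\in\R$,
\[
\Delta_R(F_\#\eta)(y)=\int_{\R^2}K_R\bigl(y-F(x)\bigr)\,d\eta(x),
\qquad
\Delta_R(L_\#\eta)(y)=\int_{\R^2}K_R\bigl(y-L(x)\bigr)\,d\eta(x).
\]
Since $K_R$ is Schwartz, both integrals converge absolutely. We implicitly assume that $F,L$ are Borel measurable, so that the pushforwards make sense. By the reverse triangle inequality in $L^1$, the left side of \eqref{eq:uniform-displacement} is bounded by $\|\Delta_R(F_\#\eta)-\Delta_R(L_\#\eta)\|_{L^1}$. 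Applying Tonelli's theorem, this is at most
\[
\int_{\R^2}\int_{\R}\bigl|K_R\bigl(y-F(x)\bigr)-K_R\bigl(y-L(x)\bigr)\bigr|\,dy\,d\eta(x).
\]

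Fix $x$ and set $h=F(x)-L(x)$, so that $|h|\le d$. After translating $y$, the inner integral becomes $\|K_R(\cdot-h)-K_R\|_{L^1}$. Writing $K_R(y-h)-K_R(y)=-\int_0^1 hK_R'(y-th)\,dt$ and applying Minkowski's integral inequality gives
\[
\|K_R(\cdot-h)-K_R\|_{L^1}\le |h|\,\|K_R'\|_{L^1}\le d\,R\,\|(\check\psi)'\|_{L^1},
\]
where the last step uses \eqref{eq:kernel-basic} from Lemma~\ref{lem:kernel-estimates}. Integrating against the probability measure $\eta$ yields \eqref{eq:uniform-displacement} with $C_\psi=\|(\check\psi)'\|_{L^1}$. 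This constant is finite because $\check\psi$ is Schwartz.

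There is no real obstacle here. The only points needing care are justifying the kernel representation of $\Delta_R$ applied to a pushforward measure, which is just the convolution of a Schwartz function with a finite measure, and the Fubini/Tonelli interchange. That interchange is legitimate because the integrand is nonnegative and jointly measurable.
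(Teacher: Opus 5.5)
Your proof is correct and is essentially the paper's argument: the kernel representation $\Delta_R(F_\#\eta)=\int K_R(\cdot-F(x))\,d\eta(x)$, the translation estimate $\|K_R(\cdot-u)-K_R(\cdot-v)\|_{L^1}\le |u-v|\,\|K_R'\|_{L^1}=R\,|u-v|\,\|(\check\psi)'\|_{L^1}$ via the fundamental theorem of calculus and Fubini, then Minkowski and the reverse triangle inequality. The only cosmetic differences are that you parametrize the segment by $t\in[0,1]$ and apply Tonelli directly instead of quoting Minkowski's inequality. One small wording slip: your opening sentence says you use the mean value theorem, but what you actually use is the fundamental theorem of calculus.
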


\begin{proof}
For $u,v\in\R$, the fundamental theorem of calculus gives, for every $x\in\R$,
$K_R(x-u)-K_R(x-v)
=
-\int_v^u K_R'(x-t)\,dt.$
Therefore,
$$|K_R(x-u)-K_R(x-v)|
\leq
\int_{\min\{u,v\}}^{\max\{u,v\}}
|K_R'(x-t)|\,dt.$$
Integrating in $x$ and using Fubini,
\[
\begin{aligned}
\|K_R(\cdot-u)-K_R(\cdot-v)\|_{L^1}
&\leq
\int_{\min\{u,v\}}^{\max\{u,v\}}
\int_{\R}|K_R'(x-t)|\,dx\,dt \\
&=
|u-v|\,\|K_R'\|_{L^1}.
\end{aligned}
\]
By \eqref{eq:kernel-basic},
$\|K_R(\cdot-u)-K_R(\cdot-v)\|_{L^1}
\leq
C_\psi R|u-v|.$

Since
$ \Delta_R(F_\#\eta)=\int K_R(\cdot-F(x))\,d\eta(x),$
and similarly for $L$, Minkowski's inequality yields
\[
 \|\Delta_R(F_\#\eta)-\Delta_R(L_\#\eta)\|_{L^1}
 \leq C_\psi R\int|F-L|\,d\eta
 \leq C_\psi Rd.
\]
The reverse triangle inequality for norms gives the claim.
\end{proof}

\subsection{Smoothing of Littlewood-Paley kernels}\label{subsec:finite-frequency}
Let $J\subset\R$ be compact and non-degenerate, let $U\geq2$, and let
$\eta\in\mathcal P(\R^2)$ be compactly supported. Recall that for a projective
direction $\theta=[e]\in\Pone$, we defined in \eqref{eq:Q-def-intro}
$
Q_R(\eta,\theta)
:=
\|\Delta_R((\pi_e)_\#\eta)\|_{L^1(\R)},
$
and observed that this is well defined on $\Pone$. We now incorporate a bounded
logarithmic correction to the frequency by setting
\begin{equation}\label{eq:H-def}
H_{U,\eta}(\theta,z)
:=
Q_{Ue^z}(\eta,\theta),
\qquad
(\theta,z)\in\Pone\times J.
\end{equation}

\begin{lemma}\label{lem:H-regularity}
Fix $M<\infty$ and a compact non-degenerate interval $J$.  There is $C=C(M,J,\psi)$ such that, for every $\eta\in \mathcal P(B(0,M))$ and every $U\geq2$, on $\Pone\times J$ we have
\begin{equation}\label{eq:H-Lip}
 \|H_{U,\eta}\|_\infty\leq C,
 \qquad
 \operatorname{Lip}(H_{U,\eta})\leq CU.
\end{equation}
\end{lemma}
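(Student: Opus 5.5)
The sup bound follows from Young's inequality: $H_{U,\eta}(\theta,z)=\|K_{Ue^z}*(\pi_e)_\#\eta\|_{L^1}\le \|K_{Ue^z}\|_{L^1}=\|\check\psi\|_{L^1}$ by \eqref{eq:kernel-basic}. This is uniform in $U$, $z$, $\theta$ and $\eta$. For the Lipschitz bound I treat the angular variable and the logarithmic frequency variable separately, and combine them with the triangle inequality on the product metric of $\Pone\times J$.

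\emph{Angular Lipschitz bound.} Fix $z\in J$ and set $R=Ue^z\le Ue^{\max J}$. Given $\theta_1,\theta_2\in\Pone$, choose unit representatives $e_1,e_2$ with $|e_1-e_2|\le C\,d_{\Pone}(\theta_1,\theta_2)$; this is possible because $Q_R$ is independent of the sign of $e$. On $B(0,M)$,
\[
|\pi_{e_1}(x)-\pi_{e_2}(x)|\le M|e_1-e_2|.
\]
To apply Lemma \ref{lem:uniform-displacement}, which asks for a sup bound over all of $\R^2$, I replace $\pi_{e_i}$ by $\pi_{e_i}\circ P$, where $P$ is the radial retraction onto $\overline{B(0,M)}$. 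This does not change the pushforward of $\eta$, since $\eta$ is supported in $B(0,M)$. Lemma \ref{lem:uniform-displacement} then gives
\[
|H(\theta_1,z)-H(\theta_2,z)|\le C_\psi R M|e_1-e_2|\le C U\, d_{\Pone}(\theta_1,\theta_2),
\]
with $C$ depending only on $M$, $J$ and $\psi$.

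\emph{Lipschitz bound in $z$.} Fix $\theta=[e]$ and write $\rho=(\pi_e)_\#\eta$. For $V=Ue^z$ we have $\partial_z=V\partial_V$, so
\[
\partial_z\bigl(K_{Ue^z}*\rho\bigr)=\bigl((V\partial_V)K_V\bigr)\big|_{V=Ue^z}*\rho .
\]
Therefore, for $z_1,z_2\in J$,
\[
\bigl|\|K_{Ue^{z_1}}*\rho\|_{L^1}-\|K_{Ue^{z_2}}*\rho\|_{L^1}\bigr|
\le\int_{z_1}^{z_2}\bigl\|(V\partial_V K_V)|_{V=Ue^t}*\rho\bigr\|_{L^1}\,dt
\le |z_1-z_2|\sup_{V>0}\|V\partial_V K_V\|_{L^1}.
\]
The first inequality uses the reverse triangle inequality, the fundamental theorem of calculus in $L^1$ and Minkowski's inequality. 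By \eqref{eq:log-kernel-derivative} with $m=1$, the right-hand side is $O(|z_1-z_2|)$, which is in fact $\le CU|z_1-z_2|$ since $U\ge2$.

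\emph{Main obstacle.} The only delicate point is justifying the interchange of $\partial_z$ with the convolution and the norm. I would handle this by writing the difference of kernels explicitly as $\int_{z_1}^{z_2}\partial_t K_{Ue^t}(x)\,dt$ and then applying Fubini. This is valid because $\check\psi$ is Schwartz and $J$ is compact, so the integrand is absolutely integrable.
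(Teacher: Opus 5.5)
Your proof is correct and follows the paper's argument essentially step for step. The sup bound comes from Young's inequality, the angular variable is handled by Lemma~\ref{lem:uniform-displacement} at frequency $Ue^z\asymp_J U$, and the logarithmic scale variable by the $m=1$ case of \eqref{eq:log-kernel-derivative}. Your radial-retraction device is a careful way to satisfy the literal global hypothesis of Lemma~\ref{lem:uniform-displacement}; the paper skips this step implicitly, since that lemma's proof only uses $\int|F-L|\,d\eta$.
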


\begin{proof}
Young's inequality gives the $L^\infty$ bound since
$$H_{U,\eta}(\theta,z)
 \leq \|K_{Ue^z}\|_{L^1}\,\|(\pi_\theta)_\#\eta\|_{\mathrm{TV}}
 =\|\check\psi\|_{L^1}.$$
For the direction variable, choose  representatives $e_1,e_2\in \Sone$ of $\theta_1,\theta_2$ with
$|e_1-e_2|\lesssim d_{\Pone}(\theta_1,\theta_2)$.  If $x\in B(0,M)$, then
$|\pi_{e_1}(x)-\pi_{e_2}(x)|
 \lesssim M d_{\Pone}(\theta_1,\theta_2).$
Applying Lemma~\ref{lem:uniform-displacement} at frequency $Ue^z\asymp_J U$ gives
\[
 |H_{U,\eta}(\theta_1,z)-H_{U,\eta}(\theta_2,z)|
 \lesssim U d_{\Pone}(\theta_1,\theta_2).
\]
For the scale variable, let $z_1,z_2\in J$ and put
$
V_i=Ue^{z_i},\, i=1,2.
$
By \eqref{eq:log-kernel-derivative},
$$
\|K_{V_1}-K_{V_2}\|_{L^1}
\lesssim_J
|z_1-z_2|.
$$
Indeed, writing $V(t)=Ue^t$,
$
K_{V_1}-K_{V_2}
=
\int_{z_2}^{z_1}
(V(t)\partial_{V(t)})K_{V(t)}\,dt,
$
and the claim follows from the case $m=1$ of
\eqref{eq:log-kernel-derivative}. Therefore,
$$
|H_{U,\eta}(\theta,z_1)-H_{U,\eta}(\theta,z_2)|
\leq
\|K_{V_1}-K_{V_2}\|_{L^1}
\lesssim_J
|z_1-z_2|.
$$
Combining this with the directional estimate, and using $U\geq2$, proves
\eqref{eq:H-Lip}.
\end{proof}

We next require the following smoothing estimate:

\begin{lemma}\label{lem:H-smoothing}
Under the hypotheses of Lemma~\ref{lem:H-regularity}, let $r\geq1$ be an integer.
There exists $C_r=C_r(M,J,\psi)<\infty$ such that, for every $U\geq2$ and every
$\eta\in\mathcal P(B(0,M))$, there is a nonnegative function
$
H_{U,\eta}^{\sharp}\in C^r(\Pone\times J)
$
satisfying
\begin{equation}\label{eq:H-smoothing}
\|H_{U,\eta}-H_{U,\eta}^{\sharp}\|_{\infty}
\leq C_rU^{-1},
\qquad
\|H_{U,\eta}^{\sharp}\|_{C^r}
\leq C_r U^{2r-1}.
\end{equation}
Moreover, $H_{U,\eta}^{\sharp}$ may be chosen measurably as a function of $\eta$.
\end{lemma}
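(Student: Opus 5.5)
The plan is to mollify $H_{U,\eta}$ at scale $U^{-2}$. The scale is fixed by the two targets in \eqref{eq:H-smoothing}: a Lipschitz function with constant $CU$ (Lemma~\ref{lem:H-regularity}) moves by $CU\cdot\tau$ under averaging at scale $\tau$, and the $r$-th derivative of the mollification costs $\tau^{-r}$ times $\|H\|_\infty$. Taking $\tau=U^{-2}$ gives an approximation error $\lesssim U\cdot U^{-2}=U^{-1}$.

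For the derivative bound, I would not lose all $r$ derivatives on the sup norm. I would put one derivative on $H$ itself, using its Lipschitz constant $CU$, and the remaining $r-1$ on the mollifier. This gives $\|\partial^\alpha H^\sharp\|_\infty\lesssim CU\cdot\tau^{-(r-1)}=U\cdot U^{2(r-1)}=U^{2r-1}$ for $|\alpha|=r$. Lower-order derivatives are bounded similarly by $U^{2|\alpha|-1}\le U^{2r-1}$ when $|\alpha|\ge1$, and by $C$ when $\alpha=0$. So $\|H^\sharp\|_{C^r}\lesssim U^{2r-1}$, matching the claimed exponent.

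The construction goes as follows.
\begin{enumerate}
\item Identify $\Pone$ with $\R/\pi\Z$ through the angle, so that $\Pone\times J$ is a subset of the cylinder $(\R/\pi\Z)\times\R$.
\item Extend $H=H_{U,\eta}$ from $J=[a,b]$ to a neighbourhood $[a-1,b+1]$ in the $z$ variable by constant (reflection) extension, $H(\theta,z)=H(\theta,\pi_J(z))$, where $\pi_J$ is the nearest-point projection onto $J$. This keeps $H$ nonnegative and bounded by $C$, and keeps the Lipschitz constant at most $CU$.
\item Fix a nonnegative $\phi\in C_c^\infty(B(0,1))$ with $\int\phi=1$, set $\phi_\tau=\tau^{-2}\phi(\cdot/\tau)$ with $\tau=U^{-2}\le 1/4$, and let $H^\sharp=H*\phi_\tau$, the convolution being taken on the cylinder.
\item Read off the three properties. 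Nonnegativity holds because $H\ge0$ and $\phi\ge0$. For the sup-norm error, $|H^\sharp-H|(x)\le\int|H(x-y)-H(x)|\phi_\tau(y)\,dy\le CU\tau$. For derivatives with $|\alpha|\ge1$, write $\partial^\alpha=\partial^{\beta}\partial_i$ and use $\partial^\alpha(H*\phi_\tau)=(\partial_iH)*\partial^\beta\phi_\tau$. Here $\partial_iH\in L^\infty$ with norm at most $CU$ by Rademacher's theorem, and $\|\partial^\beta\phi_\tau\|_{L^1}\lesssim\tau^{-|\beta|}$.
\end{enumerate}

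Measurability in $\eta$ should follow once $H_{U,\eta}(\theta,z)$ is shown to be jointly measurable in $(\eta,\theta,z)$. It is continuous in $\eta$ for the weak-$*$ topology on $\mathcal P(B(0,M))$, since $\Delta_R((\pi_e)_\#\eta)=\int K_R(\cdot-\pi_e x)\,d\eta(x)$ and the $L^1$ norm depends continuously on $\eta$ by dominated convergence together with the kernel decay \eqref{eq:kernel-decay}. It is also continuous in $(\theta,z)$, so it is jointly continuous. The convolution is then continuous in $\eta$ for each point, hence measurable.

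The main (mild) obstacle is making the constants uniform in $\eta$ and $U$. Everything depends only on $C(M,J,\psi)$ from Lemma~\ref{lem:H-regularity}, on $r$, and on the fixed mollifier, so this should go through. The only other care needed is the boundary of $J$, which is handled by the extension in step 2.
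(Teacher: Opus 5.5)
Your proposal is correct and follows the paper's proof essentially step for step: clamp in $z$, mollify on $(\R/\pi\Z)\times\R$ at scale $\tau=U^{-2}$, and trade one derivative for the Lipschitz constant $CU$ to get $\|D^\alpha H^\sharp\|_\infty\lesssim U\tau^{1-|\alpha|}=U^{2|\alpha|-1}$. The only differences are cosmetic. The paper realises the derivative trade via $\int D^\alpha\zeta_\tau=0$ and the bound $\int|y|\,|D^\alpha\zeta_\tau(y)|\,dy\lesssim\tau^{1-|\alpha|}$ rather than a Rademacher weak derivative, and your joint-continuity argument for measurability in $\eta$ is more detailed than the paper's one-line remark.
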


\begin{proof}
Write $J=[a,b]$ and identify $\Pone$ with $\R/\pi\Z$. Define the clamping map
$$
c_J(z):=\min\{b,\max\{a,z\}\},
\, z\in\R,
$$
and extend $H_{U,\eta}$ to $\Pone\times\R$ by
$
\widetilde H_{U,\eta}(\theta,z)
:=
H_{U,\eta}(\theta,c_J(z)).
$
Since $c_J$ is $1$-Lipschitz, Lemma~\ref{lem:H-regularity} gives
$
\|\widetilde H_{U,\eta}\|_\infty\lesssim 1,$
$\operatorname{Lip}(\widetilde H_{U,\eta})\lesssim U.
$
Moreover, $\widetilde H_{U,\eta}\geq0$.

Fix a nonnegative function $\zeta\in C_c^\infty(\R^2)$ with
$
\int_{\R^2}\zeta=1,
$
and for $\tau>0$ set
$$
\zeta_\tau(x):=\tau^{-2}\zeta(x/\tau).
$$
Viewing $\Pone$ as $\R/\pi\Z$, define $\widetilde\zeta_\tau(\theta,z)$ on $\Pone\times\R$ by
$\widetilde\zeta_\tau(\theta,z)
:=
\sum_{k\in\Z}\zeta_\tau(\theta+k\pi,z),$
which is $\pi$-periodic in the first variable. We then set
\[
H_{U,\eta}^{\sharp}
:=
\widetilde H_{U,\eta}*\widetilde\zeta_\tau
\,\text{ on }\Pone\times\R \text{ with } \tau:=U^{-2}.
\]
Since both $\widetilde H_{U,\eta}$ and $\zeta_\tau$ are nonnegative,
$H_{U,\eta}^{\sharp}\geq0$.

As
$\int_{\R^2}\zeta_\tau(u,v)\,du\,dv=1,$
we may write
$H_{U,\eta}(\theta,z)
=
\widetilde H_{U,\eta}(\theta,z)
=
\int_{\R^2}
\widetilde H_{U,\eta}(\theta,z)\,
\zeta_\tau(u,v)\,du\,dv,$
for $(\theta,z)\in\Pone\times J$. So, for such $(\theta,z)$,
$$
\begin{aligned}
\left|
H_{U,\eta}^{\sharp}(\theta,z)
-
H_{U,\eta}(\theta,z)
\right|
&\leq
\int_{\R^2}
\left|
\widetilde H_{U,\eta}((\theta,z)-y)
-
\widetilde H_{U,\eta}(\theta,z)
\right|
\zeta_\tau(y)\,dy \\
&\leq
\operatorname{Lip}(\widetilde H_{U,\eta})
\int_{\R^2}|y|\zeta_\tau(y)\,dy.
\end{aligned}
$$
By scaling,
$
\int_{\R^2}|y|\zeta_\tau(y)\,dy
=
\tau\int_{\R^2}|y|\zeta(y)\,dy,
$
and hence
$
\|H_{U,\eta}^{\sharp}-H_{U,\eta}\|_\infty
\lesssim
U\tau
=
U^{-1}.
$

It remains to estimate the derivatives. Let $\alpha$ be a multi-index with
$1\leq|\alpha|=j\leq r$. Since
$
\int_{\R^2}D^\alpha\zeta_\tau(y)\,dy=0,
$
we may write
$$
\begin{aligned}
D^\alpha H_{U,\eta}^{\sharp}(x)
&=
\int_{\R^2}
\widetilde H_{U,\eta}(x-y)
D^\alpha\zeta_\tau(y)\,dy \\
&=
\int_{\R^2}
\big(
\widetilde H_{U,\eta}(x-y)
-
\widetilde H_{U,\eta}(x)
\big)
D^\alpha\zeta_\tau(y)\,dy.
\end{aligned}
$$
Therefore,
$
|D^\alpha H_{U,\eta}^{\sharp}(x)|
\leq
\operatorname{Lip}(\widetilde H_{U,\eta})
\int_{\R^2}
|y|\,|D^\alpha\zeta_\tau(y)|\,dy.
$
Since
$
D^\alpha\zeta_\tau(y)
=
\tau^{-2-j}(D^\alpha\zeta)(y/\tau),
$
a change of variables gives
$
\int_{\R^2}
|y|\,|D^\alpha\zeta_\tau(y)|\,dy
\lesssim_\alpha
\tau^{1-j}.
$
Hence
$$
\|D^\alpha H_{U,\eta}^{\sharp}\|_\infty
\lesssim_r
U\tau^{1-j}
=
U^{2j-1}
\leq
U^{2r-1}.
$$
Together with the uniform bound on $H_{U,\eta}^{\sharp}$, this yields
$$
\|H_{U,\eta}^{\sharp}\|_{C^r}
\lesssim_r
U^{2r-1}.
$$

Finally, the extension and mollification above are fixed operations, independent
of $\eta$. Since $H_{U,\eta}(\theta,z)$ is measurable in $\eta$ for every
$(\theta,z)$, the same is true of $H_{U,\eta}^{\sharp}(\theta,z)$.
Thus $H_{U,\eta}^{\sharp}$ may be chosen measurably in $\eta$.
\end{proof}

\subsection{Moment sums and annular regularization}\label{subsec:qmass-prelim}
Recall from \eqref{eq:dyadic-squares-intro} that for $m\geq1$,  $\mathcal D_m$ denotes the collection of half-open dyadic cubes of side length $2^{-m}$ in the ambient Euclidean space $\mathbb{R}$ or $\mathbb{R}^2$; which space will always be clear from context. If $\eta$ is a probability measure and $1<q\leq2$, recall from \eqref{eq:qmass-intro} that
$S_{m,q}(\eta)
:=
\sum_{Q\in\mathcal D_m}\eta(Q)^q.$
The lower  $L^q$ dimension of $\eta$ is defined by
\begin{equation}\label{eq:Dq-prelim}
D_q(\eta)
:=
\liminf_{m\to\infty}
\frac{-\log S_{m,q}(\eta)}
{(q-1)m\log 2}.
\end{equation}
Consequently, if
$0<S<D_q(\eta),$
then there exists $C<\infty$ such that
\begin{equation}\label{eq:qmass-from-Dq}
S_{m,q}(\eta)
\leq
C\,2^{-m(q-1)S}
\, \text{ for } m\geq 1.
\end{equation}

We shall compare this discrete information with an $L^q$ norm at physical scale $U^{-1}$. Recall that $\psi$ and $\Delta_R$ were fixed in Subsection~\ref{subsec:main-technical-intro}. For the compact interval $J$ fixed above, choose a radial function
$
\vartheta\in C_c^\infty(\R^2\setminus\{0\})
$
such that
$$
\vartheta(\xi)=1
\qquad\text{whenever}\qquad
\frac12 e^{\min J}\leq|\xi|\leq 2e^{\max J}.
$$
Define the Schwartz kernel $L_U$ by
\begin{equation}\label{eq:LU-def}
\widehat{L_U}(\xi)=\vartheta(\xi/U).
\end{equation}
Then
$
L_U(x)=U^2L_1(Ux),
$
where $L_1=\check\vartheta$ is a Schwartz function.

\begin{lemma}\label{lem:qmass-annular}
Fix $1<q\leq2$. There exists $C=C(q,\vartheta)<\infty$ such that, for every
$\eta\in\mathcal P(\R^2)$ and every $m\geq1$, if
$
2^m\leq U<2^{m+1},
$
then
\begin{equation}\label{eq:qmass-annular}
\|L_U*\eta\|_{L^q(\R^2)}^q
\leq
C\,U^{2(q-1)}S_{m,q}(\eta).
\end{equation}
\end{lemma}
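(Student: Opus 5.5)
We need to bound $\|L_U*\eta\|_q^q \lesssim U^{2(q-1)} \sum_Q \eta(Q)^q$ for $2^m\le U<2^{m+1}$.

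The plan is to decompose $\eta$ according to the dyadic squares of $\mathcal D_m$ and use the rapid decay of the kernel $L_U(x)=U^2L_1(Ux)$. Since $L_1$ is Schwartz, for any $N$ we have
\[
|L_U(x)|\lesssim_N U^2(1+U|x|)^{-N}.
\]
Writing $\eta=\sum_{Q\in\mathcal D_m}\eta|_Q$, for $x\in Q'\in\mathcal D_m$ this gives
\[
|L_U*\eta(x)|\lesssim_N U^2\sum_{Q}\eta(Q)\,(1+|k_Q-k_{Q'}|)^{-N},
\]
where $k_Q\in\Z^2$ indexes $Q$. The comparison is legitimate because $U\asymp 2^m$: if $x\in Q'$ and $y\in Q$, then $U|x-y|\geq |k_Q-k_{Q'}|-2$ up to constants, and we absorb this via $(1+U|x-y|)^{-N}\lesssim(1+|k_Q-k_{Q'}|)^{-N}$.

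Next we integrate the $q$-th power over $Q'$, which has area $4^{-m}\asymp U^{-2}$:
\[
\|L_U*\eta\|_q^q\lesssim U^{2q-2}\sum_{Q'}\Big(\sum_Q \eta(Q)w(k_Q-k_{Q'})\Big)^q,
\]
where $w(k)=(1+|k|)^{-N}$. Choosing $N\geq 3$ makes $w$ summable on $\Z^2$. By Jensen (or Hölder) with respect to the probability weights $w/\|w\|_{\ell^1}$, we get $(\sum_Q\eta(Q)w(\cdot))^q\leq \|w\|_1^{q-1}\sum_Q\eta(Q)^q w(k_Q-k_{Q'})$. Summing over $Q'$ gives a second factor of $\|w\|_1$; equivalently, this is Young's inequality $\|a*w\|_{\ell^q}\le\|w\|_{\ell^1}\|a\|_{\ell^q}$ on $\Z^2$. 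The result is $C\,U^{2(q-1)}S_{m,q}(\eta)$ with $C$ depending only on $q$ and $\vartheta$.

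There is no real obstacle. The only care needed is the uniform comparison of $U|x-y|$ with the lattice distance $|k_Q-k_{Q'}|$, handled by $U\ge 2^m$ and a triangle inequality losing an additive constant; this is harmless because $(1+|k|-2)_+$ and $1+|k|$ are comparable up to constants for the weight. The annular support of $\vartheta$ plays no role here, only the Schwartz decay of $L_1$.
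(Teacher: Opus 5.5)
Your proposal is correct and follows the paper's proof essentially step by step. Both use the Schwartz decay of $L_U$ to get a pointwise bound by a discrete convolution of $(\eta(Q))_{Q\in\mathcal D_m}$ with a summable weight on $\mathbb{Z}^2$. Both then integrate over squares of area $\asymp U^{-2}$ and close with Young's inequality on $\ell^q(\mathbb{Z}^2)$. Your Jensen-based derivation of the Young step is accurate, as is your remark that only the Schwartz decay of $L_1$, not the annular support of $\vartheta$, enters.
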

\begin{proof}
Write the dyadic squares in $\mathcal D_m$ as
$
Q_k:=2^{-m}\bigl(k+[0,1)^2\bigr),$  $k\in\Z^2.$
Since $2^m\leq U<2^{m+1}$, we have
$
U\asymp 2^m.$
In particular, every $Q_k$ has diameter $\asymp U^{-1}$ and Lebesgue measure
$
|Q_k|=2^{-2m}\asymp U^{-2}.
$
Recall that
$
L_U(x)=U^2L_1(Ux),
$
where $L_1$ is Schwartz. Hence, for every $N\geq1$,
$
|L_U(x)|
\leq
C_NU^2(1+U|x|)^{-N}.
$

If $x\in Q_k$ and $y\in Q_\ell$, then $
U|x-y|
\gtrsim
|k-\ell|-1,
$
with an absolute implied constant, since $U\asymp2^m$. It follows that, after increasing the constant,
$
|L_U(x-y)|
\leq
C_NU^2(1+|k-\ell|)^{-N}.
$
Choose $N>2$ and set
$
a(j):=(1+|j|)^{-N},$ $j\in\Z^2.$
Then $a\in\ell^1(\Z^2)$. 

Therefore, for $x\in Q_k$,
$$
\begin{aligned}
|L_U*\eta(x)|
&\leq
\sum_{\ell\in\Z^2}
\int_{Q_\ell}|L_U(x-y)|\,d\eta(y) \\
&\lesssim
U^2
\sum_{\ell\in\Z^2}
a(k-\ell)\eta(Q_\ell).
\end{aligned}
$$
Define
$
b(\ell):=\eta(Q_\ell),
\, \ell\in\Z^2.
$
Then the preceding estimate becomes
$$
|L_U*\eta(x)|
\lesssim
U^2(a*b)(k),
\qquad x\in Q_k,
$$
where $a*b$ denotes discrete convolution on $\Z^2$.

Raising to the $q$-th power and integrating over $Q_k$ gives
$$
\begin{aligned}
\int_{Q_k}|L_U*\eta(x)|^q\,dx
&\lesssim
|Q_k|\,U^{2q}|(a*b)(k)|^q \\
&\lesssim
U^{2q-2}|(a*b)(k)|^q.
\end{aligned}
$$
Summing over $k\in\Z^2$, we obtain
$
\|L_U*\eta\|_{L^q(\R^2)}^q
\lesssim
U^{2q-2}\|a*b\|_{\ell^q(\Z^2)}^q.
$
By Young's convolution inequality on $\ell^q(\Z^2)$,
$
\|a*b\|_{\ell^q}
\leq
\|a\|_{\ell^1}\|b\|_{\ell^q}.
$
Since $a\in\ell^1(\Z^2)$ is fixed,
$$
\|L_U*\eta\|_{L^q(\R^2)}^q
\lesssim
U^{2q-2}
\sum_{\ell\in\Z^2}b(\ell)^q.
$$
Finally,
$
\sum_{\ell\in\Z^2}b(\ell)^q
=
\sum_{Q\in\mathcal D_m}\eta(Q)^q
=
S_{m,q}(\eta),
$
and hence
$$
\|L_U*\eta\|_{L^q(\R^2)}^q
\lesssim
U^{2(q-1)}S_{m,q}(\eta),
$$
as required.
\end{proof}

For later use in the self-affine application, we also record the elementary $q=2$ consequence of finite energy.

\begin{lemma}\label{lem:energy-qmass}
Let $\eta\in\mathcal P(\R^2)$ and suppose that
$I_s(\eta)<\infty$ for some $0<s<2$. Then
\begin{equation}\label{eq:energy-qmass}
S_{m,2}(\eta)
\leq
C_s\,I_s(\eta)\,2^{-ms}
\, \quad m\geq1.
\end{equation}
\end{lemma}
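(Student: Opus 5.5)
The plan is to compare the dyadic $2$-moment sum with the $s$-energy by restricting the energy integral to pairs of points in a common dyadic square. Fix $m\geq1$. If $x,y$ lie in the same square $Q\in\mathcal D_m$, then $|x-y|\leq\sqrt2\,2^{-m}$, so
\[
|x-y|^{-s}\geq 2^{-s/2}\,2^{ms}.
\]
Since $s>0$, the integrand $|x-y|^{-s}$ is nonnegative. We may therefore discard all pairs not lying in a common square, and the squares in $\mathcal D_m$ are disjoint.

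This gives
\[
I_s(\eta)\geq\sum_{Q\in\mathcal D_m}\iint_{Q\times Q}|x-y|^{-s}\,d\eta(x)\,d\eta(y)\geq 2^{-s/2}2^{ms}\sum_{Q\in\mathcal D_m}\eta(Q)^2 .
\]
Here we used $(\eta\times\eta)(Q\times Q)=\eta(Q)^2$. The last sum is $S_{m,2}(\eta)$. Rearranging yields \eqref{eq:energy-qmass} with $C_s=2^{s/2}$.

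A few points need checking. The diagonal $x=y$ gives no trouble: if $\eta$ has atoms then $I_s(\eta)=\infty$, so finiteness forces $\eta\times\eta$ to give zero mass to the diagonal. In any case the inequality is monotone, so the bound holds trivially. The disjointness and the covering by half-open squares make the decomposition of $\R^2\times\R^2$ exact on the union of the sets $Q\times Q$.

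There is no real obstacle. The only care needed is the diameter constant $\sqrt2$ and the positivity of the integrand.
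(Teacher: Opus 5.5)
Your proposal is correct and follows the paper's proof essentially verbatim: restrict the energy integral to pairs lying in a common square $Q\in\mathcal D_m$, use $\operatorname{diam}(Q)=\sqrt2\,2^{-m}$ to get $|x-y|^{-s}\geq 2^{-s/2}2^{ms}$ there, and conclude with $C_s=2^{s/2}$. Your remark that a diagonal pair $x=y$ only makes the integrand $+\infty$, and so cannot break the lower bound, is a harmless extra check that the paper leaves implicit.
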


\begin{proof}
Fix $m\geq1$. If $x,y$ belong to the same dyadic square
$Q\in\mathcal D_m$, then
$
|x-y|
\leq
\operatorname{diam}(Q)
=
\sqrt2\,2^{-m}.
$
Hence
$
|x-y|^{-s}
\geq
(\sqrt2\,2^{-m})^{-s}
=
2^{-s/2}2^{ms}.
$

Using only those pairs $(x,y)$ which lie in a common dyadic square, we obtain
$$
\begin{aligned}
I_s(\eta)
&=
\iint_{\R^2\times\R^2}
|x-y|^{-s}\,d\eta(x)\,d\eta(y) \\
&\geq
\sum_{Q\in\mathcal D_m}
\iint_{Q\times Q}
|x-y|^{-s}\,d\eta(x)\,d\eta(y) \\
&\geq
2^{-s/2}2^{ms}
\sum_{Q\in\mathcal D_m}
\eta(Q)^2.
\end{aligned}
$$
Since
$
\sum_{Q\in\mathcal D_m}\eta(Q)^2
=
S_{m,2}(\eta),
$
it follows that
$
S_{m,2}(\eta)
\leq
2^{s/2}I_s(\eta)\,2^{-ms},
$
which proves \eqref{eq:energy-qmass}.
\end{proof}

\section{Proof of the main technical theorem}\label{sec:proof-main-technical}

\subsection{A Falconer-type angular average estimate}\label{subsec:angular-gain}

The following proposition is the key  estimate underlying the proof of Theorem~\ref{thm:stopped-intro}. It is a Falconer-type projection estimate, in the spirit of the Fourier-analytic exceptional-set argument of \cite{Falconer1982Projections}; we use the term \emph{angular} rather than spherical average, since the directions are averaged against an arbitrary Frostman measure rather than surface measure. Its assumptions are designed to match conditions~\textup{(A)} and~\textup{(C)} of Theorem~\ref{thm:stopped-intro}.

\begin{proposition}\label{prop:angular-gain}
Fix $M<\infty$, a compact non-degenerate interval $J$, $1<q\leq2$, $S>0$, and $0<\kappa\leq1$. Let $\eta\in\mathcal P(B(0,M))$ and suppose that, for some $B<\infty$,
\begin{equation}\label{eq:angular-qmass-assumption}
S_{m,q}(\eta)\leq B2^{-m(q-1)S}
\quad m\geq1.
\end{equation}
Let $\Lambda\in\mathcal P(\Pone\times J)$, and let $\sigma$ be its marginal on $\Pone$. Assume that
\begin{equation}\label{eq:angular-frostman-assumption}
\sigma(B(\theta,\rho))\lesssim\rho^\kappa,
\qquad \theta\in\Pone,\ 0<\rho\leq1.
\end{equation}
Then, for every $N\geq1$ there exists $C_N<\infty$ such that, for every $U\geq2$,
\begin{equation}\label{eq:angular-gain}
\int_{\Pone\times J}H_{U,\eta}(\theta,z)\,d\Lambda(\theta,z)
\lesssim B^{1/q}U^{-\beta}+C_NU^{-N},
\end{equation}
where
\begin{equation}\label{eq:beta-def}
\beta=\frac{q-1}{q}(S+\kappa-2).
\end{equation}
\end{proposition}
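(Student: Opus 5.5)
The plan is to prove \eqref{eq:angular-gain} as a finite-frequency Falconer estimate for a linear operator. I will interpolate between a trivial $L^1$ bound and an $L^2$ bound that gains $U^{-\kappa}$ from the Frostman condition \eqref{eq:angular-frostman-assumption}, and then feed in Lemma~\ref{lem:qmass-annular}.

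\emph{Step 1: reduction to a compactly supported $L^q$ density.} Set $f:=L_U*\eta$, with $L_U$ as in \eqref{eq:LU-def}. For $z\in J$ and $V=Ue^z$, the multiplier $\psi(\xi/V)$ lives on $\frac12 Ue^{\min J}\le|\xi|\le 2Ue^{\max J}$, where $\vartheta(\xi e/U)=1$. Hence
\[
\Delta_V((\pi_e)_\#\eta)=\Delta_V(\pi_e f),
\]
where $\pi_e f$ is the line integral of $f$. Fix $\chi\in C_c^\infty(B(0,3M))$ with $\chi=1$ on $B(0,2M)$. Since $L_U$ is Schwartz at scale $U^{-1}$ and $\eta$ is supported in $B(0,M)$, we have $\|(1-\chi)f\|_{L^1}\le C_NU^{-N}$. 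Thus, uniformly in $(\theta,z)$, $H_{U,\eta}(\theta,z)$ differs from $\|\Delta_V\pi_\theta(\chi f)\|_{L^1}$ by at most $C_NU^{-N}$.

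By the kernel decay \eqref{eq:kernel-decay}, $\Delta_V\pi_\theta(\chi f)$ is also $O(U^{-N})$ in $L^1$ outside $[-4M,4M]$. Hölder on this interval then gives
\[
H_{U,\eta}(\theta,z)\lesssim \|\Delta_V\pi_\theta(\chi f)\|_{L^q(\R)}+C_NU^{-N}.
\]
Jensen's inequality in $\Lambda$ reduces matters to bounding $\int\|\Delta_{Ue^z}\pi_\theta g\|_{L^q}^q\,d\Lambda(\theta,z)$ for $g=\chi f$.

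\emph{Step 2: an operator bound by interpolation.} Consider the linear map $T g(\theta,z,t):=\Delta_{Ue^z}\pi_\theta(\chi g)(t)$ from functions on $\R^2$ to $L^p(\Lambda\otimes dt)$.

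At $p=1$, Young's inequality and Lemma~\ref{lem:kernel-estimates} give $\|Tg\|_{L^1}\lesssim\|g\|_{L^1}$.

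At $p=2$, Plancherel in $t$ gives
\[
\|\Delta_V\pi_\theta(\chi g)\|_2^2\le\int_{A_U}|\widehat{\chi g}(\xi e_\theta)|^2d\xi,\qquad A_U=\{c_JU\le|\xi|\le C_JU\}.
\]
The right side is independent of $z$, so only the marginal $\sigma$ enters. Because $\chi g$ is supported in $B(0,3M)$, we can write $\widehat{\chi g}=\widehat{\chi g}*\widehat{\chi'}$ with $\chi'\in C_c^\infty$ equal to $1$ on $B(0,3M)$. Cauchy--Schwarz then gives $|\widehat{\chi g}|^2\lesssim|\widehat{\chi g}|^2*|\widehat{\chi'}|$, so $|\widehat{\chi g}|^2$ is locally constant at scale $1$.

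The polar measure $d\sigma(\theta)\,d\xi$ restricted to $A_U$ gives each unit ball mass $\lesssim\sigma(B(\theta,U^{-1}))\lesssim U^{-\kappa}$, by \eqref{eq:angular-frostman-assumption}. Combining these two facts should yield $\|Tg\|_{L^2}^2\lesssim U^{-\kappa}\|g\|_{L^2}^2$.

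Riesz--Thorin then gives $\|Tg\|_{L^q}^q\lesssim U^{-\kappa(q-1)}\|g\|_{L^q}^q$.

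\emph{Step 3: conclusion.} Apply Step 2 to $g=f=L_U*\eta$, and use Lemma~\ref{lem:qmass-annular} with $2^m\le U<2^{m+1}$ together with \eqref{eq:angular-qmass-assumption}:
\[
\int\|\Delta_{Ue^z}\pi_\theta(\chi f)\|_q^q\,d\Lambda\lesssim U^{-\kappa(q-1)}U^{2(q-1)}B\,U^{-(q-1)S}=B\,U^{-(q-1)(S+\kappa-2)}.
\]
Taking $q$-th roots via Jensen and adding the $C_NU^{-N}$ tails from Step 1 gives \eqref{eq:angular-gain} with exponent $\beta=\frac{q-1}{q}(S+\kappa-2)$.

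I expect the $L^2$ endpoint in Step 2 to be the main obstacle. It is a trace-type inequality for $|\widehat{\chi g}|^2$ against the fractal polar measure, and it holds only because of the compact-support cutoff $\chi$. The points needing care are that the local-constancy constant is uniform in $U$, and that the unit-ball mass bound covers balls near the inner radius of $A_U$ (which is still $\asymp U$, so the angular width is $\asymp U^{-1}$).
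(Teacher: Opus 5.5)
Your proposal is correct and follows essentially the same route as the paper: the annular representation through $L_U*\eta$ with a compact cutoff, the $L^1$ endpoint via Young and the $L^2$ endpoint via Plancherel, local constancy of $|\widehat{F}|^2$ and the Frostman bound, then Riesz--Thorin, Lemma~\ref{lem:qmass-annular}, and H\"older on a bounded interval with kernel-decay tails. The only difference is harmless: you bound the $L^2$ endpoint using just the uniform unit-ball mass $\lesssim U^{-\kappa}$ of the polar measure (after discarding $z$), summed against the Schwartz decay of $\widehat{\chi'}$, whereas the paper proves the multiscale bound $\Gamma_U(B(\xi,r))\lesssim U^{-\kappa}r^{1+\kappa}$ and sums over dyadic shells; both work.
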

Here and below, the implicit constant in \eqref{eq:angular-gain} depends only on $M,J,q,S,\kappa$ and on the implicit constant in \eqref{eq:angular-frostman-assumption}; the constant $C_N$ may additionally depend on $N$.
\begin{proof}

\medskip
\noindent\textbf{Step 1: A useful annular representation.}
Fix $U\geq2$ and put
$F_U:=L_U*\eta,$
where $L_U$ is defined in \eqref{eq:LU-def}. For an integrable function
$F$ on $\R^2$, let $\pi_\theta F$ denote the density of the push-forward
of $F(x)\,dx$ under orthogonal projection in the direction $\theta$.
Thus, if $e$ is either unit representative of $\theta$, then the
Fourier-slice identity gives
\begin{equation}\label{eq:fourier-slice-proof}
\widehat{\pi_\theta F}(t)=\widehat F(te).
\end{equation}
All norms below are independent of the choice of representative.

Recall that $\vartheta$ was chosen so that
$\vartheta(\xi)=1$ whenever 
$\frac12 e^{\min J}\leq|\xi|\leq2e^{\max J},$
and that
$\widehat{L_U}(\xi)=\vartheta(\xi/U).$
Also recall from Section \ref{subsec:main-technical-intro} that
$\supp\psi\subset\{t\in\R:1/2\leq|t|\leq2\}.$
Consequently, if $z\in J$ and
$\psi\left(\frac{t}{Ue^z}\right)\neq0,$
then
$\frac12
\leq
\frac{|t|}{Ue^z}
\leq
2,$
and hence
$\frac12 e^{\min J}
\leq
\frac{|t|}{U}
\leq
2e^{\max J}.$
Since $|e|=1$, it follows from the choice of $\vartheta$ that
\[
\widehat{L_U}(te)
=
\vartheta(te/U)
=
1, \text{ whenever }  \psi(t/(Ue^z))\neq0.
\]

We Claim that 
\begin{equation}\label{eq:annular-exact-representation}
\Delta_{Ue^z}\bigl((\pi_\theta)_{\#}\eta\bigr)
=
K_{Ue^z}*\pi_\theta F_U.
\end{equation}
By the
definition of $\Delta_R$ and the Fourier-slice identity,
$\widehat{
\Delta_{Ue^z}\bigl((\pi_\theta)_{\#}\eta\bigr)
}(t)
=
\psi\left(\frac{t}{Ue^z}\right)\widehat{\eta}(te).$
On the other hand, since $F_U=L_U*\eta$,
$\widehat{F_U}(te)
=
\widehat{L_U}(te)\widehat{\eta}(te),$
and therefore
\begin{align*}
\widehat{K_{Ue^z}*\pi_\theta F_U}(t)
&=
\psi\left(\frac{t}{Ue^z}\right)
\widehat{\pi_\theta F_U}(t)\\
&=
\psi\left(\frac{t}{Ue^z}\right)
\widehat{F_U}(te)\\
&=
\psi\left(\frac{t}{Ue^z}\right)
\widehat{L_U}(te)\widehat{\eta}(te)\\
&=
\psi\left(\frac{t}{Ue^z}\right)
\widehat{\eta}(te).
\end{align*}
Indeed, we used \eqref{eq:fourier-slice-proof}, and in the last equality that
$\widehat{L_U}(te)=1$ whenever
$\psi(t/(Ue^z))\neq0$, while if
$\psi(t/(Ue^z))=0$ both sides vanish. Thus the two Fourier transforms
agree for every $t\in\R$, and  \eqref{eq:annular-exact-representation} follows.

\medskip
\noindent\textbf{Step 2: $L^1$ and  $L^2$ estimates.}
We first record a simple $L^1$ estimate. By Fubini,
$\|\pi_\theta F\|_{L^1}\leq \|F\|_{L^1},
\, \theta\in\Pone.$
Moreover, $\|K_R\|_{L^1}=\|\check\psi\|_{L^1}$ for every $R>0$.
Hence Young's inequality gives
\begin{equation}\label{eq:L1-angular-endpoint}
\int_{\Pone\times J}
\left\|K_{Ue^z}*\pi_\theta F\right\|_{L^1}\,
d\Lambda(\theta,z)
\lesssim
\|F\|_{L^1}.
\end{equation}

Suppose now that $F\in L^2(\R^2)$ is supported in $B(0,r), r>0$. We claim
that, recalling the definition of $\kappa$ from \eqref{eq:angular-frostman-assumption},
\begin{equation}\label{eq:L2-angular-endpoint}
\int_{\Pone\times J}
\left\|K_{Ue^z}*\pi_\theta F\right\|_{L^2}^2\,
d\Lambda(\theta,z)
\lesssim_r
U^{-\kappa}\|F\|_{L^2}^2.
\end{equation}
To prove this, lift $\Lambda\in\mathcal P(\Pone\times J)$ to an antipodally symmetric  $\widetilde{\Lambda} \in \mathcal{P} \left( \Sone\times J \right)$, obtained by splitting
the mass at each projective direction equally between its two unit
representatives. By Plancherel and \eqref{eq:fourier-slice-proof}
\[
\int_{\Pone\times J}
\left\|K_{Ue^z}*\pi_\theta F\right\|_{L^2}^2\,
d\Lambda(\theta,z)
=
\int_{\Sone\times J}\int_{\R}
|\widehat F(te)|^2
\left|\psi\left(\frac{t}{Ue^z}\right)\right|^2
\,dt\,d\widetilde{\Lambda}(e,z).
\]

Let $\Gamma_U$ be the finite Borel measure on $\R^2$ obtained as the
push-forward of
$\left|\psi\left(\frac{t}{Ue^z}\right)\right|^2
\,dt\,d\widetilde{\Lambda}(e,z)$
under the map $(t,e,z)\mapsto te$. Since
$\supp\psi\subset\{s\in\R:1/2\leq|s|\leq2\},$
the measure $\Gamma_U$ is supported in the annulus
\[
\frac12 e^{\min J}U
\leq |\xi|
\leq
2e^{\max J}U.
\]
Moreover, by the change of variables $s=t/(Ue^z)$,
\[ \Gamma_U(\R^2)
=
U\|\psi\|_{L^2}^2
\int_{\Sone\times J}e^z\,d\widetilde{\Lambda}(e,z) \asymp_J U.
\]
We next estimate the mass of Euclidean balls under $\Gamma_U$. We claim that
\begin{equation}\label{eq:Gamma-ball}
\Gamma_U(B(\xi,r))
\lesssim
U^{-\kappa}r^{1+\kappa},
\,
0<r\lesssim U.
\end{equation}
For $r\asymp U$, this follows immediately from
$\Gamma_U(\R^2)\lesssim U$. Thus it remains to consider $0<r\ll U$.
Suppose that $B(\xi,r)$ meets the support of $\Gamma_U$. Since
$\Gamma_U$ is supported in
$\frac12 e^{\min J}U
\leq |\zeta|
\leq
2e^{\max J}U,$
we then have $|\xi|\asymp_J U$. Let $\theta_\xi\in\Pone$ denote the
projective direction of $\xi$. If
$te\in B(\xi,r)$
for some $(t,e,z)$ contributing to $\Gamma_U$, then
$\bigl||t|-|\xi|\bigr|\leq r,$
so, for fixed $(e,z)$, the admissible values of $t$ lie in a union of
at most two intervals of total length $O(r)$. Moreover, since
$|t|\asymp_J U$ and $|te-\xi|\leq r$, the projective direction $[e]$
must satisfy
$d_{\Pone}([e],\theta_\xi)
\lesssim_J
\frac{r}{U}.$
Therefore, writing $\widetilde{\sigma}$ for the $\Sone$ marginal of
$\widetilde{\Lambda}$ and using that its projection to $\Pone$ is
$\sigma$, we obtain
\begin{align*}
\Gamma_U(B(\xi,r))
&\lesssim
r\,
\widetilde{\sigma}
\left(
\left\{
e\in\Sone:
d_{\Pone}([e],\theta_\xi)
\lesssim_J \frac{r}{U}
\right\}
\right)\\
&\lesssim_J
r\,
\sigma
\left(
B\left(\theta_\xi, \frac{r}{U}\right)
\right)\\
&\lesssim
r\left(\frac{r}{U}\right)^\kappa
=
U^{-\kappa}r^{1+\kappa},
\end{align*}
where in the last line we used
\eqref{eq:angular-frostman-assumption}. This proves
\eqref{eq:Gamma-ball}.

Choose $\chi\in C_c^\infty(\R^2)$ equal to $1$ on the support of $F$.
Since $F=\chi F$,
$\widehat F=\widehat\chi*\widehat F.$
By Cauchy--Schwarz and the rapid decay of $\widehat\chi$, for every
 $A>1+\kappa$,
\[
|\widehat F(\xi)|^2
\lesssim
\int_{\R^2}
|\widehat F(\zeta)|^2
(1+|\xi-\zeta|)^{-A}\,d\zeta.
\]

We will  use \eqref{eq:Gamma-ball} to prove that
\begin{equation}\label{eq:Gamma-potential}
\sup_{\zeta\in\R^2}
\int_{\R^2}
(1+|\xi-\zeta|)^{-A}\,d\Gamma_U(\xi)
\lesssim
U^{-\kappa}.
\end{equation}
Fix $\zeta\in\R^2$ and write
$I_U(\zeta)
:=
\int_{\R^2}
(1+|\xi-\zeta|)^{-A}\,d\Gamma_U(\xi).$
By
\eqref{eq:Gamma-ball}, 
$\Gamma_U(B(\zeta,1))
\lesssim
U^{-\kappa}.$

For $j\geq0$, let
$\mathcal A_j(\zeta)
=
\{\xi\in\R^2:2^j\leq|\xi-\zeta|<2^{j+1}\}.$
As long as $2^{j+1}\lesssim U$, \eqref{eq:Gamma-ball} gives
\[
\Gamma_U(\mathcal A_j(\zeta))
\leq
\Gamma_U(B(\zeta,2^{j+1}))
\lesssim
U^{-\kappa}2^{(j+1)(1+\kappa)}.
\]
Hence
\begin{align*}
\int_{\mathcal A_j(\zeta)}
(1+|\xi-\zeta|)^{-A}\,d\Gamma_U(\xi)
&\lesssim
2^{-jA}\Gamma_U(\mathcal A_j(\zeta))\\
&\lesssim
U^{-\kappa}2^{-j(A-1-\kappa)}.
\end{align*}
Therefore, since $A>1+\kappa$, summing over all such $j$ gives
\[
\sum_{2^{j+1}\lesssim U}
\int_{\mathcal A_j(\zeta)}
(1+|\xi-\zeta|)^{-A}\,d\Gamma_U(\xi)
\lesssim
U^{-\kappa}.
\]
It remains to consider the shells for which $2^j\gtrsim U$. Using
the total mass bound $\Gamma_U(\R^2)\lesssim U$, we obtain
\begin{align*}
\sum_{2^j\gtrsim U}
\int_{\mathcal A_j(\zeta)}
(1+|\xi-\zeta|)^{-A}\,d\Gamma_U(\xi)
&\lesssim
\sum_{2^j\gtrsim U}
2^{-jA}\Gamma_U(\R^2)\\
&\lesssim
U\sum_{2^j\gtrsim U}2^{-jA}\\
&\lesssim
U^{1-A}.
\end{align*}
Since $A>1+\kappa$, 
$U^{1-A}\lesssim U^{-\kappa}.$
Combining the three estimates, \eqref{eq:Gamma-potential} follows.

We can now finish the $L^2$ estimate. Recall that
\[
|\widehat F(\xi)|^2
\lesssim
\int_{\R^2}
|\widehat F(\zeta)|^2
(1+|\xi-\zeta|)^{-A}\,d\zeta,
\]
and hence, by Tonelli's theorem, by our estimates
\begin{align*}
\int_{\R^2}|\widehat F(\xi)|^2\,d\Gamma_U(\xi)
&\lesssim
\int_{\R^2}\int_{\R^2}
|\widehat F(\zeta)|^2
(1+|\xi-\zeta|)^{-A}\,
d\zeta\,d\Gamma_U(\xi)\\
&=
\int_{\R^2}
|\widehat F(\zeta)|^2
\left(
\int_{\R^2}
(1+|\xi-\zeta|)^{-A}\,d\Gamma_U(\xi)
\right)
d\zeta\\
&\lesssim
U^{-\kappa}
\int_{\R^2}|\widehat F(\zeta)|^2\,d\zeta\\
&=
U^{-\kappa}\|F\|_{L^2}^2,
\end{align*}
where the last equality follows from Plancherel. Together with the
definition of $\Gamma_U$, this proves
\eqref{eq:L2-angular-endpoint}.

\medskip
\
\noindent\textbf{Step 3: An $L^q$ estimate for $1<q\leq2$.}
Recall that the kernels $K_R$ were defined in \eqref{eq:kernel-def}.
Let $r>0$ and let $F$ be a function on $\R^2$ supported in
$B(0,r)$. Define
\[
T_UF(\theta,z,x)
:=
K_{Ue^z}*\pi_\theta F(x),
\, \text{ where }
(\theta,z,x)\in\Pone\times J\times\R.
\]
We regard $T_UF$ as a function on
$\Pone\times J\times\R$, equipped with the product measure
$d\Lambda(\theta,z)\,dx$.

The $L^1$ estimate \eqref{eq:L1-angular-endpoint} gives
$\|T_UF\|_{L^1(\Lambda\times\Leb^1)}
\lesssim
\|F\|_{L^1(\R^2)},$
while \eqref{eq:L2-angular-endpoint} gives
$\|T_UF\|_{L^2(\Lambda\times\Leb^1)}
\lesssim
U^{-\kappa/2}\|F\|_{L^2(\R^2)}.$
Therefore, by the Riesz--Thorin interpolation theorem, for every
$1<q\leq2$,
$\|T_UF\|_{L^q(\Lambda\times\Leb^1)}
\lesssim
U^{-\kappa(q-1)/q}\|F\|_{L^q(\R^2)}.$
Raising this estimate to the $q$th power gives
\begin{equation}\label{eq:Lq-angular-endpoint}
\int_{\Pone\times J}
\left\|K_{Ue^z}*\pi_\theta F\right\|_{L^q}^q\,
d\Lambda(\theta,z)
\lesssim
U^{-\kappa(q-1)}\|F\|_{L^q}^q.
\end{equation}

We now apply this estimate to the function
$F_U=L_U*\eta$
introduced in Step~1, and some $1<q\leq 2$. Since $F_U$ need not be compactly supported,
choose $\chi_0\in C_c^\infty(\R^2)$, with
$0\leq\chi_0\leq1$, such that $\chi_0=1$ on a fixed neighbourhood of
$B(0,M)$. Since $\eta$ is supported in $B(0,M)$ and
$L_U(x)=U^2L_1(Ux),$
with $L_1$ Schwartz, for every $N\geq1$ we have
\begin{equation}\label{eq:FU-tail}
\|(1-\chi_0)F_U\|_{L^1}
\leq
C_NU^{-N}.
\end{equation}
By \eqref{eq:annular-exact-representation} and the decomposition
$F_U=\chi_0F_U+(1-\chi_0)F_U,$
we have
\begin{align*}
\int_{\Pone\times J}H_{U,\eta}(\theta,z)\,d\Lambda(\theta,z)
&\leq
\int_{\Pone\times J}
\left\|K_{Ue^z}*\pi_\theta(\chi_0F_U)\right\|_{L^1}\,
d\Lambda(\theta,z)\\
&\quad+
\int_{\Pone\times J}
\left\|K_{Ue^z}*
\pi_\theta\bigl((1-\chi_0)F_U\bigr)\right\|_{L^1}\,
d\Lambda(\theta,z).
\end{align*}
By \eqref{eq:L1-angular-endpoint} and \eqref{eq:FU-tail}, the second
term is bounded by
$\|(1-\chi_0)F_U\|_{L^1}
\lesssim_N
U^{-N}.$

Thus, it remains to estimate
$\int_{\Pone\times J}
\left\|K_{Ue^z}*\pi_\theta(\chi_0F_U)\right\|_{L^1}\,
d\Lambda(\theta,z).$
Choose $m\geq1$ so that
$2^m\leq U<2^{m+1}.$
Since $0\leq\chi_0\leq1$, we have
$\|\chi_0F_U\|_{L^q}
\leq
\|F_U\|_{L^q}.$
Hence \eqref{eq:Lq-angular-endpoint} gives
\begin{align}
&\int_{\Pone\times J}
\left\|K_{Ue^z}*\pi_\theta(\chi_0F_U)\right\|_{L^q}^q\,
d\Lambda(\theta,z)
\notag\\
&\qquad\lesssim
U^{-\kappa(q-1)}
\|F_U\|_{L^q}^q.
\label{eq:Lq-first}
\end{align}
By Lemma~\ref{lem:qmass-annular},
$\|F_U\|_{L^q}^q
=
\|L_U*\eta\|_{L^q}^q
\lesssim
U^{2(q-1)}S_{m,q}(\eta).$
Using the hypothesis \eqref{eq:angular-qmass-assumption},
$S_{m,q}(\eta)
\leq
B2^{-m(q-1)S}.$
Since $2^m\leq U<2^{m+1}$,
$2^{-m(q-1)S}
\lesssim
U^{-(q-1)S},$
and therefore
\[
\|F_U\|_{L^q}^q
\lesssim
B\,U^{(q-1)(2-S)}.
\]
Substituting this into \eqref{eq:Lq-first}, for our $1<q\leq 2$ we have
\begin{equation}\label{eq:Lq-combined}
\int_{\Pone\times J}
\left\|K_{Ue^z}*\pi_\theta(\chi_0F_U)\right\|_{L^q}^q\,
d\Lambda(\theta,z)
\lesssim
B\,U^{-(q-1)(S+\kappa-2)}.
\end{equation}

\medskip
\noindent\textbf{Step 4: The $L^1$ angular average.}
Since $\chi_0F_U$ is supported in the fixed compact set
$\supp\chi_0$, all the projections
$\pi_\theta(\chi_0F_U)$ are supported in a common bounded interval $I$.
Fix a slightly larger interval $I^+$. By H\"older's inequality in the
line variable, followed by H\"older's inequality with respect to
$\Lambda$,
\begin{align*}
&\int_{\Pone\times J}
\left\|K_{Ue^z}*\pi_\theta(\chi_0F_U)\right\|_{L^1(I^+)}\,
d\Lambda(\theta,z)
\\
&\qquad\lesssim
\int_{\Pone\times J}
\left\|K_{Ue^z}*\pi_\theta(\chi_0F_U)\right\|_{L^q}\,
d\Lambda(\theta,z)
\\
&\qquad\lesssim
\left(
\int_{\Pone\times J}
\left\|K_{Ue^z}*\pi_\theta(\chi_0F_U)\right\|_{L^q}^q\,
d\Lambda(\theta,z)
\right)^{1/q}
\\
&\qquad\lesssim
B^{1/q}
U^{-\frac{q-1}{q}(S+\kappa-2)}
=
B^{1/q}U^{-\beta}.
\end{align*}
Since $I$ is contained in the interior of $I^+$,
$d:=\operatorname{dist}(I,\R\setminus I^+)>0.$
For $f_\theta:=\pi_\theta(\chi_0F_U)$ we have
$\supp f_\theta\subset I$ and $\|f_\theta\|_{L^1}\lesssim1$ uniformly
in $\theta$ and $U$. Hence, by \eqref{eq:kernel-decay},
\begin{align*}
\|K_{Ue^z}*f_\theta\|_{L^1(\R\setminus I^+)}
&\leq
\|f_\theta\|_{L^1}
\sup_{y\in I}
\int_{|x-y|\geq d}|K_{Ue^z}(x-y)|\,dx\\
&\lesssim_N
U^{-N},
\end{align*}
uniformly in $(\theta,z)\in\Pone\times J$, since $e^z\asymp_J1$.

Combining the preceding estimates with
\eqref{eq:annular-exact-representation}, we obtain
\[
\int_{\Pone\times J}
H_{U,\eta}(\theta,z)\,d\Lambda(\theta,z)
\lesssim
B^{1/q}U^{-\beta}+C_NU^{-N}.
\]
This proves \eqref{eq:angular-gain}.\end{proof}

\subsection{Proof of Theorem~\ref{thm:stopped-intro}}
\label{subsec:proof-stopped}
Once Proposition~\ref{prop:angular-gain} is established, Theorem~\ref{thm:stopped-intro} is fairly straightforward to derive.

\begin{proof}[Proof of Theorem~\ref{thm:stopped-intro}]
Fix $e\in E$ and a sufficiently large $R$, and fix the data supplied
by the theorem. Recalling the notations of Section \ref{subsec:finite-frequency}, for $1\leq j\leq N$, define
$H_{\xi,j}:=H_{U,\eta_{\xi,j}},$
and let $H_{\xi,j}^{\sharp}$ be the  positive smoothing from
Lemma~\ref{lem:H-smoothing}. Then, by this Lemma, uniformly in $\xi,j$,
\begin{equation}\label{eq:proof-smoothing-bounds}
\|H_{\xi,j}-H_{\xi,j}^{\sharp}\|_\infty\lesssim U^{-1},
\quad
\|H_{\xi,j}^{\sharp}\|_{C^r}\lesssim U^{2r-1}.
\end{equation}
The measurable dependence asserted in Lemma~\ref{lem:H-smoothing}
ensures that the following integrals are well defined.
Condition~\textup{(B)} and the first estimate in
\eqref{eq:proof-smoothing-bounds} give
\begin{equation}\label{eq:proof-B-smoothed}
Q_R(\nu,e)
\lesssim
\int_\Xi\sum_{j=1}^N
\int H_{\xi,j}^{\sharp}\,dP_{\xi,j}\,dm(\xi)
+U^{-1}+e^{-ck}.
\end{equation}
Since $H_{\xi,j}^{\sharp}\geq0$, condition~\textup{(C)} applies.
Using both estimates in \eqref{eq:proof-smoothing-bounds}, we obtain
\begin{align}
\int H_{\xi,j}^{\sharp}\,dP_{\xi,j}
&\lesssim
\int H_{\xi,j}^{\sharp}\,d\Lambda_{\xi,j}
+e^{-\epsilon k}U^{2r-1}
\notag\\
&\lesssim
\int H_{\xi,j}\,d\Lambda_{\xi,j}
+U^{-1}+e^{-\epsilon k}U^{2r-1}.
\label{eq:proof-C-smoothed}
\end{align}
Condition~\textup{(A)} supplies the same exponent $S$ for every
$\eta_{\xi,j}$, while the angular marginal of each
$\Lambda_{\xi,j}$ satisfies the same $\kappa$-Frostman estimate.
Proposition~\ref{prop:angular-gain} therefore gives, uniformly in
$\xi,j$,
\begin{equation}\label{eq:proof-angular-use}
\int H_{\xi,j}\,d\Lambda_{\xi,j}
\lesssim
U^{-\beta},
\qquad
\beta=\frac{q-1}{q}(S+\kappa-2)>0.
\end{equation}
Here the $O_N(U^{-N})$ term in
Proposition~\ref{prop:angular-gain} is absorbed into $U^{-\beta}$ by
choosing $N>\beta$ and increasing the implicit constant.

Substituting \eqref{eq:proof-C-smoothed} and
\eqref{eq:proof-angular-use} into \eqref{eq:proof-B-smoothed}, and
using the uniform bound $N\leq N_0$, yields
\begin{equation}\label{eq:proof-master-bound}
Q_R(\nu,e)
\lesssim
U^{-\beta}+U^{-1}
+e^{-\epsilon k}U^{2r-1}
+e^{-ck}.
\end{equation}
Since $U\asymp e^{\delta k}$, the first two terms decay exponentially
in $k$, while
$e^{-\epsilon k}U^{2r-1}
\lesssim
e^{-[\epsilon-(2r-1)\delta]k}.$
The exponent is positive by
\eqref{eq:smoothing-renewal-balance-intro}. Hence
$Q_R(\nu,e)\lesssim e^{-c_*k}$
for some $c_*>0$. Since $k\asymp\log R$, it follows that
\[
Q_R(\nu,e)\lesssim R^{-\gamma}
\]
for some $\gamma>0$, uniformly in $e\in E$.

For $\rho=(\pi_e)_{\#}\nu$, the definition of $Q_R$ gives
\[
\|\Delta_R\rho\|_{L^1}
=
Q_R(\nu,e)
\lesssim
R^{-\gamma}.
\]
By the standard criterion recalled in
Subsection~\ref{subsec:main-technical-intro} (see also \cite[Lemma~2.4]{ARHWSmoothProjections2026}), this implies
$\rho\ll\Leb^1$. This completes the proof.
\end{proof}

\begin{remark}\label{rem:stopped-besov}
As noted in Subsection~\ref{subsec:main-technical-intro}, the conclusion
of Theorem~\ref{thm:stopped-intro} is slightly stronger than just absolute
continuity. Indeed, the proof gives
$\|\Delta_R \left( (\pi_e)_{\#}\nu \right) \|_{L^1}\lesssim R^{-\gamma},
\, R\geq2,$
uniformly in $e\in E$. Hence, for every $0<t<\gamma$,
$\sum_{j\geq1}
2^{tj}\|\Delta_{2^j} \left( (\pi_e)_{\#}\nu \right) \|_{L^1}
<\infty.$
Thus, by e.g. \cite[Lemma~2.4]{ARHWSmoothProjections2026}, the density of $(\pi_e)_{\#}\nu$ belongs to the inhomogeneous Besov space
$B^t_{1,1}(\R)$, with a bound uniform in $e\in E$.
\end{remark}
\section{Uniform $q$-mass of cylinders of self-conformal measures}
\label{sec:conformal-qmass}

Recall the definition of self-conformal measures on the line from
\eqref{eq:self-conformal}, and the planar version from
Section~\ref{subsec:planar-conformal-intro}. In this section we give a
full proof 
of \eqref{eq:self-affine-lq-limit} in the self-conformal case. Recall that the  right-continuity
of $L^q$ dimensions at $q=1$ was first noted by Shmerkin and Solomyak
\cite[Remark~5.2]{Shmerkin2016Solomyak}. Recall also that our
self-conformal measures are always assumed to have strictly positive
weights.

Recall from \eqref{eq:qmass-intro} that
$S_{m,q}(\eta)
=
\sum_{Q\in\mathcal D_m}\eta(Q)^q,$ and from \eqref{eq:Dq-prelim} that \newline
$D_q(\eta)
=
\liminf_{m\to\infty}
\frac{-\log S_{m,q}(\eta)}
{(q-1)m\log 2}.$
\begin{thm}\label{thm:conformal-uniform-qmass}
Let $\theta$ be a self-conformal measure, either on the line with
respect to a $C^2$ IFS, or on the plane with respect to a $C^\omega$
IFS. Then
\begin{equation}\label{eq:conformal-Dq-continuity}
\lim_{q\downarrow1}D_q(\theta)=\dim \theta.
\end{equation}
In particular, for every $d_0<\dim \theta$ there are
\[
1<q<2,
\quad
d_0<S<D_q(\theta),
\quad
B<\infty,
\]
such that
\begin{equation}\label{eq:conformal-uniform-qmass}
S_{m,q}(\theta)
\leq
B\,2^{-m(q-1)S}
\quad m\geq1.
\end{equation}
\end{thm}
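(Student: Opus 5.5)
The plan is to prove the two inequalities $\limsup_{q\downarrow1}D_q(\theta)\le\dim\theta$ and $\liminf_{q\downarrow1}D_q(\theta)\ge\dim\theta$ separately. The second statement then follows directly. Given $d_0<\dim\theta$, pick $q>1$ close to $1$ with $D_q(\theta)>d_0$, pick $S\in(d_0,D_q(\theta))$, and apply \eqref{eq:qmass-from-Dq}. We will use two standard facts. The first is that $\theta$ is exact dimensional, with $\dim\theta=h/\chi$, the ratio of the entropy of the coding to the Lyapunov exponent, computed with respect to the Bernoulli measure on $\mathcal A^{\N}$; this is Feng--Hu \cite{feng2009dimension}. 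The second is bounded distortion for $C^2$, and for $C^\omega(\mathbb C)$, conformal IFSs: for every word $u$ we have $|f_u'(x)|\asymp|f_u'(y)|$ uniformly on the domain, and $f_u(D)$ is comparable to a ball of radius $\asymp\|f_u'\|$.

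\textbf{Upper bound.} This direction is soft. By Jensen (or Hölder), for $q>1$ we have $-\frac{1}{q-1}\log S_{m,q}\le H(\theta,\mathcal D_m)$, and $\frac{1}{m\log 2}H(\theta,\mathcal D_m)\to\dim\theta$ by exact dimensionality. Hence $D_q(\theta)\le\dim\theta$ for every $q>1$.

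\textbf{Lower bound.} This is the core of the proof and the main obstacle. Fix $\epsilon>0$, and for $n$ large consider the stopping partition
\[
\mathcal W_n=\{u:\ \|f_u'\|\le 2^{-n}<\|f_{u^-}'\|\},
\]
where $u^-$ denotes $u$ with its last letter removed. Then $\theta=\sum_{u\in\mathcal W_n}p_u\,(f_u)_\#\theta$, and each $f_u(\supp\theta)$ has diameter $\lesssim 2^{-n}$, so it meets only $O(1)$ dyadic squares of $\mathcal D_n$. Note that we do not have separation, so we cannot identify squares with cylinders. Instead we use the convexity bound
\[
\theta(Q)^q\le\Big(\sum_{u:\,f_u\cap Q\neq\emptyset}p_u\Big)^q,
\]
and we need to control how many cylinders can pile up on a single square. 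The method we want to use is to pass to a large block length $L$ and apply a variational/thermodynamic argument. We consider the pressure-type function
\[
P(q)=\lim_n\frac1n\log\sum_{Q}\Big(\sum_{u\in\mathcal W_n,\ f_u\cap Q\neq\emptyset}p_u\Big)^q ,
\]
which is convex in $q$ and satisfies $P(1)=0$. Differentiability from the right at $q=1$ would give the result, but proving that directly is hard. So the actual route will be Shmerkin--Solomyak's argument \cite[Remark~5.2]{Shmerkin2016Solomyak}, in the following steps.

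(a) \emph{Regularity of $\tau$.} The function $\tau(q)=(q-1)D_q$ is concave. Using the self-conformal relation and bounded distortion, one shows that $\liminf$ can be replaced by $\lim$, via approximate submultiplicativity $S_{m+n,q}\lesssim S_{m,q}S_{n,q}$ up to constants and an $O(1)$ overlap factor. Hence $\tau$ is a genuine concave $L^q$-spectrum with $\tau(1)=0$.

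(b) \emph{Identify the right derivative.} For concave $\tau$, the right derivative $\tau'(1^+)$ equals $\lim_{q\downarrow1}D_q$. We also have $\tau'(1^+)\ge\underline{\dim}_{\mathrm{entropy}}$ once a multifractal Legendre bound is established. This is the standard fact that the lower entropy dimension lies in $[\tau'(1^+),\tau'(1^-)]$, valid when $\tau$ is differentiable at $1$. The key input is therefore differentiability of $\tau$ at $q=1$.

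(c) \emph{Differentiability at $1$.} This is the step we expect to be the hardest. We will attempt it through a large-deviation estimate: for every $\epsilon>0$, the $\theta$-measure of the set of points with $\theta(B(x,2^{-n}))>2^{-n(\dim\theta-\epsilon)}$ decays exponentially in $n$. Given such a bound, splitting $S_{n,q}$ into heavy and light squares gives
\[
S_{n,q}\le 2^{-n(q-1)(\dim\theta-\epsilon)}+e^{-cn}
\]
for $q$ close to $1$, which yields $D_q\ge\dim\theta-2\epsilon$. To get the exponential large-deviation estimate, we will combine two ingredients. The first is a Cramér bound for the Birkhoff sums $\log p_{u|k}$ and $\log|f_{u|k}'|$ under the Bernoulli measure; these are Hölder cocycles over a full shift, so exponential large deviations hold, using the $C^2$ or analytic regularity. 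The second is exact dimensionality, made quantitative by handling the overlap count through a block version of entropy (Hochman-style). The point is that, at a large block scale $L$, the conditional entropy of the partition $\mathcal D_{n}$ given the $L$-block cylinder structure is $\ge L(\dim\theta-\epsilon)\log2$, and Azuma-type concentration over $n/L$ blocks gives the exponential tail. Bounded distortion makes each block statistically a copy of $\theta$ up to a bounded-distortion affine change, so the estimate is uniform. This concentration step is where the real work lies.
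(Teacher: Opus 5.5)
Your upper bound (Jensen plus exact dimensionality) is the same as the paper's. The lower bound, however, has a genuine gap: all the work is deferred to step (c), which is not proved and is essentially a restatement of the theorem. By Chebyshev, the $\theta$-mass of the squares $Q\in\mathcal D_n$ with $\theta(Q)>2^{-n(\dim\theta-\epsilon)}$ is at most $2^{n(q-1)(\dim\theta-\epsilon)}S_{n,q}(\theta)$. So exponential decay of this heavy mass is equivalent to having $D_q(\theta)>\dim\theta-\epsilon$ for some $q>1$.

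The tools you name do not supply this decay. Cramér bounds for $\log p_{u|k}$ and $\log|f'_{u|k}|$ concentrate individual cylinders at the symbolic dimension $h/\chi$. That can strictly exceed $\dim\theta$ under overlaps, and it says nothing about how many cylinders pile up in one square. The conditional entropy of $\mathcal D_{(j+1)L}$ given the cell $\mathcal D_{jL}(x)$ is bounded below only on average over cells, since a cell may carry only thin slivers of cylinders. The information increments are also unbounded, so Azuma does not apply as stated. Step (b) does not rescue this: in general $\tau'(1^+)\le\liminf_n H_n(\theta)/(n\log 2)$, which is the wrong direction, and that is why you end up needing differentiability.

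The missing idea is already in your step (a), provided you keep track of the constant. The overlap count enters only through $\bigl(\sum_{i\le M}a_i\bigr)^q\le M^{q-1}\sum_i a_i^q$, together with Minkowski in $\ell^q$ over cylinders assigned to the same coarse square. This gives $S_{n+m,q}(\theta)\le M^{q-1}S_{n,q}(\theta)S_{m,q}(\theta)$ with $M$ independent of $q$. Applying Fekete's lemma to $\log S_{n,q}(\theta)+(q-1)\log M$ then gives, for every $n$ and every $q>1$,
\[
D_q(\theta)\ \ge\ \frac{\log S_{n,q}(\theta)}{(1-q)\,n\log 2}-\frac{\log M}{n\log 2},
\]
with an error term independent of $q$. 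Now fix $n$ large and let $q\downarrow1$. The first term tends to $H_n(\theta)/(n\log 2)$, which is within $\epsilon$ of $\dim\theta$ by exact dimensionality (Feng--Hu). This yields $\liminf_{q\downarrow1}D_q(\theta)\ge\dim\theta$ with no large-deviation or differentiability input; it is the Shmerkin--Solomyak mechanism that the paper adapts. A generic multiplicative loss $C$ in place of $M^{q-1}$ would instead produce the error $\log C/((q-1)n\log 2)$, which blows up as $q\downarrow1$. The $M^{q-1}$ form is exactly what lets you interchange the two limits.
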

Together with the following lemma, Theorem~\ref{thm:conformal-uniform-qmass}
provides the $q$-mass input needed for condition~\textup{(A)} of
Theorem~\ref{thm:stopped-intro} in the proofs of
Theorem~\ref{thm:convolution-intro}\textup{(A)} and
Theorem~\ref{thm:planar-conformal-intro}. In
Theorem~\ref{thm:convolution-intro}\textup{(B)},  the Ahlfors--David regular
factor can be handled directly.

\begin{lemma}\label{cor:bilip-qmass}
Let $\eta \in \mathcal{P}(\R^d)$,
$d\in\{1,2\}$, and suppose that
\begin{equation}\label{eq:bilip-qmass-hyp}
S_{m,q}(\eta)
\leq
B\,2^{-m(q-1)S}
\qquad(m\geq1)
\end{equation}
for some $q>1$, $S>0$, and $B<\infty$.
Let $\mathcal T$ be a family of maps
$T:\supp\eta\to\R^d$
for which there exists $L\geq1$ such that
\[
L^{-1}|x-y|
\leq
|T(x)-T(y)|
\leq
L|x-y|
\quad
x,y\in\supp\eta,\, T\in\mathcal T.
\]
Then there exists
$B'=B'(B,q,S,L,d)<\infty$
such that
\begin{equation}\label{eq:bilip-qmass-conclusion}
S_{m,q}(T_\#\eta)
\leq
B'\,2^{-m(q-1)S}
\quad m\geq1, \, T\in\mathcal{T}.
\end{equation}
\end{lemma}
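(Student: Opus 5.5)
The plan is to compare the dyadic squares at scale $2^{-m}$ for $T_\#\eta$ with dyadic squares for $\eta$ at a slightly coarser scale $2^{-(m-\ell)}$, where $\ell$ depends only on $L$ and $d$. Convexity of $t\mapsto t^q$ then lets us control each piece by $q$-moments of $\eta$.

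Fix $m\geq1$ and $T\in\mathcal T$. For $Q\in\mathcal D_m$ we have
\[
(T_\#\eta)(Q)=\eta\bigl(T^{-1}(Q)\cap\supp\eta\bigr).
\]
By the lower bi-Lipschitz bound, the set $A_Q:=T^{-1}(Q)\cap\supp\eta$ has diameter at most $L\sqrt d\,2^{-m}$. Choose $\ell=\ell(L,d)\geq0$ with $L\sqrt d\leq 2^{\ell}$. Then $A_Q$ meets at most $c_d:=2^d$ (or $3^d$, to be safe) cubes of $\mathcal D_{m-\ell}$. For $m\leq\ell$ we simply use $S_{m,q}\leq 1$ and absorb the finitely many scales into $B'$.

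Using $(\sum_{i=1}^{c_d}a_i)^q\leq c_d^{q-1}\sum_i a_i^q$, we get
\[
(T_\#\eta)(Q)^q\leq c_d^{q-1}\sum_{P\in\mathcal D_{m-\ell},\,P\cap A_Q\neq\emptyset}\eta(P\cap A_Q)^q .
\]
Summing over $Q$ requires a multiplicity bound. The sets $A_Q$ are pairwise disjoint, since the cubes $Q$ are disjoint and we take preimages. Hence for a fixed $P$ we have $\sum_Q \eta(P\cap A_Q)^q\leq\bigl(\sum_Q\eta(P\cap A_Q)\bigr)^q\leq\eta(P)^q$, using $q>1$ and superadditivity of $t^q$. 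This gives
\[
S_{m,q}(T_\#\eta)\leq c_d^{q-1}S_{m-\ell,q}(\eta)\leq c_d^{q-1}B\,2^{\ell(q-1)S}\,2^{-m(q-1)S}.
\]
Only the lower Lipschitz bound and injectivity of $T$ (which follows from the same bound) are actually used; the upper bound is not needed.

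This argument would then be carried out in detail. The only subtle step is the one just used: the disjointness of the preimages $A_Q$ is what allows superadditivity to avoid a multiplicity loss when summing over $Q$. The constant obtained is $B'=c_d^{q-1}B\,2^{\ell(q-1)S}+2^{\ell(q-1)S}$, which depends only on $B,q,S,L,d$ and is uniform in $T\in\mathcal T$.
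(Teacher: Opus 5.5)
Your proof is correct and is essentially the paper's argument. You bound the incidence between the pulled-back partition $\{T^{-1}(Q)\cap\supp\eta\}$, whose atoms have diameter $\lesssim_{L,d}2^{-m}$ by the lower Lipschitz bound, and a dyadic partition, and then apply the power-mean inequality together with superadditivity of $t\mapsto t^q$ (the Shmerkin--Solomyak partition comparison). The only difference is cosmetic: the paper compares with $\mathcal D_m$ itself at a cost $M(L,d)^{q-1}$, while you pass to $\mathcal D_{m-\ell}$ to get incidence $2^d$ at the cost of a factor $2^{\ell(q-1)S}$; also, disjointness of the preimages $A_Q$ holds automatically and does not need injectivity.
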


The proof is an adaptation of an argument of Shmerkin and Solomyak
\cite[Theorem 5.1]{Shmerkin2016Solomyak} for self-similar measures. They observed that their method extends from
the self-similar to the self-conformal setting; we give the details here
in the form needed for our applications.

\begin{proof}[Proof of Theorem~\ref{thm:conformal-uniform-qmass}]

\medskip
\noindent\textbf{Step 1: uniform geometry of normalized cylinders.}
Fix a base point $x_0$ in the invariant interval in dimension one,
and in $D$ in dimension two. For a finite word
$w=w_1\cdots w_n$, write
$f_w:=f_{w_1}\circ\cdots\circ f_{w_n},$ and $a_w:=|f_w'(x_0)|.$

In dimension one, the standard bounded distortion property
\cite[Theorem~2.1]{algom2020decay} gives a constant $L\geq1$ such that
\begin{equation}\label{eq:1d-distortion-section4}
L^{-1}
\leq
\frac{|f_w'(x)|}{|f_w'(y)|}
\leq
L, \text{ for every word w } \text{ and points } x,y \text{ in the invariant interval}.
\end{equation}
Since
$f_w'$ has constant sign, the mean value theorem gives
\begin{equation}\label{eq:1d-quasisim-section4}
L^{-1}a_w|x-y|
\leq
|f_w(x)-f_w(y)|
\leq
La_w|x-y|, \text{ for every word w } \text{ and points } x,y \text{ in the invariant interval}.
\end{equation}
Thus, after translating $f_w(x_0)$ to the origin and rescaling by
$a_w^{-1}$, the cylinder maps form a uniformly bi-Lipschitz family.

In the planar case, after slightly enlarging $D$ if necessary, the
standard bounded distortion property for holomorphic conformal IFSs
gives, uniformly over all finite words $w$,
\begin{equation}\label{eq:planar-distortion-section4}
L^{-1}
\leq
\frac{|f_w'(x)|}{|f_w'(y)|}
\leq
L
\qquad(x,y\in D_1);
\end{equation}
see \cite[equation (4)]{algom2024plane}. By \cite[(2.1)--(2.2)]{PeresRamsSimonSolomyak2001}, there is a constant $C\geq1$,
independent of $w$, such that
\begin{equation}\label{eq:common-quasisim-section4}
C^{-1}a_w|x-y|
\leq
|f_w(x)-f_w(y)|
\leq
Ca_w|x-y|
\end{equation}
for every finite word $w$ and all $x,y\in D$.
Consequently, the normalized maps
\[
z\longmapsto
\frac{f_w(z)-f_w(x_0)}{f_w'(x_0)}
\]
form a uniformly bi-Lipschitz family on $D$. Since they send $x_0$ to
the origin and $D$ is bounded, their images are contained in one fixed
ball.

\medskip

\noindent\textbf{Step 2: comparing dyadic and symbolic cylinders.}
Let $\mathcal A$ be the finite alphabet indexing the IFS, and let
$p=(p_i)_{i\in\mathcal A}$ be the strictly positive probability vector
defining $\theta$. Put
$\Omega=\mathcal A^{\N}$ and
$\mathbb P=p^{\N} \in \mathcal{P} (\Omega)$,
and let $\Pi:\Omega\to\R^d$ be the coding map, so that
$\theta=\Pi_\#\mathbb P$. For $\omega\in\Omega$ and $k\geq1$, write
$\omega|k=\omega_1\cdots\omega_k.$

For $n\geq1$, define the stopping time
\begin{equation}\label{eq:dyadic-stop-section4}
\tau_n(\omega)
:=
\min\left\{
k\geq1:
a_{\omega|k}\leq2^{-n}
\right\}.
\end{equation}
Let
$\mathcal W_n
:=
\left\{
\omega|\tau_n(\omega):
\omega\in\Omega
\right\}$.
Then the cylinders $\{[w]:w\in\mathcal W_n\}$ form a partition of
$\Omega$, where $[w]:=\{\omega\in\Omega:\omega|_{|w|}=w\}$.
By the definition of $\tau_n$ and
the uniform lower bound on the one-step derivatives, there exists
$c>0$, depending only on the IFS, such that
\begin{equation}\label{eq:stop-scale-section4}
c\,2^{-n}
\leq
a_w
\leq
2^{-n}
\quad
n\geq1,\ w\in\mathcal W_n.
\end{equation}
Combining \eqref{eq:stop-scale-section4} with
\eqref{eq:common-quasisim-section4}, every $f_w(K)$,
$w\in\mathcal W_n$, has diameter $O(2^{-n})$, while $f_w^{-1}$ is
$O(2^n)$-Lipschitz on $f_w(K)$.

We shall repeatedly use the following standard partition comparison:
if $\mathcal P$ and $\mathcal Q$ are measurable partitions of a
probability space and every atom of $\mathcal P$ meets at most $M$
atoms of $\mathcal Q$, then, for every $q\geq1$,
\[
\sum_{P\in\mathcal P}\mu(P)^q
\leq
M^{q-1}
\sum_{Q\in\mathcal Q}\mu(Q)^q;
\]
see \cite[Lemma~5.3]{Shmerkin2016Solomyak}. In particular, if the
incidence is bounded by $M$ in both directions, the two $q$-mass sums
are comparable up to factors $M^{q-1}$.

For each $w\in\mathcal W_n$, choose a dyadic cube
$Q_w\in\mathcal D_n$ meeting $f_w(K)$, and, for
$Q\in\mathcal D_n$, let
\[
A_Q
:=
\bigcup_{\substack{w\in\mathcal W_n\\ Q_w=Q}}[w],
\quad
P_Q:=\mathbb P(A_Q).
\]
The nonempty sets $A_Q$ form a measurable partition of $\Omega$.
By \eqref{eq:stop-scale-section4} and
\eqref{eq:common-quasisim-section4}, there exists a constant
$M_0\geq1$, depending only on the IFS and the ambient dimension, such
that each $A_Q$ meets at most $M_0$ atoms of the partition
$\bigl\{\Pi^{-1}(R):R\in\mathcal D_n\bigr\},$
and conversely. Hence, by
\cite[Lemma~5.3]{Shmerkin2016Solomyak},
\begin{equation}\label{eq:coarse-symbolic-qmass-section4}
\sum_{Q\in\mathcal D_n}P_Q^q
\leq
M_0^{q-1}S_{n,q}(\theta)
\qquad(q\geq1,\ n\geq1).
\end{equation}

\medskip
\medskip
\noindent\textbf{Step 3: submultiplicativity with the correct loss.}
We claim that there exists $M\geq1$, depending only on  
the IFS, such that
\begin{equation}\label{eq:qmass-submultiplicativity-section4}
S_{n+m,q}(\theta)
\leq
M^{q-1}S_{n,q}(\theta)S_{m,q}(\theta)
\quad\, n,m\geq1.
\end{equation}
Fix $n,m\geq1$ and $w\in\mathcal W_n$, and put
$p_w:=\mathbb P([w]).$
For $R\in\mathcal D_{n+m}$, set
$\mu_w(R)
:=
\mathbb P\bigl(\Pi^{-1}(R)\cap[w]\bigr).$
Identifying $[w]$ with $\Omega$ by deleting the prefix
$w$,  independence (by Bernoulli) gives
$\mu_w(R)
=
p_w\,\theta\bigl(f_w^{-1}(R)\bigr).$

By \eqref{eq:common-quasisim-section4} and
\eqref{eq:stop-scale-section4}, the set
$f_w^{-1}(R)\cap K$ has diameter $O(2^{-m})$, uniformly in
$n,m,w,R$. Conversely, if $Q\in\mathcal D_m$ meets $K$, then
$f_w(Q\cap K)$ has diameter $O(2^{-(n+m)})$. Hence the partition of
$\Omega$ induced by
\[
\bigl\{f_w^{-1}(R):R\in\mathcal D_{n+m}\bigr\}
\]
and the pullback under $\Pi$ of $\mathcal D_m$ have uniformly bounded
incidence, in both directions. By the partition comparison used in
Step~2, there exists $M_1\geq1$, depending only on the IFS,
such that
\begin{equation}\label{eq:fine-inside-cylinder-section4}
\sum_{R\in\mathcal D_{n+m}}\mu_w(R)^q
\leq
M_1^{q-1}p_w^q S_{m,q}(\theta).
\end{equation}

Recalling the coarse partition from
Step~2, for $Q\in\mathcal D_n$ let
\[
\mathcal W_n(Q)
:=
\{w\in\mathcal W_n:Q_w=Q\},
\quad
P_Q
=
\sum_{w\in\mathcal W_n(Q)}p_w, \text{ and define }
\mu_Q(R)
:=
\sum_{w\in\mathcal W_n(Q)}\mu_w(R).
\]
Since the cylinders $[w]$, $w\in\mathcal W_n$, partition $\Omega$,
$\theta(R)
=
\sum_{Q\in\mathcal D_n}\mu_Q(R).$

There exists $M_2\geq1$, depending only on the IFS  and the
ambient dimension, such that for every
$R\in\mathcal D_{n+m}$ there are at most $M_2$ cubes
$Q\in\mathcal D_n$ for which $\mu_Q(R)>0$. Indeed, if
$\mu_Q(R)>0$, then for some $w\in\mathcal W_n(Q)$ the set $f_w(K)$
meets $R$, while by construction $Q$ also meets $f_w(K)$; the diameter
bound from Step~2 therefore forces $Q$ to lie within
$O(2^{-n})$ of $R$.
It follows that
\[
\theta(R)^q
\leq
M_2^{q-1}
\sum_{Q\in\mathcal D_n}\mu_Q(R)^q.
\]

Summing over $R\in\mathcal D_{n+m}$ and using Minkowski's inequality,
followed by \eqref{eq:fine-inside-cylinder-section4}, gives
\begin{align*}
S_{n+m,q}(\theta)
&\leq
M_2^{q-1}
\sum_{Q\in\mathcal D_n}
\sum_{R\in\mathcal D_{n+m}}\mu_Q(R)^q
\\
&\leq
M_2^{q-1}
\sum_{Q\in\mathcal D_n}
\left(
\sum_{w\in\mathcal W_n(Q)}
\left(
\sum_{R\in\mathcal D_{n+m}}\mu_w(R)^q
\right)^{1/q}
\right)^q
\\
&\leq
(M_1M_2)^{q-1}
S_{m,q}(\theta)
\sum_{Q\in\mathcal D_n}P_Q^q.
\end{align*}
Finally, \eqref{eq:coarse-symbolic-qmass-section4} yields
\[
S_{n+m,q}(\theta)
\leq
(M_0M_1M_2)^{q-1}
S_{n,q}(\theta)S_{m,q}(\theta),
\]
which proves \eqref{eq:qmass-submultiplicativity-section4} with
$M=M_0M_1M_2$.

\medskip
\medskip
\noindent\textbf{Step 4: right-continuity at $q=1$.}

Let
\[
H_n(\theta)
:=
\sum_{Q\in\mathcal D_n^{(d)}}
\theta(Q)\log\frac{1}{\theta(Q)}
\]
Since  self-conformal measures (in both settings) are exact
dimensional by Feng--Hu \cite[Theorem~2.8]{feng2009dimension}, their
entropy dimension agrees with their Hausdorff dimension:
\begin{equation}\label{eq:entropy-dimension-section4}
\lim_{n\to\infty}
\frac{H_n(\theta)}{n\log 2}
=
\dim\theta.
\end{equation}
Set
$A_n(q)
:=
\log S_{n,q}(\theta)+(q-1)\log M.$
By \eqref{eq:qmass-submultiplicativity-section4},
$A_{n+m}(q)\leq A_n(q)+A_m(q),$
so Fekete's lemma gives
\[
\lim_{n\to\infty}\frac{A_n(q)}{n}
=
\inf_{n\geq1}\frac{A_n(q)}{n}.
\]
Since $1-q<0$, this is equivalent to
\begin{equation}\label{eq:Dq-fixed-scale-section4}
D_q(\theta)
=
\sup_{n\geq1}
\left[
\frac{\log S_{n,q}(\theta)}
     {(1-q)n\log 2}
-
\frac{\log M}{n\log 2}
\right].
\end{equation}

For each fixed $n$, the collection of nonzero masses
\[
\{\theta(Q):Q\in\mathcal D_n^{(d)},\ \theta(Q)>0\}
\]
is a finite probability vector. Hence, by differentiating at $q=1$,
\begin{equation}\label{eq:renyi-to-shannon-section4}
\lim_{q\downarrow1}
\frac{\log S_{n,q}(\theta)}
     {(1-q)n\log 2}
=
\frac{H_n(\theta)}{n\log 2}.
\end{equation}
Fix $\varepsilon>0$. By \eqref{eq:entropy-dimension-section4}, choose
$n$ sufficiently large that
\[
\frac{H_n(\theta)}{n\log 2}
>
\dim\theta-\varepsilon
\qquad\text{and}\qquad
\frac{\log M}{n\log 2}<\varepsilon.
\]
Keeping this $n$ fixed, \eqref{eq:renyi-to-shannon-section4} implies
that, for all $q>1$ sufficiently close to $1$,
\[
\frac{\log S_{n,q}(\theta)}
     {(1-q)n\log 2}
>
\frac{H_n(\theta)}{n\log 2}-\varepsilon.
\]
Using \eqref{eq:Dq-fixed-scale-section4}, we obtain
$D_q(\theta)
>
\dim\theta-3\varepsilon.$
Thus
\[
\liminf_{q\downarrow1}D_q(\theta)
\geq
\dim\theta.
\]

For the reverse inequality, for every finite probability vector
$(p_i)$ and every $q>1$,
$\frac{1}{1-q}\log\sum_i p_i^q
\leq
\sum_i p_i\log\frac{1}{p_i}.$
Applying this to the dyadic masses $\{\theta(Q):Q\in\mathcal D_n^{(d)}\}$
gives
\[
\frac{\log S_{n,q}(\theta)}
     {(1-q)n\log 2}
\leq
\frac{H_n(\theta)}{n\log 2}.
\]
Passing to the limit in $n$ and using
\eqref{eq:entropy-dimension-section4}, we get
$D_q(\theta)\leq\dim\theta.$
Therefore
\[
\lim_{q\downarrow1}D_q(\theta)=\dim\theta,
\]
which proves \eqref{eq:conformal-Dq-continuity}.

Finally, let $d_0<\dim\theta$. By
\eqref{eq:conformal-Dq-continuity}, we may choose $q\in(1,2)$
sufficiently close to $1$, and then choose
\[
d_0<S<D_q(\theta).
\]
The estimate \eqref{eq:qmass-from-Dq}, after enlarging the constant to
cover finitely many initial scales, gives
\eqref{eq:conformal-uniform-qmass}. This completes the proof.
\end{proof}

\begin{proof}[Proof of Lemma~\ref{cor:bilip-qmass}]
Fix $T\in\mathcal T$. By assumption,
\[
L^{-1}|x-y|
\leq
|T(x)-T(y)|
\leq
L|x-y|
\quad x,y\in\supp\eta.
\]
Hence there exists $M=M(L,d)\geq1$ such that, for every $m\geq1$,
each set
$T^{-1}(R)\cap\supp\eta,$
for $R\in\mathcal D_m^{(d)},$
meets at most $M$ cubes of $\mathcal D_m^{(d)}$, and conversely the
image under $T$ of each $Q\cap\supp\eta$,
$Q\in\mathcal D_m^{(d)}$, meets at most $M$ cubes of
$\mathcal D_m^{(d)}$. Thus the dyadic partition at level $m$ and its
pullback under $T$ have incidence bounded by $M$ in both directions.

By the partition comparison used in Step~2 of the proof above,
$S_{m,q}(T_\#\eta)
\leq
M^{q-1}S_{m,q}(\eta).$
Using \eqref{eq:bilip-qmass-hyp}, we obtain
\[
S_{m,q}(T_\#\eta)
\leq
M^{q-1}B\,2^{-m(q-1)S}
\quad m\geq1 .
\]
Therefore \eqref{eq:bilip-qmass-conclusion} holds with
$B'=M^{q-1}B,$
uniformly in $T\in\mathcal T$.
\end{proof}

\section{Absolute continuity of convolutions}\label{sec:convolution-proof}
We now prove Theorem~\ref{thm:convolution-intro}. Recall from
Subsection~\ref{subsec:convolutions-intro} that a $C^2$ IFS on a compact
interval is a finite family
$\Phi=\{f_i:i\in\mathcal A\}$
satisfying \eqref{eq:1d-uniform-contraction}, and that a self-conformal
measure for $\Phi$ is a probability measure satisfying
\eqref{eq:self-conformal} for some strictly positive probability vector.
Recall also that $\Phi$ is linear if $f_i''=0$ on $K_\Phi$ for every
$i$, and is $C^2$-conjugate to linear if a $C^2$ change of coordinates
conjugates it to such a system. Finally, a probability measure is
$\alpha$-Ahlfors--David regular if
\[
C^{-1}r^\alpha\leq \nu(B(x,r))\leq Cr^\alpha
\quad
x\in\supp\nu,\ 0<r\leq\diam(\supp\nu)
\]
for some $C\geq1, \alpha\in (0,1)$.

Throughout this section $\mu$ denotes the nonlinear factor in
Theorem~\ref{thm:convolution-intro}: it is self-conformal for a $C^2$
IFS which is not $C^2$-conjugate to linear. The second factor $\nu$ is
either self-conformal for a $C^2$ IFS or Ahlfors--David regular. Our
goal is to prove
\[
\dim\mu+\dim\nu>1
\qquad\Longrightarrow\qquad
\mu*\nu\ll\Leb^1.
\]
The utility of the nonlinearity assumption will become explicit in
Subsection~\ref{subsec:convolution-angular}. It is this property that will allow us to exhibit the key condition (C) in Theorem \ref{thm:stopped-intro}.

Recall that convolution is the diagonal projection of the product measure. So, set
$e_0=2^{-1/2}(1,1) \in \Sone.$
Writing $\Sigma(x,y)=x+y$, by Lemma~\ref{lem:L1-scaling}
\[
\Sigma=\sqrt2\,\pi_{e_0},
\, \text{ and }
\|\Delta_R(\mu*\nu)\|_1
=
Q_{\sqrt2 R}(\mu\times\nu,[e_0])
\]
We suppress this  fixed factor
$\sqrt2$ below. 
\subsection{The derivative cocycle and the stopping rules}
\label{subsec:convolution-stopping}
Write
$
\Phi=\{f_i:i\in\mathcal A\}
$
for the IFS generating $\mu=\mu_{\mathbf p}$, and let
$\mathbb P=\mathbf p^{\N}$ be the corresponding Bernoulli measure on
$\Omega=\mathcal A^{\N}$.  Fix an arbitrary $x_0\in I$, and write
$
x_\omega
=
\lim_{n\to\infty}f_{\omega|_n}(x_0)
\in K_\Phi
$
for the coding map.  As usual, for
$w=w_1\cdots w_n\in\mathcal A^*$ we use the convention
$
f_w=f_{w_1}\circ\cdots\circ f_{w_n}.
$

For $\omega\in\Omega$, define
$$
S_n(\omega)
=
-\log\left|f'_{\omega|_n}(x_{\sigma^n\omega})\right|,
\qquad
\varepsilon_n(\omega)
=
\sgn f'_{\omega|_n}(x_{\sigma^n\omega})
\in\{\pm1\}.
$$
Thus $
f'_{\omega|_n}(x_{\sigma^n\omega})
=
\varepsilon_n(\omega)e^{-S_n(\omega)}.
$
We refer to $S_n$ as the derivative cocycle and to $\varepsilon_n$ as its
orientation component.  By the chain rule,
$
S_{n+1}(\omega)-S_n(\omega)
=
-\log\left|f'_{\omega_{n+1}}
(x_{\sigma^{n+1}\omega})\right|.
$
Hence the uniform contraction assumption
\eqref{eq:1d-uniform-contraction} gives constants
$0<D\leq D'<\infty$ such that
$
D
\leq
S_{n+1}(\omega)-S_n(\omega)
\leq
D'
$
for every $\omega\in\Omega$ and $n\geq0$.

For $\omega \in \Omega$, put
\begin{equation}\label{eq:convolution-first-stop}
\tau_k(\omega)=\min\{n:S_n(\omega)\geq k\},
\qquad
Y_k(\omega)=S_{\tau_k(\omega)}(\omega)-k\in[0,D'].
\end{equation}
Thus
$
f'_{\omega|\tau_k(\omega)}
\bigl(x_{\sigma^{\tau_k(\omega)}\omega}\bigr)
=
\varepsilon_{\tau_k(\omega)}(\omega)e^{-k-Y_k(\omega)}.
$
Define a measurable partition $\mathcal P_k$ of $\Omega$ by declaring
$$
\omega\sim_k\omega'
\quad\Longleftrightarrow\quad
\sigma^{\tau_k(\omega)}\omega
=
\sigma^{\tau_k(\omega')}\omega'.
$$
Thus the cells of $\mathcal P_k$ consist of those sequences having the same
 tail after the stopping rule $\tau_k$.  For $\mathbb P$-a.e.
$\xi\in\Omega$, let
$$
P_{k,\xi}:=\mathbb P_{\mathcal P_k(\xi)}
$$
denote the conditional measure of $\mathbb P$ on the cell
$\mathcal P_k(\xi)$.  Accordingly, for every integrable $F$,
$
\int F\,d\mathbb P
=
\int_\Omega\left(\int F\,dP_{k,\xi}\right)d\mathbb P(\xi).
$

It is worth emphasizing that the partition $\mathcal P_k$ is not, in general, a partition associated to a bona-fide stopping time.  Although $S_n$ is an additive derivative cocycle, its value need not be determined by the prefix $\omega|_{n}$, since the evaluation point $x_{\sigma^n\omega}$ may well depend on the full tail.  So, $\tau_k$ need not be a stopping time for the usual forward symbolic filtration. In some special cases such as a self-similar IFS, $S_n$ is actually prefix-measurable and this distinction disappears.  In the general self-conformal setting we will therefore need to compare $\tau_k$ below with a genuine prefix stopping time at the same geometric scale. We refer to \cite[Section 4]{algom2020decay} for more discussion of this subtle issue.

Fix a base point $x_0$ in the invariant interval and define for $\omega \in \Omega$, 
$$
\beta_k(\omega)
=
\min\big\{m:|f'_{\omega|m}(x_0)|<e^{-2k}\big\}.
$$
Unlike $\tau_k$,  $\beta_k$ is determined by a finite prefix and is therefore a genuine stopping time for the forward symbolic filtration.  By bounded distortion \eqref{eq:1d-distortion-section4} and \eqref{eq:convolution-first-stop}, for all sufficiently large $k$ we have $\beta_k>\tau_k$.

We will use the following elementary consequence of the distortion estimates from Section~\ref{sec:conformal-qmass}.

\begin{lemma}\label{lem:convolution-relative-Taylor}
There exists $C\geq1$ such that, for every $w\in\mathcal A^*$ and every $x,y\in I$,
\begin{equation}\label{eq:convolution-relative-Taylor}
\left|
f_w(x)-f_w(y)-f_w'(y)(x-y)
\right|
\leq
C|f_w'(y)|\,|x-y|^2.
\end{equation}
\end{lemma}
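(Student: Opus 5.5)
The plan is to prove \eqref{eq:convolution-relative-Taylor} by Taylor's theorem with integral remainder, controlling $f_w''$ through the bounded distortion of the log-derivative. Write
\[
f_w(x)-f_w(y)-f_w'(y)(x-y)=\int_y^x\bigl(f_w'(t)-f_w'(y)\bigr)\,dt .
\]
It therefore suffices to show that
\[
|f_w'(t)-f_w'(y)|\leq C|f_w'(y)|\,|t-y| \quad\text{for all } t,y\in I.
\]
Integrating this bound over $t$ between $y$ and $x$ gives \eqref{eq:convolution-relative-Taylor}, with $C$ replaced by $C/2$.

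The key step is a uniform Lipschitz bound on $\log|f_w'|$. Since $f_w'$ has constant sign on $I$, the chain rule gives
\[
\log|f_w'(t)|=\sum_{i=1}^{n}\log\bigl|f_{w_i}'\bigl(f_{\sigma^i w}(t)\bigr)\bigr|,
\]
where $\sigma^i w=w_{i+1}\cdots w_n$ and $f_{\sigma^n w}=\mathrm{id}$. Each map $\log|f_a'|$ is $C^1$ on a neighbourhood of $I$, with derivative $f_a''/f_a'$ bounded by some constant $A$; this uses that the IFS is $C^2$ and that $|f_a'|\geq\rho_-$ by \eqref{eq:1d-uniform-contraction}. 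Moreover, $f_{\sigma^i w}$ is $\rho_+^{\,n-i}$-Lipschitz on $I$. Hence
\[
\bigl|\log|f_w'(t)|-\log|f_w'(y)|\bigr|\leq A\sum_{i}\rho_+^{\,n-i}|t-y|\leq \frac{A}{1-\rho_+}\,|t-y|=:A'|t-y|.
\]
This is exactly the estimate underlying the bounded distortion property \eqref{eq:1d-distortion-section4}, made quantitative in $|t-y|$.

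To conclude, note that $|t-y|\leq|I|$, so $A'|t-y|$ is bounded. Using the elementary inequality $|e^s-1|\leq e^{|s|}|s|$, we get
\[
\left|\frac{f_w'(t)}{f_w'(y)}-1\right|\leq e^{A'|I|}A'\,|t-y|.
\]
Here the ratio $f_w'(t)/f_w'(y)$ is positive, by the constant sign of $f_w'$. Multiplying through by $|f_w'(y)|$ gives the required Lipschitz bound, with $C=e^{A'|I|}A'$.

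I expect no real obstacle here. The only points needing care are two: the geometric-series summation that makes the constant independent of the word length $n$, and the fact that all compositions stay inside $I$, where the one-step $C^2$ bounds hold.
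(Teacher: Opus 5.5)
Your proof is correct and is essentially the paper's argument. Both rest on the uniform bound $|f_w''/f_w'|\leq A/(1-\rho_+)$, equivalently a uniform Lipschitz bound on $\log|f_w'|$, obtained from the chain rule and a geometric series, combined with Taylor's formula with integral remainder. The only difference is cosmetic: the paper cites bounded distortion \eqref{eq:1d-distortion-section4} to get $\sup_{u\in I}|f_w''(u)|\lesssim|f_w'(y)|$, whereas you integrate the log-Lipschitz bound yourself to compare $f_w'(t)$ with $f_w'(y)$, which makes your version self-contained.
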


\begin{proof}
Since the maps $f_i$ are $C^2$ and non-singular on $I$, there is
$M<\infty$ such that
$
\left|\frac{f_i''(x)}{f_i'(x)}\right|\leq M
$
for any $i\in\mathcal A,\ x\in I.
$
Let $w=w_1\cdots w_n$.  By the chain rule,
$$
\frac{f_w''(x)}{f_w'(x)}
=
\sum_{j=1}^n
\frac{f_{w_j}''}{f_{w_j}'}
\left(f_{w_{j+1}\cdots w_n}(x)\right)
f_{w_{j+1}\cdots w_n}'(x),
$$
where the last suffix is understood to be the identity.  Hence, by
\eqref{eq:1d-uniform-contraction},
$$
\left|\frac{f_w''(x)}{f_w'(x)}\right|
\leq
M\sum_{j=1}^n\rho_+^{\,n-j}
\leq
\frac{M}{1-\rho_+},
$$
uniformly in $w$ and $x$.
It follows that
$
|f_w''(u)|
\lesssim
|f_w'(u)|$
uniformly in $w$ and $u$.  By the bounded distortion estimate
\eqref{eq:1d-distortion-section4},
$
|f_w'(u)|\lesssim |f_w'(y)|$
for any $u,y\in I),
$
and therefore
$
\sup_{u\in I}|f_w''(u)|
\lesssim
|f_w'(y)|.
$
Finally, Taylor's formula with integral remainder gives
$$
f_w(x)-f_w(y)-f_w'(y)(x-y)
=
\int_y^x (x-u)f_w''(u)\,du,
$$
so
$
\left|
f_w(x)-f_w(y)-f_w'(y)(x-y)
\right|
\lesssim
|f_w'(y)|\,|x-y|^2.
$
This proves \eqref{eq:convolution-relative-Taylor}.
\end{proof}

By \eqref{eq:convolution-first-stop}, the definition of $\beta_k$, and
the bounded distortion estimate \eqref{eq:1d-distortion-section4},
\[
\left|f'_{\omega|\tau_k(\omega)}(x)\right|
\asymp e^{-k},
\quad
\left|f'_{\omega|\beta_k(\omega)}(x)\right|
\asymp e^{-2k},
\text{ for } x\in I.
\]
Hence, by the chain rule,
\begin{equation}\label{eq:convolution-suffix-scale}
\left|
f'_{\sigma^{\tau_k(\omega)}\omega|
\,\beta_k(\omega)-\tau_k(\omega)}(x)
\right|
\asymp e^{-k},
\qquad x\in I,
\end{equation}
uniformly in $k$ and $\omega$.  This is the analogue of
\cite[Lemma~4.4, equations~(36)--(37)]{algom2020decay}.

The same argument also gives the following finite-remainder
decomposition; compare the discussion following
\cite[Lemma~4.4]{algom2020decay}, and in particular
\cite[equation~(39)]{algom2020decay}.

\begin{proposition}\label{prop:convolution-finite-remainder}
There exists a finite set $\mathcal R\subset\mathcal A^*$ with the
following property. For every $k$ and every $\xi\in\Omega$, writing
$\zeta=\sigma^{\tau_k(\xi)}\xi,$
there is a prefix $c_{\xi,k}$ of $\zeta$ such that
\[
\left|c'_{\xi,k}(x)\right|\asymp e^{-k},
\qquad x\in I,
\]
and, for every $\omega\in\mathcal P_k(\xi)$, there exists
$\rho_{\omega,k}\in\mathcal R$ for which
\begin{equation}\label{eq:convolution-finite-menu}
f_{\omega|\beta_k(\omega)}
=
f_{\omega|\tau_k(\omega)}
\circ c_{\xi,k}
\circ f_{\rho_{\omega,k}}.
\end{equation}
\end{proposition}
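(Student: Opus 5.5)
The plan is to exploit that all sequences in a cell of $\mathcal P_k$ share the same tail after their first stopping rule. Fix $k$ and $\xi\in\Omega$, and put $\zeta=\sigma^{\tau_k(\xi)}\xi$. By definition of $\sim_k$, every $\omega\in\mathcal P_k(\xi)$ satisfies $\sigma^{\tau_k(\omega)}\omega=\zeta$. Assume first that $k$ is large enough that $\beta_k>\tau_k$ everywhere, as noted above. Then for such $\omega$, writing $\ell(\omega):=\beta_k(\omega)-\tau_k(\omega)\geq 1$, we have
\[
\omega|\beta_k(\omega)=\bigl(\omega|\tau_k(\omega)\bigr)\,\bigl(\zeta|\ell(\omega)\bigr),
\qquad\text{hence}\qquad
f_{\omega|\beta_k(\omega)}=f_{\omega|\tau_k(\omega)}\circ f_{\zeta|\ell(\omega)} .
\]
So the only thing that varies across the cell, after the common-tail prefix, is the integer $\ell(\omega)$. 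I will show that its values range over a set of uniformly bounded diameter.

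To bound this range, I invoke \eqref{eq:convolution-suffix-scale}, which gives $|f'_{\zeta|\ell(\omega)}(x)|\asymp e^{-k}$ for $x\in I$, uniformly in $k,\omega$. Let $L=L_{\xi,k}:=\min\{\ell(\omega):\omega\in\mathcal P_k(\xi)\}$; this minimum exists because $\ell$ is a positive integer. Define $c_{\xi,k}:=\zeta|L$. Since $L$ is attained by some $\omega$, the estimate $|c'_{\xi,k}(x)|\asymp e^{-k}$ on $I$ follows immediately from \eqref{eq:convolution-suffix-scale}.

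For any $\omega$ in the cell, write $\zeta|\ell(\omega)=c_{\xi,k}\,\rho_{\omega,k}$ with $\rho_{\omega,k}=\zeta_{L+1}\cdots\zeta_{\ell(\omega)}$, which may be the empty word, i.e.\ the identity map. By the chain rule,
\[
f'_{\zeta|\ell(\omega)}(x)=c'_{\xi,k}\bigl(f_{\rho_{\omega,k}}(x)\bigr)\,f'_{\rho_{\omega,k}}(x).
\]
Both $f'_{\zeta|\ell(\omega)}$ and $c'_{\xi,k}$ are $\asymp e^{-k}$ on $I$, and $f_{\rho_{\omega,k}}(I)\subset I$, so $|f'_{\rho_{\omega,k}}(x)|\gtrsim 1$ uniformly. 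On the other hand \eqref{eq:1d-uniform-contraction} gives $|f'_{\rho_{\omega,k}}|\leq\rho_+^{|\rho_{\omega,k}|}$. Together these force $|\rho_{\omega,k}|\leq L_0$ for a constant $L_0$ depending only on the IFS and the distortion constants. Taking $\mathcal R$ to be the finite set of all words of length at most $L_0$, including the empty word, yields \eqref{eq:convolution-finite-menu}.

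The finitely many small $k$ for which $\beta_k>\tau_k$ might fail need separate treatment. There all relevant word lengths $\tau_k$ and $\beta_k$ are bounded by a constant depending on $k$. So either these $k$ are excluded, since the proposition is only used for large $k$, or they are absorbed by enlarging $\mathcal R$. In the latter case one takes $c_{\xi,k}$ empty and lets the bounded-length words play the role of the remainder, adjusting constants.

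The main (mild) obstacle is ensuring that the comparability in \eqref{eq:convolution-suffix-scale} is uniform, with constants independent of $\omega$ and of the evaluation point. This uniformity is what turns a two-sided derivative comparison into a length bound on $\rho_{\omega,k}$. It rests on bounded distortion \eqref{eq:1d-distortion-section4}, applied both to the evaluation at $x_{\sigma^n\omega}$ defining $\tau_k$ and to the evaluation at $x_0$ defining $\beta_k$.
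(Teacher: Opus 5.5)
Your argument is correct and is essentially the paper's: the paper defers to the finite-remainder argument of \cite[Lemma~4.4]{algom2020decay} and writes it out for the planar analogue in Lemma~\ref{lem:planar-finite-remainder}, where the common prefix is $\zeta|(\ell-P)$ for a reference length $\ell$ determined by $\zeta$ and $k$ alone, whereas you take the minimal value of $\beta_k-\tau_k$ over the cell, a harmless variation that makes $|c'_{\xi,k}|\asymp e^{-k}$ immediate. The only slip is your second option for small $k$: if $\beta_k(\omega)<\tau_k(\omega)$, then $\omega|\beta_k(\omega)$ is a proper prefix of $\omega|\tau_k(\omega)$ and injectivity rules out any factorization of the stated form, so the proposition should simply be read for $k\geq k_0$, which is all the paper uses.
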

This is precisely the finite-remainder argument of
\cite[Lemma~4.4 and equation~(39)]{algom2020decay}: after removing a
uniformly bounded  word, the portion of
$\omega$ between $\tau_k(\omega)$ and $\beta_k(\omega)$ has a common
prefix depending only on the  tail $\zeta$.

For $w\in\mathcal A^*$, write
\begin{equation}\label{eq:convolution-normalized-word}
N_w(x)
=
\frac{f_w(x)-f_w(x_0)}{f_w'(x_0)}.
\end{equation}
By \eqref{eq:1d-quasisim-section4}, the maps $N_w$, $w\in\mathcal A^*$,
are uniformly bi-Lipschitz on $I$ and their images are contained in a
fixed compact interval. When a second self-conformal IFS is considered,
we use the analogous normalization with its fixed base point.
\subsection{Decompositions and renormalizations: Condition \textup{(B)}}
\label{subsec:convolution-renormalization}
\subsubsection{Self-conformal measure as second factor}
Assume first that $\nu$, the other factor, is a self-conformal measure
generated by a $C^2$ IFS
$\Psi=\{g_j:j\in\mathcal B\}.$
Let $\widetilde\Omega=\mathcal B^{\mathbb N}$, let $\mathbb Q$ be the
corresponding Bernoulli measure, and write $y_\eta$ for the coded point
associated to $\eta\in\widetilde\Omega$.

We use the same notation as for the first IFS, with tildes. Thus
\[
\widetilde S_n(\eta)
=
-\log\left|g'_{\eta|n}(y_{\sigma^n\eta})\right|,
\qquad
\widetilde\varepsilon_n(\eta)
=
\sgn g'_{\eta|n}(y_{\sigma^n\eta}),
\]
so that
$g'_{\eta|n}(y_{\sigma^n\eta})
=
\widetilde\varepsilon_n(\eta)e^{-\widetilde S_n(\eta)}.$
The distortion and cocycle estimates from
Subsection~\ref{subsec:convolution-stopping} apply verbatim. Define
$\widetilde\tau_k(\eta)
=
\min\{n:\widetilde S_n(\eta)\geq k\},$
and
\begin{equation}\label{eq:convolution-second-overshoot}
\widetilde Y_k(\eta)
=
\widetilde S_{\widetilde\tau_k(\eta)}(\eta)-k.
\end{equation}
As before, $\widetilde Y_k$ is uniformly bounded.

Fix $y_0 \in  K_\Psi$, and define
the   stopping time
\[
\widetilde\beta_k(\eta)
=
\min\left\{
m:
\left|g'_{\eta|m}(y_0)\right|<e^{-2k}
\right\}.
\]
By bounded distortion \eqref{eq:1d-distortion-section4}, for all  large $k$,
$\widetilde\beta_k(\eta)>\widetilde\tau_k(\eta)$. Put
\[
\widetilde c_{\eta,k}
=
g_{\sigma^{\widetilde\tau_k(\eta)}\eta|
\,\widetilde\beta_k(\eta)-\widetilde\tau_k(\eta)}.
\]
Exactly as in the derivation of
\eqref{eq:convolution-suffix-scale}, the definitions of the two stopping
rules, together with \eqref{eq:1d-uniform-contraction} and
\eqref{eq:1d-distortion-section4}, give
\begin{equation}\label{eq:convolution-second-suffix-scale}
\left|\widetilde c'_{\eta,k}(y)\right|
\asymp e^{-k},\, \text{ uniformly in } \eta, k, y.
\end{equation}

We now define the normalized pieces which will play the role of the
measures $\eta_{\xi,j}$ in Theorem~\ref{thm:stopped-intro}. For
$\xi\in\Omega$, let $c_{\xi,k}$ be supplied by
Proposition~\ref{prop:convolution-finite-remainder}. For
$\rho\in\mathcal R$, set
\begin{equation}\label{eq:convolution-selfconf-remainder}
\Theta_{\xi,\eta,\rho,k}
=
(N_{c_{\xi,k}}\circ f_\rho)_\#\mu
\times
(N_{\widetilde c_{\eta,k}})_\#\nu,
\end{equation}
where in the second factor $N_w$ is defined similarly to \eqref{eq:convolution-normalized-word}. By Proposition~\ref{prop:convolution-finite-remainder},
\eqref{eq:convolution-second-suffix-scale}, and the discussion following
\eqref{eq:convolution-normalized-word}, the maps
$N_{c_{\xi,k}}\circ f_\rho$ and $N_{\widetilde c_{\eta,k}}$
have images contained in fixed compact intervals, uniformly in all
parameters. Hence the measures $\Theta_{\xi,\eta,\rho,k}$ are supported
in one fixed compact subset of $\mathbb R^2$.

We next specify the probability space and kernels from Theorem \ref{thm:stopped-intro}.
Let
\begin{equation}\label{eq:convolution-selfconf-conditioning-space}
(\Xi,m)
=
(\Omega\times\widetilde\Omega,\mathbb P\times\mathbb Q),
\qquad
\chi=(\xi,\eta),
\end{equation}
and let
$\mathcal J=\mathcal R\times\{\pm1\}.$
Thus $\mathcal J$ is a fixed finite set.
For $s\in\{\pm1\}$ and $t\in[0,D']$, put
\begin{equation}\label{eq:convolution-v-selfconf}
v_{\chi,s,k}(t)
=
\left(
s e^k c'_{\xi,k}(x_0)e^{-t},
\,
e^k
\widetilde\varepsilon_{\widetilde\tau_k(\eta)}(\eta)
\widetilde c'_{\eta,k}(y_0)
e^{-\widetilde Y_k(\eta)}
\right) \in\mathbb R^2,
\end{equation}
and define
$$\Phi_{\chi,s,k}(t)
=
\left(
[v_{\chi,s,k}(t)]_{\Pone},
\log|v_{\chi,s,k}(t)|
\right).$$
By Proposition~\ref{prop:convolution-finite-remainder},
\eqref{eq:convolution-second-suffix-scale}, and the boundedness of
$\widetilde Y_k$, both coordinates of $v_{\chi,s,k}(t)$ have absolute
value bounded above and below by positive constants, uniformly in
$\chi$, $s$, $k$, and $t\in[0,D']$. Hence the images
of all the maps $\Phi_{\chi,s,k}$ are contained in
$\Pone\times J$ for one fixed compact interval $J$.

Finally, for $\chi=(\xi,\eta)\in\Xi$ and
$j=(\rho,s)\in\mathcal J$, define
\begin{equation}\label{eq:convolution-selfconf-kernel}
P_{\chi,j}
=
(\Phi_{\chi,s,k})_\#
\big((Y_k)_\#P_{k,\xi}\big),
\qquad
\Theta_{\chi,j}
=
\Theta_{\xi,\eta,\rho,k}.
\end{equation}
$\Theta_{\chi,j}$ and $P_{\chi,j}$ are respectively the measures
$\eta_{\xi,j}$ and the probability kernels appearing in
Theorem~\ref{thm:stopped-intro}. The corresponding limiting laws
$\Lambda_{\chi,j}$ will be defined only in
Subsection~\ref{subsec:convolution-angular}.

To unpack this definition, fix $\chi=(\xi,\eta)\in\Xi$ and
$j=(\rho,s)\in\mathcal J$, and write
$\zeta=\sigma^{\tau_k(\xi)}\xi.$
The measure $P_{k,\xi}$ is supported on the atom
$\mathcal P_k(\xi)$, so for $P_{k,\xi}$-almost every $\omega$,
$\sigma^{\tau_k(\omega)}\omega=\zeta.$
Thus all such $\omega$ have the same  tail after running $\tau_k$ digits, and the prefix map
$f_{\omega|\tau_k(\omega)}$ is linearized at the same point $x_\zeta$.
Its derivative there is
\[
f'_{\omega|\tau_k(\omega)}(x_\zeta)
=
\varepsilon_{\tau_k(\omega)}(\omega)
e^{-k-Y_k(\omega)}.
\]
After fixing the sign $s$, the only remaining dependence on $\omega$ in
this derivative is therefore the centered cocycle value $Y_k(\omega)$.
The map $c_{\xi,k}$ contributes a further derivative of size $e^{-k}$.
Hence, after linearization and after factoring out the common scale
$e^{-2k}$, the first coefficient becomes
$s\,e^k c'_{\xi,k}(x_0)e^{-Y_k(\omega)}.$
This explains both the first coordinate of
$v_{\chi,s,k}(Y_k(\omega))$ and the factor $e^k$ appearing there.
The second coordinate is obtained in exactly the same way from the
second IFS.

Recall from \eqref{eq:Q-def-intro} and \eqref{eq:H-def} that
$Q_R(\theta,\vartheta)
=
\left\|
\Delta_R\big((\pi_e)_\#\theta\big)
\right\|_{L^1},
\,
\vartheta=[e],$
and
$H_{U,\theta}(\vartheta,z)
=
Q_{Ue^z}(\theta,\vartheta).$
We can now state the required renormalization estimate.

\begin{proposition}\label{prop:convolution-renormalization-selfconf}
There exist $C<\infty$ and $k_0\in\mathbb N$ such that, for every
$k\geq k_0$ and every $R\geq1$, setting
\begin{equation}\label{eq:convolution-U}
U=Re^{-2k},
\end{equation}
we have
\begin{equation}\label{eq:convolution-condition-B-selfconf}
Q_R(\mu\times\nu,[e_0])
\leq
C
\int_\Xi
\sum_{j\in\mathcal J}
\int_{\Pone\times J}
H_{U,\Theta_{\chi,j}}(\vartheta,z)
\,dP_{\chi,j}(\vartheta,z)
\,dm(\chi)
+
CR e^{-3k}.
\end{equation}
\end{proposition}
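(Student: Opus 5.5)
The plan is to decompose $\mu\times\nu$ along the stopping rules, linearize each piece, apply the exact covariance of Lemma~\ref{lem:L1-scaling}, and pay for the linearization error with Lemma~\ref{lem:uniform-displacement}.

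\textbf{Step 1: decompose the product measure.} Write $\mu=\int_\Omega \delta_{x_\omega}\,d\mathbb P(\omega)$ and disintegrate $\mathbb P=\int P_{k,\xi}\,d\mathbb P(\xi)$. For $P_{k,\xi}$-a.e.\ $\omega$, Proposition~\ref{prop:convolution-finite-remainder} gives $x_\omega=f_{\omega|\tau_k(\omega)}\circ c_{\xi,k}\circ f_{\rho_{\omega,k}}(x_{\sigma^{\beta_k}\omega})$. The remainder $\rho_{\omega,k}\in\mathcal R$ is determined by a finite prefix, and $\beta_k$ is a genuine stopping time. Hence conditioning on $\omega|\beta_k(\omega)$ leaves the tail $\sigma^{\beta_k}\omega$ distributed as $\mathbb P$, so $x_{\sigma^{\beta_k}\omega}$ has law $\mu$.

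I would therefore write $\mu$ as an average over $(\xi,\omega,\rho)$ of $(f_{\omega|\tau_k}\circ c_{\xi,k}\circ f_\rho)_\#\mu$. The subtle point is to make this conditioning compatible with the non-stopping-time partition $\mathcal P_k$; here I would follow \cite[Section 4]{algom2020decay}. The same is done for $\nu$ with $\widetilde\tau_k,\widetilde\beta_k$ and $\eta\in\widetilde\Omega$. Splitting further by the sign $s=\varepsilon_{\tau_k}(\omega)$ and by $\rho$ gives finitely many $j=(\rho,s)\in\mathcal J$.

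By Minkowski and convexity of $Q_R$ in the measure (it is an $L^1$ norm of a linear image), $Q_R(\mu\times\nu,[e_0])$ is bounded by the average of $Q_R$ of the pieces. I expect $C\ge|\mathcal J|$ to absorb the weights: summing over $j$ instead of averaging costs at most a constant factor.

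\textbf{Step 2: linearize.} On a piece, the map $f_{\omega|\tau_k}$ is applied to points of $c_{\xi,k}f_\rho(I)$. This set has diameter $\lesssim e^{-k}$ and contains $c_{\xi,k}(x_0)$, and $x_\zeta$ also lies within $O(e^{-k})$ of it. By Lemma~\ref{lem:convolution-relative-Taylor}, linearizing $f_{\omega|\tau_k}$ at $x_\zeta$ costs $\lesssim e^{-k}\cdot e^{-2k}$. Using \eqref{eq:convolution-finite-menu} and the derivative formula $\varepsilon e^{-k-Y_k}$, one gets
\[
f_{\omega|\tau_k}\bigl(c_{\xi,k}(f_\rho(x))\bigr)
= a_\omega+s\,e^{-k-Y_k(\omega)}\,c'_{\xi,k}(x_0)\,N_{c_{\xi,k}}(f_\rho(x))+O(e^{-3k}),
\]
where $a_\omega$ is a translation and $N_{c_{\xi,k}}$ is as in \eqref{eq:convolution-normalized-word}. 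Here I use that $c_{\xi,k}(y)-c_{\xi,k}(x_0)=c'_{\xi,k}(x_0)N_{c_{\xi,k}}(y)$ exactly, by definition of $N$.

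The same holds for the second factor with $\eta$. Thus the map $(x,y)\mapsto \sqrt2\langle e_0,(\cdot,\cdot)\rangle$ applied to the piece equals, up to uniform error $O(e^{-3k})$, the affine map $t\mapsto e^{-2k}\langle v_{\chi,s,k}(Y_k(\omega)),\cdot\rangle+\mathrm{const}$ applied to $\Theta_{\chi,j}$. Lemma~\ref{lem:uniform-displacement} turns this into an additive error $\lesssim Re^{-3k}$.

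\textbf{Step 3: rescale.} Write $e^{-2k}\langle v,\cdot\rangle=e^{-2k}|v|\,\pi_{[v]}$. Lemma~\ref{lem:L1-scaling} then gives $Q_R$ of the piece equal to $Q_{Re^{-2k}|v|}(\Theta_{\chi,j},[v])=H_{U,\Theta_{\chi,j}}(\Phi_{\chi,s,k}(Y_k(\omega)))$ with $U=Re^{-2k}$. Integrating in $\omega$ against $P_{k,\xi}$ produces exactly $\int H\,dP_{\chi,j}$ by \eqref{eq:convolution-selfconf-kernel}. Integrating then in $\chi$ yields \eqref{eq:convolution-condition-B-selfconf}, with the fixed factor $\sqrt2$ absorbed.

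\textbf{Main obstacle.} The hard part is Step 1. One must justify that, within an atom $\mathcal P_k(\xi)$ (where all $\omega$ share the tail $\zeta$), the inner measure is genuinely $(c_{\xi,k}f_\rho)_\#\mu$ pushed forward, so that the conditional law of the deep tail is still $\mathbb P$. The sum over $\rho$ must also be handled with controlled weights. I would try to decompose instead with respect to the genuine stopping time $\beta_k$ first, and then regroup according to $\mathcal P_k$ using Proposition~\ref{prop:convolution-finite-remainder}, as in \cite[eq.~(39)]{algom2020decay}.
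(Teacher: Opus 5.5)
Your proposal is correct and follows the paper's proof. The paper decomposes $\mu\times\nu$ using the genuine stopping times $\beta_k,\widetilde\beta_k$ and applies Minkowski. It then disintegrates the resulting $\omega$-integral along $\mathcal P_k$, factors via Proposition~\ref{prop:convolution-finite-remainder}, and linearizes at $x_\zeta$ using Lemmas~\ref{lem:convolution-relative-Taylor} and~\ref{lem:uniform-displacement}. Finally it rescales with Lemma~\ref{lem:L1-scaling} and drops the indicators of $\{\rho_{\omega,k}=\rho,\ \varepsilon_{\tau_k}=s\}$ by positivity.

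One correction to your Step~1 worry. Within an atom of $\mathcal P_k$, the deep tail $\sigma^{\beta_k}\omega$ is not $\mathbb P$-distributed: it is one of boundedly many shifts of $\zeta$. However, this is never needed. In the order you propose at the end ($\beta_k$-decomposition and Minkowski first), the fresh copy of $\mu$ sits outside the $\omega$-dependence. What gets disintegrated is then only a scalar function of $\omega|\beta_k(\omega)$.
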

With the choice of $k=k(R)$ made below,
\eqref{eq:convolution-condition-B-selfconf} gives
condition~\textup{(B)} of Theorem~\ref{thm:stopped-intro}. The proof follows from the arguments in \cite[Section 4]{algom2020decay}; for completeness, we explain this in detail.

\begin{proof}
Since $\beta_k$ and $\widetilde\beta_k$ are bona-fide stopping
times, 
\cite[Lemma~4.3]{algom2020decay} applied to the two factors and followed
by Fubini, gives
\begin{equation}\label{eq:convolution-product-stopping-identity}
\mu\times\nu
=
\iint
\left(
f_{\omega|\beta_k(\omega)}
\times
g_{\eta|\widetilde\beta_k(\eta)}
\right)_\#
(\mu\times\nu)
\,d\mathbb P(\omega)\,d\mathbb Q(\eta).
\end{equation}
Since projection and $\Delta_R$ are linear, Minkowski's inequality
therefore gives
\[
Q_R(\mu\times\nu,[e_0])
\leq
\iint
Q_R\left(
\left(
f_{\omega|\beta_k(\omega)}
\times
g_{\eta|\widetilde\beta_k(\eta)}
\right)_\#
(\mu\times\nu),
[e_0]
\right)
\,d\mathbb P(\omega)\,d\mathbb Q(\eta).
\]

We now disintegrate the $\omega$-integral with respect to
$\mathcal P_k$. For $\mathbb P$-almost every $\xi$, put
\[
\zeta=\sigma^{\tau_k(\xi)}\xi.
\]
Then
$\sigma^{\tau_k(\omega)}\omega=\zeta$
for $P_{k,\xi}$-almost every $\omega$. By
Proposition~\ref{prop:convolution-finite-remainder}, for such $\omega$
we have
\begin{equation}\label{eq:convolution-stopped-factorization}
f_{\omega|\beta_k(\omega)}
=
f_{\omega|\tau_k(\omega)}
\circ c_{\xi,k}
\circ f_{\rho_{\omega,k}},
\qquad
\rho_{\omega,k}\in\mathcal R.
\end{equation}
For the second factor, by definition of $\widetilde c_{\eta,k}$,
\begin{equation}\label{eq:convolution-second-stopped-factorization}
g_{\eta|\widetilde\beta_k(\eta)}
=
g_{\eta|\widetilde\tau_k(\eta)}
\circ\widetilde c_{\eta,k}.
\end{equation}
The first of these factorizations is the analogue of
\cite[equation~(39)]{algom2020decay}.

Fix $\xi,\eta,\rho$ and restrict for the moment to those $\omega$ for
which $\rho_{\omega,k}=\rho$. Write
\[
F_\omega=f_{\omega|\tau_k(\omega)},
\qquad
\widetilde F_\eta
=
g_{\eta|\widetilde\tau_k(\eta)}.
\]
We linearize $F_\omega$ at $x_\zeta$. Define
\[
L_{\omega,\xi,\rho,k}(x)
=
F_\omega(x_\zeta)
+
F_\omega'(x_\zeta)
\left(
c_{\xi,k}(f_\rho(x))-x_\zeta
\right).
\]
Since $c_{\xi,k}$ is a prefix of $\zeta$, both
$x_\zeta$ and $c_{\xi,k}(f_\rho(x))$ belong to
$c_{\xi,k}(I)$. Proposition~\ref{prop:convolution-finite-remainder}
and bounded distortion \eqref{eq:1d-distortion-section4} therefore give
$\left|
c_{\xi,k}(f_\rho(x))-x_\zeta
\right|
\lesssim e^{-k}.$
Moreover, by \eqref{eq:convolution-first-stop},
$\left|F_\omega'(x_\zeta)\right|
=
e^{-k-Y_k(\omega)}
\asymp e^{-k}.$
Hence Lemma~\ref{lem:convolution-relative-Taylor} gives
\begin{equation}\label{eq:convolution-first-linearization}
\sup_x
\left|
f_{\omega|\beta_k(\omega)}(x)
-
L_{\omega,\xi,\rho,k}(x)
\right|
\lesssim e^{-3k}.
\end{equation}
This is precisely the $C^0$ linearization used in the proof of
\cite[Claim~4.5, equations~(39)--(42)]{algom2020decay}; equation~(40)
there is the corresponding estimate for the derivative of the inner
map.

The second coordinate is similar, put
$\widetilde y_\eta
=
y_{\sigma^{\widetilde\tau_k(\eta)}\eta}$
and define
\[
\widetilde L_{\eta,k}(y)
=
\widetilde F_\eta(\widetilde y_\eta)
+
\widetilde F_\eta'(\widetilde y_\eta)
\left(
\widetilde c_{\eta,k}(y)-\widetilde y_\eta
\right).
\]
Both $\widetilde c_{\eta,k}(y)$ and $\widetilde y_\eta$ belong to
$\widetilde c_{\eta,k}(I)$, and
\eqref{eq:convolution-second-suffix-scale} gives
$\left|
\widetilde c_{\eta,k}(y)-\widetilde y_\eta
\right|
\lesssim e^{-k}.$
Also,
$\left|
\widetilde F_\eta'(\widetilde y_\eta)
\right|
=
e^{-k-\widetilde Y_k(\eta)}
\asymp e^{-k}.$
Thus the same Taylor estimate gives
\begin{equation}\label{eq:convolution-second-linearization}
\sup_y
\left|
g_{\eta|\widetilde\beta_k(\eta)}(y)
-
\widetilde L_{\eta,k}(y)
\right|
\lesssim e^{-3k}.
\end{equation}

It follows from
\eqref{eq:convolution-first-linearization} and
\eqref{eq:convolution-second-linearization} that
\[
\sup_{x,y}
\left|
\pi_{e_0}\left(
f_{\omega|\beta_k(\omega)}(x),
g_{\eta|\widetilde\beta_k(\eta)}(y)
\right)
-
\pi_{e_0}\left(
L_{\omega,\xi,\rho,k}(x),
\widetilde L_{\eta,k}(y)
\right)
\right|
\lesssim e^{-3k}.
\]
Indeed, each coordinate contributes $O(e^{-3k})$, and $\pi_{e_0}$ is
a fixed linear map. Hence Lemma~\ref{lem:uniform-displacement}, applied
to the measure $\mu\times\nu$, gives
\begin{equation}\label{eq:convolution-linearization-error}
\left|
Q_R\left(
\left(
f_{\omega|\beta_k(\omega)}
\times
g_{\eta|\widetilde\beta_k(\eta)}
\right)_\#
(\mu\times\nu),
[e_0]
\right)
-
Q_R\left(
\left(
L_{\omega,\xi,\rho,k}
\times
\widetilde L_{\eta,k}
\right)_\#
(\mu\times\nu),
[e_0]
\right)
\right|
\lesssim Re^{-3k}.
\end{equation}

We now rewrite the measure
$(\pi_{e_0})_\#
\left(
(L_{\omega,\xi,\rho,k}\times\widetilde L_{\eta,k})_\#
(\mu\times\nu)
\right)$
in terms of the measure
$\Theta_{\chi,j}$ defined in
\eqref{eq:convolution-selfconf-remainder}. By
\eqref{eq:convolution-normalized-word},
\[
c_{\xi,k}(f_\rho(x))
=
c_{\xi,k}(x_0)
+
c'_{\xi,k}(x_0)
(N_{c_{\xi,k}}\circ f_\rho)(x),
\quad \text{ and    }
\widetilde c_{\eta,k}(y)
=
\widetilde c_{\eta,k}(y_0)
+
\widetilde c'_{\eta,k}(y_0)
N_{\widetilde c_{\eta,k}}(y).
\]
Substituting these identities into
$L_{\omega,\xi,\rho,k}$ and $\widetilde L_{\eta,k}$ shows that the
corresponding projected map differs by an additive constant from
\[
\begin{aligned}
&
\varepsilon_{\tau_k(\omega)}(\omega)
e^{-k-Y_k(\omega)}
c'_{\xi,k}(x_0)
(N_{c_{\xi,k}}\circ f_\rho)(x)
\\
&\qquad+
\widetilde\varepsilon_{\widetilde\tau_k(\eta)}(\eta)
e^{-k-\widetilde Y_k(\eta)}
\widetilde c'_{\eta,k}(y_0)
N_{\widetilde c_{\eta,k}}(y).
\end{aligned}
\]
The additive constant may be ignored when computing $Q_R$, since
translation of a measure on $\mathbb R$ leaves its $Q_R$-value
unchanged;  this is the case $a=1$ of
\eqref{eq:L1-scaling}.

Writing
$s=\varepsilon_{\tau_k(\omega)}(\omega),$
and using \eqref{eq:convolution-v-selfconf}, the last display is
\[
e^{-2k}
\left\langle
v_{\chi,s,k}(Y_k(\omega)),
\left(
(N_{c_{\xi,k}}\circ f_\rho)(x),
N_{\widetilde c_{\eta,k}}(y)
\right)
\right\rangle.
\]
Therefore, by the definition
\eqref{eq:convolution-selfconf-remainder} of $\Theta_{\chi,j}$,
the measure
\[
(\pi_{e_0})_\#
\left(
(L_{\omega,\xi,\rho,k}\times\widetilde L_{\eta,k})_\#
(\mu\times\nu)
\right)
\]
is, up to translation, the pushforward of $\Theta_{\chi,j}$ under the
linear map
\[
z\mapsto
e^{-2k}
\left\langle
v_{\chi,s,k}(Y_k(\omega)),z
\right\rangle.
\]
Thus, it is, up to translation, obtained by projecting
$\Theta_{\chi,j}$ in the direction
$[v_{\chi,s,k}(Y_k(\omega))]$
and then dilating the resulting one-dimensional measure by
$e^{-2k}
\left|v_{\chi,s,k}(Y_k(\omega))\right|.$

Therefore, by the  $L^1$ scaling identity
\eqref{eq:L1-scaling}, and recalling that translations do not affect
$Q_R$, we obtain
\[
\begin{aligned}
&
Q_R\left(
\left(
L_{\omega,\xi,\rho,k}
\times
\widetilde L_{\eta,k}
\right)_\#
(\mu\times\nu),
[e_0]
\right)
\\
&\qquad =
Q_{Re^{-2k}|v_{\chi,s,k}(Y_k(\omega))|}
\left(
\Theta_{\chi,j},
[v_{\chi,s,k}(Y_k(\omega))]
\right).
\end{aligned}
\]
Set
$U=Re^{-2k},$ and
recall from \eqref{eq:H-def} that
$H_{U,\theta}(\vartheta,z)
=
Q_{Ue^z}(\theta,\vartheta).$ 
Since
\[
\Phi_{\chi,s,k}(t)
=
\left(
[v_{\chi,s,k}(t)],
\log |v_{\chi,s,k}(t)|
\right),
\]
we have
$Ue^{\log|v_{\chi,s,k}(Y_k(\omega))|}
=
U|v_{\chi,s,k}(Y_k(\omega))|,$
and hence the preceding expression is exactly
$H_{U,\Theta_{\chi,j}}
\left(
\Phi_{\chi,s,k}(Y_k(\omega))
\right).$
Thus
\[
Q_R\left(
\left(
L_{\omega,\xi,\rho,k}
\times
\widetilde L_{\eta,k}
\right)_\#
(\mu\times\nu),
[e_0]
\right)
=
H_{U,\Theta_{\chi,j}}
\left(
\Phi_{\chi,s,k}(Y_k(\omega))
\right).
\]

Finally, decomposing according to
$\rho_{\omega,k}\in\mathcal R$ and
$s=\varepsilon_{\tau_k(\omega)}(\omega)\in\{\pm1\}$, we obtain
\[
\begin{aligned}
&\int
H_{U,\Theta_{\chi,(\rho_{\omega,k},
\varepsilon_{\tau_k(\omega)}(\omega))}}
\left(
\Phi_{\chi,\varepsilon_{\tau_k(\omega)}(\omega),k}
(Y_k(\omega))
\right)
\,dP_{k,\xi}(\omega)
\\
&\quad =
\sum_{(\rho,s)\in\mathcal J}
\int
\mathbf 1_{\{
\rho_{\omega,k}=\rho,\,
\varepsilon_{\tau_k(\omega)}(\omega)=s
\}}
H_{U,\Theta_{\chi,(\rho,s)}}
\left(
\Phi_{\chi,s,k}(Y_k(\omega))
\right)
\,dP_{k,\xi}(\omega)
\\
&\quad \leq
\sum_{(\rho,s)\in\mathcal J}
\int
H_{U,\Theta_{\chi,(\rho,s)}}
\left(
\Phi_{\chi,s,k}(Y_k(\omega))
\right)
\,dP_{k,\xi}(\omega)
\\
&\quad =
\sum_{j\in\mathcal J}
\int_{\Pone\times J}
H_{U,\Theta_{\chi,j}}(\vartheta,z)\,
dP_{\chi,j}(\vartheta,z),
\end{aligned}
\]
where in the last equality we used
$P_{\chi,j}
=
(\Phi_{\chi,s,k})_\#
\big((Y_k)_\#P_{k,\xi}\big),$ for $j=(\rho,s).$ 
Integrating this inequality with respect to
$dm(\chi)
=
d\mathbb P(\xi)\,d\mathbb Q(\eta)$ 
gives \eqref{eq:convolution-condition-B-selfconf}. Since
$\mathcal J$ is finite, the total linearization error is still
$O(Re^{-3k})$.
\end{proof}

\subsubsection{Ahlfors--David  second factor}
\label{subsec:convolution-AD}

Assume now that $\nu$ is $\alpha$-Ahlfors--David regular. The
construction in the first coordinate is unchanged.

We localize $\nu$ at scale $e^{-2k}$. Let $K=\supp\nu$, and choose a
maximal $e^{-2k}$-separated set
$\{x_{k,1},\ldots,x_{k,N_k}\}\subset K.$
Thus
$|x_{k,i}-x_{k,j}|\geq e^{-2k}$ for $i\neq j,$
while maximality implies that every point of $K$ lies within distance
$e^{-2k}$ of at least one of the points $x_{k,i}$.

Partition $K$ into measurable sets
$K=\bigsqcup_{i=1}^{N_k}V_{k,i}$
by setting
\[
V_{k,i}
=
\left\{
x\in K:
\begin{array}{l}
|x-x_{k,i}|<|x-x_{k,j}| \quad \text{for every } j<i,\\[2mm]
|x-x_{k,i}|\leq |x-x_{k,j}| \quad \text{for every } j>i
\end{array}
\right\}.
\]
Thus $V_{k,i}$ consists  of those points of $K$ for which
$x_{k,i}$ is a nearest point among
$x_{k,1},\ldots,x_{k,N_k}$, with ties resolved by choosing the
smallest index.
Since the points $x_{k,i}$ are $e^{-2k}$-separated,
$\left\{x\in K:|x-x_{k,i}|<\frac12e^{-2k}\right\}
\subset V_{k,i}.$
Moreover, maximality of the $e^{-2k}$-separated set implies that every
$x\in K$ satisfies
$$\min_{1\leq j\leq N_k}|x-x_{k,j}|<e^{-2k}, \text{ and hence }
V_{k,i}
\subset
\left\{x\in K:|x-x_{k,i}|<e^{-2k}\right\}.$$
Therefore
\begin{equation}\label{eq:convolution-AD-cells}
B\left(x_{k,i},\frac12e^{-2k}\right)\cap K
\subset
V_{k,i}
\subset
B\left(x_{k,i},e^{-2k}\right)\cap K.
\end{equation}
For $1\leq i\leq N_k$, set
$T_{k,i}(u)=x_{k,i}+e^{-2k}u$
and define the probability measure
\[
\vartheta_{k,i}
=
(T_{k,i}^{-1})_\#
\left(
\frac{\nu|_{V_{k,i}}}{\nu(V_{k,i})}
\right).
\]
Then,
$\supp\vartheta_{k,i}\subset[-1,1],$ and
\begin{equation}\label{eq:convolution-AD-decomposition}
\nu
=
\sum_{i=1}^{N_k}
\nu(V_{k,i})(T_{k,i})_\#\vartheta_{k,i}.
\end{equation}

We next specify the probability space and kernels from
Theorem~\ref{thm:stopped-intro}. For $\mathbb P$-almost every
$\xi\in\Omega$, let $c_{\xi,k}$ be the prefix supplied by
Proposition~\ref{prop:convolution-finite-remainder}. Thus
$|c'_{\xi,k}(x)|\asymp e^{-k},
\, x\in I.$
Let
\begin{equation}\label{eq:convolution-AD-conditioning-space}
(\Xi,m)
=
\left(
\Omega\times\{1,\ldots,N_k\},
\,
\mathbb P\times
\sum_{i=1}^{N_k}\nu(V_{k,i})\delta_i
\right),
\qquad
\chi=(\xi,i),
\end{equation}
and let
$\mathcal J=\mathcal R\times\{\pm1\},$ where $\mathcal{R}$ is from Proposition~\ref{prop:convolution-finite-remainder}.
Thus $\mathcal J$ is a fixed finite set, whereas the index
$i\in\{1,\ldots,N_k\}$ is part of the conditioning space.

For $s\in\{\pm1\}$ and $t\in[0,D']$, put
\begin{equation}\label{eq:convolution-v-AD}
v_{\chi,s,k}(t)
=
\left(
s e^k c'_{\xi,k}(x_0)e^{-t},
\,1
\right)
\in\mathbb R^2,
\end{equation}
and define
\[
\Phi_{\chi,s,k}(t)
=
\left(
[v_{\chi,s,k}(t)]_{\Pone},
\log|v_{\chi,s,k}(t)|
\right).
\]
By Proposition~\ref{prop:convolution-finite-remainder}, both
coordinates of $v_{\chi,s,k}(t)$ have absolute value bounded above
and below by positive constants, uniformly in $\chi$, $s$, $k$, and
$t\in[0,D']$. Hence the images of all the maps
$\Phi_{\chi,s,k}$ are contained in $\Pone\times J$ for one fixed
compact interval $J$.

Recall from \eqref{eq:convolution-normalized-word} that, for
$w\in\mathcal A^*$,
$N_w(x)=\frac{f_w(x)-f_w(x_0)}{f_w'(x_0)},$
and from \eqref{eq:convolution-first-stop} that
$Y_k(\omega)
=
S_{\tau_k(\omega)}(\omega)-k
\in[0,D'].$
Finally, for $\chi=(\xi,i)\in\Xi$ and
$j=(\rho,s)\in\mathcal J$, we define
\begin{equation}\label{eq:convolution-AD-kernel}
\Theta_{\chi,j}
=
(N_{c_{\xi,k}}\circ f_\rho)_\#\mu
\times
\vartheta_{k,i},
\qquad
P_{\chi,j}
=
(\Phi_{\chi,s,k})_\#
\big((Y_k)_\#P_{k,\xi}\big).
\end{equation}
The measures $\Theta_{\chi,j}$ are supported in one fixed compact
subset of $\mathbb R^2$, uniformly in all parameters.

Let us briefly explain the analogy with the self-conformal case.
For $P_{k,\xi}$-almost every $\omega$, one has
$\sigma^{\tau_k(\omega)}\omega
=
\sigma^{\tau_k(\xi)}\xi,$
so all such $\omega$ have the same  tail and hence the same
map $c_{\xi,k}$. As before, after fixing the sign $s$, the dependence
of the first  coefficient of \eqref{eq:convolution-v-AD} on $\omega$ is entirely through
$Y_k(\omega)$; after factoring out the common scale $e^{-2k}$, this
coefficient is
$s e^k c'_{\xi,k}(x_0)e^{-Y_k(\omega)}.$
The only difference from the self-conformal case is in the second
coordinate of \eqref{eq:convolution-v-AD}. Here the localization map $T_{k,i}$ is affine with
derivative exactly $e^{-2k}$, so after the same rescaling its
coefficient is simply $1$.

\begin{proposition}\label{prop:convolution-renormalization-AD}
There exist $C<\infty$ and $k_0\in\mathbb N$ such that, for every
$k\geq k_0$ and every $R\geq1$, setting
$U=Re^{-2k},$
we have
\begin{equation}\label{eq:convolution-condition-B-AD}
Q_R(\mu\times\nu,[e_0])
\leq
C\int_\Xi
\sum_{j\in\mathcal J}
\int_{\Pone\times J}
H_{U,\Theta_{\chi,j}}(\vartheta,z)
\,dP_{\chi,j}(\vartheta,z)\,dm(\chi)
+
CRe^{-3k}.
\end{equation}
\end{proposition}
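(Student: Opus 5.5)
We follow the proof of Proposition~\ref{prop:convolution-renormalization-selfconf} line by line. The only change is in the second coordinate, where the symbolic stopping decomposition of $\nu$ is replaced by the deterministic localization \eqref{eq:convolution-AD-decomposition}.

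\textbf{Decomposition and Minkowski.} Combine \cite[Lemma~4.3]{algom2020decay} for $\mu$ (with the genuine stopping time $\beta_k$) with \eqref{eq:convolution-AD-decomposition} for $\nu$. This gives
\[
\mu\times\nu=\int\sum_{i=1}^{N_k}\nu(V_{k,i})\,\bigl(f_{\omega|\beta_k(\omega)}\times T_{k,i}\bigr)_\#(\mu\times\vartheta_{k,i})\,d\mathbb P(\omega).
\]
Since projection and $\Delta_R$ are linear, Minkowski's inequality bounds $Q_R(\mu\times\nu,[e_0])$ by the corresponding average of the $Q_R$-values of the pieces. Next disintegrate $\mathbb P$ along $\mathcal P_k$, so that $d\mathbb P(\omega)=dP_{k,\xi}(\omega)\,d\mathbb P(\xi)$. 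For $P_{k,\xi}$-a.e.\ $\omega$, Proposition~\ref{prop:convolution-finite-remainder} gives the factorization \eqref{eq:convolution-stopped-factorization} with common tail $\zeta=\sigma^{\tau_k(\xi)}\xi$.

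\textbf{Linearization of the first coordinate.} Replace $f_{\omega|\beta_k(\omega)}$ by the affine map $L_{\omega,\xi,\rho,k}$, defined exactly as before. By \eqref{eq:convolution-first-linearization}, i.e.\ Lemma~\ref{lem:convolution-relative-Taylor} applied with $|F_\omega'(x_\zeta)|\asymp e^{-k}$ and displacement $\lesssim e^{-k}$, the $C^0$ error is $O(e^{-3k})$. The second coordinate needs no linearization, since $T_{k,i}$ is already affine. Lemma~\ref{lem:uniform-displacement}, applied to $\mu\times\vartheta_{k,i}$, therefore bounds the error by $O(Re^{-3k})$, uniformly in $\omega,\xi,i$.

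\textbf{Exact affine renormalization.} Write $c_{\xi,k}(f_\rho(x))=c_{\xi,k}(x_0)+c'_{\xi,k}(x_0)(N_{c_{\xi,k}}\circ f_\rho)(x)$. Then, up to an additive constant, the projected linearized map is
\[
e^{-2k}\bigl\langle v_{\chi,s,k}(Y_k(\omega)),\bigl((N_{c_{\xi,k}}\circ f_\rho)(x),u\bigr)\bigr\rangle,
\qquad s=\varepsilon_{\tau_k(\omega)}(\omega).
\]
The first coefficient comes from
\[
F_\omega'(x_\zeta)\,c'_{\xi,k}(x_0)=s\,e^{-k-Y_k(\omega)}c'_{\xi,k}(x_0),
\]
and the second coefficient equals $e^{-2k}\cdot1$, because $T_{k,i}$ has derivative exactly $e^{-2k}$. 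This matches \eqref{eq:convolution-v-AD}; the constant factor $\sqrt2$ from $\pi_{e_0}$ is suppressed as agreed earlier. Translations do not change $Q_R$, so Lemma~\ref{lem:L1-scaling} identifies the resulting $Q_R$-value with $H_{U,\Theta_{\chi,j}}(\Phi_{\chi,s,k}(Y_k(\omega)))$, where $U=Re^{-2k}$.

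\textbf{Summing up.} Split $P_{k,\xi}$ according to $(\rho_{\omega,k},\varepsilon_{\tau_k(\omega)}(\omega))\in\mathcal J$ and bound each indicator by $1$. This turns the $\omega$-integral into $\sum_{j}\int H\,dP_{\chi,j}$. Integrating against $m$ then yields \eqref{eq:convolution-condition-B-AD}; since $\mathcal J$ is finite, the error stays $O(Re^{-3k})$.

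\textbf{Main point of care.} The only step needing attention is the uniform support and measurability bookkeeping. We need $\supp\vartheta_{k,i}\subset[-1,1]$, which follows from \eqref{eq:convolution-AD-cells}, so that all $\Theta_{\chi,j}$ lie in a fixed ball and Lemma~\ref{lem:uniform-displacement} applies with constants independent of $i$ and $k$. We also need the coefficient $1$ to keep $\Phi_{\chi,s,k}$ inside a fixed $\Pone\times J$. Everything else is identical to the self-conformal case.
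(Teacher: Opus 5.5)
Your proposal is correct and follows the paper's proof essentially step for step. Both arguments go through the same chain: the stopping-time decomposition of $\mu$ via $\beta_k$ combined with the exact localization \eqref{eq:convolution-AD-decomposition} of $\nu$, then Minkowski, disintegration along $\mathcal P_k$, linearization of the first coordinate only via Lemma~\ref{lem:convolution-relative-Taylor} and Lemma~\ref{lem:uniform-displacement}, the exact $L^1$ rescaling identifying each linearized piece with $H_{U,\Theta_{\chi,j}}(\Phi_{\chi,s,k}(Y_k(\omega)))$, and finally discarding the indicators of $(\rho_{\omega,k},\varepsilon_{\tau_k(\omega)}(\omega))$. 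One minor correction: Lemma~\ref{lem:uniform-displacement} needs no support bound, since its constant depends only on $\psi$, so the uniform support of $\vartheta_{k,i}$ matters for conditions (A) and (C) rather than for this step.
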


\begin{proof}
We follow the proof of
Proposition~\ref{prop:convolution-renormalization-selfconf}, with the
 simplification that the second coordinate is already affine.

Since $\beta_k$ is a bona-fide stopping time,
\cite[Lemma~4.3]{algom2020decay} gives
$\mu
=
\int
(f_{\omega|\beta_k(\omega)})_\#\mu
\,d\mathbb P(\omega).$
Combining this with the exact decomposition
\eqref{eq:convolution-AD-decomposition} of $\nu$, we obtain
\[
\mu\times\nu
=
\int
\sum_{i=1}^{N_k}
\nu(V_{k,i})
\left(
f_{\omega|\beta_k(\omega)}
\times T_{k,i}
\right)_\#
(\mu\times\vartheta_{k,i})
\,d\mathbb P(\omega).
\]
Since projection and $\Delta_R$ are linear, Minkowski's inequality
therefore gives
\begin{equation}\label{eq:convolution-AD-stopping-bound}
\begin{aligned}
Q_R(\mu\times\nu,[e_0])
\leq
\int
\sum_{i=1}^{N_k}
\nu(V_{k,i})\,
Q_R\left(
\left(
f_{\omega|\beta_k(\omega)}
\times T_{k,i}
\right)_\#
(\mu\times\vartheta_{k,i}),
[e_0]
\right)
\,d\mathbb P(\omega).
\end{aligned}
\end{equation}

As in the proof of
Proposition~\ref{prop:convolution-renormalization-selfconf}, we now
disintegrate the $\omega$-integral with respect to $\mathcal P_k$.
Fix $\xi\in\Omega$ and write
$\zeta=\sigma^{\tau_k(\xi)}\xi.$
Then
$\sigma^{\tau_k(\omega)}\omega=\zeta$
for $P_{k,\xi}$-almost every $\omega$. By
Proposition~\ref{prop:convolution-finite-remainder}, for such $\omega$,
\begin{equation}\label{eq:convolution-AD-stopped-factorization}
f_{\omega|\beta_k(\omega)}
=
f_{\omega|\tau_k(\omega)}
\circ c_{\xi,k}
\circ f_{\rho_{\omega,k}},
\qquad
\rho_{\omega,k}\in\mathcal R.
\end{equation}

Fix also $i$ and $\rho\in\mathcal R$, and restrict temporarily to
those $\omega$ for which $\rho_{\omega,k}=\rho$. The first coordinate
is exactly the one treated in the proof of
Proposition~\ref{prop:convolution-renormalization-selfconf}. Namely,
writing
$F_\omega=f_{\omega|\tau_k(\omega)},$
and
\[
L_{\omega,\xi,\rho,k}(x)
=
F_\omega(x_\zeta)
+
F_\omega'(x_\zeta)
\left(
c_{\xi,k}(f_\rho(x))-x_\zeta
\right),
\]
equation~\eqref{eq:convolution-first-linearization} gives
\begin{equation}\label{eq:convolution-AD-first-linearization}
\sup_x
\left|
f_{\omega|\beta_k(\omega)}(x)
-
L_{\omega,\xi,\rho,k}(x)
\right|
\lesssim e^{-3k}.
\end{equation}
There is no corresponding approximation in the second coordinate:
by definition,
$T_{k,i}(y)=x_{k,i}+e^{-2k}y$
is already affine. Hence
\[
\sup_{x,y}
\left|
\pi_{e_0}\left(
f_{\omega|\beta_k(\omega)}(x),T_{k,i}(y)
\right)
-
\pi_{e_0}\left(
L_{\omega,\xi,\rho,k}(x),T_{k,i}(y)
\right)
\right|
\lesssim e^{-3k}.
\]
Lemma~\ref{lem:uniform-displacement} therefore yields
\begin{equation}\label{eq:convolution-AD-linearization-error}
\begin{aligned}
&
\left|
Q_R\left(
\left(
f_{\omega|\beta_k(\omega)}
\times T_{k,i}
\right)_\#
(\mu\times\vartheta_{k,i}),
[e_0]
\right)
\right.
\\
&\hspace{35mm}\left.
-
Q_R\left(
\left(
L_{\omega,\xi,\rho,k}
\times T_{k,i}
\right)_\#
(\mu\times\vartheta_{k,i}),
[e_0]
\right)
\right|
\lesssim
Re^{-3k}.
\end{aligned}
\end{equation}

We next identify the second term in
\eqref{eq:convolution-AD-linearization-error} with the quantity
appearing in \eqref{eq:convolution-condition-B-AD}. This is the same
calculation as in the proof of
Proposition~\ref{prop:convolution-renormalization-selfconf}, except
that the normalized second coordinate is simply
$\vartheta_{k,i}$.

Indeed, by \eqref{eq:convolution-normalized-word},
$c_{\xi,k}(f_\rho(x))
=
c_{\xi,k}(x_0)
+
c'_{\xi,k}(x_0)
(N_{c_{\xi,k}}\circ f_\rho)(x).$
Also, by \eqref{eq:convolution-first-stop},
$F_\omega'(x_\zeta)
=
\varepsilon_{\tau_k(\omega)}(\omega)
e^{-k-Y_k(\omega)}.$
Substituting these identities into
$L_{\omega,\xi,\rho,k}$, and using
$T_{k,i}(y)=x_{k,i}+e^{-2k}y,$
shows that, up to an additive constant, the projected map
\[
(x,y)
\longmapsto
\pi_{e_0}
\left(
L_{\omega,\xi,\rho,k}(x),T_{k,i}(y)
\right)
\]
is
\[
\begin{aligned}
(x,y)\longmapsto\;&
\varepsilon_{\tau_k(\omega)}(\omega)
e^{-k-Y_k(\omega)}
c'_{\xi,k}(x_0)
(N_{c_{\xi,k}}\circ f_\rho)(x)
+
e^{-2k}y
\\
=\;&
e^{-2k}
\left\langle
v_{\chi,s,k}(Y_k(\omega)),
\left(
(N_{c_{\xi,k}}\circ f_\rho)(x),y
\right)
\right\rangle,
\end{aligned}
\]
where
$\chi=(\xi,i),$
$s=\varepsilon_{\tau_k(\omega)}(\omega).$
Recall that
\[
\Theta_{\chi,(\rho,s)}
=
(N_{c_{\xi,k}}\circ f_\rho)_\#\mu
\times
\vartheta_{k,i}.
\]
So,
\[
(\pi_{e_0})_\#
\left(
(L_{\omega,\xi,\rho,k}\times T_{k,i})_\#
(\mu\times\vartheta_{k,i})
\right)
\]
is, up to translation, the pushforward of
$\Theta_{\chi,(\rho,s)}$ under the linear map
\[
z
\mapsto
e^{-2k}
\left\langle
v_{\chi,s,k}(Y_k(\omega)),z
\right\rangle.
\]
That is, one first projects
$\Theta_{\chi,(\rho,s)}$ in the direction
$[v_{\chi,s,k}(Y_k(\omega))]$
and then dilates the resulting one-dimensional measure by
$e^{-2k}|v_{\chi,s,k}(Y_k(\omega))|.$

Set $U=Re^{-2k}$. By the $L^1$ scaling identity
\eqref{eq:L1-scaling}, and since translations do not affect $Q_R$,
we therefore have
\[
\begin{aligned}
&
Q_R\left(
\left(
L_{\omega,\xi,\rho,k}
\times T_{k,i}
\right)_\#
(\mu\times\vartheta_{k,i}),
[e_0]
\right)
\\
&\qquad=
Q_{U|v_{\chi,s,k}(Y_k(\omega))|}
\left(
\Theta_{\chi,(\rho,s)},
[v_{\chi,s,k}(Y_k(\omega))]
\right)
\\
&\qquad=
H_{U,\Theta_{\chi,(\rho,s)}}
\left(
\Phi_{\chi,s,k}(Y_k(\omega))
\right).
\end{aligned}
\]
This is precisely the analogue of the corresponding identity in the
proof of Proposition~\ref{prop:convolution-renormalization-selfconf}.

It remains only to average. Decompose according to
$\rho_{\omega,k}\in\mathcal R,$
$s=\varepsilon_{\tau_k(\omega)}(\omega)\in\{\pm1\}.$
Exactly as at the end of the proof of
Proposition~\ref{prop:convolution-renormalization-selfconf}, positivity
allows us to discard the corresponding indicators, and hence
\[
\begin{aligned}
&
\int
H_{U,\Theta_{\chi,
(\rho_{\omega,k},\varepsilon_{\tau_k(\omega)}(\omega))}}
\left(
\Phi_{\chi,\varepsilon_{\tau_k(\omega)}(\omega),k}
(Y_k(\omega))
\right)
\,dP_{k,\xi}(\omega)
\\
&\qquad\leq
\sum_{j\in\mathcal J}
\int_{\Pone\times J}
H_{U,\Theta_{\chi,j}}(\vartheta,z)
\,dP_{\chi,j}(\vartheta,z).
\end{aligned}
\]
Finally, integrate this inequality with respect to
$dm(\chi)
=
d\mathbb P(\xi)
\sum_{i=1}^{N_k}\nu(V_{k,i})\,d\delta_i.$
Combining this with
\eqref{eq:convolution-AD-stopping-bound} and
\eqref{eq:convolution-AD-linearization-error} gives
\eqref{eq:convolution-condition-B-AD}. Since
$\mathcal J$ is fixed and finite, the accumulated linearization error
remains $O(Re^{-3k})$.
\end{proof}

\subsection{Uniform \(q\)-mass: condition \textup{(A)}}
\label{subsec:convolution-qmass}

We now verify condition~\textup{(A)} of
Theorem~\ref{thm:stopped-intro} for the families
$\{\Theta_{\chi,j}\}$ associated to the probability kernels constructed
in the preceding subsection. As there, we treat separately the cases in
which the second factor $\nu$ is self-conformal or Ahlfors--David regular.
\subsubsection{The self-conformal case}
Assume first that $\nu$ is self-conformal. Recall that in this case
\[
(\Xi,m)
=
(\Omega\times\widetilde\Omega,\mathbb P\times\mathbb Q),
\qquad
\chi=(\xi,\eta),
\]
and that, for $j=(\rho,s)\in\mathcal J$,
$\Theta_{\chi,j}
=
(N_{c_{\xi,k}}\circ f_\rho)_\#\mu
\times
(N_{\widetilde c_{\eta,k}})_\#\nu$,
recalling \eqref{eq:convolution-selfconf-kernel} and
\eqref{eq:convolution-selfconf-remainder}.

\begin{proposition}\label{prop:convolution-condition-A-selfconf}
There exist $1<q<2$, $S>1$, and $B_*<\infty$ such that, for every
$k\geq1$, every $\chi\in\Xi$, every $j\in\mathcal J$, and every
$m\geq1$,
\begin{equation}\label{eq:convolution-condition-A-selfconf}
S_{m,q}(\Theta_{\chi,j})
\leq
B_*\,2^{-m(q-1)S}.
\end{equation}
\end{proposition}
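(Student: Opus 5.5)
The plan is to reduce the statement to the one-dimensional $q$-mass estimates supplied by Theorem~\ref{thm:conformal-uniform-qmass}, transport them along the uniformly bi-Lipschitz normalizing maps via Lemma~\ref{cor:bilip-qmass}, and then multiply them using the product structure of $\Theta_{\chi,j}$.

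\textbf{Choice of parameters.} Since $\dim\mu+\dim\nu>1$, I would fix $d_\mu<\dim\mu$ and $d_\nu<\dim\nu$ with $d_\mu+d_\nu>1$. I would then apply Theorem~\ref{thm:conformal-uniform-qmass} to $\mu$ and to $\nu$ separately. This gives exponents $q_\mu,q_\nu\in(1,2)$ together with
\[
S_\mu>d_\mu,\qquad S_\nu>d_\nu,
\]
and the corresponding bounds \eqref{eq:conformal-uniform-qmass}.

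To work with a single $q$, I would use that $(q-1)D_q$ is nondecreasing in $q$. This follows from $S_{m,q'}\le S_{m,q}$ for $q'\ge q$, since all masses are at most $1$. Hence for $1<q\le\min(q_\mu,q_\nu)$ we have
\[
S_{m,q}(\theta)\le S_{m,q_\theta}(\theta)^{(q-1)/(q_\theta-1)}
\]
by H\"older or log-convexity of $q\mapsto \log S_{m,q}$, using $S_{m,1}=1$. So the exponent $S_\theta$ persists for smaller $q$. Alternatively, I would simply rerun the proof with a common $q$ close to $1$, since $\lim_{q\downarrow1}D_q=\dim$ for both measures. Either route yields one $q\in(1,2)$ with
\[
S_{m,q}(\mu)\le B2^{-m(q-1)S_\mu},\qquad S_{m,q}(\nu)\le B2^{-m(q-1)S_\nu}.
\]

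\textbf{Transport to normalized cylinders.} The maps $N_{c_{\xi,k}}\circ f_\rho$ form a uniformly bi-Lipschitz family on $I$. Indeed, $N_w$ is uniformly bi-Lipschitz by \eqref{eq:1d-quasisim-section4}, and $\rho$ ranges over the finite set $\mathcal R$, each $f_\rho$ being a diffeomorphism onto its image. The maps $N_{\widetilde c_{\eta,k}}$ are uniformly bi-Lipschitz for the same reason. Lemma~\ref{cor:bilip-qmass} with $d=1$ then gives, uniformly in $k,\chi,j$,
\[
S_{m,q}\bigl((N_{c_{\xi,k}}\circ f_\rho)_\#\mu\bigr)\le B'2^{-m(q-1)S_\mu},
\]
and the analogous bound for the second factor with $S_\nu$.

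\textbf{Product step.} Every dyadic square in $\mathcal D_m$ on $\R^2$ is a product $Q_1\times Q_2$ of dyadic intervals in $\mathcal D_m$ on $\R$. Hence for product measures
\[
S_{m,q}(\alpha\times\beta)=S_{m,q}(\alpha)\,S_{m,q}(\beta).
\]
Consequently
\[
S_{m,q}(\Theta_{\chi,j})\le B'^2\,2^{-m(q-1)(S_\mu+S_\nu)},
\]
which is the claimed bound with $S=S_\mu+S_\nu>1$ and $B_*=B'^2$.

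\textbf{Main obstacle.} The only delicate point is making the parameter $q$ common to both factors. The monotonicity argument above handles it, provided I verify the interpolation inequality $S_{m,q}\le S_{m,q'}^{(q-1)/(q'-1)}$ for $1<q<q'$. This is Jensen applied to the probability weights $\theta(Q)$: the function $t\mapsto t^{(q-1)/(q'-1)}$ is concave, and $\sum_Q\theta(Q)\,\theta(Q)^{q-1}\le\bigl(\sum_Q\theta(Q)\,\theta(Q)^{q'-1}\bigr)^{(q-1)/(q'-1)}$.
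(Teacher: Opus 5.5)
Your proposal is correct and follows the paper's proof: choose a common $q$ close to $1$ via Theorem~\ref{thm:conformal-uniform-qmass}, transport the bounds through the uniformly bi-Lipschitz normalized maps with Lemma~\ref{cor:bilip-qmass}, and multiply using $S_{m,q}(\theta_1\times\theta_2)=S_{m,q}(\theta_1)S_{m,q}(\theta_2)$. Your justification of a common $q$ is fine; the paper simply takes $q$ close enough to $1$ for both measures. Note, however, that your Jensen inequality establishes that $D_q$ is nonincreasing in $q$; the monotonicity of $(q-1)D_q$ that you cite first would not suffice by itself.
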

In particular, condition~\textup{(A)} of
Theorem~\ref{thm:stopped-intro} holds.

\begin{proof}
Since $\dim\mu+\dim\nu>1$, Theorem~\ref{thm:conformal-uniform-qmass}
allows us to choose $q\in(1,2)$ and numbers
\begin{equation}\label{eq:convolution-Snu-Smu}
S_\mu<D_q(\mu),
\qquad
S_\nu<D_q(\nu),
\qquad
S:=S_\mu+S_\nu>1,
\end{equation}
such that, for some $B<\infty$,
\begin{equation}\label{eq:convolution-qmass-factors}
S_{m,q}(\mu)
\leq
B\,2^{-m(q-1)S_\mu},
\qquad
S_{m,q}(\nu)
\leq
B\,2^{-m(q-1)S_\nu}
\end{equation}
for every $m\geq1$.

By Proposition~\ref{prop:convolution-finite-remainder} and the
discussion following \eqref{eq:convolution-normalized-word}, the maps
$N_{c_{\xi,k}}\circ f_\rho$ and
$N_{\widetilde c_{\eta,k}}$
form uniformly bi-Lipschitz families. Hence
Lemma~\ref{cor:bilip-qmass} and
\eqref{eq:convolution-qmass-factors} give
\[
S_{m,q}\big((N_{c_{\xi,k}}\circ f_\rho)_\#\mu\big)
\lesssim
2^{-m(q-1)S_\mu}, \text{ and } \,
S_{m,q}\big((N_{\widetilde c_{\eta,k}})_\#\nu\big)
\lesssim
2^{-m(q-1)S_\nu},
\]
uniformly in all parameters.
Finally, since the dyadic squares in $\mathbb R^2$ are products of
dyadic intervals,
\[
S_{m,q}(\theta_1\times\theta_2)
=
S_{m,q}(\theta_1)S_{m,q}(\theta_2)
\]
for probability measures $\theta_1,\theta_2$ on $\mathbb R$.
Applying this to $\Theta_{\chi,j}$ proves
\eqref{eq:convolution-condition-A-selfconf}.
\end{proof}

\subsubsection{The Ahlfors--David regular case}

Assume now that $\nu$ is $\alpha$-Ahlfors--David regular. Recall the
sets $V_{k,i}$ and the normalized probability measures
\[
\vartheta_{k,i}
=
(T_{k,i}^{-1})_\#
\left(
\frac{\nu|_{V_{k,i}}}{\nu(V_{k,i})}
\right)
\]
constructed in Subsection~\ref{subsec:convolution-AD}, where
$T_{k,i}(u)=x_{k,i}+e^{-2k}u.$

\begin{lemma}\label{lem:convolution-AD-localization}
There exists $C<\infty$ such that, for every sufficiently large $k$,
every $1\leq i\leq N_k$, and every $0<r\leq1$,
\begin{equation}\label{eq:convolution-AD-local-frostman}
\vartheta_{k,i}(B(u,r))
\leq
Cr^\alpha
\quad \text{ for }
u\in\mathbb R.
\end{equation}
Consequently, for every $1<q\leq2$, uniformly in $k$ and $i$,
\begin{equation}\label{eq:convolution-AD-local-qmass}
S_{m,q}(\vartheta_{k,i})
\leq
C\,2^{-m\alpha(q-1)}
\quad \text{ for }
m\geq1.
\end{equation}
\end{lemma}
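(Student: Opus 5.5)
The plan is to prove the Frostman bound \eqref{eq:convolution-AD-local-frostman} directly from Ahlfors--David regularity, using the cell inclusions \eqref{eq:convolution-AD-cells}. After that, the $q$-mass bound follows by a standard Frostman-to-moment argument.

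\textbf{Frostman bound.} Fix $k$ large, $i$, $u\in\mathbb R$ and $0<r\leq1$. By definition,
\[
\vartheta_{k,i}(B(u,r))=\frac{\nu\bigl(V_{k,i}\cap B(T_{k,i}(u),e^{-2k}r)\bigr)}{\nu(V_{k,i})}.
\]
\emph{Lower bound on the denominator.} By \eqref{eq:convolution-AD-cells}, $V_{k,i}\supset B(x_{k,i},\tfrac12e^{-2k})\cap K$. Since $x_{k,i}\in K=\supp\nu$, the lower AD bound applies once $\tfrac12e^{-2k}\leq\diam K$, which holds for $k$ large (this is where "sufficiently large $k$" is used). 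It gives $\nu(V_{k,i})\geq C^{-1}2^{-\alpha}e^{-2k\alpha}$.

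\emph{Upper bound on the numerator.} If $B(T_{k,i}(u),e^{-2k}r)$ does not meet $K$, the numerator is zero. Otherwise pick $x\in K$ in that ball. Then $B(T_{k,i}(u),e^{-2k}r)\subset B(x,2e^{-2k}r)$, and the upper AD bound gives
\[
\nu\bigl(B(x,2e^{-2k}r)\bigr)\leq C\,2^\alpha e^{-2k\alpha}r^\alpha.
\]
If $2e^{-2k}r$ exceeds $\diam K$, I will instead use the trivial bound by $1$. This case cannot arise for large $k$, since $r\leq1$. Taking the ratio yields \eqref{eq:convolution-AD-local-frostman} with $C'=4^\alpha C^2$, uniformly in $k,i,u$. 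The AD constant $C$ appears only through the inequality $\nu(B(x,\rho))\leq C\rho^\alpha$ for points $x$ of the support, so the small care needed is to center the ball at a support point, as above.

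\textbf{$q$-mass bound.} Each $Q\in\mathcal D_m$ is an interval of length $2^{-m}$, so it lies in a ball of radius $2^{-m}\leq1$. Hence $\vartheta_{k,i}(Q)\leq C'2^{-m\alpha}$. Then
\[
S_{m,q}(\vartheta_{k,i})=\sum_Q\vartheta_{k,i}(Q)^{q}\leq\max_Q\vartheta_{k,i}(Q)^{q-1}\sum_Q\vartheta_{k,i}(Q)\leq C'^{\,q-1}2^{-m\alpha(q-1)},
\]
which is \eqref{eq:convolution-AD-local-qmass} with a constant uniform in $k,i$ and $q\in(1,2]$.

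I do not expect a genuine obstacle. The only points needing attention are the range restriction $0<r\leq\diam(\supp\nu)$ in the AD definition, which is handled by taking $k$ large, and recentering the numerator ball at a point of $K$.
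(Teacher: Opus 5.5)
Your proof is correct and follows the paper's argument essentially line by line. You bound $\nu(V_{k,i})$ from below via the inner inclusion in \eqref{eq:convolution-AD-cells} and the lower AD bound, recenter the rescaled ball $T_{k,i}(B(u,r))$ at a point of $K$ to apply the upper AD bound, and finish with the standard estimate $\sum_Q\vartheta_{k,i}(Q)^q\leq(\max_Q\vartheta_{k,i}(Q))^{q-1}$. Your explicit tracking of the range restriction $r\leq\diam(\supp\nu)$ (which is exactly where ``sufficiently large $k$'' enters) and of the constant $4^\alpha C^2$ is a harmless refinement of what the paper leaves implicit.
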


\begin{proof}
By \eqref{eq:convolution-AD-cells} and the lower Ahlfors--David bound,
\[
\nu(V_{k,i})
\geq
\nu\left(
B\left(x_{k,i},\frac12e^{-2k}\right)
\right)
\gtrsim
e^{-2k\alpha}.
\]
Now fix $u\in\mathbb R$ and $0<r\leq1$. If
$V_{k,i}\cap T_{k,i}(B(u,r))=\varnothing,$
then \eqref{eq:convolution-AD-local-frostman} is immediate. Otherwise,
choose
$y\in V_{k,i}\cap T_{k,i}(B(u,r))\subset K.$
Since $T_{k,i}(B(u,r))$ is an interval of radius $e^{-2k}r$,
$T_{k,i}(B(u,r))
\subset
B(y,2e^{-2k}r).$
The upper Ahlfors--David bound therefore gives
\[
\begin{aligned}
\vartheta_{k,i}(B(u,r))
&=
\frac{
\nu\big(V_{k,i}\cap T_{k,i}(B(u,r))\big)
}{
\nu(V_{k,i})
}
\\
&\lesssim
\frac{(e^{-2k}r)^\alpha}{e^{-2k\alpha}}
\lesssim
r^\alpha,
\end{aligned}
\]
which proves \eqref{eq:convolution-AD-local-frostman}.

If $I\in\mathcal D_m$ and $\vartheta_{k,i}(I)>0$, then
\eqref{eq:convolution-AD-local-frostman} implies
$\vartheta_{k,i}(I)\lesssim 2^{-m\alpha}.$
Hence
\[
\begin{aligned}
S_{m,q}(\vartheta_{k,i})
&=
\sum_{I\in\mathcal D_m}\vartheta_{k,i}(I)^q
\\
&\leq
\left(
\max_{I\in\mathcal D_m}\vartheta_{k,i}(I)
\right)^{q-1}
\sum_{I\in\mathcal D_m}\vartheta_{k,i}(I)
\\
&\lesssim
2^{-m\alpha(q-1)},
\end{aligned}
\]
as required.
\end{proof}

Recall that in the present case
\[
\chi=(\xi,i)\in\Xi,
\qquad
j=(\rho,s)\in\mathcal J,
\]
and
\[
\Theta_{\chi,j}
=
(N_{c_{\xi,k}}\circ f_\rho)_\#\mu
\times
\vartheta_{k,i}
\]
by \eqref{eq:convolution-AD-kernel}.

\begin{proposition}\label{prop:convolution-condition-A-AD}
There exist $1<q<2$, $S>1$, and $B_*<\infty$ such that, for every
sufficiently large $k$, every $\chi\in\Xi$, every
$j\in\mathcal J$, and every $m\geq1$,
\begin{equation}\label{eq:convolution-condition-A-AD}
S_{m,q}(\Theta_{\chi,j})
\leq
B_*\,2^{-m(q-1)S}.
\end{equation}
\end{proposition}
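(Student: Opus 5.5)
The argument will mirror the proof of Proposition~\ref{prop:convolution-condition-A-selfconf}: bound the $q$-mass of each factor of $\Theta_{\chi,j}$ separately, then multiply using the product structure of dyadic squares. The only change is that the second factor's bound now comes from Lemma~\ref{lem:convolution-AD-localization} rather than from Theorem~\ref{thm:conformal-uniform-qmass}.

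The first task is to identify $\dim\nu$. Since $\nu$ is $\alpha$-Ahlfors--David regular, the mass distribution principle together with the upper bound $\nu(B(x,r))\lesssim r^\alpha$ gives $\dim\nu\geq\alpha$. The lower bound $\nu(B(x,r))\gtrsim r^\alpha$ on the support gives local dimension exactly $\alpha$ everywhere on $\supp\nu$, so $\dim\nu=\alpha$. The hypothesis $\dim\mu+\dim\nu>1$ therefore becomes $\dim\mu>1-\alpha$.

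Next I handle the $\mu$ factor. I fix $d_0$ with $1-\alpha<d_0<\dim\mu$ and apply Theorem~\ref{thm:conformal-uniform-qmass} to $\mu$. This produces $q\in(1,2)$, a number $S_\mu>d_0$, and $B<\infty$ with $S_{m,q}(\mu)\leq B2^{-m(q-1)S_\mu}$ for all $m\geq1$. By Proposition~\ref{prop:convolution-finite-remainder} and the remark after \eqref{eq:convolution-normalized-word}, the maps $N_{c_{\xi,k}}\circ f_\rho$ ($\xi\in\Omega$, $\rho\in\mathcal R$, $k$ large) form a uniformly bi-Lipschitz family on $I$. The finite set $\mathcal R$ only costs a bounded Lipschitz factor. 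Lemma~\ref{cor:bilip-qmass} then gives
\[
S_{m,q}\big((N_{c_{\xi,k}}\circ f_\rho)_\#\mu\big)\lesssim 2^{-m(q-1)S_\mu}
\]
uniformly in all parameters.

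For the second factor, Lemma~\ref{lem:convolution-AD-localization} applies with this same $q$. It gives $S_{m,q}(\vartheta_{k,i})\lesssim 2^{-m\alpha(q-1)}$ uniformly in $k$ large and in $i$.

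Finally I combine the two bounds. Because dyadic squares in $\R^2$ are products of dyadic intervals, $S_{m,q}(\theta_1\times\theta_2)=S_{m,q}(\theta_1)S_{m,q}(\theta_2)$. This yields \eqref{eq:convolution-condition-A-AD} with $S:=S_\mu+\alpha>d_0+\alpha>1$.

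The only point needing care is that $q$ is dictated by the $\mu$ factor alone. This is harmless, because the Ahlfors--David bound holds for every $1<q\leq2$ with the same exponent $\alpha$. The other check is that the bi-Lipschitz constants of the normalized cylinder maps, including the composition with $f_\rho$, are uniform in $k$ and $\xi$. This is exactly what Proposition~\ref{prop:convolution-finite-remainder} combined with \eqref{eq:1d-quasisim-section4} provides.
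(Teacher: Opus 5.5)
Your proposal is correct and follows the paper's proof essentially step by step: you choose $1-\alpha<d_0<\dim\mu$, use Theorem~\ref{thm:conformal-uniform-qmass} and Lemma~\ref{cor:bilip-qmass} for the first factor and Lemma~\ref{lem:convolution-AD-localization} for the second, and then multiply via the product structure of dyadic squares to get $S=S_\mu+\alpha>1$. Your explicit check that $\dim\nu=\alpha$ is a useful addition, since the paper uses it implicitly when it chooses $d_0>1-\alpha$.
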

Thus condition~\textup{(A)} of
Theorem~\ref{thm:stopped-intro} holds also in the
Ahlfors--David regular case.

\begin{proof}
Choose
$1-\alpha<d_0<\dim\mu.$
By Theorem~\ref{thm:conformal-uniform-qmass}, there exist
$q\in(1,2)$, a number
$d_0<S_\mu<D_q(\mu),$
and $B<\infty$ such that
\begin{equation}\label{eq:convolution-AD-qmass-mu}
S_{m,q}(\mu)
\leq
B\,2^{-m(q-1)S_\mu}
\qquad
(m\geq1).
\end{equation}
In particular,
$S:=S_\mu+\alpha>1.$

As in the self-conformal case, the maps
$N_{c_{\xi,k}}\circ f_\rho$ form a uniformly bi-Lipschitz family.
Lemma~\ref{cor:bilip-qmass} and
\eqref{eq:convolution-AD-qmass-mu} therefore give
\[
S_{m,q}\big(
(N_{c_{\xi,k}}\circ f_\rho)_\#\mu
\big)
\lesssim
2^{-m(q-1)S_\mu}
\]
uniformly in $\xi$, $\rho$, and $k$. On the other hand,
Lemma~\ref{lem:convolution-AD-localization} gives
\[
S_{m,q}(\vartheta_{k,i})
\lesssim
2^{-m\alpha(q-1)}
\]
uniformly in $k$ and $i$. Using again the exact product factorization
of dyadic $q$-mass, we obtain
\[
S_{m,q}(\Theta_{\chi,j})
\lesssim
2^{-m(q-1)(S_\mu+\alpha)}
=
2^{-m(q-1)S},
\]
which is \eqref{eq:convolution-condition-A-AD}.
\end{proof}
\subsection{The angular law: condition \textup{(C)}}
\label{subsec:convolution-angular}

It remains to verify condition~\textup{(C)} of
Theorem~\ref{thm:stopped-intro}. This is the only point in the proof
where we use the assumption that the IFS generating $\mu$ is not
$C^2$-conjugate to linear.

The  input we need in this case, stated below, was developed in our previous work
on Fourier decay of self-conformal measures. A non-effective version first
appeared in \cite{algom2020decay}, an effective version with
logarithmic rates was obtained in \cite{algom2021decay}, and the
exponential error term needed here was proved in
\cite{algom2023polynomial}. These arguments were inspired in part by
the renewal-theoretic methods introduced by Jialun Li in
\cite{Li2018decay,li2018fourier}. We record the precise consequence
that we shall use.

Recall that $\Omega=\mathcal A^{\mathbb N}$ is the coding space of the
IFS generating $\mu=\mu_{\mathbf{p}}$, endowed with the Bernoulli measure $\mathbb P=\mathbf{p}^\mathbb{N}$.
For $k\geq1$, the measurable partition $\mathcal P_k$ of $\Omega$ is
defined by declaring two sequences equivalent when they have the same
 tail after the stopping rule $\tau_k$. For
$\mathbb P$-almost every $\xi\in\Omega$, we write
$P_{k,\xi}:=\mathbb P_{\mathcal P_k(\xi)}$
for the conditional measure of $\mathbb P$ on the cell
$\mathcal P_k(\xi)$. Recall also from
\eqref{eq:convolution-first-stop} that
$Y_k(\omega)
=
S_{\tau_k(\omega)}(\omega)-k
\in[0,D'].$
\begin{proposition}
\label{prop:convolution-effective-renewal}
There exist $\epsilon_{\rm ren}>0$, $C<\infty$, and a probability
density $h\in L^\infty(\mathbb R)$ supported on $[0,D']$ such that,
for every sufficiently large $k$, for $\mathbb P$-almost every
$\xi\in\Omega$, and every $G\in C^3(\mathbb R)$,
\begin{equation}\label{eq:convolution-effective-renewal}
\left|
\int G(Y_k(\omega))\,dP_{k,\xi}(\omega)
-
\int_0^{D'}G(t)h(t)\,dt
\right|
\leq
Ce^{-\epsilon_{\rm ren}k}\|G\|_{C^3}.
\end{equation}
The constants are uniform in $k$ and $\xi$.
\end{proposition}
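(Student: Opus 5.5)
This is an effective renewal theorem for the derivative cocycle, conditioned on the tail partition $\mathcal P_k$. I plan to deduce it from the effective spectral-gap machinery of our earlier work rather than prove it from scratch.

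\textbf{Step 1: set up the transfer operators.} Work on the coding space $\Omega$ with the Bernoulli measure $\mathbb P$. Introduce the twisted transfer operators
\[
\mathcal L_{s}g(x)=\sum_{i\in\mathcal A}p_i\,|f_i'(x)|^{s}g(f_i(x)),\qquad s=a+ib,
\]
acting on $C^{1}$ (or $C^{1+\gamma}$) functions on $I$. The conditioned law of $Y_k$ on the cell $\mathcal P_k(\xi)$ is determined by the common tail $\zeta=\sigma^{\tau_k(\xi)}\xi$. Summing over admissible prefixes $w$ with $-\log|f_w'(x_\zeta)|\ge k$ and $-\log|f_{w^-}'(x_\zeta)|<k$, one gets
\[
\int G(Y_k)\,dP_{k,\xi}=\frac{\sum_{w} p_w\,G\bigl(-\log|f_w'(x_\zeta)|-k\bigr)}{\sum_w p_w},
\]
where $w^-$ is $w$ with its last letter removed. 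Both numerator and denominator are renewal sums of the backward cocycle evaluated at the point $x_\zeta$. This is exactly the setting of the renewal operators in \cite{algom2020decay,algom2023polynomial}.

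\textbf{Step 2: Fourier-analyze in the cocycle variable.} Write $G(t)\mathbf 1_{[0,D']}$-type overshoot functionals through the Fourier transform, as in Li's approach \cite{Li2018decay}. The renewal sum then becomes
\[
\int \widehat G(b)\,e^{ibk}\,(I-\mathcal L_{ib})^{-1}(\cdots)(x_\zeta)\,db .
\]
Non-conjugacy to linear (via the $C^2$ criterion of \cite{algom2023polynomial}) gives two facts. First, the cocycle is non-lattice. Second, there is a Dolgopyat-type bound $\|(I-\mathcal L_{ib})^{-1}\|_{C^1_b}\lesssim |b|^{C}$ for $|b|\ge1$, together with analytic continuation of the resolvent to a strip $|\mathrm{Re}\,s|<\epsilon_0$ apart from the simple pole at $s=0$. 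Shifting the contour into the strip yields the following:
\begin{itemize}
\item the main term comes from the residue at $0$, giving $\int G h$ with $h(t)=\chi^{-1}\int \mathbb P(\text{overshoot}\in dt)$-type stationary overshoot density (bounded, supported on $[0,D']$);
\item the remainder is $O(e^{-\epsilon k})$ times $\int|\widehat G(b)|(1+|b|)^{C}\,db$.
\end{itemize}
The three derivatives on $G$ are used to make this integral finite, presumably with $C<2$ after interpolation, as in \cite{algom2023polynomial}.

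\textbf{Step 3: take the ratio and check uniformity.} Apply the same argument to the denominator with $G\equiv1$; it is bounded below uniformly. The ratio then inherits the exponential error. Uniformity in $\xi$ follows because the estimates are uniform in the evaluation point $x_\zeta\in K_\Phi$, the spectral bounds being in $C^1$ norm. Boundedness of $h$ follows from the explicit renewal formula, since the increments are bounded below by $D>0$.

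\textbf{Main obstacle.} The hard step is the uniform polynomial resolvent bound for large $|b|$ from nonlinearity. Here I would simply cite \cite{algom2023polynomial}, where this effective renewal statement (with exponential error) is essentially proved. What remains is bookkeeping to express the conditioned law on $\mathcal P_k(\xi)$ in the form used there.
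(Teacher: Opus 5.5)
Your overall route is the paper's: its proof of this proposition is just the citation of \cite[Theorem~4.1]{algom2023polynomial}, with the main term rewritten as a density $h$ in Remark~\ref{rem:convolution-effective-renewal}, and your Steps~1--2 sketch what lies behind that theorem. The gap is in Step~3, where you take the main term to be a single stationary overshoot density and conclude uniformity in $\xi$.

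Start with the bookkeeping in Step~1. A point $w\zeta$ lies in the cell iff $S_{|w|-1}(w\zeta)<k\le S_{|w|}(w\zeta)$. Here $S_{|w|-1}(w\zeta)=-\log|f'_{w^-}(f_{w_{|w|}}(x_\zeta))|$, not $-\log|f'_{w^-}(x_\zeta)|$. Writing $c(i,y)=-\log|f_i'(y)|$, the condition says exactly that $Y_k(w\zeta)\in[0,c(w_{|w|},x_\zeta))$. So the overshoot window is fixed by the letter adjacent to the tail and by the point $x_\zeta$. The residues at $s=0$ of your numerator and denominator are therefore $\chi(\Phi,p)^{-1}\sum_i p_i\int_0^{c(i,x_\zeta)}G$ and $\chi(\Phi,p)^{-1}\sum_i p_i c(i,x_\zeta)$. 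Their ratio tends to $\int G\,h_{x_\zeta}$, where
\[
h_x(t)=\frac{\sum_i p_i\mathbf 1_{[0,c(i,x))}(t)}{\sum_i p_i c(i,x)}.
\]
This depends on $\xi$ through $x_{\sigma^{\tau_k(\xi)}\xi}$. Uniformity of the spectral bounds in the evaluation point makes the \emph{error} uniform, not the main term. A $\xi$-independent density of the form $\chi(\Phi,p)^{-1}\kappa_0([t,D'])$, as in the Remark, is the limit law of $Y_k$ under $\mathbb P$ itself, i.e.\ the average of the $h_{x_\zeta}$ over $\xi$.

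No argument can produce a single $h$ here. For every $\omega$ in the cell, $Y_k(\omega)<c(\omega_{\tau_k(\omega)},x_\zeta)\le\max_i c(i,x_\zeta)$, so the conditional law of $Y_k$ is supported in $[0,\max_i c(i,x_\zeta)]$. Take the example of Corollary~\ref{cor:explicit-convolutions} with $\varepsilon>0$. There $c(0,\cdot)\equiv\log 3$, while $c(1,x)<\log 3$ on $[0,1/3]$ and $c(1,x)\ge\log 3-\log(1-\varepsilon)$ on $[2/3,1]$. Let $G\ge0$, $G\not\equiv0$, be supported in $(\log 3,\log 3-\log(1-\varepsilon))$. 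Tails with $\zeta_1=0$ give $\int G\,dP_{k,\xi}=0$, which forces $\int Gh=0$. Tails with $\zeta_1=1$, which also occur with positive probability, give $\int G\,dP_{k,\xi}=\int G\,h_{x_\zeta}+o(1)\ge c_G>0$ by the renewal theorem.

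What a correct version of your argument yields is the estimate with $h$ replaced by $h_{x_{\sigma^{\tau_k(\xi)}\xi}}$. Since $0\le h_x\le D^{-1}$ and $\supp h_x\subset[0,D']$, this is all that Propositions~\ref{prop:convolution-condition-C-selfconf} and~\ref{prop:convolution-condition-C-AD} use. Condition~\textup{(C)} allows $\Lambda_{\chi,j}$ to depend on the conditioning variable; alternatively, dominate by $D^{-1}\mathbf 1_{[0,D']}(t)\,dt$ as in the planar case. So the statement should be amended accordingly, and the same caveat applies to the repackaging of the main term in Remark~\ref{rem:convolution-effective-renewal}.

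A smaller issue in Step~2: $t\mapsto G(t)\mathbf 1_{[0,c(i,x_\zeta))}(t)$ is discontinuous. Its Fourier transform in general decays only like $|b|^{-1}$, so the integral against $(1+|b|)^{C}$ diverges however smooth $G$ is. The window has to be smoothed at an exponentially small scale, and the resulting boundary layer controlled separately.
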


\begin{remark}\label{rem:convolution-effective-renewal}
Proposition~\ref{prop:convolution-effective-renewal} is a
specialization of \cite[Theorem~4.1]{algom2023polynomial} to the
derivative cocycle and the conditional measures $P_{k,\xi}$ used
here. We have only
repackaged its main term as the probability measure $h(t)\,dt$.
Indeed, the renewal main term in that theorem can be written, after
Fubini, as integration against a density of the form
$h(t)
=
\frac{1}{\chi}
\kappa_0([\max\{D,t\},D'])
\mathbf 1_{[0,D']}(t),$
where $\chi>0$ is the Lyapunov exponent and  $\kappa_0$ is the probability measure appearing in the 
main term of \cite[Theorem~4.1]{algom2023polynomial}. In particular, $h$ is bounded, and the
renewal identity gives $\int h=1$.
\end{remark}

We now use Proposition~\ref{prop:convolution-effective-renewal} to
construct the limiting laws $\Lambda_{\chi,j}$ in the two cases
considered above. In both cases their angular marginals will have
uniformly bounded densities on $\Pone$. Thus condition~\textup{(C)}
will hold with the optimal Frostman exponent
$\kappa=1.$
Together with the verification of conditions~\textup{(A)} and
\textup{(B)} in the preceding subsections, this will contain all the
substantive ingredients required to prove
Theorem~\ref{thm:convolution-intro}. In the following subsection we
will nevertheless collect the choices of scales and parameters and
check explicitly that they satisfy all the quantitative hypotheses of
Theorem~\ref{thm:stopped-intro}.

\subsubsection{The self-conformal second factor}

Assume first that $\nu$ is self-conformal. Recall from
Subsection~\ref{subsec:convolution-renormalization} that
\[
\chi=(\xi,\eta)\in\Xi,
\qquad
j=(\rho,s)\in\mathcal J,
\]
and that
$P_{\chi,j}
=
(\Phi_{\chi,s,k})_\#
\big((Y_k)_\#P_{k,\xi}\big).$
We define the corresponding limiting law by
\begin{equation}\label{eq:convolution-lambda-selfconf}
\Lambda_{\chi,j}
=
(\Phi_{\chi,s,k})_\#
\big(h(t)\,dt\big).
\end{equation}

\begin{proposition}\label{prop:convolution-condition-C-selfconf}
There exists $C<\infty$ such that, uniformly in $k$, $\chi\in\Xi$,
and $j\in\mathcal J$, the following hold.

For every nonnegative $G\in C^3(\Pone\times J)$,
\begin{equation}\label{eq:convolution-condition-C-renewal-selfconf}
\int G\,dP_{\chi,j}
\leq
C\int G\,d\Lambda_{\chi,j}
+
Ce^{-\epsilon_{\rm ren}k}\|G\|_{C^3}.
\end{equation}
Moreover,
\begin{equation}\label{eq:convolution-condition-C-frostman-selfconf}
(\operatorname{proj}_{\Pone})_\#
\Lambda_{\chi,j}(B(\theta,r))
\leq
Cr
\qquad
(\theta\in\Pone,\ 0<r\leq1).
\end{equation}
\end{proposition}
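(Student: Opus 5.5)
The plan is to deduce both estimates from the one-dimensional effective renewal theorem, Proposition~\ref{prop:convolution-effective-renewal}, by pulling back along the map $\Phi_{\chi,s,k}:[0,D']\to\Pone\times J$. For a nonnegative $G\in C^3(\Pone\times J)$ put $g:=G\circ\Phi_{\chi,s,k}$ on $[0,D']$. Then, by definition of $P_{\chi,j}$ and $\Lambda_{\chi,j}$,
\[
\int G\,dP_{\chi,j}=\int g(Y_k(\omega))\,dP_{k,\xi}(\omega),\qquad \int G\,d\Lambda_{\chi,j}=\int_0^{D'}g(t)h(t)\,dt .
\]
So \eqref{eq:convolution-condition-C-renewal-selfconf} follows, even with $C=1$ in front of the main term, from \eqref{eq:convolution-effective-renewal}, once we know $\|g\|_{C^3}\lesssim\|G\|_{C^3}$ uniformly in $\chi,s,k$. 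Two routine points remain. First, $g$ must be extended to a $C^3$ function on $\R$ with comparable norm. We can do this either by a standard Whitney/Seeley extension from $[0,D']$, or more simply by noting that $\Phi_{\chi,s,k}$ is defined and smooth for $t$ in a neighbourhood of $[0,D']$ with image still in a fixed compact $\Pone\times J'$. If needed we enlarge $J$ slightly and replace $G$ by a $C^3$ extension multiplied by a fixed cutoff.

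Second, we need uniform $C^3$ bounds on $\Phi_{\chi,s,k}$. Write $v(t)=(a e^{-t},b)$ with $a=s e^k c'_{\xi,k}(x_0)$ and $b=e^k\widetilde\varepsilon\,\widetilde c'_{\eta,k}(y_0)e^{-\widetilde Y_k(\eta)}$. By Proposition~\ref{prop:convolution-finite-remainder} and \eqref{eq:convolution-second-suffix-scale}, $|a|,|b|\asymp1$ uniformly. The angle is $\arctan(b e^{t}/a)$ modulo $\pi$, and $\log|v(t)|=\tfrac12\log(a^2e^{-2t}+b^2)$. Both are smooth functions of $t$ whose derivatives of order at most $3$ are bounded by constants depending only on upper and lower bounds for $|a|,|b|$. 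The chain rule (Faà di Bruno) then gives $\|g\|_{C^3}\lesssim\|G\|_{C^3}$.

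For the Frostman bound \eqref{eq:convolution-condition-C-frostman-selfconf}, the angular marginal of $\Lambda_{\chi,j}$ is the pushforward of $h(t)\,dt$ under $\theta(t)=[v(t)]$. Differentiating gives
\[
\theta'(t)=\frac{ab\,e^{-t}}{a^2e^{-2t}+b^2}\quad(\text{up to sign}),
\]
which is bounded below in absolute value by a positive constant $c_0$, uniformly in all parameters, since $|a|,|b|\asymp1$ and $t\in[0,D']$. Hence $\theta$ is a diffeomorphism onto its image, which is an arc of length $\lesssim D'$. In particular we should check that this arc does not wrap around $\Pone$; if it did, we would split $[0,D']$ into a bounded number of pieces, which only changes the constant. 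The preimage of a ball $B(\vartheta,r)$ then has Lebesgue measure at most $2r/c_0$ per piece, so the mass is $\le\|h\|_\infty\cdot O(r)$. Here $h\in L^\infty$ comes from Proposition~\ref{prop:convolution-effective-renewal}.

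The main obstacle is just the uniformity bookkeeping: we need the two-sided bounds on $|a|$ and $|b|$, and hence the lower bound on $|\theta'|$, to hold uniformly. This is exactly where nonlinearity enters, namely through the existence of the bounded density $h$ with an exponential error term. The rest is the chain rule.
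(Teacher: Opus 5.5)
Your proposal is correct and follows essentially the same route as the paper. You pull back along $\Phi_{\chi,s,k}$ and apply Proposition~\ref{prop:convolution-effective-renewal} to $G\circ\Phi_{\chi,s,k}$, with uniform $C^3$ bounds coming from $|a|,|b|\asymp1$. You then get the Frostman bound from $|\vartheta'(t)|\asymp1$ together with $h\in L^\infty$. Your extra care about extending $G\circ\Phi_{\chi,s,k}$ to $\R$ is a detail the paper leaves implicit. The wrap-around worry is moot: $(ae^{-t},b)$ has fixed-sign coordinates, so $t\mapsto[(ae^{-t},b)]$ stays within a single quarter-turn of $\Pone$.
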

Therefore, condition~\textup{(C)} of
Theorem~\ref{thm:stopped-intro} holds in the self-conformal case with
\[
r=3,
\qquad
\kappa=1,
\qquad
\epsilon=\epsilon_{\rm ren}.
\]

\begin{proof}
By the definition of $P_{\chi,j}$ and
\eqref{eq:convolution-lambda-selfconf},
\[
\int G\,dP_{\chi,j}
=
\int
G\big(\Phi_{\chi,s,k}(Y_k(\omega))\big)
\,dP_{k,\xi}(\omega), \text{ whereas } \,\,\,
\int G\,d\Lambda_{\chi,j}
=
\int_0^{D'}
G\big(\Phi_{\chi,s,k}(t)\big)h(t)\,dt.
\]
The coefficients in \eqref{eq:convolution-v-selfconf} are bounded
above and below in absolute value by positive constants, uniformly in
all parameters. Hence
$\|\Phi_{\chi,s,k}\|_{C^3}\lesssim1.$
Applying Proposition~\ref{prop:convolution-effective-renewal} to
$G\circ\Phi_{\chi,s,k}$ and using the chain rule gives
\eqref{eq:convolution-condition-C-renewal-selfconf}.

It remains to estimate the angular marginal. Write
$v_{\chi,s,k}(t)
=
\big(ae^{-t},b\big),$
where $|a|\asymp1$ and $|b|\asymp1$, uniformly in all parameters.
If
$\vartheta(t)=[v_{\chi,s,k}(t)]_{\Pone},$
then, in a fixed projective chart,
\begin{equation}\label{eq:convolution-projective-speed-selfconf}
|\vartheta'(t)|
=
\frac{|ab|e^{-t}}
{a^2e^{-2t}+b^2}
\asymp1.
\end{equation}
Thus $t\mapsto\vartheta(t)$ is uniformly bi-Lipschitz onto its image.
Since $h\in L^\infty$, the pushforward of $h(t)\,dt$ under this map
has a uniformly bounded density with respect to Lebesgue measure on
$\Pone$. This proves
\eqref{eq:convolution-condition-C-frostman-selfconf}.
\end{proof}

\subsubsection{The Ahlfors--David regular second factor}

Assume now that $\nu$ is $\alpha$-Ahlfors--David regular. Recall that
in this case
\[
\chi=(\xi,i)\in\Xi,
\qquad
j=(\rho,s)\in\mathcal J,
\]
and
$P_{\chi,j}
=
(\Phi_{\chi,s,k})_\#
\big((Y_k)_\#P_{k,\xi}\big).$
Define
\begin{equation}\label{eq:convolution-lambda-AD}
\Lambda_{\chi,j}
=
(\Phi_{\chi,s,k})_\#
\big(h(t)\,dt\big).
\end{equation}

\begin{proposition}\label{prop:convolution-condition-C-AD}
There exists $C<\infty$ such that, uniformly in $k$, $\chi\in\Xi$,
and $j\in\mathcal J$, the following hold.

For every nonnegative $G\in C^3(\Pone\times J)$,
\begin{equation}\label{eq:convolution-condition-C-renewal-AD}
\int G\,dP_{\chi,j}
\leq
C\int G\,d\Lambda_{\chi,j}
+
Ce^{-\epsilon_{\rm ren}k}\|G\|_{C^3}.
\end{equation}
Moreover,
\begin{equation}\label{eq:convolution-condition-C-frostman-AD}
(\operatorname{proj}_{\Pone})_\#
\Lambda_{\chi,j}(B(\theta,r))
\leq
Cr
\qquad
(\theta\in\Pone,\ 0<r\leq1).
\end{equation}
\end{proposition}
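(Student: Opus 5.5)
This is the Ahlfors--David analogue of Proposition~\ref{prop:convolution-condition-C-selfconf}, and I plan to follow that proof almost word for word. The only change is that the second coordinate of $v_{\chi,s,k}(t)$ is now the constant $1$ instead of the renormalized derivative of the second IFS. Neither step of that proof used anything specific to the second factor beyond the two-sided bound on the absolute values of the coordinates.

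\emph{The renewal estimate.} By the definitions \eqref{eq:convolution-AD-kernel} and \eqref{eq:convolution-lambda-AD},
\[
\int G\,dP_{\chi,j}=\int G\big(\Phi_{\chi,s,k}(Y_k(\omega))\big)\,dP_{k,\xi}(\omega),
\qquad
\int G\,d\Lambda_{\chi,j}=\int_0^{D'} G\big(\Phi_{\chi,s,k}(t)\big)h(t)\,dt .
\]
Both integrals depend only on $\xi$, through $c_{\xi,k}$ and $P_{k,\xi}$. Write $v_{\chi,s,k}(t)=(ae^{-t},1)$ with $a=se^kc'_{\xi,k}(x_0)$. By Proposition~\ref{prop:convolution-finite-remainder}, $|a|\asymp 1$ uniformly in $\xi$, $k$ and $s$.

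Consequently $|v_{\chi,s,k}(t)|$ is bounded above and below on $[0,D']$. In a fixed projective chart, the maps $t\mapsto [v_{\chi,s,k}(t)]$ and $t\mapsto \log|v_{\chi,s,k}(t)|$ are smooth with all derivatives up to order three bounded uniformly. This gives $\|\Phi_{\chi,s,k}\|_{C^3}\lesssim 1$, and by the chain rule $\|G\circ\Phi_{\chi,s,k}\|_{C^3}\lesssim\|G\|_{C^3}$.

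To apply Proposition~\ref{prop:convolution-effective-renewal}, first extend $G\circ\Phi_{\chi,s,k}$ from $[0,D']$ to a $C^3$ function on $\R$ with comparable norm; this is harmless since $Y_k$ takes values in $[0,D']$. The proposition then gives \eqref{eq:convolution-condition-C-renewal-AD}, in fact with $C=1$ in front of the main term, for $\mathbb P$-a.e.\ $\xi$, every $i$, and all large $k$. Small $k$ are absorbed into the constant.

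\emph{The Frostman bound.} The angular marginal of $\Lambda_{\chi,j}$ is the pushforward of $h(t)\,dt$ under $\vartheta(t)=[(ae^{-t},1)]$. This is \eqref{eq:convolution-projective-speed-selfconf} with $b=1$:
\[
|\vartheta'(t)|=\frac{|a|e^{-t}}{a^2e^{-2t}+1}\asymp 1 \quad\text{on } [0,D'].
\]
So $\vartheta$ is a uniformly bi-Lipschitz embedding of $[0,D']$ into $\Pone$; it is monotone in the chart because the slope $1/(ae^{-t})$ is monotone in $t$. Since $h\in L^\infty$, the pushforward has density bounded by $\|h\|_\infty\sup_t|\vartheta'(t)|^{-1}\lesssim 1$, which yields \eqref{eq:convolution-condition-C-frostman-AD} with $\kappa=1$.

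\emph{Main obstacle.} There is essentially none beyond bookkeeping. The one point to check is uniformity: all constants depend only on $|a|\asymp1$, $D'$ and $\|h\|_\infty$, hence not on the localization index $i$ or on $k$. The full-measure set $\Xi_0$ required in condition~(C) is then $\Omega_0\times\{1,\dots,N_k\}$, where $\Omega_0$ is the full-measure set of $\xi$ provided by Proposition~\ref{prop:convolution-effective-renewal}.
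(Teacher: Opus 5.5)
Your proposal is correct and follows the paper's proof essentially verbatim. You use uniform $C^3$ bounds on $\Phi_{\chi,s,k}$ together with Proposition~\ref{prop:convolution-effective-renewal} applied to $G\circ\Phi_{\chi,s,k}$ for the renewal estimate, and the projective speed bound $|\vartheta'(t)|\asymp 1$ together with $h\in L^\infty$ for the Frostman bound; your extra bookkeeping (extending $G\circ\Phi_{\chi,s,k}$ off $[0,D']$, absorbing small $k$ into the constant, and specifying $\Xi_0$) just makes explicit details the paper leaves implicit.
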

Thus, condition~\textup{(C)} of
Theorem~\ref{thm:stopped-intro} holds in the Ahlfors--David regular
case with
\[
r=3,
\qquad
\kappa=1,
\qquad
\epsilon=\epsilon_{\rm ren}.
\]

\begin{proof}
The proof of the renewal estimate is identical to the
self-conformal case. The maps $\Phi_{\chi,s,k}$ have uniformly bounded
$C^3$ norms, so Proposition~\ref{prop:convolution-effective-renewal},
applied to $G\circ\Phi_{\chi,s,k}$, gives
\eqref{eq:convolution-condition-C-renewal-AD}.

For the angular marginal, recall from
\eqref{eq:convolution-v-AD} that
$v_{\chi,s,k}(t)
=
\left(
s e^k c'_{\xi,k}(x_0)e^{-t},
1
\right).$
Set
$a_{\xi,k}=e^k c'_{\xi,k}(x_0).$
By Proposition~\ref{prop:convolution-finite-remainder},
$|a_{\xi,k}|\asymp1$. Hence, writing
$\vartheta(t)=[v_{\chi,s,k}(t)]_{\Pone},$
we have
\begin{equation}\label{eq:convolution-projective-speed-AD}
|\vartheta'(t)|
=
\frac{|a_{\xi,k}|e^{-t}}
{1+|a_{\xi,k}|^2e^{-2t}}
\asymp1.
\end{equation}
Thus $t\mapsto\vartheta(t)$ is again uniformly bi-Lipschitz onto its
image. Since $h\in L^\infty$, the angular marginal of
$\Lambda_{\chi,j}$ has uniformly bounded density, which proves
\eqref{eq:convolution-condition-C-frostman-AD}.
\end{proof}
\subsection{Choice of parameters and conclusion of proof}
\label{subsec:convolution-conclusion}

The preceding subsections contain all the substantive ingredients of
the proof. For completeness, we now collect the parameters and check
explicitly that they fit the hypotheses of
Theorem~\ref{thm:stopped-intro}.

We apply that theorem to the planar measure
$\mu\times\nu$
and to the singleton set of directions
$E=\{[e_0]\},$ where
$e_0=2^{-1/2}(1,1).$
In the notation of Theorem~\ref{thm:stopped-intro}, the normalized
measures $\eta_{\xi,j}$ are the measures $\Theta_{\chi,j}$ constructed
above.

We first record the parameters which are common to the two cases.
Via Propositions~\ref{prop:convolution-condition-C-selfconf} and
\ref{prop:convolution-condition-C-AD}, we take
\begin{equation}\label{eq:convolution-common-C-parameters}
r=3,
\qquad
\kappa=1,
\qquad
\epsilon=\epsilon_{\rm ren}.
\end{equation}
Choose $\delta>0$ so small that
\begin{equation}\label{eq:convolution-parameter-choice}
0<\delta<1,
\qquad
5\delta<\epsilon_{\rm ren}.
\end{equation}
Since $5=2\cdot3-1$, this is precisely the condition
$(2r-1)\delta<\epsilon$,
required in \eqref{eq:smoothing-renewal-balance-intro}.

For sufficiently large $R$, set
\begin{equation}\label{eq:convolution-k-choice}
k
=
\left\lfloor
\frac{\log R}{2+\delta}
\right\rfloor
\end{equation}
and, as in \eqref{eq:convolution-U}, put
$U=Re^{-2k}.$
Then
$k\asymp\log R$
and
\begin{equation}\label{eq:convolution-U-choice}
U\asymp e^{\delta k}.
\end{equation}

The error term in both renormalization propositions satisfies
\begin{equation}\label{eq:convolution-error-choice}
Re^{-3k}
=
Ue^{-k}
\lesssim
e^{-(1-\delta)k}.
\end{equation}
Thus condition~\textup{(B)} has the form required in
Theorem~\ref{thm:stopped-intro}, with, for example,
$c=1-\delta>0.$
Moreover, $k\to\infty$ with $R$, so for all sufficiently large $R$
the lower bounds on $k$ required in the renormalization and
Ahlfors--David localization arguments are automatically satisfied.

The auxiliary set in both cases is
$\mathcal J=\mathcal R\times\{\pm1\}.$
After fixing an enumeration of $\mathcal J$, we therefore take
\[
N=|\mathcal J|=2|\mathcal R|,
\qquad
N_0=N.
\]
The measures $\Theta_{\chi,j}$ are supported in a fixed compact subset
of $\mathbb R^2$ in both constructions, so we choose
$M\in\mathbb N$ large enough that
$\supp\Theta_{\chi,j}\subset B(0,M)$
uniformly in all parameters. Likewise, the maps $\Phi_{\chi,s,k}$
constructed in the two cases take values in
$\Pone\times J$ for a fixed compact interval $J$; enlarging $J$ if
necessary, we use the same notation for such an interval below.
Finally, the constant $C$ in Theorem~\ref{thm:stopped-intro} is chosen
larger than the constants occurring in the relevant renormalization
and angular-law propositions.

We now check the two possible second factors separately.

\subsubsection*{The self-conformal second factor}

Suppose first that $\nu$ is self-conformal. We use the probability
space
\[
(\Xi,m)
=
(\Omega\times\widetilde\Omega,\mathbb P\times\mathbb Q)
\]
from \eqref{eq:convolution-selfconf-conditioning-space}, and the
families $\Theta_{\chi,j}$ and $P_{\chi,j}$ defined in
\eqref{eq:convolution-selfconf-kernel}, together with the limiting
laws $\Lambda_{\chi,j}$ from
\eqref{eq:convolution-lambda-selfconf}.

Proposition~\ref{prop:convolution-condition-A-selfconf} supplies
numbers
\[
1<q<2,
\qquad
S=S_\mu+S_\nu>1,
\]
and a constant $B_*<\infty$ such that
$S_{m,q}(\Theta_{\chi,j})
\leq
B_*2^{-m(q-1)S}$
uniformly in all parameters. Thus we take $B=B_*$, and
condition~\textup{(A)} of Theorem~\ref{thm:stopped-intro} holds.

Proposition~\ref{prop:convolution-renormalization-selfconf}, together
with \eqref{eq:convolution-U-choice} and
\eqref{eq:convolution-error-choice}, gives condition~\textup{(B)} with
the choices of $U$ and $c$ above.

Finally, Proposition~\ref{prop:convolution-condition-C-selfconf}
gives condition~\textup{(C)} with
\[
r=3,
\qquad
\epsilon=\epsilon_{\rm ren},
\qquad
\kappa=1.
\]
Since $S>1$,
$S+\kappa=S+1>2.$
All the hypotheses of Theorem~\ref{thm:stopped-intro} are therefore
satisfied.

\subsubsection*{The Ahlfors--David regular second factor}

Suppose instead that $\nu$ is $\alpha$-Ahlfors--David regular. We use
the probability space
\[
(\Xi,m)
=
\left(
\Omega\times\{1,\ldots,N_k\},
\,
\mathbb P\times
\sum_{i=1}^{N_k}\nu(V_{k,i})\delta_i
\right)
\]
from \eqref{eq:convolution-AD-conditioning-space}, together with the
families $\Theta_{\chi,j}$ and $P_{\chi,j}$ from
\eqref{eq:convolution-AD-kernel} and the limiting laws
$\Lambda_{\chi,j}$ from \eqref{eq:convolution-lambda-AD}.

Proposition~\ref{prop:convolution-condition-A-AD} supplies
\[
1<q<2,
\qquad
S=S_\mu+\alpha>1,
\]
and $B_*<\infty$ such that
$S_{m,q}(\Theta_{\chi,j})
\leq
B_*2^{-m(q-1)S}$
uniformly in the stopped data. Taking again $B=B_*$ gives
condition~\textup{(A)}.

Proposition~\ref{prop:convolution-renormalization-AD}, together with
\eqref{eq:convolution-U-choice} and
\eqref{eq:convolution-error-choice}, gives condition~\textup{(B)}
with exactly the same choices of $\delta$, $k$, $U$, and $c$ as
above.

Finally, Proposition~\ref{prop:convolution-condition-C-AD} gives
condition~\textup{(C)} with
\[
r=3,
\qquad
\epsilon=\epsilon_{\rm ren},
\qquad
\kappa=1.
\]
Again,
\[
S+\kappa=S+1>2,
\]
so all the hypotheses of Theorem~\ref{thm:stopped-intro} are
satisfied.

Thus, in either case, Theorem~\ref{thm:stopped-intro} yields constants
$C_*,\gamma>0$ such that
\[
Q_R(\mu\times\nu,[e_0])
\leq
C_*R^{-\gamma}
\qquad
(R\geq2).
\]
Recalling that
$\|\Delta_R(\mu*\nu)\|_1
=
Q_{\sqrt2 R}(\mu\times\nu,[e_0]),$
and absorbing the fixed factor $\sqrt2$, we obtain the required
Littlewood--Paley decay and hence
\[
\mu*\nu\ll\Leb^1.
\]
This proves Theorem~\ref{thm:convolution-intro}.

\section{Projections of self-conformal measures}
\label{sec:planar-conformal-proof}
We now prove Theorem~\ref{thm:planar-conformal-intro}. Recall that here
$D=B(0,1)\subset\C$, and that
$\Phi=\{f_i:i\in\mathcal A\}$
is a finite $C^\omega(\C)$ IFS consisting of injective 
contractions defined on a neighbourhood of $D$. Given a strictly
positive probability vector $p=(p_i)_{i\in\mathcal A}$, the associated
self-conformal measure is the unique probability measure $\mu$ satisfying
$\mu=\sum_{i\in\mathcal A}p_i(f_i)_\#\mu.$
Its support is the attractor $K_\Phi$.

Throughout this section we assume that $\Phi$ is not
$C^\omega$-conjugate to a self-similar IFS, that $K_\Phi$ is not
contained in a real-analytic planar curve, and that
$\dim\mu>1.$
Our goal is to prove
\[
(\pi_e)_\#\mu\ll\Leb^1
\quad
\text{ for every }e\in\Sone.
\]

As in the convolution argument in the previous Section, we verify the three hypotheses of
Theorem~\ref{thm:stopped-intro} by first constructing  normalized
pieces and  probability kernels which arise from a 
decomposition of $\mu$. The present situation here is somewhat simpler:
there is only one measure and only one derivative cocycle. On the
other hand, because the derivative is complex-valued, this cocycle
 contains an angular component. In fact, it is this angular component that will
supply the angular equidistribution required in condition~\textup{(C)}.

\subsection{Decomposition of $\mu$ and  normalized kernels}
\label{subsec:planar-stopping-inputs}
We retain the notations and conventions
$\Omega=\mathcal A^{\N},
\,
\mathbb P=p^{\N},$
 $x_\omega \in K_\Phi$ for the point coded by $\omega\in\Omega$, and
$f_w=f_{w_1}\circ\cdots\circ f_{w_n}$
for a finite word $w=w_1\cdots w_n$.

Put $\mathbb T=\R/(2\pi\Z)$. For $\omega\in\Omega$, define the
norm and angle components of the derivative cocycle by
\begin{equation}\label{eq:planar-derivative-cocycle}
S_n(\omega)
=
-\log\left|
f'_{\omega|n}(x_{\sigma^n\omega})
\right|,
\quad
A_n(\omega)
=
\arg f'_{\omega|n}(x_{\sigma^n\omega})
\in\mathbb T, \text{ for } \omega \in \Omega.
\end{equation}
These are the same definitions given in \cite{algom2024plane}. 
Uniform contraction gives
\[
0<D_0
\leq
S_{n+1}(\omega)-S_n(\omega)
\leq
D_0'<\infty, \text{ for every } \omega,\,n.
\]
For $k\geq1$, set, as in the convolutions proof,
\begin{equation}\label{eq:planar-first-stop}
\tau_k(\omega)
=
\min\{n:S_n(\omega)\geq k\},
\qquad
Y_k(\omega)
=
S_{\tau_k(\omega)}(\omega)-k
\in[0,D_0'].
\end{equation}
Thus
\begin{equation}\label{eq:planar-stopped-derivative}
f'_{\omega|\tau_k(\omega)}
\bigl(x_{\sigma^{\tau_k(\omega)}\omega}\bigr)
=
e^{-k-Y_k(\omega)+iA_{\tau_k(\omega)}(\omega)}.
\end{equation}
As in Subsection~\ref{subsec:convolution-stopping}, define a measurable
partition $\mathcal P_k$ of $\Omega$ by
\[
\omega\sim_k\omega'
\quad\Longleftrightarrow\quad
\sigma^{\tau_k(\omega)}\omega
=
\sigma^{\tau_k(\omega')}\omega'.
\]
Thus the elements of $\mathcal P_k$ consist of codes having the same
 tail after the stopping rule $\tau_k$. For
$\mathbb P$-almost every $\xi\in\Omega$, let
$P_{k,\xi}
=
\mathbb P_{\mathcal P_k(\xi)}$
denote the conditional measure of $\mathbb P$ on the cell
$\mathcal P_k(\xi)$. Accordingly,
\begin{equation}\label{eq:planar-tail-disintegration}
\int F\,d\mathbb P
=
\int_\Omega
\left(
\int F\,dP_{k,\xi}
\right)
d\mathbb P(\xi),\, \text{ for every integrable } F.
\end{equation}

As in the convolution argument, $\tau_k$ need not be a
stopping time. We therefore again compare it with a genuine prefix
stopping time at a slightly deeper geometric scale. Fix a base point
$z_0\in D$ and define
\begin{equation}\label{eq:planar-deeper-stop}
\beta_k(\omega)
=
\min
\left\{
m:
|f'_{\omega|m}(z_0)|<e^{-2k}
\right\}.
\end{equation}
By bounded distortion and \eqref{eq:planar-first-stop}, for all
sufficiently large $k$,
$\beta_k(\omega)>\tau_k(\omega)$,
uniformly in $\omega$.
The portion of the word between these two stopping rules admits the
same finite-remainder decomposition as in the one-dimensional
argument.

\begin{lemma}[Finite remainder decomposition]
\label{lem:planar-finite-remainder}
There exists a finite set of words $\mathcal R$, independent of $k$,
such that, for $\mathbb P$-almost every $\xi\in\Omega$, there is a
word-map $g_{\xi,k}$ with the following property. For
$P_{k,\xi}$-almost every $\omega$,
\begin{equation}\label{eq:planar-finite-remainder}
f_{\sigma^{\tau_k(\omega)}\omega|
\,\beta_k(\omega)-\tau_k(\omega)}
=
g_{\xi,k}\circ f_{\rho_{\omega,k}}
\end{equation}
for some $\rho_{\omega,k}\in\mathcal R$. Moreover,
\begin{equation}\label{eq:planar-common-prefix-scale}
|g'_{\xi,k}(z)|
\asymp
e^{-k},
\qquad z\in D,
\end{equation}
uniformly in $\xi$ and $k$. In particular,
$\diam g_{\xi,k}(D)\lesssim e^{-k}.$
\end{lemma}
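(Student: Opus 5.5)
The plan is to show that, on a cell of $\mathcal P_k$, the segment of $\omega$ between $\tau_k(\omega)$ and $\beta_k(\omega)$ is an initial segment of the common tail $\zeta:=\sigma^{\tau_k(\xi)}\xi$, up to a uniformly bounded correction at its end. Fix $\xi$ and put $\zeta=\sigma^{\tau_k(\xi)}\xi$. For $P_{k,\xi}$-a.e. $\omega$ we have $\sigma^{\tau_k(\omega)}\omega=\zeta$. Hence the word $w_\omega:=\sigma^{\tau_k(\omega)}\omega|\,\beta_k(\omega)-\tau_k(\omega)$ is the prefix $\zeta|\,\ell_\omega$ with $\ell_\omega=\beta_k(\omega)-\tau_k(\omega)$. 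The only $\omega$-dependence of $w_\omega$ is therefore through its length $\ell_\omega$. The idea is to take $g_{\xi,k}:=f_{\zeta|L}$ with $L:=\min_\omega \ell_\omega$ over the cell (essential infimum), and to show that $\ell_\omega-L$ is uniformly bounded. Then $\rho_{\omega,k}:=\zeta_{L+1}\cdots\zeta_{\ell_\omega}$ ranges over words of bounded length, and $\mathcal R:=\bigcup_{j\le j_0}\mathcal A^j$ is a finite set independent of $k$. The identity \eqref{eq:planar-finite-remainder} then holds by the concatenation $f_{\zeta|\ell_\omega}=f_{\zeta|L}\circ f_{\rho_{\omega,k}}$.

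The key step is the bound on $\ell_\omega$, which I would obtain from bounded distortion \eqref{eq:planar-distortion-section4} and the chain rule.

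\begin{itemize}
\item By \eqref{eq:planar-stopped-derivative} and distortion, $|f'_{\omega|\tau_k(\omega)}(z)|\asymp e^{-k}$ on $D$.
\item By the definition \eqref{eq:planar-deeper-stop} of $\beta_k$, together with the one-step bounds $\rho_-\le|f_i'|\le\rho_+$, we get $|f'_{\omega|\beta_k(\omega)}(z_0)|\asymp e^{-2k}$, and distortion extends this to $D$.
\item Factor $f_{\omega|\beta_k(\omega)}=f_{\omega|\tau_k(\omega)}\circ f_{w_\omega}$ and apply the chain rule, using distortion once more for $f_{\omega|\tau_k}$ evaluated at the image point. This gives $|f'_{w_\omega}(z)|\asymp e^{-k}$ on $D$, uniformly in $\omega$, $\xi$ and $k$; this is the planar analogue of \eqref{eq:convolution-suffix-scale}.
\end{itemize}

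All the words $w_\omega$ are prefixes of the same sequence $\zeta$. For each $n$, distortion gives $|f'_{\zeta|n}(z)|\asymp|f'_{\zeta|n}(z_0)|$, and this quantity decreases by a factor in $[\rho_-,\rho_+]$ at each step. Consequently, if two prefix lengths $\ell\le\ell'$ both satisfy $|f'_{\zeta|\ell}|\asymp e^{-k}$ with the same constants $C^{\pm1}$, then $\rho_+^{\ell'-\ell}\gtrsim C^{-2}$, which yields $\ell'-\ell\le j_0$ for a constant $j_0$ depending only on the IFS. This proves the uniform bound on $\ell_\omega-L$.

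The scale estimate \eqref{eq:planar-common-prefix-scale} then follows. We have $g_{\xi,k}=f_{\zeta|L}$ with $L=\ell_{\omega^*}$ for some $\omega^*$ in the cell, or nearly so when $L$ is only an essential infimum. This gives $|g'_{\xi,k}|\asymp e^{-k}$ on $D$ directly. Equivalently, $|f'_{w_\omega}|=|g'_{\xi,k}\circ f_\rho|\cdot|f'_\rho|$ with $|f'_\rho|\asymp1$ because $\rho\in\mathcal R$ has bounded length. The diameter bound follows from the quasi-similarity estimate \eqref{eq:common-quasisim-section4}.

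I expect the main obstacle to be the measure-theoretic bookkeeping rather than the analysis. The minimum must be taken $P_{k,\xi}$-essentially. The maps $\xi\mapsto g_{\xi,k}$ and $\omega\mapsto\rho_{\omega,k}$ must be chosen measurably, which is fine because $g_{\xi,k}$ depends only on $\zeta$ and on the countably-valued quantity $L$. Finally, the statement is needed only for $k$ large enough that $\beta_k>\tau_k$. To avoid the infimum issue, I would define $L$ as the least $n$ with $|f'_{\zeta|n}(z_0)|<c\,e^{-k}$ for a suitably small constant $c$. This choice depends only on $\zeta$, and the argument above shows $0\le\ell_\omega-L\le j_0$ once $c$ is chosen appropriately.
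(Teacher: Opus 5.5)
Your proof is correct and follows essentially the same route as the paper: the paper likewise uses the common tail $\zeta$, gets $|f'_{\zeta|n(\omega)}|\asymp e^{-k}$ on $D$ from the chain rule and bounded distortion, pins every $n(\omega)=\beta_k(\omega)-\tau_k(\omega)$ within distance $P$ of a reference index $\ell$ depending only on $\zeta$ (the least $\ell$ with $|f'_{\zeta|\ell}(z_0)|\le e^{-k}$), and sets $g_{\xi,k}=f_{\zeta|\max\{0,\ell-P\}}$ and $\mathcal R=\bigcup_{j\le 2P}\mathcal A^j$. One small correction: in your threshold variant the constant $c$ must be \emph{large} (at least the upper comparability constant for $|f'_{w_\omega}(z_0)|$), not small, to guarantee $L\le\ell_\omega$ (the paper gets this by shifting the reference index down by $P$), while your essential-minimum version needs no fix, since an integer-valued essential infimum is attained on a set of positive measure.
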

Both the statement and proof are quite similar to Proposition \ref{prop:convolution-finite-remainder}.
\begin{proof}
For $P_{k,\xi}$-almost every $\omega$, the  tail
$\sigma^{\tau_k(\omega)}\omega$ is the same. By minimality in
\eqref{eq:planar-deeper-stop} and bounded distortion \eqref{eq:planar-distortion-section4}, the norm of the derivative of the full cylinder
$f_{\omega|\beta_k(\omega)}$ is comparable to $e^{-2k}$ at all points, whereas the term in
\eqref{eq:planar-stopped-derivative} has modulus comparable to
$e^{-k}$ uniformly at all points. Bounded distortion and the chain rule therefore show that the norm of the derivative of
$f_{\sigma^{\tau_k(\omega)}\omega|
\,\beta_k(\omega)-\tau_k(\omega)}$ is comparable to $e^{-k}$ at all points.

Let $\zeta$ denote the common terminal tail $\sigma^{\tau_k(\omega)}\omega$ on the fibre
$\mathcal P_k(\xi)$, and write
$n(\omega)=\beta_k(\omega)-\tau_k(\omega).$
By the preceding paragraph, there is a constant $C\geq1$, independent
of $k$, $\xi$, and $\omega$, such that
\begin{equation}\label{eq:planar-suffix-scale}
C^{-1}e^{-k}
\leq
\left|f'_{\zeta|n(\omega)}(z)\right|
\leq
Ce^{-k}
\qquad
(z\in D).
\end{equation}
Choose a reference integer $\ell=\ell(\xi,k)$ to be the least integer
such that
$\left|f'_{\zeta|\ell}(z_0)\right|\leq e^{-k}.$
By the uniform contraction bounds and
\eqref{eq:planar-distortion-section4}, there is an integer $P\geq1$,
depending only on the IFS, such that
\begin{equation}\label{eq:planar-suffix-length-comparison}
|n(\omega)-\ell|\leq P
\end{equation}
for every $\omega$ under consideration. Indeed, adding or deleting
$j$ symbols changes the norm of the derivative by an exponential
factor bounded above and below in terms only of $j$ and the uniform
one-step contraction constants. Hence
\eqref{eq:planar-suffix-scale}, together with the corresponding
estimate for the reference prefix $\zeta|\ell$, rules out
$|n(\omega)-\ell|>P$ once $P$ is chosen sufficiently large.

Set
$m=\max\{0,\ell-P\}$ and
$g_{\xi,k}=f_{\zeta|m}.$
Since every $n(\omega)$ lies between $\ell-P$ and $\ell+P$, each
prefix $\zeta|n(\omega)$ has the exact factorization
\[
f_{\zeta|n(\omega)}
=
g_{\xi,k}\circ f_{\rho_{\omega,k}},
\]
where $\rho_{\omega,k}$ is a word of length at most $2P$. Thus we may
take
$\mathcal R=\bigcup_{j=0}^{2P}\mathcal A^j,$
which is independent of $k$ and $\xi$. Finally,
\eqref{eq:planar-suffix-scale}, the fact that $m$ differs from
$n(\omega)$ by at most $2P$, and bounded distortion imply
$|g'_{\xi,k}(z)|\asymp e^{-k}$ for all
$z\in D$,
which is \eqref{eq:planar-common-prefix-scale}. The diameter estimate
then follows from the quasi-similarity estimate
\eqref{eq:common-quasisim-section4}.
\end{proof}

We retain the notations from Lemma \ref{lem:planar-finite-remainder}. We  normalize these common prefixes  as in
\eqref{eq:convolution-normalized-word}. Namely, set
$N_{\xi,k}(z)
=
\frac{g_{\xi,k}(z)-g_{\xi,k}(z_0)}
{g'_{\xi,k}(z_0)}.$
For $\rho\in\mathcal R$, define, as in Section~\ref{sec:conformal-qmass},
\begin{equation}\label{eq:planar-remainder-measure}
\Theta_{\xi,\rho}
=
(N_{\xi,k}\circ f_\rho)_\#\mu.
\end{equation}
Since $\mathcal R$ is finite,
the maps $N_{\xi,k}\circ f_\rho$ form a uniformly bi-Lipschitz
family and their images lie in one fixed ball. In particular, there
is $M<\infty$ such that
\begin{equation}\label{eq:planar-remainder-support}
\supp\Theta_{\xi,\rho}
\subset B(0,M)
\end{equation}
uniformly in $\xi$, $\rho$, and $k$.

We next specify the probability space and the kernels appearing in
Theorem~\ref{thm:stopped-intro}. Set
\begin{equation}\label{eq:planar-conditioning-space}
(\Xi,m)=(\Omega,\mathbb P),
\qquad
\mathcal J=\mathcal R.
\end{equation}
Thus the conditioning variable is simply $\xi\in\Omega$, whereas the
finite auxiliary index is $\rho\in\mathcal R$.

Fix  a direction $e\in\Sone$, and define
\begin{equation}\label{eq:planar-actual-law-map}
\Phi^e_{\xi,k}(a,y)
=
\left(
\left[
R_{-\left(a+\arg g'_{\xi,k}(z_0)\right)}e
\right],
\,
\log\left(e^k|g'_{\xi,k}(z_0)|\right)-y
\right),
\qquad
(a,y)\in\mathbb T\times[0,D_0'].
\end{equation}
By \eqref{eq:planar-common-prefix-scale}, the second coordinate of
$\Phi^e_{\xi,k}$ ranges in one fixed compact interval, uniformly in
$e$, $\xi$, and $k$.
For $\rho\in\mathcal R$, set
\begin{equation}\label{eq:planar-kernels}
P^e_{\xi,\rho}
=
(\Phi^e_{\xi,k})_\#
\left(
(A_{\tau_k},Y_k)_\#P_{k,\xi}
\right).
\end{equation}
The right-hand side is independent of $\rho$; we retain the index
$\rho$ only to match the family of normalized measures
$\Theta_{\xi,\rho}$ in Theorem~\ref{thm:stopped-intro}. The images of
the maps \eqref{eq:planar-actual-law-map} lie in
$\Pone\times J_1$ for one fixed compact interval $J_1$, uniformly in
$e$, $\xi$, and $k$.

The structure is  analogous to the convolution
construction. The principal difference is that there is no second
factor and hence no second stopping rule: the direction variable is
already carried by the argument $A_{\tau_k}$ of the complex
derivative cocycle. The overshoot $Y_k$ controls the remaining radial
scale, while the finite index $\rho$ records the bounded discrepancy
between the cocycle stopping rule $\tau_k$ and the genuine stopping
time $\beta_k$.

\subsection{Renormalization: condition \textup{(B)}}
\label{subsec:planar-renormalization}

We first verify the renormalization condition. This is the planar
analogue of Propositions~\ref{prop:convolution-renormalization-selfconf}
and \ref{prop:convolution-renormalization-AD}.

\begin{proposition}\label{prop:planar-renormalization}
There exist $C<\infty$ and $k_0\in\N$ such that, for every
$k\geq k_0$, every $R\geq1$, and every $e\in\Sone$, setting
\begin{equation}\label{eq:planar-U}
U=Re^{-2k},
\end{equation}
one has, uniformly in $e$,
\begin{equation}\label{eq:planar-condition-B}
\begin{aligned}
Q_R(\mu,[e])
\leq\;&
C
\int_\Omega
\sum_{\rho\in\mathcal R}
\int_{\Pone\times J_1}
H_{U,\Theta_{\xi,\rho}}(\theta,z)
\,dP^e_{\xi,\rho}(\theta,z)
\,d\mathbb P(\xi)
\\
&\qquad
+
CRe^{-3k}.
\end{aligned}
\end{equation}\end{proposition}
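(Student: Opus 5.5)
The argument will follow the proof of Proposition~\ref{prop:convolution-renormalization-selfconf}, now with a single complex cocycle. First, since $\beta_k$ is a genuine stopping time, \cite[Lemma~4.3]{algom2020decay} (or its planar analogue) gives
\[
\mu=\int (f_{\omega|\beta_k(\omega)})_\#\mu\,d\mathbb P(\omega).
\]
Linearity of $\pi_e$ and $\Delta_R$, together with Minkowski, then bounds $Q_R(\mu,[e])$ by $\int Q_R((f_{\omega|\beta_k(\omega)})_\#\mu,[e])\,d\mathbb P(\omega)$. We disintegrate this along $\mathcal P_k$ using \eqref{eq:planar-tail-disintegration}. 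For $P_{k,\xi}$-a.e. $\omega$ the tail $\zeta=\sigma^{\tau_k(\omega)}\omega$ is common, and Lemma~\ref{lem:planar-finite-remainder} gives
\[
f_{\omega|\beta_k(\omega)}=F_\omega\circ g_{\xi,k}\circ f_{\rho_{\omega,k}},\qquad F_\omega:=f_{\omega|\tau_k(\omega)}.
\]

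Next we linearize $F_\omega$ at $x_\zeta$:
\[
L(z)=F_\omega(x_\zeta)+F'_\omega(x_\zeta)\bigl(g_{\xi,k}(f_\rho(z))-x_\zeta\bigr).
\]
The inputs are $|g_{\xi,k}(f_\rho(z))-x_\zeta|\lesssim e^{-k}$ (both points lie in $g_{\xi,k}(D)$, of diameter $\lesssim e^{-k}$) and $|F'_\omega|\asymp e^{-k}$. With these we need a planar version of Lemma~\ref{lem:convolution-relative-Taylor}, namely
\[
|f_w(x)-f_w(y)-f_w'(y)(x-y)|\lesssim|f_w'(y)||x-y|^2.
\]
For holomorphic maps on a slightly larger disc this follows from the Koebe distortion theorem, or from the same chain-rule bound on $f_w''/f_w'$. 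The resulting $C^0$ error is $O(e^{-3k})$ uniformly. Since $\pi_e$ is $1$-Lipschitz, Lemma~\ref{lem:uniform-displacement} converts it into an error $O(Re^{-3k})$ in $Q_R$, uniformly in $e$. This is the main technical point: the estimate must be uniform over words and must use a domain on which the cylinder maps stay univalent. I would take this from bounded distortion \eqref{eq:planar-distortion-section4} after enlarging $D$.

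Then we identify the linearized term. Write $g_{\xi,k}(f_\rho(z))=g_{\xi,k}(z_0)+g'_{\xi,k}(z_0)\,(N_{\xi,k}\circ f_\rho)(z)$. By \eqref{eq:planar-stopped-derivative}, up to translation,
\[
L(z)=e^{-k-Y_k(\omega)+iA_{\tau_k(\omega)}(\omega)}\,g'_{\xi,k}(z_0)\,w,\qquad w=(N_{\xi,k}\circ f_\rho)(z).
\]
Multiplication by a complex number $\lambda=|\lambda|e^{i\phi}$ satisfies $\langle\lambda w,e\rangle=|\lambda|\langle w,R_{-\phi}e\rangle$. Hence $(\pi_e)_\#L_\#\mu$ is, up to translation, the projection of $\Theta_{\xi,\rho}$ in direction $[R_{-(A_{\tau_k}+\arg g'_{\xi,k}(z_0))}e]$, dilated by
\[
|\lambda|=e^{-2k}\cdot e^{\log(e^k|g'_{\xi,k}(z_0)|)-Y_k}.
\]
Lemma~\ref{lem:L1-scaling} (translation invariance is the case $a=1$) then yields
\[
Q_R(L_\#\mu,[e])=H_{U,\Theta_{\xi,\rho}}\bigl(\Phi^e_{\xi,k}(A_{\tau_k}(\omega),Y_k(\omega))\bigr),\qquad U=Re^{-2k}.
\]

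Finally we split by $\rho_{\omega,k}\in\mathcal R$. Positivity lets us drop the indicators, and integrating in $P_{k,\xi}$ gives $\sum_\rho\int H_{U,\Theta_{\xi,\rho}}\,dP^e_{\xi,\rho}$ by \eqref{eq:planar-kernels}. Integrating over $\xi$ gives \eqref{eq:planar-condition-B}; the error stays $O(Re^{-3k})$ because $\mathcal R$ is finite.
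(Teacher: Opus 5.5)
Your proposal is correct and follows essentially the same route as the paper's proof. The common steps are: the stopping-time decomposition via $\beta_k$ with Minkowski; disintegration along $\mathcal P_k$ with the finite-remainder factorization; linearization of $F_\omega$ at the common tail point, with the $O(e^{-3k})$ error converted into $O(Re^{-3k})$ by Lemma~\ref{lem:uniform-displacement}; identification of the linearized piece as $H_{U,\Theta_{\xi,\rho}}\circ\Phi^e_{\xi,k}$ via complex multiplication and Lemma~\ref{lem:L1-scaling}; and summation over $\rho\in\mathcal R$ using positivity.
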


\begin{proof}
Since $\beta_k$ is a genuine  stopping time, iteration of the
stationary identity gives
$\mu
=
\int
(f_{\omega|\beta_k(\omega)})_\#\mu
\,d\mathbb P(\omega).$
Hence, by linearity and Minkowski's inequality,
\begin{equation}\label{eq:planar-minkowski-stop}
Q_R(\mu,[e])
\leq
\int
Q_R\left(
(f_{\omega|\beta_k(\omega)})_\#\mu,[e]
\right)
\,d\mathbb P(\omega).
\end{equation}
We disintegrate the right-hand side according to
$\mathcal P_k$ using \eqref{eq:planar-tail-disintegration}. Fix
$\xi\in\Omega$. For $P_{k,\xi}$-almost every $\omega$, the tail
$\sigma^{\tau_k(\omega)}\omega$ is the same; let $x_*$ denote its
coded point, and write
$F_\omega=f_{\omega|\tau_k(\omega)}.$
By Lemma~\ref{lem:planar-finite-remainder},
\[
f_{\omega|\beta_k(\omega)}
=
F_\omega\circ g_{\xi,k}\circ f_{\rho_{\omega,k}},
\quad
\rho_{\omega,k}\in\mathcal R.
\]

Fix $\xi\in\Omega$ and, for the moment, fix $\rho\in\mathcal R$.
We restrict to those $\omega$ for which $\rho_{\omega,k}=\rho$.
At the end of the argument we sum over $\rho\in\mathcal R$; since
$\mathcal R$ is fixed and finite, its cardinality is absorbed into
the constants.

For such $\omega$,
$f_{\omega|\beta_k(\omega)}
=
F_\omega\circ g_{\xi,k}\circ f_\rho.$
Both $x_*$ and $g_{\xi,k}\circ f_\rho(D)$ are contained in
$g_{\xi,k}(D)$, whose diameter is $O(e^{-k})$ by
Lemma~\ref{lem:planar-finite-remainder}. By the complex-analytic analogue of
Lemma~\ref{lem:convolution-relative-Taylor}, whose proof is essentially the same,
\begin{equation}\label{eq:planar-relative-Taylor}
|F_\omega(x)-F_\omega(y)-F_\omega'(y)(x-y)|
\leq
C|F_\omega'(y)|\,|x-y|^2.
\end{equation}
Taking $y=x_*$ and using
\eqref{eq:planar-stopped-derivative}, we have
$|F_\omega'(x_*)|
=
e^{-k-Y_k(\omega)}
\asymp e^{-k}.$
Hence, on $g_{\xi,k}\circ f_\rho(D)$, the map $F_\omega$ differs
from its affine approximation at $x_*$ by $O(e^{-3k})$. 
Define the affine map
\[
L_{\omega,\xi,k}(x)
=
F_\omega(x_*)
+
F_\omega'(x_*)(x-x_*).
\]
By the preceding estimate, for $x\in g_{\xi,k}\circ f_\rho(D)$,
\begin{equation*}
\left|F_\omega(x)-L_{\omega,\xi,k}(x)\right|
\lesssim e^{-3k}.
\end{equation*}
Consequently, since orthogonal projection is $1$-Lipschitz,
Lemma~\ref{lem:uniform-displacement} gives
\begin{equation}\label{eq:planar-linearization-error}
\begin{aligned}
\Big|
&
Q_R\left(
(F_\omega\circ g_{\xi,k}\circ f_\rho)_\#\mu,[e]
\right)
\\
&\quad-
Q_R\left(
(L_{\omega,\xi,k}\circ g_{\xi,k}\circ f_\rho)_\#\mu,[e]
\right)
\Big|
\lesssim
Re^{-3k}.
\end{aligned}
\end{equation}

We now compute the contribution of the map $L_{\omega,\xi,k}$.
By the definition of $N_{\xi,k}$,
$g_{\xi,k}(f_\rho(z))
=
g_{\xi,k}(z_0)
+
g'_{\xi,k}(z_0)
(N_{\xi,k}\circ f_\rho)(z).$
So,
\[
L_{\omega,\xi,k}(g_{\xi,k}(f_\rho(z)))
=
F_\omega(x_*)
+
F_\omega'(x_*)
\bigl(g_{\xi,k}(z_0)-x_*\bigr)
+
F_\omega'(x_*)g'_{\xi,k}(z_0)
(N_{\xi,k}\circ f_\rho)(z).
\]
Thus, up to translation, this map is multiplication of
$N_{\xi,k}\circ f_\rho$ by
$$F_\omega'(x_*)g'_{\xi,k}(z_0)
=
e^{-k-Y_k(\omega)+iA_{\tau_k(\omega)}(\omega)}
g'_{\xi,k}(z_0).$$

For complex multiplication by $ce^{i\varphi}$, $c>0$, we have
$\pi_e(ce^{i\varphi}z)
=
c\,\pi_{R_{-\varphi}e}(z).$
Since translations do not affect the $L^1$ norm of a
Littlewood--Paley piece, Lemma~\ref{lem:L1-scaling} gives, with
$U=Re^{-2k},$
\begin{align*}
&
Q_R\left(
(L_{\omega,\xi,k}\circ g_{\xi,k}\circ f_\rho)_\#\mu,
[e]
\right)
\\
&\qquad =
H_{U,\Theta_{\xi,\rho}}
\left(
\left[
R_{-\left(
A_{\tau_k(\omega)}(\omega)
+
\arg g'_{\xi,k}(z_0)
\right)}e
\right],
\,
\log\left(e^k|g'_{\xi,k}(z_0)|\right)-Y_k(\omega)
\right).
\end{align*}
By the definition of $\Phi^e_{\xi,k}$, the right-hand side is
$H_{U,\Theta_{\xi,\rho}}
\left(
\Phi^e_{\xi,k}
\bigl(A_{\tau_k(\omega)}(\omega),Y_k(\omega)\bigr)
\right).$

Recall that we are presently considering only those $\omega$ for
which $\rho_{\omega,k}=\rho$. Combining the preceding identity with
\eqref{eq:planar-linearization-error}, we obtain
\begin{align*}
&
\int_{\{\rho_{\omega,k}=\rho\}}
Q_R\left(
(f_{\omega|\beta_k(\omega)})_\#\mu,[e]
\right)
\,dP_{k,\xi}(\omega)
\\
&\qquad\leq
\int_{\{\rho_{\omega,k}=\rho\}}
H_{U,\Theta_{\xi,\rho}}
\left(
\Phi^e_{\xi,k}
\bigl(A_{\tau_k(\omega)}(\omega),Y_k(\omega)\bigr)
\right)
\,dP_{k,\xi}(\omega)
\\
&\qquad\qquad
+
CRe^{-3k}
P_{k,\xi}\{\rho_{\omega,k}=\rho\}.
\end{align*}
Since the integrand is nonnegative, we may enlarge the first
integral to the whole fibre. By the definition of
$P^e_{\xi,\rho}$,
\[
\int
H_{U,\Theta_{\xi,\rho}}
\left(
\Phi^e_{\xi,k}
\bigl(A_{\tau_k(\omega)}(\omega),Y_k(\omega)\bigr)
\right)
\,dP_{k,\xi}(\omega)
=
\int_{\Pone\times J_1}
H_{U,\Theta_{\xi,\rho}}(\theta,z)
\,dP^e_{\xi,\rho}(\theta,z).
\]

We now sum over $\rho\in\mathcal R$. Since
$\rho_{\omega,k}\in\mathcal R$ and the sets
$\{\rho_{\omega,k}=\rho\}$ partition the fibre,
$\sum_{\rho\in\mathcal R}
P_{k,\xi}\{\rho_{\omega,k}=\rho\}
=
1.$
Therefore,
\[
\int
Q_R\left(
(f_{\omega|\beta_k(\omega)})_\#\mu,[e]
\right)
\,dP_{k,\xi}(\omega)
\leq
\sum_{\rho\in\mathcal R}
\int_{\Pone\times J_1}
H_{U,\Theta_{\xi,\rho}}(\theta,z)
\,dP^e_{\xi,\rho}(\theta,z)
+
CRe^{-3k}.
\]
Finally, integrating in $\xi$ and using
\eqref{eq:planar-minkowski-stop}, we obtain
\[
Q_R(\mu,[e])
\leq
\int_\Omega
\sum_{\rho\in\mathcal R}
\int_{\Pone\times J_1}
H_{U,\Theta_{\xi,\rho}}(\theta,z)
\,dP^e_{\xi,\rho}(\theta,z)
\,d\mathbb P(\xi)
+
CRe^{-3k},
\]
which is \eqref{eq:planar-condition-B}.
\end{proof}

\subsection{Uniform \(q\)-mass: condition \textup{(A)}}
\label{subsec:planar-qmass}

We next verify condition~\textup{(A)} for the normalized measures
$\Theta_{\xi,\rho}$ constructed above. It is here that the hypothesis
$\dim\mu>1$ enters the proof.

\begin{proposition}\label{prop:planar-condition-A}
There exist $1<q<2$, $S>1$, and $B_*<\infty$ such that, for every
sufficiently large $k$, for $\mathbb P$-almost every $\xi\in\Omega$,
every $\rho\in\mathcal R$, and every $m\geq1$,
\begin{equation}\label{eq:planar-condition-A}
S_{m,q}(\Theta_{\xi,\rho})
\leq
B_*\,2^{-m(q-1)S}.
\end{equation}
\end{proposition}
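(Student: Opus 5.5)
The estimate follows by combining Theorem~\ref{thm:conformal-uniform-qmass} for $\mu$ with Lemma~\ref{cor:bilip-qmass} for the normalized cylinder maps, exactly as in Proposition~\ref{prop:convolution-condition-A-selfconf}, but with a single planar factor instead of a product.

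\textbf{Step 1 (choice of $q$ and $S$).} Since $\dim\mu>1$, apply Theorem~\ref{thm:conformal-uniform-qmass} in its planar $C^\omega$ form with $d_0=1$. This yields $q\in(1,2)$, a number $1<S<D_q(\mu)$ and $B<\infty$ such that
\[
S_{m,q}(\mu)\leq B\,2^{-m(q-1)S}\qquad (m\geq1).
\]

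\textbf{Step 2 (uniform bi-Lipschitz control).} Next I would check that the family $\mathcal T=\{N_{\xi,k}\circ f_\rho\}$, ranging over $k\geq k_0$, almost every $\xi$, and $\rho\in\mathcal R$, is uniformly bi-Lipschitz on $\supp\mu\subset D$.

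\begin{itemize}
\item The maps $N_{\xi,k}$ have the form $z\mapsto (g_{\xi,k}(z)-g_{\xi,k}(z_0))/g'_{\xi,k}(z_0)$, where $g_{\xi,k}=f_{\zeta|m}$ is a genuine cylinder map. The quasi-similarity estimate \eqref{eq:common-quasisim-section4} with $a_w=|f'_w(z_0)|$ therefore makes $N_{\xi,k}$ bi-Lipschitz on $D$ with constant $C$, independent of $\xi$ and $k$.
\item Each $f_\rho$, for $\rho$ in the finite set $\mathcal R$, is an injective conformal map with derivative bounded above and below on $D$. By \eqref{eq:common-quasisim-section4} again, it is bi-Lipschitz with a constant depending only on $\mathcal R$, and it maps $D$ into $D$.
\item The composition is therefore bi-Lipschitz with a constant $L$ independent of all parameters.
\end{itemize}

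\textbf{Step 3 (conclusion).} Lemma~\ref{cor:bilip-qmass} with $d=2$ then gives
\[
S_{m,q}(\Theta_{\xi,\rho})=S_{m,q}\bigl((N_{\xi,k}\circ f_\rho)_\#\mu\bigr)\leq B'\,2^{-m(q-1)S},
\]
with $B'=B'(B,q,S,L)$ uniform. Setting $B_*=B'$ proves \eqref{eq:planar-condition-A}. The ``$\mathbb P$-almost every $\xi$'' qualifier only reflects that $g_{\xi,k}$ is defined off a null set.

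\textbf{Main obstacle.} The only delicate point is the uniformity in the bi-Lipschitz bound. It requires \eqref{eq:common-quasisim-section4} to hold on all of $D$, possibly after slightly enlarging the disc as in Step 1 of the proof of Theorem~\ref{thm:conformal-uniform-qmass}, so that it applies both to the common prefix $g_{\xi,k}$ and to the bounded remainders $f_\rho$. It also requires that normalizing at $z_0$ does not depend on $k$. Both follow from bounded distortion for holomorphic conformal IFSs, already recorded in Section~\ref{sec:conformal-qmass}.
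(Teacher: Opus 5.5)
Your proposal is correct and follows the paper's proof essentially verbatim. You use Theorem~\ref{thm:conformal-uniform-qmass} with $\dim\mu>1$ to obtain $q\in(1,2)$ and $S>1$, then uniform bi-Lipschitz control of $N_{\xi,k}\circ f_\rho$ via bounded distortion and quasi-similarity together with the finiteness of $\mathcal R$, and finally Lemma~\ref{cor:bilip-qmass}; your Step 2 just spells out the bi-Lipschitz verification that the paper states in a single line.
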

Thus, condition~\textup{(A)} of
Theorem~\ref{thm:stopped-intro} holds.

\begin{proof}
Since $\dim\mu>1$, Theorem~\ref{thm:conformal-uniform-qmass} gives
$1<q<2$, a number
$1<S<D_q(\mu),$
and $B<\infty$ such that
\begin{equation}\label{eq:planar-base-qmass}
S_{m,q}(\mu)
\leq
B\,2^{-m(q-1)S}
\qquad
(m\geq1).
\end{equation}

By \eqref{eq:planar-common-prefix-scale}, the normalized-cylinder
geometry of Section~\ref{sec:conformal-qmass}, and the finiteness of
$\mathcal R$, the maps
$N_{\xi,k}\circ f_\rho$
form a uniformly bi-Lipschitz family. Hence
Lemma~\ref{cor:bilip-qmass}, applied to
\eqref{eq:planar-base-qmass}, gives
\[
S_{m,q}
\left(
(N_{\xi,k}\circ f_\rho)_\#\mu
\right)
\lesssim
2^{-m(q-1)S}
\]
uniformly in $\xi$, $\rho$, and $k$. By
\eqref{eq:planar-remainder-measure}, this is exactly
\eqref{eq:planar-condition-A}.
\end{proof}

\subsection{The angular law: condition \textup{(C)}}
\label{subsec:planar-angular}

It remains to verify condition~\textup{(C)}, that is, to control the
angular law. There is a conceptual difference here from the
convolution setting. In the previous section the angular variable
arose after representing convolution as a projection of a product,
and the renewal law of one of the factors was used to produce the
required angular distribution. Here the angular variable is already
part of the complex derivative cocycle: the angle component
$A_{\tau_k}$ directly rotates the projection direction. Thus the
required smooth angular law follows from the joint norm--angle
renewal theorem for this cocycle.

The input we use was proved in our previous work \cite[Theorem~3.1]{algom2024plane}. It is the
planar extension of the one-dimensional renewal theorem, which is Proposition \ref{prop:convolution-effective-renewal} here. 

\begin{thm}
\label{thm:planar-angular-renewal}
There exist $\epsilon_{\rm ang}>0$, $C_{\rm ang}<\infty$, a fixed
compact interval $J_0\subset\R$, and the probability measure
$\lambda_0
=
m_{\mathbb T}
\times
\frac{1}{|J_0|}\left.\Leb^1\right|_{J_0}$
on $\mathbb T\times J_0$ such that:

For every sufficiently large $k$,
for $\mathbb P$-almost every $\xi\in\Omega$, and every nonnegative
$G\in C^8(\mathbb T\times J_0)$,
\begin{equation}\label{eq:planar-angular-renewal}
\int
G\bigl(A_{\tau_k(\omega)}(\omega),Y_k(\omega)\bigr)
\,dP_{k,\xi}(\omega)
\leq
C_{\rm ang}\int G\,d\lambda_0
+
C_{\rm ang}e^{-\epsilon_{\rm ang}k}
\|G\|_{C^8}.
\end{equation}
The constants are uniform in $k$ and $\xi$.
\end{thm}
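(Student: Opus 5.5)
Theorem~\ref{thm:planar-angular-renewal} is the effective joint norm--angle renewal theorem of \cite[Theorem~3.1]{algom2024plane}, specialized to our conditional measures $P_{k,\xi}$. The plan is therefore not to reprove the spectral theory, but to show that our objects match the hypotheses and the conclusion of that result, and to repackage its main term as an upper bound by $\lambda_0$.

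First I would check that the data coincide. By definition \eqref{eq:planar-derivative-cocycle} and \eqref{eq:planar-first-stop}, the pair $(S_n,A_n)$, the rule $\tau_k$, the overshoot $Y_k$ and the tail partition $\mathcal P_k$ are exactly those of \cite{algom2024plane}. Hypotheses (i)--(ii) of Theorem~\ref{thm:planar-conformal-intro} are precisely the nonlinearity and nondegeneracy assumptions used there. They yield a $C^{1+\gamma}$ spectral gap for the twisted transfer operators $\mathcal L_{s,\ell}f(x)=\sum_i p_i |f_i'(x)|^{-s}e^{i\ell\arg f_i'(x)} f(f_i x)$ on vertical strips $|\Re s|$ small and $|\Im s|+|\ell|$ large, via a Dolgopyat-type estimate.

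The cited theorem then gives, for $\mathbb P$-a.e.\ $\xi$ and $G\in C^8$,
\[
\int G(A_{\tau_k},Y_k)\,dP_{k,\xi}=\int G\,d\Lambda^{\rm main}_{k,\xi}+O\bigl(e^{-\epsilon k}\|G\|_{C^8}\bigr),
\]
where the main term is computed by Fourier analysis in the angle and Laplace inversion in the norm. It has the form $\frac1\chi\int_{\mathbb T}\int G(a,t)\,\kappa_0([\max\{D_0,t\},D_0'])\,dt\,dm_{\mathbb T}(a)$, up to a rotation of the angle depending on $\xi$. The nonzero angular frequencies contribute only to the error term because of the spectral gap, so the main term is Haar in $a$, independently of the shift.

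Second, I would dominate this main term. Take $J_0\supset[0,D_0']$. Since the density $h(t)=\chi^{-1}\kappa_0(\cdots)\mathbf 1_{[0,D_0']}$ is bounded, for nonnegative $G$ we get
\[
\int G\,d\Lambda^{\rm main}\le \|h\|_\infty |J_0|\int G\,d\lambda_0 .
\]
This gives \eqref{eq:planar-angular-renewal} with $C_{\rm ang}=\max\{\|h\|_\infty|J_0|,C\}$. Uniformity in $k,\xi$ is inherited from the uniform constants in the cited theorem.

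The main obstacle is the fibre issue. $\tau_k$ is not a genuine stopping time, so one must confirm that \cite{algom2024plane} states its estimate for the conditional measures on cells of $\mathcal P_k$ uniformly in $\xi$, rather than only after averaging. I expect this holds because that paper works with precisely these tail-fibres: the conditional law on a fibre is a reweighted transfer-operator iterate evaluated at the fixed tail point $x_\zeta$, to which the spectral gap applies pointwise. If the citation only gave averaged statements, I would rederive the fibrewise version by writing $\int G\,dP_{k,\xi}$ as a renewal sum $\sum_n \mathcal L^n$ applied at $x_\zeta$ and repeating the contour-shift argument with $\sup$-norm control in $x_\zeta$.
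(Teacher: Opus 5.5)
Your proposal is correct and follows the same route as the paper. The paper also obtains this statement directly from \cite[Theorem~3.1]{algom2024plane}: Haar invariance absorbs the bounded angular translation in the main term, the compactly supported overshoot part is dominated by normalized Lebesgue measure on an enlarged interval $J_0$ (which is allowed because $G$ is nonnegative), and the uniformity of that theorem in the prescribed tail gives the estimate uniformly for the conditional measures $P_{k,\xi}$.
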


Theorem~\ref{thm:planar-angular-renewal} is a direct consequence of
\cite[Theorem~3.1]{algom2024plane}, applied to the norm and angle
components of the derivative cocycle. The renewal main term in that
theorem is Haar measure in the angular variable, while the overshoot
variable is supported in a fixed compact interval. Since we only use
nonnegative test functions, we may enlarge this interval to $J_0$ and
dominate the overshoot part by Lebesgue measure there, absorbing the
resulting constant into $C_{\rm ang}$. Haar invariance also absorbs
the bounded angular translation appearing in the renewal main term.
Finally, the estimate in \cite[Theorem~3.1]{algom2024plane} is uniform
in the prescribed  tail, and therefore applies uniformly to
the conditional measures $P_{k,\xi}$.

Let $J$ be a fixed compact interval containing the interval $J_1$
defined after \eqref{eq:planar-kernels} and all numbers
$\log\left(e^k|g'_{\xi,k}(z_0)|\right)-y,
$ for $y\in J_0.$
Such a $J$ exists by \eqref{eq:planar-common-prefix-scale}.

For $e\in\Sone$, $\xi\in\Omega$, and $\rho\in\mathcal R$, define
\begin{equation}\label{eq:planar-limiting-law}
\Lambda^e_{\xi,\rho}
=
(\Phi^e_{\xi,k})_\#\lambda_0,
\end{equation}
where
$\Phi^e_{\xi,k}(a,y)
=
\left(
\left[
R_{-\left(a+\arg g'_{\xi,k}(z_0)\right)}e
\right],
\,
\log\left(e^k|g'_{\xi,k}(z_0)|\right)-y
\right).$
Thus $\Lambda^e_{\xi,\rho}$ is supported on
$\Pone\times J$. As with $P^e_{\xi,\rho}$, it does not actually
depend on $\rho$; we retain the index only to keep the notation
parallel to the family $\Theta_{\xi,\rho}$.

\begin{proposition}\label{prop:planar-condition-C}
There exists $C<\infty$ such that, uniformly in
$e\in\Sone$, $k$, for $\mathbb P$-almost every $\xi\in\Omega$, and
$\rho\in\mathcal R$, the following hold.

For every nonnegative $G\in C^8(\Pone\times J)$,
\begin{equation}\label{eq:planar-condition-C-renewal}
\int G\,dP^e_{\xi,\rho}
\leq
C\int G\,d\Lambda^e_{\xi,\rho}
+
Ce^{-\epsilon_{\rm ang}k}
\|G\|_{C^8}.
\end{equation}
Moreover,
\begin{equation}\label{eq:planar-condition-C-frostman}
(\operatorname{proj}_{\Pone})_\#
\Lambda^e_{\xi,\rho}(B(\theta,r))
\leq
Cr
\quad
\theta\in\Pone,\ 0<r\leq1.
\end{equation}
\end{proposition}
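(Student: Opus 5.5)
The plan is to mirror the proofs of Propositions~\ref{prop:convolution-condition-C-selfconf} and \ref{prop:convolution-condition-C-AD}. Both $P^e_{\xi,\rho}$ and $\Lambda^e_{\xi,\rho}$ are push-forwards under the same map $\Phi^e_{\xi,k}$: the first of the law of $(A_{\tau_k},Y_k)$ under $P_{k,\xi}$, the second of $\lambda_0$. So we rewrite
\[
\int G\,dP^e_{\xi,\rho}=\int (G\circ\Phi^e_{\xi,k})\bigl(A_{\tau_k(\omega)}(\omega),Y_k(\omega)\bigr)\,dP_{k,\xi}(\omega),
\qquad
\int G\,d\Lambda^e_{\xi,\rho}=\int G\circ\Phi^e_{\xi,k}\,d\lambda_0 ,
\]
and then apply Theorem~\ref{thm:planar-angular-renewal} to the nonnegative test function $\widetilde G=G\circ\Phi^e_{\xi,k}$ on $\mathbb T\times J_0$.

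\textbf{Renewal estimate.} Since $G$ is defined on $\Pone\times J$, we need $\Phi^e_{\xi,k}(\mathbb T\times J_0)\subset\Pone\times J$; this holds by the choice of $J$ just before \eqref{eq:planar-limiting-law}. We also need $\|\widetilde G\|_{C^8}\lesssim\|G\|_{C^8}$, uniformly in $e,\xi,k$.

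To see the norm bound, note that $\Phi^e_{\xi,k}$ splits into two coordinates. The first is $a\mapsto[R_{-(a+c)}e]$ with $c=\arg g'_{\xi,k}(z_0)$. In the chart $\Pone\cong\R/\pi\Z$ this is the translation $a\mapsto\theta_e-a-c$, composed with the 2-to-1 covering $\mathbb T\to\Pone$. The second is $y\mapsto b-y$ with $b=\log(e^k|g'_{\xi,k}(z_0)|)$, which is $O(1)$ by \eqref{eq:planar-common-prefix-scale}. Both coordinates are isometries up to translation, so all derivatives of $\Phi^e_{\xi,k}$ of order at most 8 are bounded by absolute constants. Faà di Bruno's formula then gives the $C^8$ bound on $\widetilde G$. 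Applying \eqref{eq:planar-angular-renewal} yields \eqref{eq:planar-condition-C-renewal} with $\epsilon=\epsilon_{\rm ang}$, and hence $r=8$ in condition~(C).

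\textbf{Frostman bound.} The $\Pone$-marginal of $\Lambda^e_{\xi,\rho}$ is the push-forward of $m_{\mathbb T}$ under $a\mapsto[R_{-(a+c)}e]$. This is translation composed with the covering map, so the push-forward is normalized Lebesgue measure on $\Pone$, i.e.\ Haar measure. Balls of radius $r$ in the angular metric therefore have mass $\le 2r/\pi\le r$, which gives \eqref{eq:planar-condition-C-frostman} with $\kappa=1$.

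\textbf{Main obstacle.} The only delicate point is the domain bookkeeping. Theorem~\ref{thm:planar-angular-renewal} requires test functions on $\mathbb T\times J_0$, while $G$ lives on $\Pone\times J$, and $\widetilde G$ must be $C^8$ up to the boundary of $J_0$. This is handled by the enlargement of $J$ made before \eqref{eq:planar-limiting-law} and by the fact that the second coordinate of $\Phi^e_{\xi,k}$ is an affine reflection. Uniformity in $e$ is automatic, since $e$ enters only through the rotation.
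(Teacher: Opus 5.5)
Your proposal is correct and follows essentially the same route as the paper. Both arguments write $P^e_{\xi,\rho}$ and $\Lambda^e_{\xi,\rho}$ as push-forwards under the common map $\Phi^e_{\xi,k}$, apply Theorem~\ref{thm:planar-angular-renewal} to the nonnegative function $G\circ\Phi^e_{\xi,k}$ using uniform $C^8$ bounds on $\Phi^e_{\xi,k}$, and get the Frostman bound because the angular marginal of $\Lambda^e_{\xi,\rho}$ is Haar measure on $\Pone$. Your extra bookkeeping only makes explicit what the paper leaves implicit: $\Phi^e_{\xi,k}$ is affine in coordinates, so $\|G\circ\Phi^e_{\xi,k}\|_{C^8}\le\|G\|_{C^8}$, and the choice of $J$ guarantees the needed domain inclusion.
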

Therefore, condition~\textup{(C)} of
Theorem~\ref{thm:stopped-intro} holds with
\[
r=8,
\quad
\kappa=1,
\quad
\epsilon=\epsilon_{\rm ang}.
\]
\begin{proof}
By definition,
\[
P^e_{\xi,\rho}
=
(\Phi^e_{\xi,k})_\#
\left(
(A_{\tau_k},Y_k)_\#P_{k,\xi}
\right),
\qquad
\Lambda^e_{\xi,\rho}
=
(\Phi^e_{\xi,k})_\#\lambda_0,
\]
where
$\Phi^e_{\xi,k}(a,y)
=
\left(
\left[
R_{-\left(a+\arg g'_{\xi,k}(z_0)\right)}e
\right],
\,
\log\left(e^k|g'_{\xi,k}(z_0)|\right)-y
\right).$ 
The maps $\Phi^e_{\xi,k}$ have uniformly bounded derivatives through
order $8$ in smooth coordinates, uniformly in $e$, $\xi$, and $k$.
Therefore, for every nonnegative
$G\in C^8(\Pone\times J)$, applying
Theorem~\ref{thm:planar-angular-renewal} to
$G\circ\Phi^e_{\xi,k}$ and using the chain rule gives
\begin{equation*}
\int G\,dP^e_{\xi,\rho}
\leq
C\int G\,d\Lambda^e_{\xi,\rho}
+
Ce^{-\epsilon_{\rm ang}k}\|G\|_{C^8}.
\end{equation*}

It remains to check the angular Frostman estimate. The first
coordinate of $\Phi^e_{\xi,k}$ is
$a
\mapsto
\left[
R_{-\left(a+\arg g'_{\xi,k}(z_0)\right)}e
\right].$
The pushforward of normalized Haar measure on $\mathbb T$ under this
map is normalized Haar measure on $\Pone$. Consequently, the angular
marginal of $\Lambda^e_{\xi,\rho}$ is normalized Haar measure on
$\Pone$, and hence
\begin{equation*}
(\operatorname{proj}_{\Pone})_\#\Lambda^e_{\xi,\rho}\bigl(B(\theta,r)\bigr)
\leq Cr
\quad
\theta\in\Pone,\ 0<r\leq1,
\end{equation*}
uniformly in $e$, $\xi$, $\rho$, and $k$. Thus condition~\textup{(C)}
holds with $\kappa=1$ and $r=8$.
\end{proof}

At this point the three substantive conditions of
Theorem~\ref{thm:stopped-intro} have been verified. As in the
convolution proof, we finish by recording explicitly how the
parameters above match those of the abstract theorem.

\subsection{Choice of parameters and conclusion of proof}
\label{subsec:planar-conclusion}

We apply Theorem~\ref{thm:stopped-intro} with
$E=\Pone.$
The conditioning space and finite auxiliary set are
$(\Xi,m)=(\Omega,\mathbb P),$
$\mathcal J=\mathcal R.$
Thus we take
\[
N=|\mathcal R|,
\qquad
N_0=N.
\]
The normalized measures $\eta_{\xi,j}$ in the notation of the
abstract theorem are the measures $\Theta_{\xi,\rho}$ from
\eqref{eq:planar-remainder-measure}. By
\eqref{eq:planar-remainder-support}, one may fix
$M\in\N$ such that
$\supp\Theta_{\xi,\rho}\subset B(0,M)$
uniformly in all parameters. The interval $J$ was fixed in the
preceding subsection.

Proposition~\ref{prop:planar-condition-A} supplies
\[
1<q<2,
\quad
S>1,
\quad
B<\infty,
\]
for condition~\textup{(A)}. Proposition~\ref{prop:planar-condition-C}
supplies the common choices
\begin{equation}\label{eq:planar-C-parameters}
r=8,
\quad
\kappa=1,
\quad
\epsilon=\epsilon_{\rm ang}.
\end{equation}
Choose $\delta>0$ so small that
\begin{equation}\label{eq:planar-parameter-choice}
0<\delta<1,
\qquad
15\delta<\epsilon_{\rm ang}.
\end{equation}
This is precisely the requirement
$(2r-1)\delta<\epsilon$
from \eqref{eq:smoothing-renewal-balance-intro}.

For sufficiently large $R$, set
\begin{equation}\label{eq:planar-k-choice}
k
=
\left\lfloor
\frac{\log R}{2+\delta}
\right\rfloor.
\end{equation}
Then
$k\asymp\log R,$
and, by \eqref{eq:planar-U},
\begin{equation}\label{eq:planar-U-choice}
U
=
Re^{-2k}
\asymp
e^{\delta k}.
\end{equation}

The geometric error in
Proposition~\ref{prop:planar-renormalization} satisfies
\begin{equation}\label{eq:planar-error-choice}
Re^{-3k}
=
Ue^{-k}
\lesssim
e^{-(1-\delta)k}.
\end{equation}
Thus condition~\textup{(B)} holds with, for example,
$c=1-\delta>0.$
For sufficiently large $R$, the corresponding value of $k$ also
exceeds all fixed lower bounds on $k$ occurring in the stopping,
renormalization, and renewal propositions.

We have therefore verified the hypotheses of
Theorem~\ref{thm:stopped-intro} as follows:
Proposition~\ref{prop:planar-condition-A} gives
condition~\textup{(A)};
Proposition~\ref{prop:planar-renormalization}, together with
\eqref{eq:planar-U-choice}--\eqref{eq:planar-error-choice}, gives
condition~\textup{(B)};
and Proposition~\ref{prop:planar-condition-C} gives
condition~\textup{(C)} with
\[
r=8,
\qquad
\kappa=1,
\qquad
\epsilon=\epsilon_{\rm ang}.
\]
Moreover,
$S+\kappa=S+1>2.$
All constants, compactness bounds, and implicit comparison constants
are uniform in $e\in\Sone$.

Theorem~\ref{thm:stopped-intro} therefore gives
$C_*,\gamma>0$, independent of $e$, such that
\[
Q_R(\mu,[e])
\leq
C_*R^{-\gamma}
\quad
e\in\Sone,\ R\geq2.
\]
So,
$\|\Delta_R((\pi_e)_\#\mu)\|_{L^1}
\leq
C_*R^{-\gamma}$ for
$e\in\Sone,\ R\geq2.$
The criterion from Theorem~\ref{thm:stopped-intro} now yields
\[
(\pi_e)_\#\mu\ll\Leb^1
\quad
\text{for every }e\in\Sone.
\]
This proves Theorem~\ref{thm:planar-conformal-intro}.

\section{Projections of self-affine measures}\label{sec:self-affine-proof}
We now prove Theorem~\ref{thm:self-affine-intro}. Recall from
Subsection~\ref{subsec:self-affine-intro} that
$\Phi=\{f_i:i\in\mathcal A\},
\,
f_i(x)=A_i x+b_i,$
where
$A_i\in\mathrm{GL}(2,\R),$
$\|A_i\|<1,$
$b_i\in\R^2,$
and that $\mu=\mu_{\mathbf{p}}$ is the self-affine measure associated to a strictly
positive probability vector $p=(p_i)_{i\in\mathcal A}$, so that
\eqref{eq:self-conformal} holds. We assume throughout that the
 semigroup
$\Gamma_+^T=\langle A_i^T:i\in\mathcal A\rangle_+$
is strongly irreducible and proximal, in the sense defined in
Subsection~\ref{subsec:self-affine-intro}. We denote by $\nu_F$ the
corresponding Furstenberg measure on $\Pone$, characterized by
\eqref{eq:furstenberg-stationary-intro}, and use the Frostman
dimension $\dimFr\nu_F$ defined in \eqref{eq:frostman-dim-intro}.

Fix $0<s<2$ as in \eqref{eq:self-affine-threshold-intro}; thus,
using the notation from \eqref{eq:def-correlation-dim},
\[
I_s(\mu)<\infty,
\qquad
s+\dimFr\nu_F>2.
\]
Our goal is to prove that, under these assumptions
\[
(\pi_e)_\#\mu\ll\Leb^1
\qquad
\text{for every }e\in\Sone.
\]

As in the preceding section, we apply Theorem~\ref{thm:stopped-intro}. The
present application is simpler: the  stopping
rule is already a genuine prefix stopping time; the cylinder maps
are affine, so the renormalization in condition~\textup{(B)} is
exact. The main new input is the Li--Sahlsten renewal theorem \cite{Li2020Sahl} for the
transpose matrix random walk. In particular, whereas the limiting angular
law in previous applications was the Haar measure, here it is
governed by the Furstenberg measure $\nu_F$.

\subsection{The stopping decomposition and normalized kernels}
\label{subsec:self-affine-stopping-inputs}

As before, let
$\Omega=\mathcal A^{\N},$
$\mathbb P=p^{\N}.$
For a finite word $w=i_1\cdots i_n$, write
\[
A_w=A_{i_1}\cdots A_{i_n},
\qquad
f_w=f_{i_1}\circ\cdots\circ f_{i_n},
\qquad
p_w=p_{i_1}\cdots p_{i_n}.
\]

We follow the notation of Li--Sahlsten
\cite{Li2020Sahl} for the matrix random walk and its norm cocycle,
with the minor modifications that we work with the transposed linear
parts and pass to projective space when convenient.
Put
\begin{equation}\label{eq:self-affine-lambda}
\lambda
=
\sum_{i\in\mathcal A}p_i\,\delta_{A_i^T}
\in\mathcal P(\mathrm{GL}(2,\R)).
\end{equation}
For $g\in\mathrm{GL}(2,\R)$ and $e\in\Sone$, set
\[
\sigma(g,e):=\log|ge|.
\]
Since the alphabet is finite and every $A_i$ is an invertible strict
contraction, there are constants $0<a_-\leq a_+<\infty$ such that
\begin{equation}\label{eq:self-affine-one-step}
-a_+
\leq
\sigma(A_i^T,e)
\leq
-a_-<0
\quad
i\in\mathcal A,\ e\in\Sone.
\end{equation}

For $e\in\Sone$, $k>0$, and $\omega\in\Omega$, define
\begin{equation}\label{eq:self-affine-first-passage}
\tau_k^e(\omega)
=
\min\left\{
n\geq1:
-\log|A_{\omega|n}^Te|>k
\right\}.
\end{equation}
Since
$A_{\omega|n}^T
=
A_{\omega_n}^T\cdots A_{\omega_1}^T,$
this is  the first-passage time for the left random walk with
step distribution \eqref{eq:self-affine-lambda}. It is also a genuine
prefix stopping time on $\Omega$.

Let $\mathcal W_k(e)$ be the cut-set
\begin{equation}\label{eq:self-affine-stopping-line}
\mathcal W_k(e)
=
\left\{
w\in\mathcal A^*:
-\log|A_w^Te|>k
\ \text{and}\
-\log|A_{w^-}^Te|\leq k
\right\},
\end{equation}
where $w^-$ denotes the word obtained from $w$ by deleting its last
symbol. For
$w\in\mathcal W_k(e)$ set, recalling \eqref{eq:self-affine-one-step},
\begin{equation*}
\theta_w(e)=[A_w^Te]\in\Pone,
\quad
z_w(e)=\log|A_w^Te|+k\in[-a_+,0].
\end{equation*}
Thus
$|A_w^Te|
=
e^{-k+z_w(e)}.$ We write, 
\begin{equation}\label{eq:self-affine-Pek}
P_{e,k}
=
\sum_{w\in\mathcal W_k(e)}
p_w\,\delta_{(\theta_w(e),z_w(e))}
\in
\mathcal P(\Pone\times[-a_+,0]).
\end{equation}
Since $\mathcal W_k(e)$ is a cut-set, iterating
\eqref{eq:self-conformal} gives 
\begin{equation}\label{eq:self-affine-stopping-identity}
\mu
=
\sum_{w\in\mathcal W_k(e)}
p_w(f_w)_\#\mu.
\end{equation}

We can now specify the objects from Theorem~\ref{thm:stopped-intro}
that arise from this decomposition. We take
\[
(\Xi,m)=(\{\ast\},\delta_\ast),
\quad
N=N_0=1,
\quad
J=[-a_+,0],
\]
and set
\[
\eta_{\ast,1}=\mu.
\]
The corresponding probability kernel is
$P_{\ast,1}=P_{e,k},$ defined in \eqref{eq:self-affine-Pek}. 
Thus there is no  conditioning  or auxiliary finite
index in the self-affine application. The limiting law
$\Lambda_{\ast,1}$ required in condition~\textup{(C)} will be defined
below from the Li--Sahlsten renewal theorem, Proposition \ref{prop:self-affine-condition-C} below.

\subsection{Renormalization: condition \textup{(B)}}
\label{subsec:self-affine-renormalization}

We next verify condition~\textup{(B)} of
Theorem~\ref{thm:stopped-intro}. Recall from \eqref{eq:Q-def-intro}
that, for $\eta\in\mathcal P(\R^2)$ and $\theta=[v]\in\Pone$,
$Q_R(\eta,\theta)
=
\|\Delta_R((\pi_v)_\#\eta)\|_{L^1(\R)},$
and from \eqref{eq:H-def} that
$H_{U,\eta}(\theta,z)
=
Q_{Ue^z}(\eta,\theta).$

We use the probability space and kernels specified in the preceding
subsection:
\[
(\Xi,m)=(\{\ast\},\delta_\ast),
\quad
N=1,
\quad
\eta_{\ast,1}=\mu,
\quad
P_{\ast,1}=P_{e,k},
\]
with $J=[-a_+,0]$.

\begin{lemma}\label{lem:self-affine-renormalization}
For every $e\in\Sone$, $k\geq1$, and $R\geq1$, set
$U=Re^{-k}.$
Then
\begin{equation}\label{eq:self-affine-condition-B}
Q_R(\mu,[e])
\leq
\int_\Xi
\int_{\Pone\times J}
H_{U,\eta_{\xi,1}}(\theta,z)\,
dP_{\xi,1}(\theta,z)\,dm(\xi).
\end{equation}
\end{lemma}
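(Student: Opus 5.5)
The plan is to start from the cut-set identity \eqref{eq:self-affine-stopping-identity}, $\mu=\sum_{w\in\mathcal W_k(e)}p_w(f_w)_\#\mu$. Projection $(\pi_e)_\#$ and $\Delta_R$ are linear, so Minkowski's inequality gives
\[
Q_R(\mu,[e])\leq\sum_{w\in\mathcal W_k(e)}p_w\,\|\Delta_R((\pi_e\circ f_w)_\#\mu)\|_{L^1}.
\]
Convergence of the sum is not an issue. The cut-set is finite, because one-step contraction is at least $e^{-a_-}$. Alternatively, each term is bounded by $\|\check\psi\|_{L^1}$.

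Next we compute each term exactly; no linearization is needed here. For $x\in\R^2$,
\[
\pi_e(f_w(x))=\langle A_wx+b_w,e\rangle=\langle x,A_w^Te\rangle+\langle b_w,e\rangle.
\]
Write $A_w^Te=|A_w^Te|\,u_w$ with $u_w\in\Sone$ and $[u_w]=\theta_w(e)$. Then $\pi_e\circ f_w=T_{a,b}\circ\pi_{u_w}$, where $a=|A_w^Te|=e^{-k+z_w(e)}$ and $b=\langle b_w,e\rangle$. By Lemma~\ref{lem:L1-scaling},
\[
\|\Delta_R((\pi_e\circ f_w)_\#\mu)\|_{L^1}=\|\Delta_{Re^{-k}e^{z_w(e)}}((\pi_{u_w})_\#\mu)\|_{L^1}=Q_{Ue^{z_w(e)}}(\mu,\theta_w(e)),
\]
with $U=Re^{-k}$. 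This is exactly $H_{U,\mu}(\theta_w(e),z_w(e))$, by \eqref{eq:H-def}. The quantity is well defined on $\Pone$ because $\psi$ is even, so the sign choice of $u_w$ is irrelevant.

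Summing over the cut-set then gives
\[
\sum_w p_w H_{U,\mu}(\theta_w(e),z_w(e))=\int_{\Pone\times J}H_{U,\eta_{\ast,1}}\,dP_{e,k}.
\]
This uses the definition \eqref{eq:self-affine-Pek} of $P_{e,k}$, together with $\eta_{\ast,1}=\mu$ and the trivial space $(\Xi,m)=(\{\ast\},\delta_\ast)$. The result is \eqref{eq:self-affine-condition-B} with constant $1$ and no error term.

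The only points needing care are bookkeeping. First, $P_{e,k}$ is a probability measure because $\mathcal W_k(e)$ is a cut-set: $\tau_k^e$ is a.s. finite, so $\sum_w p_w=1$. Second, $z_w(e)\in[-a_+,0]=J$, which follows from \eqref{eq:self-affine-one-step} and minimality in \eqref{eq:self-affine-stopping-line}. I do not expect a genuine obstacle; the argument rests entirely on the exact covariance of Lemma~\ref{lem:L1-scaling}.
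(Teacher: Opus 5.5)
Your proposal is correct and follows the paper's proof essentially verbatim. Both arguments apply Minkowski's inequality to the cut-set identity, then use the exact factorization $\pi_e\circ f_w=T_{a,b}\circ\pi_{u_w}$ with $a=|A_w^Te|=e^{-k+z_w(e)}$ together with Lemma~\ref{lem:L1-scaling}, giving the bound with constant $1$ and no error term. Your extra bookkeeping (finiteness of $\mathcal W_k(e)$, $z_w(e)\in[-a_+,0]$, and $\sum_w p_w=1$) is accurate and just makes explicit what the paper takes for granted in its setup.
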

In particular, condition~\textup{(B)} of
Theorem~\ref{thm:stopped-intro} holds, with no error term.
\begin{proof}
For a finite word $w$, write
$f_w(x)=A_wx+b_w.$
Then
\[
\pi_e(f_w(x))
=
\langle b_w,e\rangle
+
|A_w^Te|\,
\pi_{A_w^Te/|A_w^Te|}(x).
\]
Hence, by the affine covariance \eqref{eq:L1-scaling},
$Q_R((f_w)_\#\mu,[e])
=
Q_{R|A_w^Te|}(\mu,\theta_w(e)).$
Applying Minkowski's inequality to
\eqref{eq:self-affine-stopping-identity}, and recalling that for
$w\in\mathcal W_k(e)$,
$|A_w^Te|
=
e^{-k+z_w(e)},$
we obtain
\[
\begin{aligned}
Q_R(\mu,[e])
&\leq
\sum_{w\in\mathcal W_k(e)}
p_w
Q_{Re^{-k+z_w(e)}}(\mu,\theta_w(e))\\
&=
\int_{\Pone\times J}
H_{U,\mu}(\theta,z)\,dP_{e,k}(\theta,z).
\end{aligned}
\]
By the definitions
$\eta_{\ast,1}=\mu$, $P_{\ast,1}=P_{e,k}$, and
$(\Xi,m)=(\{\ast\},\delta_\ast)$, the last expression is exactly the
right-hand side of \eqref{eq:self-affine-condition-B}.
\end{proof}

\subsection{Uniform \(q\)-mass: condition \textup{(A)}}
\label{subsec:self-affine-qmass}
We next verify condition~\textup{(A)} of
Theorem~\ref{thm:stopped-intro}. Recall that, in the notation fixed
above,
$\eta_{\ast,1}=\mu.$

\begin{lemma}\label{lem:self-affine-condition-A}
Condition~\textup{(A)} of Theorem~\ref{thm:stopped-intro} holds with
$q=2$ and
$S=s.$
That is,
\begin{equation}\label{eq:self-affine-condition-A}
S_{m,2}(\eta_{\ast,1})
=
S_{m,2}(\mu)
\leq
C_s I_s(\mu)\,2^{-ms}
\quad
m\geq1.
\end{equation}
\end{lemma}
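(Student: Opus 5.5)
This is a direct application of Lemma~\ref{lem:energy-qmass}, which already records exactly the needed dyadic estimate from finite energy. The plan is simply to check that its hypotheses hold for $\eta=\mu$ and then read off the parameters.

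First, $\mu$ is a probability measure on $\R^2$ (compactly supported on the attractor), and by the standing assumption \eqref{eq:self-affine-threshold-intro} we have $I_s(\mu)<\infty$ for the fixed $0<s<2$. Lemma~\ref{lem:energy-qmass} then gives
\[
S_{m,2}(\mu)\leq C_s I_s(\mu)2^{-ms},\qquad m\geq1,
\]
with $C_s=2^{s/2}$. The proof of that lemma is just the observation that pairs $(x,y)$ lying in a common dyadic square of $\mathcal D_m$ satisfy $|x-y|^{-s}\gtrsim 2^{ms}$. This is exactly \eqref{eq:self-affine-condition-A}.

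Next we match the form \eqref{eq:A-intro} of Theorem~\ref{thm:stopped-intro}. With $q=2$ we have $q-1=1$, so $2^{-n(q-1)S}=2^{-ns}$ when $S=s$. Taking $B=C_sI_s(\mu)<\infty$ therefore gives condition~(A). The conditioning space $(\Xi,m)=(\{\ast\},\delta_\ast)$ is a single point and $N=1$. Hence the requirement ``for $m$-almost every $\xi$ and every $j$'' reduces to the single measure $\eta_{\ast,1}=\mu$, and no uniformity over a family needs to be checked. This is in contrast to the conformal applications, where Lemma~\ref{cor:bilip-qmass} was needed for that purpose.

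There is no real obstacle here. The only point deserving care is that $q=2$ lies in the admissible range $1<q\leq2$ of Theorem~\ref{thm:stopped-intro}. Beyond that, the exponent $S=s$ is what later pairs with $\kappa<\dimFr\nu_F$ in the threshold $S+\kappa>2$.
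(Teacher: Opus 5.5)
Your proposal is correct and matches the paper's proof, which simply invokes Lemma~\ref{lem:energy-qmass} with $\eta=\mu$ using the finite-energy hypothesis $I_s(\mu)<\infty$. Your remarks on $q-1=1$ and on the trivial one-point conditioning space just spell out the parameter matching that the paper leaves implicit.
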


\begin{proof}
By \eqref{eq:self-affine-threshold-intro},
$I_s(\mu)<\infty$. Hence
\eqref{eq:self-affine-condition-A} follows directly from
Lemma~\ref{lem:energy-qmass}.
\end{proof}

\begin{remark}\label{rem:self-affine-energy-role}
This is the only place where the finite-energy assumption in
\eqref{eq:self-affine-threshold-intro} is used. Recall from the
discussion following Theorem~\ref{thm:self-affine-intro} that if
$\lim_{q\downarrow1}D_q(\mu)=\dim\mu,$
then the same argument as in the preceding sections would yield the same
conclusion under the assumption
$\dim\mu+\dimFr\nu_F>2.$
\end{remark}

\subsection{The angular law: condition \textup{(C)}}
\label{subsec:self-affine-renewal}
We now turn to the only substantial dynamical input. In the
self-conformal applications, the limiting angular law is Haar measure.
For the transpose matrix random walk \eqref{eq:self-affine-lambda}, it is
 the Furstenberg measure $\nu_F$ from
\eqref{eq:furstenberg-stationary-intro}.

Recall the norm cocycle
$
\sigma(g,e)=\log|ge|$
defined above. Since it is invariant under replacing $e$ by $-e$, we
also write $\sigma(g,\theta)$ for $\theta=[e]\in\Pone$, and write
$g\theta=[ge]$ for the induced projective action. Set
\[
\sigma_\lambda
=
\int_{\Pone}
\int_{\mathrm{GL}(2,\R)}
\sigma(g,\theta)\,
d\lambda(g)\,d\nu_F(\theta)
<0.
\]

Define $\Lambda\in\mathcal P(\Pone\times[-a_+,0])$ by
\[
\int G\,d\Lambda
=
\frac{1}{|\sigma_\lambda|}
\int_{\Pone}
\int_{\mathrm{GL}(2,\R)}
\int_0^{-\sigma(g,\theta)}
G\bigl(g\theta,\sigma(g,\theta)+u\bigr)
\,du\,d\lambda(g)\,d\nu_F(\theta).
\]
The normalization follows immediately by taking $G\equiv1$. In the
notation of Theorem~\ref{thm:stopped-intro}, we set
$\Lambda_{\ast,1}=\Lambda.$

\begin{proposition}[Li--Sahlsten renewal theorem]
\label{prop:self-affine-condition-C}
There exist $C<\infty$ and $\epsilon_{\rm LS}>0$ such that, uniformly
in $e\in\Sone$ and $k\geq1$,
\[
\left|
\int G\,dP_{e,k}
-
\int G\,d\Lambda
\right|
\leq
Ce^{-\epsilon_{\rm LS}k}\|G\|_{C^3}
\]
for every $G\in C^3(\Pone\times[-a_+,0])$.

Moreover, for every $0<\kappa<\dimFr\nu_F$, the angular marginal
$\bar\sigma
=
(\operatorname{proj}_{\Pone})_\#\Lambda$
satisfies
\[
\bar\sigma(B(\theta,\rho))
\leq
C_\kappa\rho^\kappa
\quad
\theta\in\Pone,\ 0<\rho\leq1.
\]
\end{proposition}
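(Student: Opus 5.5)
The statement has two parts, and I treat them separately. The first part is the effective renewal estimate. I would derive it from the quantitative renewal theorem of Li and Li--Sahlsten \cite{Li2018decay,li2018fourier,Li2020Sahl} for the left random walk on $\mathrm{GL}(2,\R)$ driven by $\lambda$. That theorem gives, under SIP, an exponential-rate residual process statement. Concretely, for a first-passage time such as $\tau_k^e$, the joint law of the direction $[A_{\omega|\tau}^Te]$ and the overshoot $\sigma$-value equidistributes. The limit is the measure obtained from the stationary law $\nu_F$ on $\Pone$ together with the size-biased overshoot. The rate is $e^{-\epsilon k}$ for H\"older or $C^3$ test functions, uniformly in the initial direction $e$.

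The steps are as follows.

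\begin{enumerate}
\item Rewrite $\int G\,dP_{e,k}$ as $\mathbb E\, G(\theta_{\tau}(e),z_\tau(e))$, where $\theta_\tau=[A^T_{\omega|\tau}e]$ and $z_\tau=k+\log|A^T_{\omega|\tau}e|$. This identifies it with the residual-process functional in \cite[Theorem on the stopped random walk]{Li2020Sahl}.

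\item Identify the limiting main term. Decompose according to the last step before passage: write $\theta'=$ the direction at time $\tau-1$ and $g=A_{\omega_\tau}^T$. The pre-passage process is governed by the Markov renewal measure $\sum_n \lambda^{*n}$ acting on $(\theta,\text{height})$. Its equidistribution, namely $\frac{1}{|\sigma_\lambda|}\nu_F\otimes\Leb$ up to exponential error, is exactly the renewal theorem with a spectral gap on H\"older functions. Integrating over the window of heights $u\in[0,-\sigma(g,\theta)]$ then gives precisely the stated formula for $\Lambda$.

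\item Check uniformity in $e$ and the passage from H\"older to $C^3$ norms. This is routine, since $\|\cdot\|_{C^\gamma}\lesssim\|\cdot\|_{C^3}$ on the compact space $\Pone\times[-a_+,0]$.
\end{enumerate}

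The main obstacle is the first part: quoting the Li--Sahlsten result in exactly this form. Their theorem is stated for smooth test functions of the residual variables with a Fourier/Laplace-analytic proof. One must check that the cut-off in the overshoot variable, which is not smooth, is handled by the $C^3$ norm of $G$ alone. It helps that $G$ is only evaluated on a compact set. I would follow their reduction via the twisted transfer operators $P_{it}$ and their Dolgopyat-type estimate, which requires SIP. I would accept a small loss in $\epsilon$ from smoothing any indicators.

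The second part is the Frostman bound. The angular marginal of $\Lambda$ is
\[
\bar\sigma=\frac{1}{|\sigma_\lambda|}\int\!\!\int(-\sigma(g,\theta))\,\delta_{g\theta}\,d\lambda(g)\,d\nu_F(\theta).
\]
By \eqref{eq:self-affine-one-step}, $-\sigma(g,\theta)\le a_+$, so
\[
\bar\sigma\le \frac{a_+}{|\sigma_\lambda|}\sum_i p_i\,((A_i^T)^{\Pone})_\#\nu_F=\frac{a_+}{|\sigma_\lambda|}\nu_F
\]
by stationarity \eqref{eq:furstenberg-stationary-intro}. The bound $\bar\sigma(B(\theta,\rho))\le C_\kappa\rho^\kappa$ for $\kappa<\dimFr\nu_F$ then follows directly from the definition \eqref{eq:frostman-dim-intro}.
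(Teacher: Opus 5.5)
Your proposal is correct and follows the paper's route: the paper also obtains the effective estimate by quoting Li--Sahlsten's stopped (residual-process) renewal theorem \cite[Proposition~3.7]{Li2020Sahl}, noting that SIP gives its hypotheses in dimension two and projectivizing their $\Sone$ statement via antipodal invariance of $\sigma$ and of the test functions, and your Frostman bound via $-\sigma\le a_+$ and stationarity of $\nu_F$ is exactly the paper's argument. Your last-step decomposition correctly recovers the formula for $\Lambda$, and when the stopped statement is quoted directly, as the paper does, the overshoot cut-off is dealt with inside Li--Sahlsten's result, so the extra smoothing loss you anticipate is not needed.
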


The effective renewal estimate is the projective form of
\cite[Proposition~3.7]{Li2020Sahl}; its proof is given in
\cite[Section~5.4]{Li2020Sahl}. To check that it applies here, note
that in dimension two the SIP hypothesis implies the algebraic
assumptions of that proposition by
\cite[Remark~1.2(3)]{Li2020Sahl}. Their result is formulated on
$\Sone$. After projectivizing, the limiting spherical stationary law
projects to the unique Furstenberg measure $\nu_F$. Since both the
norm cocycle and the test functions considered here are antipodally
invariant, this gives precisely the measure $\Lambda$ above. The
regularity appearing in \cite[Proposition~3.7]{Li2020Sahl} is
controlled by the $C^3$ norm used here.

It remains only to record the Frostman estimate. If $B\subset\Pone$
is a ball, then by the definition of $\Lambda$ and
\eqref{eq:self-affine-one-step},
\[
\begin{aligned}
\bar\sigma(B)
&\leq
\frac{a_+}{|\sigma_\lambda|}
\int_{\Pone}
\int_{\mathrm{GL}(2,\R)}
\mathbf 1_B(g\theta)\,
d\lambda(g)\,d\nu_F(\theta)\\
&=
\frac{a_+}{|\sigma_\lambda|}\nu_F(B),
\end{aligned}
\]
where the last equality follows from
\eqref{eq:furstenberg-stationary-intro}. The required estimate now
follows directly from \eqref{eq:frostman-dim-intro}.

Finally, choose
\[
0<\kappa<\dimFr\nu_F
\qquad\text{such that}\qquad
s+\kappa>2,
\]
which is possible by \eqref{eq:self-affine-threshold-intro}.
Proposition~\ref{prop:self-affine-condition-C} therefore gives
condition~\textup{(C)} of Theorem~\ref{thm:stopped-intro}, with
\[
r=3,
\qquad
\epsilon=\epsilon_{\rm LS},
\qquad
\kappa<\dimFr\nu_F.
\]
Together with Lemmas~\ref{lem:self-affine-renormalization} and
\ref{lem:self-affine-condition-A}, all three conditions of
Theorem~\ref{thm:stopped-intro} have now been verified.

\subsection{Choice of parameters and conclusion of proof}
\label{subsec:self-affine-conclusion}
We now apply Theorem~\ref{thm:stopped-intro} with
$E=\Pone,$ $J=[-a_+,0].$
Recall that
\[
(\Xi,m)=(\{\ast\},\delta_\ast),
\quad
N=N_0=1,
\quad
\eta_{\ast,1}=\mu,
\]
and that
$P_{\ast,1}=P_{e,k},$
$\Lambda_{\ast,1}=\Lambda.$
Since $\mu$ is compactly supported, the support parameter $M$ in
Theorem~\ref{thm:stopped-intro} may be fixed independently of all
parameters.

Lemma~\ref{lem:self-affine-condition-A} gives
condition~\textup{(A)} with
$q=2,
S=s.$
Lemma~\ref{lem:self-affine-renormalization} gives
condition~\textup{(B)}, with no error term. Finally,
Proposition~\ref{prop:self-affine-condition-C} gives
condition~\textup{(C)} with
$r=3,
\,
\epsilon=\epsilon_{\rm LS},$
and with any $0<\kappa<\dimFr\nu_F$. By
\eqref{eq:self-affine-threshold-intro}, we may choose such a $\kappa$
so that
$S+\kappa=s+\kappa>2.$

Choose $\delta>0$ sufficiently small that
$5\delta<\epsilon_{\rm LS}.$
Since $5=2r-1$, this is precisely the requirement
\eqref{eq:smoothing-renewal-balance-intro}.

For sufficiently large $R$, set
$k
=
\left\lfloor
\frac{\log R}{1+\delta}
\right\rfloor.$
Then
$k\asymp\log R.$
Moreover, with the choice
\[
U=Re^{-k}
\]
from Lemma~\ref{lem:self-affine-renormalization}, we have
$U\asymp e^{\delta k}.$
Thus all the hypotheses of Theorem~\ref{thm:stopped-intro} are
satisfied uniformly in $e\in\Sone$. Therefore, there exist
$C_*,\gamma>0$ such that
\[
Q_R(\mu,[e])
\leq
C_*R^{-\gamma}
\quad
e\in\Sone,\ R\geq2.
\]
In particular,
\[
(\pi_e)_\#\mu\ll\Leb^1
\quad
\text{for every }e\in\Sone.
\]
This proves Theorem~\ref{thm:self-affine-intro}.
\section{Explicit examples}\label{sec:explicit-examples}

We conclude  with some explicit examples to which the three main theorems apply.  

\subsection{Explicit examples of absolutely continuous convolutions}
\label{subsec:explicit-convolution}

Recall from Subsection~\ref{subsec:convolutions-intro} that a $C^2$ IFS
$\Phi=\{f_i:i\in\mathcal A\}$ satisfies uniform contraction
\eqref{eq:1d-uniform-contraction}, that its attractor $K_\Phi$ is
characterized by \eqref{eq:attractor}, and that a self-conformal measure
for $\Phi$ is a measure satisfying \eqref{eq:self-conformal}. We also use
the notions of a linear IFS and of a $C^2$ IFS which is $C^2$-conjugate
to linear exactly as defined in Subsection~\ref{subsec:convolutions-intro}.

Throughout this subsection, let $\nu$ denote the middle-third Cantor
measure, that is, the equal-weight self-similar measure for the IFS
$\{ S_0(x)=\frac{x}{3},
S_1(x)=\frac23+\frac{x}{3} \}.$
Thus, $\nu$ is the unique measure such that
$\nu=\frac12(S_0)_\#\nu+\frac12(S_1)_\#\nu.$
The measure $\nu$ is also Ahlfors--David regular and it is well known that
$\dim \nu=\frac{\log2}{\log3}>0.63.$

\begin{cor}\label{cor:explicit-convolutions}
For $|\varepsilon|\leq1/24$, let
$\Phi_\varepsilon
=
\left\{
x\mapsto\frac{x}{3},
\quad
x\mapsto\frac23+\frac{x}{3}+\varepsilon x(1-x)
\right\},$
and let $\mu_\varepsilon \in \mathcal{P}(\mathbb{R})$ be its equal-weight self-conformal measure.
Then, for every $\varepsilon\neq0$, the $C^\omega(\mathbb R)$ IFS
$\Phi_\varepsilon$ satisfies the SSC, is not $C^2$-conjugate to linear,
and
$\mu_\varepsilon*\nu\ll\Leb^1.$
\end{cor}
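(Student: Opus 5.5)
The plan is to verify the hypotheses of Theorem~\ref{thm:convolution-intro}(A) (or (B), since $\nu$ is also AD regular) for $\mu_\varepsilon$ and $\nu$. Three things must be checked: that $\Phi_\varepsilon$ is a genuine $C^2$ (indeed $C^\omega$) IFS on $I=[0,1]$ satisfying the SSC; that it is not $C^2$-conjugate to linear; and that $\dim\mu_\varepsilon+\dim\nu>1$.

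\emph{Step 1: IFS and SSC.} Write $f_1(x)=x/3$ and $f_2(x)=\frac23+\frac x3+\varepsilon x(1-x)$. Then $f_2(0)=2/3$ and $f_2(1)=1$, and $f_2'(x)=\frac13+\varepsilon(1-2x)\in[\frac13-\frac1{24},\frac13+\frac1{24}]$ on $[0,1]$. Hence $f_2$ is an increasing contraction mapping $[0,1]$ onto $[2/3,1]$, with derivative bounded between $7/24$ and $3/8$, and this persists on a small neighbourhood of $I$. The images $[0,1/3]$ and $[2/3,1]$ are disjoint, which gives the SSC.

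\emph{Step 2: non-linearity.} Suppose $g$ were a $C^2$ diffeomorphism with $g\circ f_i\circ g^{-1}$ having zero second derivative on $g(K)$. I would apply the criterion of \cite{algom2023polynomial} recalled in the introduction. Concretely, $f_1$ fixes $0$ with multiplier $1/3$, and $f_2$ fixes $1$ with multiplier $f_2'(1)=\frac13-\varepsilon$. Under a $C^2$ conjugacy, the multipliers at fixed points are invariants. More strongly, linearity on $K$ forces, for every periodic orbit, the derivative of the composed map at the periodic point to equal the product of the constant slopes, which are determined by the fixed points: $\frac13$ and $\frac13-\varepsilon$. So I would take a periodic point of $f_{12}=f_1\circ f_2$ and show $(f_1\circ f_2)'(x_{12})\neq\frac13(\frac13-\varepsilon)$. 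Since $f_{12}'(x_{12})=\frac13 f_2'(f_2$-coordinate$)$ with $f_2'(y)=\frac13+\varepsilon(1-2y)$, this reduces to checking $y\neq1$, where $y=f_2(x_{12})<1$ is the relevant point. That holds because $y\in f_2(f_1(I))=f_2([0,1/3])$, which lies strictly below $1$. The danger is that a linear conjugate could have non-constant slopes compatible with cocycle identities. I would therefore phrase this as: for a linear IFS, the derivative cocycle at periodic points is multiplicative over the fixed-point multipliers, and conjugacy preserves periodic multipliers. This is the main obstacle, and I expect to rely on the precise statement in \cite{algom2023polynomial}, \cite{algom2021decay} that an IFS is $C^2$-conjugate to linear iff the periodic-point Lyapunov data is cohomologous to a locally constant cocycle. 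The computation above exhibits the obstruction.

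\emph{Step 3: dimensions.} $\dim\nu=\log2/\log3>0.63$. For $\mu_\varepsilon$, the SSC gives $\dim\mu_\varepsilon=\frac{\log 2}{\chi}$, with $\chi\le-\frac12(\log\frac13+\log\frac7{24})$. This yields $\dim\mu_\varepsilon\ge 2\log2/\log(72/7)>0.59$. Hence the sum exceeds $1$. Theorem~\ref{thm:convolution-intro} then gives $\mu_\varepsilon*\nu\ll\Leb^1$.
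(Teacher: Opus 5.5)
Your Steps 1 and 3 agree with the paper: the same bounds $7/24\le f_2'\le 3/8$, the same SSC check, and the same estimate $\dim\mu_\varepsilon\ge 2\log2/\log(72/7)>0.59$. The obstruction you compute in Step 2 is also the paper's. The gap is in how you justify that obstruction.

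You assert that for a linear IFS, the multiplier of $\tilde f_w$ at its fixed point is the product of the fixed-point multipliers of its letters. This is false for $C^2$ IFSs. Linearity only requires $\tilde f_i''=0$ on the attractor, so $\tilde f_i'$ can take different constant values near different first-level cylinders. For instance, take $\tilde f_1,\tilde f_2$ with $\tilde f_1(I)=[0,0.3]$ and $\tilde f_2(I)=[0.7,1]$. Make each map affine on $[0,0.4]$ and on $[0.6,1]$, with slopes $a_{11}=a_{22}=0.2$ and $a_{12}=a_{21}=0.4$, where $a_{ij}$ is the slope of $\tilde f_i$ near the $j$-th cylinder. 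Interpolate smoothly on $[0.4,0.6]$. This is a $C^\infty$ linear IFS, but $(\tilde f_1\circ\tilde f_2)'(z_{12})=a_{12}a_{21}=0.16\neq 0.04=a_{11}a_{22}$.

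Your fallback does not repair this. Here the log-derivative cocycle is itself locally constant (it depends on two symbols). So a characterization via cohomology to \emph{some} locally constant cocycle is compatible with exactly the discrepancy you want to exclude, and a single periodic orbit cannot produce a contradiction.

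The missing idea is the real analyticity of $\Phi_\varepsilon$, which your Step 2 never uses. The paper's Lemma~\ref{lem:check-nonlinearity}(1) relies on \cite[Claim~6.1]{algom2023polynomial}: a real-analytic IFS that is $C^2$-conjugate to linear is real-analytically conjugate to an IFS of affine maps. For that affine conjugate the derivatives are constant, so the multiplier of a composition at its fixed point is the product of the letters' multipliers. Combining this with the conjugacy invariance of fixed-point multipliers, which you state correctly, yields $(f_1\circ f_2)'(x_{12})=\frac13(\frac13-\varepsilon)$. Your computation then gives the contradiction.

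You should also correct a chain-rule slip. We have $(f_1\circ f_2)'(x_{12})=f_1'(f_2(x_{12}))\,f_2'(x_{12})=\frac13 f_2'(x_{12})$, so $f_2'$ is evaluated at $x_{12}$, not at $f_2(x_{12})$. The needed fact $x_{12}\neq 1$ holds because $x_{12}\in f_1(I)=[0,1/3]$. Concluding $f_2'(x_{12})\neq f_2'(1)$ then uses that $f_2'$ is injective for $\varepsilon\neq 0$.
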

Notice that $\mu_\varepsilon\rightarrow\nu$ as $\varepsilon\to0$ in the weak-$*$ sense, whereas
$\mu_0=\nu$ and $\nu*\nu$ is singular.  Thus, even within this very rigid real-analytic family of IFS, absolute continuity of the convolution is not closed under weak-$*$ limits.

We  record two elementary criteria for checking the non-linearity
hypothesis in Theorem~\ref{thm:convolution-intro}.

\begin{lemma}\label{lem:check-nonlinearity}
Let $\Phi=\{f_i:i\in\mathcal A\}$ be a $C^2$ IFS with attractor
$K=K_\Phi$.

\begin{enumerate}
\item Suppose that $\Phi$ is real analytic. For $i\in\mathcal A$, let
$y_i$ be the fixed point of $f_i$, and, for a finite word
$w=w_1\cdots w_n$, let $y_w$ be the fixed point of $f_w$. If
\begin{equation}\label{eq:multiplier-obstruction}
f_w'(y_w)
\neq
\prod_{j=1}^n f_{w_j}'(y_{w_j}) \text{ for some } w,
\end{equation}
then $\Phi$ is not $C^2$-conjugate to linear.

\item For $\omega\in\mathcal A^{\mathbb N}$ and $x\in K$, set
$D_{\omega,n}(x)
=
\frac{d}{dx}\log\left|f_{\omega|n}'(x)\right|.$
If $\Phi$ is $C^2$-conjugate to linear, then, for every $x\in K$,
$\lim_{n\to\infty}D_{\omega,n}(x)$
exists and is independent of $\omega\in\mathcal A^{\mathbb N}$.
\end{enumerate}
\end{lemma}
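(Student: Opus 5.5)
Both parts rest on one principle: if $\Phi$ is $C^2$-conjugate to a linear IFS via $g$, i.e.\ $h_i:=g\circ f_i\circ g^{-1}$ satisfies $h_i''=0$ on $g(K)$, then the derivatives of the $f_w$ along $K$ are governed by the affine-on-$K$ maps $h_i$, up to a single coboundary coming from $g$. We prove (2) first, since it is purely $C^2$, and then deduce (1).

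\emph{Part (2).} By the chain rule,
\[
\log|f_w'(x)| = \log|h_w'(g(x))| + \log|g'(x)| - \log|g'(f_w(x))|.
\]
Differentiating in $x$ gives
\[
D_{\omega,n}(x) = \frac{h_w''(g(x))}{h_w'(g(x))}\,g'(x) + \frac{g''(x)}{g'(x)} - \frac{g''(f_w(x))}{g'(f_w(x))}\,f_w'(x), \qquad w=\omega|n.
\]
The last term is $O(\rho_+^n)$, because $g\in C^2$ and $|f_w'|\le \rho_+^n$. For the first term, expand $h_w''/h_w'$ by the chain rule, as in the proof of Lemma~\ref{lem:convolution-relative-Taylor}, into a sum of $(h_{w_j}''/h_{w_j}')$ evaluated at $h_{w_{j+1}\cdots w_n}(g(x))\in g(K)$. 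Each such evaluation lies in $g(K)$, and $h_i''=0$ on $g(K)$, so every summand vanishes. Hence
\[
D_{\omega,n}(x)\to \frac{g''(x)}{g'(x)},
\]
and this limit is independent of $\omega$. One point needs care: the linearity condition gives $h_i''=0$ only on the attractor of the conjugated system, and that attractor is exactly $g(K)$, so the invariance needed here holds. I would also note that the limit exists for every $x\in K$ without any appeal to bounded distortion beyond the uniform contraction bound \eqref{eq:1d-uniform-contraction}.

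\emph{Part (1).} Assume $\Phi$ is real analytic and $C^2$-conjugate to linear. By the fact recalled in the introduction, $\Phi$ is then real-analytically conjugate to a self-similar IFS $\{h_i\}$, with $h_i(x)=\lambda_i x+c_i$. Multipliers at fixed points are conjugacy invariants: if $y_w$ is the fixed point of $f_w$, then $g(y_w)$ is the fixed point of $h_w$ and $f_w'(y_w)=h_w'(g(y_w))$. For self-similar maps,
\[
h_w' \equiv \prod_j \lambda_{w_j} = \prod_j h_{w_j}'(g(y_{w_j})) = \prod_j f_{w_j}'(y_{w_j}),
\]
so \eqref{eq:multiplier-obstruction} fails for every $w$, which is the contrapositive of (1).

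The main obstacle is the cited real-analytic rigidity fact. If I want to avoid quoting it, I would argue directly with the $C^2$ conjugacy. On $g(K)$, $h_i''=0$, and I would try to show that $h_i'$ is constant on $g(K)$. Since the fixed points $g(y_w)$ lie in $g(K)$, the multiplier identity above then follows without the analytic upgrade. The step that needs justification is that $h_i'$ is constant on $g(K)$ and not merely that $h_i''$ vanishes there. When $K$ is perfect this holds, because vanishing of the derivative of $h_i'$ on a perfect compact set forces $h_i'$ to be constant along the gaps only if $h_i''$ also vanishes on them. So instead I would take the logarithm of the product $h_w'$ evaluated at fixed points and control it via part (2)-type telescoping along orbits in $K$. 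If this direct route becomes too delicate, I would fall back on the analytic-conjugacy fact recalled in the introduction.
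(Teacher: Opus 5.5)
Your argument is correct and follows the paper's proof. In (2) you derive the same identity $D_{\omega,n}(x)=\frac{g''(x)}{g'(x)}-\frac{g''(f_w(x))}{g'(f_w(x))}f_w'(x)$ from $h_w''=0$ on $g(K)$; the paper differentiates the conjugacy relation twice rather than taking logarithmic derivatives, but it is the same computation. In (1) you use the same real-analytic upgrade to a self-similar conjugacy (the paper cites \cite[Claim~6.1]{algom2023polynomial}), followed by invariance of fixed-point multipliers.

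Drop the speculative direct $C^2$ route in your last paragraph rather than pursue it. The condition $h_i''=0$ on $g(K)$ does not make $h_i'$ constant on $g(K)$. For example, a linear $C^\infty$ IFS whose maps are affine with different slopes near the two first-level intervals can violate the multiplier identity, so analyticity is genuinely needed in (1).
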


\begin{proof}
For part~\textup{(1)}, suppose towards a contradiction that $\Phi$ is
$C^2$-conjugate to linear. Since $\Phi$ is real analytic,  \cite[Claim~6.1]{algom2023polynomial} implies that there
is a real-analytic diffeomorphism $h$ such that
$\widetilde f_i=h\circ f_i\circ h^{-1},$ for every $i\in\mathcal A,$
is  affine.  In particular, each $\widetilde f_i'$ is
constant.
Put
$z_i=h(y_i),$
and
$z_w=h(y_w).$
Since $y_i$ and $y_w$ are the fixed points of $f_i$ and $f_w$,
respectively, $z_i$ and $z_w$ are the corresponding fixed points of
$\widetilde f_i$ and $\widetilde f_w$.  By the chain rule,
\begin{align*}
\widetilde f_i'(z_i)
&=
\frac{h'(f_i(y_i))}{h'(y_i)}f_i'(y_i)
=
f_i'(y_i),\\
\widetilde f_w'(z_w)
&=
\frac{h'(f_w(y_w))}{h'(y_w)}f_w'(y_w)
=
f_w'(y_w).
\end{align*}
On the other hand, since the maps $\widetilde f_i$ are similarities,
their derivatives are constant, and therefore
\begin{align*}
f_w'(y_w)
&=
\widetilde f_w'(z_w)\\
&=
\prod_{j=1}^n \widetilde f_{w_j}'(z_{w_j})\\
&=
\prod_{j=1}^n f_{w_j}'(y_{w_j}).
\end{align*}
This contradicts \eqref{eq:multiplier-obstruction}, and proves
part~\textup{(1)}.

For part~\textup{(2)}, suppose again towards a contradiction that a
$C^2$ diffeomorphism $h$ conjugates $\Phi$ to a linear IFS
$\widetilde\Phi
=
\{\widetilde f_i:i\in\mathcal A\},
\,
\widetilde f_i=h\circ f_i\circ h^{-1} \}.$
Recall that, with our definition of linearity from
Subsection~\ref{subsec:convolutions-intro},
$\widetilde f_i''(z)=0$ for all $i\in\mathcal A,\ z\in h(K)).$ 
Since each $\widetilde f_i$ maps $h(K)$ into itself, the chain rule
implies that
$\widetilde f_w''(z)=0$
for every $z\in h(K)$
and
for every finite word $w$.
Arguing similarly to
\cite[Lemma~6.3]{algom2023polynomial},  from
$\widetilde f_w\circ h=h\circ f_w$
we obtain, for $x\in K$,
$\widetilde f_w'(h(x))h'(x)
=
h'(f_w(x))f_w'(x).$
Differentiating this identity once more gives
\begin{align*}
&\widetilde f_w''(h(x))(h'(x))^2
+\widetilde f_w'(h(x))h''(x)\\
&\hspace{35mm}
=
h''(f_w(x))(f_w'(x))^2
+h'(f_w(x))f_w''(x).
\end{align*}
The first term on the left vanishes, since $h(x)\in h(K)$ and
$\widetilde f_w$ is linear on $h(K)$.  Using also
\[
\widetilde f_w'(h(x))
=
\frac{h'(f_w(x))f_w'(x)}{h'(x)},
\]
we obtain
\begin{equation}\label{eq:distortion-under-conjugacy}
\frac{f_w''(x)}{f_w'(x)}
=
\frac{h''(x)}{h'(x)}
-
\frac{h''(f_w(x))}{h'(f_w(x))}f_w'(x).
\end{equation}
By \eqref{eq:1d-uniform-contraction},
$|f_w'(x)|\longrightarrow0$
uniformly as  $|w|\to\infty,$
while $h''/h'$ is bounded on $K$.  Hence
\eqref{eq:distortion-under-conjugacy} implies that, for every
$x\in K$ and every $\omega\in\mathcal A^{\mathbb N}$,
\[
D_{\omega,n}(x)
=
\frac{f_{\omega|n}''(x)}{f_{\omega|n}'(x)}
\longrightarrow
\frac{h''(x)}{h'(x)}.
\]
In particular, the limit exists and is independent of $\omega$, which
proves part~\textup{(2)}.
\end{proof}

\begin{remark}\label{rem:finite-regularity-examples}
Using Lemma~\ref{lem:check-nonlinearity}\textup{(2)}, one can also
produce examples of  finite regularity which are not real
analytic.  Fix $2<r<\infty$.  By
\cite[Theorem~2; see also Claim~22]{algom2024linear}, there exists a
strongly separated $C^r$ IFS
$\Psi_r=\{g_0,g_1\},$
 not of class $C^s$ for any $s>r$, such that, writing $K_r$ for its
attractor,
\[
g_0(0)=0,\qquad g_1(1)=1,
\quad
g_i'(x)=\frac14
\quad x\in K_r,\ i=0,1.
\]
For $\delta>0$ define
$f_0=g_0,$
$f_1(x)=g_1(x)+\delta(1-x)^2,$
$\Phi_{r,\delta}=\{f_0,f_1\}.$
For $\delta>0$ sufficiently small, $\Phi_{r,\delta}$ is still strongly
separated and has exactly $C^r$ regularity.  Moreover,
$D_{0^\infty,n}(1)=0,$
whereas
\[
D_{1^\infty,n}(1)
=
\frac{2\delta}{1/4}
\sum_{k=0}^{n-1}\left(\frac14\right)^k
\longrightarrow
\frac{2\delta}{(1/4)(3/4)}
\neq0.
\]
Hence Lemma~\ref{lem:check-nonlinearity}\textup{(2)} shows that
$\Phi_{r,\delta}$ is not $C^2$-conjugate to linear.

If $\mu_{r,\delta}$ is the uniform self-conformal measure for
$\Phi_{r,\delta}$, then
$\dim \mu_{r,\delta}\rightarrow\frac12
\quad \delta\to0.$
Thus, for $\delta>0$ sufficiently small,
$\dim \mu_{r,\delta}+\dim_H\nu>1,$
and Theorem~\ref{thm:convolution-intro} gives
$\mu_{r,\delta}*\nu\ll\Leb^1.$
The same argument, starting from the $C^\infty$ non-real-analytic
examples in \cite[Theorem~2]{algom2024linear}, gives corresponding
$C^\infty$ examples.
\end{remark}

\begin{proof}
For $|\varepsilon|\leq1/24$,
$\frac7{24}
\leq
f_{1,\varepsilon}'(x)
=
\frac13+\varepsilon(1-2x)
\leq
\frac38,$
while
$f_{0,\varepsilon}([0,1])=[0,1/3],$ and
$f_{1,\varepsilon}([0,1])=[2/3,1].$
Thus $\Phi_\varepsilon$ is a strongly separated real-analytic IFS.  Suppose
$\varepsilon\neq0$, and let $x_\varepsilon$ be the fixed point of
$f_{0,\varepsilon}\circ f_{1,\varepsilon}$.  The fixed points of
$f_{0,\varepsilon}$ and $f_{1,\varepsilon}$ are $0$ and $1$.  If
$\Phi_\varepsilon$ were $C^2$-conjugate to linear, then
Lemma~\ref{lem:check-nonlinearity}\textup{(1)} would give
$\frac13 f_{1,\varepsilon}'(x_\varepsilon)
=
\frac13 f_{1,\varepsilon}'(1).$
Since $f_{1,\varepsilon}'$ is injective for $\varepsilon\neq0$, this
would imply $x_\varepsilon=1$, contradicting
$(f_{0,\varepsilon}\circ f_{1,\varepsilon})(1)=1/3$.  Hence
$\Phi_\varepsilon$ is not $C^2$-conjugate to linear.  Finally, the
dimension formula  and the above derivative bound give
\[
\dim \mu_\varepsilon
\geq
\frac{\log2}
{\frac12\log3+\frac12\log(24/7)}
>0.59.
\]
Since $\dim \nu=\log2/\log3>0.63$, we have
$\dim \mu_\varepsilon+\dim_H\nu>1$, and
Theorem~\ref{thm:convolution-intro} yields
$\mu_\varepsilon*\nu\ll\Leb^1$.
\end{proof}

\subsection{Explicit nonlinear self-conformal measures with all projections absolutely continuous}
\label{subsec:explicit-planar}

We now give a concrete application of
Theorem~\ref{thm:planar-conformal-intro}.  The example is a small
holomorphic perturbation of a strongly separated planar self-similar
system.

Let
$\omega=e^{2\pi i/3},
r=\frac38,
p=\frac45\,\omega^2.$
For $|\varepsilon|\leq 1/200$, define the $C^\omega (D)$ IFS
\begin{equation}\label{eq:explicit-planar-family}
\Phi_\varepsilon
=
\left \lbrace  F_0(z)=rz+\frac12,\, F_1(z)=rz+\frac{\omega}{2}
,\, G_\varepsilon(z)
=
p+r(z-p)+\varepsilon(z-p)^2.\right \rbrace.
\end{equation}
Notice that
$(1-r)p=\frac{\omega^2}{2},$
so, when $\varepsilon=0$, the three maps in
\eqref{eq:explicit-planar-family} are the similarities
$z\mapsto rz+\frac{\omega^j}{2},
\,
j=0,1,2$; in particular, $\Phi_0$ is a self-similar IFS.
Let $\mu_\varepsilon$ denote the equal-weight self-conformal measure
associated to $\Phi_\varepsilon$.

\begin{cor}\label{cor:explicit-planar}
For every
$0<|\varepsilon|\leq\frac1{200}$
 $\Phi_\varepsilon$ is a strongly separated
$C^\omega(\C)$ IFS which is not $C^\omega$-conjugate to a
self-similar IFS, and
$\dim\mu_\varepsilon>1.$
Therefore,
\[
(\pi_e)_\#\mu_\varepsilon\ll\Leb^1
\qquad
\text{for every }e\in\Sone.
\]
\end{cor}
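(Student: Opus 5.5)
The plan is to verify the three hypotheses of Theorem~\ref{thm:planar-conformal-intro} for $\Phi_\varepsilon$ with $0<|\varepsilon|\leq 1/200$, in the following order: strong separation together with derivative bounds, then the dimension estimate, then the two geometric conditions (i) and (ii).

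\textbf{Separation and derivative bounds.} Work on $D=B(0,1)$. For $z\in D$ we have $|z-p|\leq 9/5$, so
\[
|G_\varepsilon'(z)-r| = 2|\varepsilon||z-p| \leq \tfrac{9}{500},
\]
and $G_\varepsilon$ is univalent on a neighbourhood of $D$. The images $F_j(D)$ are the discs $B(\omega^j/2,3/8)$. The image $G_\varepsilon(D)$ lies within distance
\[
|\varepsilon|\,|z-p|^2 \leq \tfrac{81}{5000}
\]
of $B(\omega^2/2,3/8)$. The centres $\omega^j/2$ are pairwise at distance $\sqrt3/2\approx 0.866$, which exceeds $3/4+0.02$. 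Moreover $1/2+3/8+0.02<1$, so all three images lie inside $D$ and are pairwise disjoint. This gives the strong separation condition, so the attractor is a Cantor set and the classical dimension formula $\dim\mu_\varepsilon=H(p)/\chi$ applies.

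\textbf{Dimension.} With equal weights, $H=\log 3$. The derivative bounds above give
\[
\chi \leq \tfrac23\log\tfrac83 - \tfrac13\log\left(\tfrac38-\tfrac{9}{500}\right).
\]
Numerically, $\log(8/3)\approx 0.981$ and $-\log(0.357)\approx 1.030$, so $\chi\lesssim 0.997<\log 3\approx 1.0986$. Hence $\dim\mu_\varepsilon>1$, which is (iii). Since $\dim\mu_\varepsilon>1$ while a real-analytic curve has dimension at most one, $K_{\Phi_\varepsilon}$ cannot lie in such a curve. Curves may be non-compact but are $\sigma$-compactly covered by dimension-one pieces, so this gives (ii).

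\textbf{Non-conjugacy (i).} This is the step I expect to be the main obstacle. The plan is to use a planar analogue of the multiplier obstruction in Lemma~\ref{lem:check-nonlinearity}(1). Suppose $h$ is a $C^\omega$ (real-analytic) conjugacy to a self-similar IFS. Then each conjugated map has constant derivative, a similarity matrix. The key step is the identity $D\widetilde f_w(h(y_w))=Dh(y_w)\,Df_w(y_w)\,Dh(y_w)^{-1}$ at fixed points. It shows that the linear part of $f_w$ at its fixed point is conjugate to a product of the linear parts at the fixed points $y_{w_j}$. I would compare determinants, since these are conjugation-invariant, which gives $|f_w'(y_w)|=\prod_j|f_{w_j}'(y_{w_j})|$. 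Apply this to $w=02$:
\begin{itemize}
\item the fixed points are $1/2\cdot(1-r)^{-1}$ for $F_0$ and $p$ for $G_\varepsilon$, with $G_\varepsilon'(p)=r$;
\item the condition therefore becomes $|G_\varepsilon'(x)|=r$, where $x$ is the fixed point of $F_0\circ G_\varepsilon$ pushed through $G_\varepsilon$, i.e.\ $x=G_\varepsilon(y_{02})$;
\item this forces $|r+2\varepsilon(x-p)|=r$.
\end{itemize}
The last equation describes a circle through $p$ of radius $r/(2|\varepsilon|)\geq 75$. A point $x$ within distance about $2$ of $p$ lies on that circle only if it lies on an explicit small arc. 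If $w=02$ happens to fail for some $\varepsilon$, I would use a second word such as $12$ or $012$ to rule out every case. As a fallback, I would pass to a holomorphic conjugacy argument, where a complex multiplier identity $G_\varepsilon'(x)=r$ forces $x=p$, contradicting $x\in F_0(D)$; that argument works if one can reduce $C^\omega$-conjugacy to holomorphic conjugacy. Once (i)--(iii) hold, Theorem~\ref{thm:planar-conformal-intro} yields $(\pi_e)_\#\mu_\varepsilon\ll\Leb^1$ for every $e\in\Sone$.
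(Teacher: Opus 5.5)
Your verification of strong separation, the bounds $0.357\le|G_\varepsilon'|\le 0.393$ on $D$, the dimension estimate, and hypothesis (ii) is the same as the paper's. The gap is in hypothesis (i). You correctly identify it as the crux but leave it open.

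Taking determinants only yields $|G_\varepsilon'(y_{02})|=r$, where $y_{02}$ is the fixed point of $F_0\circ G_\varepsilon$. By the chain rule $(F_0\circ G_\varepsilon)'(y_{02})=r\,G_\varepsilon'(y_{02})$, so the relevant point is $y_{02}\in F_0(D)$ itself, not $G_\varepsilon(y_{02})$. Write $u=y_{02}-p\approx 0.87+0.50i$. The condition then reads $\operatorname{Re}(\varepsilon u)=-|\varepsilon|^2|u|^2/r$. Since $|u|\approx 1$ while $|\varepsilon||u|^2/r\le 0.014$, for every modulus $0<|\varepsilon|\le 1/200$ the intermediate value theorem in $\arg\varepsilon$ produces complex parameters for which the condition holds. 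So the word $02$ with determinants alone gives no contradiction on the whole parameter range. Your suggested patch via further words is not carried out. (Also, $r/(2|\varepsilon|)\ge 37.5$, not $75$.) Your fallback is exactly the paper's argument, but it hinges on a step you do not supply.

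No reduction to a holomorphic conjugacy is needed; keep the full linear map rather than its determinant. Both letter multipliers that enter are the real number $r$. As real $2\times 2$ matrices, $DF_0\equiv rI$ and $DG_\varepsilon(p)=rI$, and $rI$ commutes with $Dh$. Differentiating $h\circ f_i=\widetilde f_i\circ h$ at the fixed points gives $D\widetilde F_0=D\widetilde G_\varepsilon=rI$. These derivatives are constant everywhere, since the conjugated maps are affine. Hence $D(\widetilde F_0\circ\widetilde G_\varepsilon)\equiv r^2 I$. Differentiating the conjugacy at $y_{02}$ then gives $D(F_0\circ G_\varepsilon)(y_{02})=r^2 I$, i.e.\ $G_\varepsilon'(y_{02})=r$ as a complex number. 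Since $G_\varepsilon'(z)=r+2\varepsilon(z-p)$ with $\varepsilon\neq 0$, this forces $y_{02}=p$. That contradicts $(F_0\circ G_\varepsilon)(p)=F_0(p)\neq p$. This works for any $C^1$ conjugacy and completes your argument along the paper's lines.
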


\begin{proof}
We verify the hypotheses of
Theorem~\ref{thm:planar-conformal-intro}.

First, it is direct to check that $G_\varepsilon$ is an injective contraction with nonvanishing
derivative on a neighbourhood of $D$, with
\begin{equation}\label{eq:explicit-planar-derivative-bounds}
\frac{357}{1000}
\leq
|G_\varepsilon'(z)|
\leq
\frac{393}{1000}.
\end{equation}

The three first-level cylinders are   disjoint.  The maps
$F_0$ and $F_1$ send $D$ into discs of radius $3/8$ centred at
$1/2$ and $\omega/2$, respectively, while
\[
G_\varepsilon(D)
\subset
B\left(
\frac{\omega^2}{2},
\frac38+\frac1{200}\left(\frac95\right)^2
\right)
=
B\left(
\frac{\omega^2}{2},
\frac{489}{1250}
\right).
\]
All three discs are compactly contained in $D$.  Their centres are
the vertices of an equilateral triangle of side length
$\sqrt3/2$, and
$\frac34<\frac{\sqrt3}{2},$
$\frac38+\frac{489}{1250}<\frac{\sqrt3}{2}.$ 
Hence $\Phi_\varepsilon$ satisfies the strong separation condition.

We next check the nonlinearity hypothesis.  We use 
Lemma~\ref{lem:check-nonlinearity}\textup{(1)}.  Although that lemma was
stated for real one-dimensional IFSs, its proof applies verbatim in the
present holomorphic setting: if an analytic IFS is analytically conjugate
to a self-similar IFS, then the multiplier at the fixed point of every
composition is the product of the multipliers at the fixed points of its
letters.
Now $G_\varepsilon$ fixes $p$ and
$G_\varepsilon'(p)=r.$
Let $z_\varepsilon$ be the fixed point of
$F_0\circ G_\varepsilon$.  If $\Phi_\varepsilon$ were
$C^\omega$-conjugate to a self-similar IFS, the preceding multiplier
identity would give
\[
(F_0\circ G_\varepsilon)'(z_\varepsilon)
=
F_0'\,G_\varepsilon'(p)
=
r^2.
\]
On the other hand,
$(F_0\circ G_\varepsilon)'(z_\varepsilon)
=
rG_\varepsilon'(z_\varepsilon),$
and hence
$G_\varepsilon'(z_\varepsilon)=r.$
Since $\varepsilon\neq0$ and
$G_\varepsilon'(z)
=
r+2\varepsilon(z-p),$
this forces $z_\varepsilon=p$.  But
\[
(F_0\circ G_\varepsilon)(p)=F_0(p)\neq p,
\]
since the fixed point of $F_0$ is $4/5$, whereas
$p=(4/5)\omega^2$.  This contradiction shows that
$\Phi_\varepsilon$ is not $C^\omega$-conjugate to a self-similar IFS.

It remains to check the dimension hypothesis.  In this equal weights case, the
Lyapunov exponent of $\mu_\varepsilon$ is
$\chi_\varepsilon
:=
-\frac13\sum_{f\in\Phi_\varepsilon}
\int \log |f'(z)|\,d\mu_\varepsilon(z).$ 
Since $\Phi_\varepsilon$ satisfies the strong separation condition,
the standard  dimension formula
\cite[Theorem~2.8]{feng2009dimension} gives
$dim\mu_\varepsilon
=
\frac{\log3}{\chi_\varepsilon}.$
The maps $F_0,F_1$ have derivative $3/8$, while
\eqref{eq:explicit-planar-derivative-bounds} gives
$|G_\varepsilon'|\geq357/1000$.  Therefore
$\chi_\varepsilon
\leq
\frac23\log\frac83
+
\frac13\log\frac{1000}{357},$
and so
\[
\dim\mu_\varepsilon
\geq
\frac{\log3}
{\frac23\log(8/3)+\frac13\log(1000/357)}
>1.10.
\]
In particular, $K_{\Phi_\varepsilon}=\supp\mu_\varepsilon$ cannot be
contained in a real-analytic planar curve, since every such curve has
Hausdorff dimension one.

Thus all the hypotheses of
Theorem~\ref{thm:planar-conformal-intro} are satisfied, and hence
\[
(\pi_e)_\#\mu_\varepsilon\ll\Leb^1
\qquad
\text{for every }e\in\Sone.
\]
\end{proof}

\subsection{A singular self-affine measure with every projection absolutely continuous}
\label{subsec:explicit-self-affine}
We conclude with a concrete application of
Theorem~\ref{thm:self-affine-intro}.  Recall from
Subsection~\ref{subsec:self-affine-intro} that a planar self-affine IFS
has the form
$\Phi=\{x\mapsto A_i x+b_i:i\in\mathcal A\},$ where
$A_i\in\mathrm{GL}(2,\R)$, $\|A_i\|<1$, and $b_i\in \mathbb{R}^2$.
We say that $\Phi$ is \emph{SIP} if the semigroup generated by the
transposed linear parts is strongly irreducible and proximal.  In this
case we write $\nu_F$ for the corresponding stationary Furstenberg
measure \eqref{eq:furstenberg-stationary-intro} on $\Pone$, and recall that
\[
\dimFr\nu_F
=
\sup\left\{
\kappa:
\nu_F(B(\theta,r))\leq C_\kappa r^\kappa
\text{ for all }\theta\in\Pone,\ 0<r\leq1
\right\}.
\]
We also recall the notation
$I_s(\mu)
=
\iint |x-y|^{-s}\,d\mu(x)\,d\mu(y)$
from \eqref{eq:def-correlation-dim}.

For $\sigma\in\{-1,1\}$, set
\begin{equation}\label{eq:explicit-Bpm}
B_\sigma
=
\sqrt{\frac{80}{39}}
\begin{pmatrix}
1&\frac{\sigma}{200}\\[2mm]
\frac{\sigma}{2}&\frac{49}{100}
\end{pmatrix}
\in\mathrm{SL}_2(\R),
\qquad
A_\sigma=\frac8{25}B_\sigma^T.
\end{equation}
Let
$Q=[-1,1]\times\left[-\frac1{36},\frac1{36}\right],$
$y_j=\frac{j}{54},$
$j\in\{-1,0,1\},$
and define the six affine maps
\begin{equation}\label{eq:explicit-six-maps}
\Phi= \left \lbrace f_{\sigma,j}(x)
=
A_\sigma x+
\left(\frac{\sigma}{2},y_j\right) \right \rbrace_{\sigma\in\{-1,1\},
\,
j\in\{-1,0,1\}.
}
\end{equation}
Let $\mu_A$ be the equal-weight self-affine measure for this IFS.

\begin{cor}\label{cor:explicit-self-affine}
The IFS $\Phi$ from \eqref{eq:explicit-six-maps} satisfies the strong separation
condition and is SIP.  If $\nu_F$ denotes its Furstenberg measure, then
$\dimFr\nu_F
\geq
\kappa_0
:=
\frac{\log2}{\log(13467/6500)}
>0.95,
$ while
$I_s(\mu_A)<\infty$
for every 
$0<s<d_0$, where
$d_0
:=
\frac{\log6}{\log(125/24)}
>1.08.$ 
In particular,
$d_0+\kappa_0>2,$
and hence
\[
(\pi_e)_\#\mu_A\ll\Leb^1
\qquad
\text{for every }e\in\Sone.
\]
Moreover,
$\mu_A\perp\Leb^2.$
\end{cor}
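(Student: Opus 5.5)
The plan is to verify the hypotheses of Theorem~\ref{thm:self-affine-intro} one at a time by explicit matrix computations, and to get singularity from a volume count. The checks are: SSC, SIP, the Frostman bound for $\nu_F$, and finite $s$-energy for $s<d_0$.

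\emph{Preliminary algebra.} First, $\det B_\sigma=\frac{80}{39}\bigl(\frac{49}{100}-\frac{1}{400}\bigr)=\frac{80}{39}\cdot\frac{195}{400}=1$, so $B_\sigma\in\mathrm{SL}_2(\R)$. The transposed linear parts are $A_\sigma^T=\frac{8}{25}B_\sigma$. Hence the projective action relevant for $\nu_F$ is the action of $B_{\pm1}$ on $\Pone$, since scalars act trivially.

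\emph{Strong separation.} I will show that $f_{\sigma,j}(Q)\subset Q$ and that the six images are pairwise disjoint, by bounding the box $A_\sigma Q$.
\begin{itemize}
\item Horizontally, $A_\sigma Q$ has half-width at most $\frac{8}{25}\sqrt{80/39}\,(1+\frac1{72})\approx0.465$. After translating by $\pm\frac12$, the two $\sigma$-families lie in disjoint subintervals of $[-1,1]$.
\item Vertically, the half-height is at most $\frac8{25}\sqrt{80/39}\,\bigl(\frac1{200}+\frac{49}{100}\cdot\frac1{36}\bigr)\approx0.0085$. This is less than half the spacing $\frac1{54}$, and $\frac1{54}+0.0085<\frac1{36}$.
\end{itemize}
These margins are tight, so this is the step where I would compute carefully with exact fractions.

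\emph{SIP.} Proximality holds because $\operatorname{tr}B_\sigma=\sqrt{80/39}\cdot\frac{149}{100}>2$, so each $B_\sigma$ is hyperbolic with a simple dominant eigenvalue. For strong irreducibility, suppose a finite union of lines is invariant under a semigroup containing a hyperbolic element $g$. Then some power of $g$ fixes each of these lines, so the union consists of eigenlines of $g$. I will check that $B_{1}$ and $B_{-1}$ share no eigenline. Then no finite union of lines is invariant under both, and strong irreducibility follows.

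\emph{Frostman bound for $\nu_F$.} I will find a closed interval $I\subset\Pone$, in a slope chart, with the following properties:
\begin{itemize}
\item $B_1I$ and $B_{-1}I$ are disjoint subintervals of $I$;
\item the projective derivatives satisfy $|(B_\sigma^{\Pone})'|\geq \lambda_-:=6500/13467$ on $I$;
\item the projective derivatives are at most some $\lambda_+<1$ on $I$.
\end{itemize}
These follow from explicit bounds on $|B_\sigma v|$ over $v$ in the cone corresponding to $I$, using that the derivative of $[v]\mapsto[Bv]$ equals $\det B/|Bv|^2$ at unit $v$.

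With such an $I$, $\nu_F$ is the equal-weight self-conformal measure of a strongly separated projective IFS on $I$. Its $n$-cylinders have mass $2^{-n}$, length at least $c\lambda_-^n$, and are separated by gaps of comparable size. A ball of radius $r\asymp\lambda_-^n$ therefore meets $O(1)$ cylinders of generation $n$. This gives $\nu_F(B(\theta,r))\lesssim r^{\log2/\log(1/\lambda_-)}$, that is, $\dimFr\nu_F\geq\kappa_0$.

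Finding an $I$ that works with the exact constant $\lambda_-$ is the main obstacle. I would first try $I$ bounded by the attracting eigendirections of $B_{\pm1}$.

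\emph{Finite energy.} Write $s_{\min}(\cdot)$ for the least singular value. I expect $s_{\min}(B_\sigma)\geq\frac35$, so $s_{\min}(A_\sigma)\geq\frac{24}{125}$, and by submultiplicativity $s_{\min}(A_w)\geq(24/125)^{|w|}$.

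Let $x,y$ lie in the same level-$n$ cylinder $f_w(Q)$ but in different level-$(n+1)$ cylinders. Strong separation gives a gap $g>0$ between distinct first-level images, so
\[
|x-y|\geq g\,(24/125)^{n}.
\]
The $\mu_A\times\mu_A$-mass of such pairs is at most $\sum_{|w|=n}p_w^2=6^{-n}$. Hence
\[
I_s(\mu_A)\lesssim\sum_n 6^{-n}(125/24)^{ns}<\infty \qquad\text{for } s<d_0.
\]

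\emph{Conclusion.} One checks numerically that $d_0+\kappa_0>2$. Choose $s<d_0$ with $s+\kappa_0>2$, which is possible since $\dimFr\nu_F\geq\kappa_0$, and apply Theorem~\ref{thm:self-affine-intro}.

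\emph{Singularity.} The attractor is covered, for every $n$, by $6^n$ sets $f_w(Q)$ of total area $(6\cdot(8/25)^2)^n|Q|=(384/625)^n|Q|\to0$. So $\supp\mu_A$ is Lebesgue-null, and $\mu_A\perp\Leb^2$.
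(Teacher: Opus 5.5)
Your treatments of strong separation and SIP are the same as the paper's. The Frostman step for $\nu_F$, however, has a genuine gap, and it is exactly the step that carries the threshold $d_0+\kappa_0>2$.

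\textbf{The gap.} You propose to bound the \emph{angular} derivative $\det B_\sigma/|B_\sigma v|^2$ from below by $\lambda_-=6500/13467\approx0.483$ on some cone $I\supset\supp\nu_F$. No such cone exists.
\begin{itemize}
\item Write $t=y/x$ for the slope of $v$. The inequality $|B_1v|^2>13467/6500$ holds for all $t\in(-0.32,0.97)$, so $\det B_1/|B_1v|^2<\lambda_-$ on that whole range.
\item This range covers almost all of the first-level cylinder $\psi_+([-1,1])=[\tfrac{2}{199},\tfrac{66}{67}]$, and hence most of $\supp\nu_F$.
\item At slope $t\approx0.23$, the top right singular direction of $B_1$, the angular derivative drops to $\|B_1\|^{-2}\approx0.373$.
\end{itemize}
So the uniform per-step bound you can actually get in the round metric gives only $\kappa\approx\log2/\log(1/0.373)\approx0.70$. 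Since $d_0<1.09$, this is far from enough. Bounding $I$ by the attracting eigendirections does not change this.

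\textbf{The fix.} The constant $6500/13467$ comes from the slope chart $t=y/x$. There $B_\sigma$ acts by the M\"obius map $\psi_\sigma(t)=\frac{\sigma/2+\frac{49}{100}t}{1+\frac{\sigma}{200}t}$, with
\[
\psi_\sigma'(t)=\frac{39/80}{(1+\sigma t/200)^2}\in\Big[\frac{6500}{13467},\frac{19500}{39601}\Big]\quad\text{for }|t|\le1.
\]
This derivative sees only the first coordinate of $B_\sigma(1,t)$, whereas $|B_\sigma v|^2$ also picks up the large second coordinate. Moreover:
\begin{itemize}
\item $\psi_\pm([-1,1])=\pm[\tfrac{2}{199},\tfrac{66}{67}]$ are disjoint subintervals of $[-1,1]$;
\item $t\mapsto\arctan t$ is bi-Lipschitz on $[-1,1]$, so Frostman exponents computed in this chart agree with those for $d_{\Pone}$.
\end{itemize}
With this chart your cylinder-counting argument goes through and gives $\dimFr\nu_F\ge\kappa_0$. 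The missing idea is to measure contraction in a chart adapted to the projective dynamics rather than in the round metric. This is what the paper does.

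\textbf{The other steps} are correct but take different routes from the paper.
\begin{itemize}
\item \emph{Energy.} You sum directly over the level at which a pair of points splits, using $\alpha_2(A_w)\ge(24/125)^{|w|}$. The paper instead derives $\mu_A(B(x,r))\lesssim r^{d_0}$ from the same separation estimate and then quotes the standard Frostman-to-energy bound. The two arguments are equivalent.
\item \emph{Singularity.} You use $\det A_\sigma=(8/25)^2$, so $\bigcup_{|w|=n}f_w(Q)$ has area $(384/625)^n|Q|\to0$. This is more elementary than the paper's route, which gets $\dim\mu_A<1.705$ from B\'ar\'any--Hochman--Rapaport via the Lyapunov dimension.
\end{itemize}
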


Before proving the corollary, let us note that by
\cite[Theorem~1.2]{BaranyHochmanRapaport2019},
the strong separation and SIP properties proved below imply that
$\dim\mu_A$ equals its Lyapunov dimension.  A direct computation then shows that
\begin{equation}\label{eq:explicit-self-affine-dimension-range}
1.572<\dim\mu_A<1.705.
\end{equation}
Thus $\mu_A$ is indeed a  singular measure.

\begin{proof}[Proof of Corollary~\ref{cor:explicit-self-affine}]
We first verify SIP and estimate the Furstenberg measure.  Since
$A_\sigma^T=\frac8{25}B_\sigma,$
the corresponding projective maps are the projective actions of
$B_\sigma$.  In the slope coordinate $t=y/x$ they are
$\psi_\sigma(t)
=
\frac{
\frac{\sigma}{2}+\frac{49}{100}t
}{
1+\frac{\sigma}{200}t
}.$ A direct calculation gives
\[
\psi_-([-1,1])
=
\left[-\frac{66}{67},-\frac2{199}\right],
\qquad
\psi_+([-1,1])
=
\left[\frac2{199},\frac{66}{67}\right],
\]
so the two first-level projective cylinders are disjoint.  Moreover,
$|\psi_\sigma'(t)|
=
\frac{39/80}{(1+\sigma t/200)^2},$
and therefore, for $|t|\leq1$,
\begin{equation}\label{eq:explicit-projective-derivative}
\delta_0
:=
\frac{6500}{13467}
\leq
|\psi_\sigma'(t)|
\leq
\frac{19500}{39601}
<1.
\end{equation}
The two matrices $B_\sigma$ are hyperbolic, since
$\operatorname{tr}(B_\sigma)
=
\frac{149}{100}\sqrt{\frac{80}{39}}
>2.$
Their respective pairs of fixed slopes are
$\{-51\pm\sqrt{2701}\},$ and
$\{51\pm\sqrt{2701}\},$
which are disjoint.  Thus the semigroup is proximal.  If a finite
nonempty set of projective lines were invariant under both generators,
each generator would permute it.  Every point of this set would
therefore be periodic for both hyperbolic projective transformations,
and hence would have to be an eigenline of both matrices, contradicting
the disjointness of the two  fixed-point sets.  Thus the
semigroup is strongly irreducible.

Since each of the three maps with linear part $A_-$ has weight $1/6$,
and similarly for $A_+$, the transpose-projective random walk is the
equal-weight walk generated by $\psi_-$ and $\psi_+$.  Let $\nu_F$ be
its stationary measure.  The first-level projective intervals are
separated by a gap $4/199$, and
\eqref{eq:explicit-projective-derivative} implies that two level-$n$
cylinders which first branch at level $j$ are separated by at least a
fixed multiple of $\delta_0^j$.  Hence a projective ball of radius
comparable to $\delta_0^n$ meets only $O(1)$ level-$n$ cylinders.
Since every such cylinder has $\nu_F$-mass $2^{-n}$,
\[
\nu_F(B(\theta,r))
\leq
Cr^{\kappa_0},
\qquad
\kappa_0
=
\frac{\log2}{\log(1/\delta_0)}
=
\frac{\log2}{\log(13467/6500)}.
\]
Therefore
\begin{equation}\label{eq:explicit-kappa}
\dimFr\nu_F\geq\kappa_0>0.95.
\end{equation}

We next verify strong separation and the energy estimate.  We use the
elementary singular-value bounds
\begin{equation}\label{eq:explicit-B-singular-values}
\|B_\sigma\|<\frac53,
\qquad
\alpha_2(B_\sigma)>\frac35.
\end{equation}
Indeed,
$\operatorname{tr}(B_\sigma^TB_\sigma)
=
\frac{917}{300}.$
Since $\det B_\sigma=1$, its singular values are
$\rho,\rho^{-1}$ for some $\rho\geq1$, and
$\rho^2+\rho^{-2}
=
\frac{917}{300}
<
\left(\frac53\right)^2
+
\left(\frac35\right)^2,$
which proves \eqref{eq:explicit-B-singular-values}.  So,
\begin{equation}\label{eq:explicit-A-singular-values}
\|A_\sigma\|<\frac8{15}<1,
\qquad
\alpha_2(A_\sigma)>\frac{24}{125}.
\end{equation}
Put
$c_0=\frac8{25}\sqrt{\frac{80}{39}}.$
For $x\in Q$, a direct estimate gives
\[
|(A_\sigma x)_1|
\leq
c_0\left(1+\frac1{72}\right)
<0.465<\frac12,
\text{ and    } 
|(A_\sigma x)_2|
\leq
c_0\left(\frac1{200}+\frac{49}{3600}\right)
<0.00854<\frac1{108}.
\]
Thus the two values of $\sigma$ occupy disjoint left and right columns
of $Q$, while, for each fixed $\sigma$, the three values of $j$ occupy
disjoint horizontal strips.  Hence the six sets
$f_{\sigma,j}(Q)$ are pairwise disjoint compact subsets of the interior
of $Q$, so the IFS satisfies the strong separation condition.

Set
$a=\frac{24}{125}.$
Let $\Delta>0$ be the minimum distance between two distinct first-level
cylinders.  If two level-$n$ cylinders first separate after a common
prefix $w$ of length $j<n$, then
\[
\dist(f_w(K_1),f_w(K_2))
\geq
\alpha_2(A_w)\Delta
\geq
a^j\Delta
\geq
a^{n-1}\Delta.
\]
It follows by a standard packing argument that a ball of radius
comparable to $a^n$ meets only $O(1)$ level-$n$ cylinders.  Every such
cylinder has $\mu_A$-mass $6^{-n}$, and hence

$\mu_A(B(x,r))
\leq
Cr^{d_0},
\qquad
d_0
=
\frac{\log6}{\log(125/24)}
>1.08.$
It follows from  standard   estimates
\cite[Chapter~8]{mattila1999geometry} that\begin{equation}\label{eq:explicit-self-affine-energy}
I_s(\mu_A)<\infty
\qquad
\text{for every }0<s<d_0.
\end{equation}

Now
$d_0+\kappa_0>2.$
Thus we may choose
$2-\kappa_0<s<d_0;$
for instance $s=1.07$ works.  By
\eqref{eq:explicit-kappa} and
\eqref{eq:explicit-self-affine-energy},
$I_s(\mu_A)<\infty,
\,
s+\dimFr\nu_F>2.$
All the hypotheses of Theorem~\ref{thm:self-affine-intro} are therefore
satisfied, and we obtain
\[
(\pi_e)_\#\mu_A\ll\Leb^1
\qquad
\text{for every }e\in\Sone.
\]

Finally, $\mu_A$ is singular with respect to the Lebesgue measure on the plane; this follows directly from \eqref{eq:explicit-self-affine-dimension-range}.
\end{proof}

\section{AI disclosure}
ChatGPT (GPT-5.6 Sol) was used to assist with language
polishing, typographical corrections, and final consistency checks.
All mathematical results and proofs are the work of the authors,
who take full responsibility for the contents of this paper.
\bibliographystyle{plainurl}
\bibliography{bib}
\end{document}